\documentclass[12pt]{article}

\usepackage{enumerate}
\usepackage[OT1]{fontenc}
\usepackage{amsmath,amssymb}
\usepackage{natbib}
\usepackage{bbm}
\usepackage[usenames]{color}

\usepackage{smile}
\usepackage{natbib}
\usepackage{multirow}
\usepackage{mathtools}
\usepackage[colorlinks,
            linkcolor=red,
            anchorcolor=blue,
            citecolor=blue
            ]{hyperref}

 \newcommand{\limn}{\lim_{n\rightarrow \infty} }
\def\given{\,|\,}

\def\tr{\mathop{\text{tr}}\kern.2ex}

\def\sign{\mathop{\text{sign}}}

\def\supp{\mathop{\text{supp}}}

\long\def\comment#1{}

\def\cS{{\mathcal{S}}}

\newcommand{\bel}{\begin{eqnarray}\label}
\newcommand{\eel}{\end{eqnarray}}
\newcommand{\bes}{\begin{eqnarray*}}
\newcommand{\ees}{\end{eqnarray*}}

\newcommand{\la}{\langle}
\newcommand{\ra}{\rangle}

\let\emptyset\varnothing

\usepackage{mathrsfs}

\usepackage{fullpage}

\usepackage[protrusion=false,expansion=true]{microtype}
\usepackage[british,UKenglish,USenglish,english,american]{babel}

\def\##1\#{\begin{align}#1\end{align}}
\def\$#1\${\begin{align*}#1\end{align*}}
\usepackage{enumitem}

\def \mA {\mathscr{A} }
\newcommand\floor[1]{\lfloor#1\rfloor}

\begin{document}

\title{\LARGE Curse of  Heterogeneity: Computational Barriers in Sparse Mixture Models and Phase Retrieval}
\author{Jianqing Fan\thanks{Princeton University; e-mail: \texttt{jqfan@princeton.edu}, supported by NSF grants DMS-1712591 and DMS-1662139 and NIH grant 2R01-GM072611.} \qquad Han Liu\thanks{Northwestern University; e-mail: \texttt{hanliu.cmu@gmail.com}.}\qquad Zhaoran Wang\thanks{Northwestern University; e-mail: \texttt{zhaoranwang@gmail.com}.} \qquad Zhuoran Yang\thanks{Princeton University; e-mail: \texttt{zy6@princeton.edu}.}
}
\maketitle				
\date{}

\begin{abstract}
We study the fundamental tradeoffs between statistical accuracy and computational tractability in the analysis  of high dimensional heterogeneous data. As examples, we study sparse Gaussian mixture model, mixture of sparse linear regressions, and sparse phase retrieval model.    For these models, we exploit an oracle-based computational model to establish conjecture-free computationally feasible minimax lower bounds, which quantify the minimum signal strength required for the existence of any algorithm that is both computationally tractable and statistically accurate. Our analysis shows that there exist significant gaps between computationally feasible minimax risks and classical ones. These gaps quantify the statistical price we must pay to achieve computational tractability in the presence of data heterogeneity.     Our results cover the problems of detection, estimation, support recovery, and clustering, and moreover, resolve several conjectures of \cite{azizyan2013minimax, azizyan2015efficient, arias2017detection,cai2015optimal}.  Interestingly, our results reveal a new but counter-intuitive phenomenon in heterogeneous data analysis that more data might lead to less computation complexity.

\end{abstract}
\section{Introduction}\label{sec::intro}

Computational efficiency and statistical accuracy are two key factors for designing learning algorithms. Nevertheless, classical~statistical theory focuses more on characterizing the minimax risk of a learning procedure rather than its computational efficiency. In high dimensional heterogeneous data analysis, it is usually observed that statistically optimal procedures are not computationally tractable, while computationally efficient methods are suboptimal in terms of statistical risk \citep{azizyan2013minimax, azizyan2015efficient, arias2017detection,cai2015optimal}.  This discrepancy motivates~us to study the fundamental statistical limits of learning high dimensional heterogeneous models under computational tractability constraints. As examples, we consider two heterogeneous models, namely sparse Gaussian mixture model and mixture of sparse linear regressions.  These two models are prominently featured in the analysis of big data \citep{fan2014challenges}.



Gaussian mixture model  is one of the most fundamental statistical models. It has broad applications in a variety of areas, including speech and image processing \citep{reynolds1995robust, zhuang1996gaussian}, social science \citep{titterington1985statistical}, as well as biology \citep{yeung2001model}. Specifically, for observable $\bX \in \RR^{d}$ and a discrete latent variable $ Z \in \cZ$, Gaussian mixture model assumes
\$
\bX| Z = z \sim N(\bmu_{z}, \bSigma_z),\quad \text{where~~} \PP(Z = z) = p_z,~~\text{and~~}\textstyle{\sum_{z\in \cZ}} p_z = 1,
\$
where $\bmu_{z}$ and $\bSigma_z$ denote the mean and covariance matrix of $\bX$ conditioning on $Z = z$. Let $n$ be the number of observations.  In this paper, we study the high dimensional setting where $d \gg n$, which  is challenging for consistently recovering $\bmu_{z}\ (z\in\cZ)$, even assuming $|\cZ| = 2$ and $\bSigma_z$'s are known. To address such an issue, one popular assumption is that the difference between the two~$\bmu_{z}$'s is sparse \citep{azizyan2013minimax, arias2017detection}. In detail, for $\cZ = \{1,2\}$, they~assume~that $\Delta \bmu = \bmu_2 - \bmu_1$ is $s$-sparse, i.e., $\Delta \bmu$ has $s$ nonzero entries ($s \ll n$). Under this sparsity assumption,  \cite{azizyan2013minimax, arias2017detection} establish information-theoretic lower bounds and efficient algorithms for detection, estimation, support recovery, and clustering. However, there remain rate gaps between the information-theoretic lower bounds and the upper bounds that are attained by efficient algorithms. Is the gap intrinsic to the difficulty of the mixture problem?  We will show that such a lower bound is indeed sharp if no computational constraints are imposed, and such an upper bound is also sharp if we restrict our estimators to computationally feasible ones.

Another example of heterogeneity is the mixture of linear regression model, which characterizes the regression problem  where the observations consist of multiple subgroups with different regression parameters. Specifically, we assume that $Y = \bmu_{z}^\top \bX + \epsilon$ conditioning on the discrete latent variable $Z = z$, where $\bmu_z$'s are the regression parameters, $ \bX\in \RR^d$,  and $\epsilon \sim N(0, \sigma^2)$ is the random noise, which is independent of everything else.
Here we also focus on the high dimensional setting in which $ d\gg n $, where $n$ is sample size. 
In this setting, consistently estimating mixture of regressions is challenging even when $| \cZ| = 2$.~Similar to Gaussian mixture model, we focus on the setting in which $\cZ = \{ 1, 2\}$, $p_1 = p_2 = 1/2$, and $\bmu_1 = - \bmu_2 = \bbeta$ is $s$-sparse to illustrate the difficulty of the problem. As we will illustrate in \S\ref{sec::theory_reg}, this symmetric setting is closely related to sparse phase retrieval \citep{chen2014convex}, for which \cite{cai2015optimal} observe a gap in terms of sample complexity between the information-theoretic limit and upper bounds that are attained by computationally tractable algorithms.


One question is left open: Are such gaps intrinsic to these statistical models with heterogeneity, which can not be eliminated by more complicated algorithms or proofs? In other words, do we have to sacrifice statistical accuracy to achieve computational tractability?

In this paper, we provide an affirmative answer to this question. In detail, we study the detection problem, i.e., testing whether $\Delta \bmu = \zero$ or $\bbeta = \zero$ in the above models, since the fundamental limit of detection further implies the limits of estimation, support recovery, as well as clustering. We~establish sharp computational-statistical phase transitions in terms of the sparsity level $s$, dimension $d$, sample size $n$, as well as the signal strength, which~is determined by the model parameters. More specifically, under the simplest setting of Gaussian mixture model where the covariance matrices are identity, up to a term that is logarithmic in  $n$,~the computational-statistical phase transitions are as follows~under certain regularity conditions.
\begin{enumerate}[label=(\roman*)]
\item  In the weak-signal regime where $\|\Delta \bmu\|_2^2 = o (\sqrt { s\log d /n  } ) $, any algorithm fails to detect~the~sparse Gaussian mixtures.
\item In the regime with moderate signal strength 
\$
\|\Delta \bmu\|_2^2 = \Omega (\sqrt { s\log d /n  } )  \quad\text{and}\quad \|\Delta \bmu\|_2^2 = o (\sqrt{s^2/n} ) ,
\$
under a generalization of the statistical query model \citep{kearns1998efficient}, any efficient algorithm~that has polynomial computational complexity fails to detect the sparse Gaussian mixture. (We will specify the computational model and the notion of oracle complexity in details in \S\ref{sec::bg}.)
Meanwhile,~there exists an algorithm with superpolynomial oracle complexity that successfully detects the sparse Gaussian mixtures.
\item  In the strong-signal regime where $\|\Delta \bmu\|_2^2 = \Omega (\sqrt{s^2/n} )$, there exists an efficient algorithm~with polynomial oracle complexity that succeeds.
\end{enumerate}
Here regime (ii) exhibits the tradeoffs between statistical optimality and computational tractability. More specifically, $\sqrt{s^2/n}$ is the minimum detectable signal strength under computational tractability constraints, which contrasts with the classical minimax lower bound $\sqrt { s\log d /n  }  $. In other words, to attain computational tractability, we must pay a price of $\sqrt { s\log d /n  } $ in the minimum detectable signal strength. We will also establish the results for more general covariance matrices  in \S\ref{sec::unknown}, where $\bSigma_{z} = \bSigma \ (z \in \{1,2\})$ may even be unknown. In addition, for mixture of regressions, we establish similar phase transitions as in (i)-(iii) with $\| \Delta \bmu\|_2^2 $ replaced by $\| \bbeta \|_2^2 / \sigma^2$, where $\sigma$ is the standard deviation of the noise.~See \S\ref{sec::theory_GMM} and \S\ref{sec::theory_reg} for details.

From another point of view, the above statistical-computational~tradeoffs reveal a new and counter-intuitive phenomenon, i.e., with a larger sample size $n = \Omega (s^2 /\|\Delta\bmu\|_2^4 )$, which corresponds to the strong-signal regime, we can achieve much lower computational complexity (polynomial oracle complexity). In contrast,  with a smaller sample size $n = o(s^2/\|\Delta\bmu\|_2^4  )$, which corresponds to the moderate-signal regime, we suffer from  superpolynomial oracle complexity. In other words, with more data, we need less computation.  Such a novel and counter-intuitive phenomenon is first captured in the literature.
On the other hand, this new phenomena is not totally unexpected.  With  a larger $n$, the mixture problem becomes locally more convex around the true parameters of interest, which helps the optimization.

Our results are of the same nature as a recent line of work on statistical-computational~tradeoffs    \citep{berthet2013computational, berthet2013optimal,  ma2013computational,
	daniely2013more,gao2017sparse,wang2014statistical, zhang2014lower, chen2014statistical, krauthgamer2013semidefinite,  cai2015computational, chen2015incoherence, hajek2014computational, perry2016optimality,lelarge2016fundamental, brennan2018reducibility, zhang2018tensor,wu2018statistical}. Such a line of work is mostly based upon randomized polynomial-time reductions~from average-case computational hardness conjectures, such as \textsf{planted clique} conjecture \citep{alon1998finding} and \textsf{random 3SAT} conjecture \citep{feige2002relations}. In detail, they build a reduction from a problem that is conjectured to be computationally difficult  to an instance of the statistical problem of interest, which implies the computational difficulty of the statistical problem. Such a reduction-based  approach has several drawbacks. Firstly, there lacks a consensus on the correctness of average-case computational hardness conjectures \citep{applebaum2008basing, barak2012truth}. Secondly, there lacks a systematic way to connect a statistical problem with a proper computational hardness conjecture.

In this paper, we employ a different approach. Instead of reducing a problem that is conjectured to be computationally hard to solve to  the statistical problem of interest, we directly characterize the computationally feasible minimax lower bounds using the intrinsic structure of the sparse mixture model. In detail, we~focus on an oracle-based computational model, which generalizes the statistical query model proposed by \cite{kearns1998efficient} and recently generalized by \cite{feldman2013statistical, feldman2015statistical, feldman2015complexity, wang2015sharp}. In particular, compared with their work, we focus on a more powerful computational model that allows continuous-valued query functions, which is more natural to sparse mixture models.~Under~such a computational~model, we establish  sharp computationally feasible minimax lower bounds for sparse mixture models under the regimes with moderate signal strength. Such lower bounds do not depend on any unproven conjecture, and are applicable for almost all commonly used~learning algorithms, such as convex optimization algorithms, matrix decomposition algorithms, expectation-maximization~algorithms, and sampling algorithms \citep{blum2005practical, chu06mapreduce}.

There exists a vast body of literature on learning mixture models.  The study of Gaussian mixture model dates back to \cite{pearson1894contributions, lindsay1993multivariate,fukunaga1983estimation}. To attain the sample complexity that is polynomial in $d$ and $|\cZ|$,  \cite{dasgupta1999learning, dasgupta2000two,sanjeev2001learning,vempala2004spectral,brubaker2008isotropic} develop a variety of efficient algorithms for learning Gaussian mixture model with well-separated means. In the general settings with nonseparated means, \cite{belkin2009learning, belkin2010polynomial,kalai2010efficiently,moitra2010settling,hsu2013learning,bhaskara2014smoothed,anderson2014more, anandkumar2014tensor, ge2015learning, cai2016rate} construct efficient algorithms based on the method of moments. A more related piece of work is \cite{srebro2006investigation}, which focuses on the information-theoretic and computational limits of clustering spherical Gaussian mixtures. In contrast with this line of work, we focus on the sparse Gaussian~mixture~model in high dimensions, for which we establish the existence~of~fundamental~gaps between computational tractability and information-theoretic optimality.

For sparse Gaussian mixture model,  \cite{raftery2006variable,maugis2009variable,pan2007penalized,maugis2008slope,stadler2010ell,maugis2011non,krishnamurthy2011high,ruan2011regularized,he2011laplacian,lee2012variable,lotsi2013high,malsiner2013model,azizyan2013minimax,gaiffas2014sparse} study the problems of clustering and feature selection, but mostly either lack efficient algorithms to attain the proposed estimators, or do not have finite-sample guarantees. \cite{arias2017detection, azizyan2015efficient} establish efficient algorithms for detection, feature selection, and clustering with finite-sample guarantees.~They observe the gaps between the information-theoretic lower bounds and the minimum signal strengths required by the computationally tractable learning algorithms proposed therein. It remains unclear whether these gaps  are  intrinsic to the statistical problems. In this paper we close this open question by proving that these gaps can not be eliminated, which gives rise to the fundamental tradeoffs between statistical accuracy and computational efficiency.

In addition, mixture of regression model is first introduced by \cite{quandt1978estimating}, where estimators based upon the moment-generating function are proposed.  In subsequent work,  \cite{de1989mixtures,wedel1995mixture,mclachlan2004finite,zhu2004hypothesis,faria2010fitting} study the  likelihood-based estimators along with expectation-maximization (EM) or gradient descent algorithms, which  are vulnerable to local optima. In addition, \cite{khalili2007variable} propose a penalized likelihood method for variable selection under the low dimensional setting, which lacks finite-sample guarantees. To attain computational efficient estimators~with finite-sample error bounds, \cite{chaganty2013spectral,yi2013alternating,chen2014convex,balakrishnan2014statistical} tackle the problem of parameter estimation using spectral methods, alternating minimization, convex optimization, and EM algorithm. For high dimensional mixture of regressions, \cite{stadler2010ell} propose $\ell_1$-regularization for parameter estimation. In more recent work, \cite{wang2014high,yi2015regularized} propose estimators based upon high dimensional variants of EM algorithm. Although gaining computational efficiency, the upper bounds in terms of sample complexity obtained in \cite{wang2014high,yi2015regularized} are statistically suboptimal. It is natural to ask whether there exists a computationally tractable estimator that attains statistical optimality.~Similar to Gaussian mixture model, we resolve this question by showing that the gap between computational tractability and information-theoretic optimality is also intrinsic to sparse mixture of regressions.


It is worth noting that \cite{jin2015phase, jin2014rare} study the phase transition in mixture detection in the context of multiple testing. They study the statistical and computational tradeoffs for specific methods in the upper bounds. In comparison, our tradeoffs hold for all algorithms under a generalization of the statistical query model. Also, our setting is different from theirs,~which~leads to incomparable statistical rates of convergence.
Besides, \cite{diakonikolas2017statistical} establish a  statistical query lower bound  for learning   Gaussian mixture model with multiple components, which shows how computational complexity scales with the dimension and the number of components. In contrast,  we exhibit the statistical-computational phase transition in sparse mixture models with two components.
 In addition, \cite{wang2015sharp} consider the problems of structural normal mean detection and sparse principal component detection. The former problem exhibits drastically different computational-statistical phase transitions compared with the problems considered in this paper. Meanwhile, sparse principal component detection~is~closely~related~to~sparse Gaussian mixture detection, which will be discussed in \S\ref{sec::connet_sparse_PCA}. In particular, we will show that sparse principal component detection is  more difficult in comparison with sparse Gaussian mixture detection. Hence, the computational lower bounds for sparse Gaussian mixture detection are  more challenging to establish, and imply the lower bounds for sparse principal component detection. To address this challenge, we employ a sharp characterization of the $\chi^2$-divergence between the null and alternative hypotheses under a localized prior, which is tailored towards sparse Gaussian mixture model. See \S\ref{pf::lower} for details.

Our analysis of the statistical-computational tradeoffs is based on a sequence of work on statistical query models by \cite{kearns1998efficient, blum1994weakly, blum1998polynomial, servedio1999computational, yang2001learning, yang2005new, jackson2003efficiency, szorenyi2009characterizing, feldman2012complete, feldman2012computational, feldman2013statistical, feldman2015statistical, feldman2015complexity,diakonikolas2017statistical,wang2015sharp,  yi2016more, lu2018edge}. Our computational model    is based upon the {\sf VSTAT} oracle model proposed  in \cite{feldman2013statistical, feldman2015complexity, feldman2015statistical}, which is a powerful tool for understanding the computational hardness  of statistical problems. This model  is
 used to study  problems such as planted clique \citep{feldman2013statistical}, random $k$-{\sf SAT} \citep{feldman2015complexity},   stochastic convex optimization \citep{feldman2015statistical}, low dimensional Gaussian mixture model \citep{diakonikolas2017statistical}, detection of structured normal mean and sparse principal components \citep{wang2015sharp}, weakly supervised learning \citep{yi2016more}, and combinatorial inference \citep{lu2018edge}. Following this line of work, we study the computational aspects of high dimensional mixture models under the oracle model framework.


In summary, our contribution is two-fold.
\begin{enumerate}[label=(\roman*)]

\item We establish the first conjecture-free computationally feasible minimax lower bound for sparse Gaussian mixture model and sparse mixture of regression model. Our theory sharply characterizes~the computational-statistical phase transitions in these models, and moreover resolves the questions left open by \cite{arias2017detection, azizyan2013minimax, azizyan2015efficient,cai2015optimal}. Such phase transitions reveal a counter-intuitive ``more data, less computation''~phenomenon in heterogeneous data analysis, which is observed for the first time.

\item Our analysis is built on a slight modification of the statistical query model \citep{kearns1998efficient, feldman2013statistical, feldman2015statistical, feldman2015complexity, wang2015sharp} that   captures the algorithms~for sparse mixture models in the  real world. The analytic techniques used to establish  the  computationally feasible minimax lower bounds under this  computational model are of independent interest.

\end{enumerate}

\section{Background}\label{sec::bg}

In the following, we first define the computational model. Then we introduce the detection problem in sparse Gaussian mixture model and mixture of linear regression model.

\subsection{Computational Model}
To solve statistical problems, algorithms must interact with data. Therefore, the number of rounds of interactions with data serves as a good proxy for the algorithmic complexity of learning algorithms.~In the following, we define a slight modification of the statistical query model \citep{kearns1998efficient, feldman2013statistical, feldman2015statistical, feldman2015complexity, wang2015sharp} to quantify the interactions between algorithms and data. 

\begin{definition}[\em Statistical query model]\label{def::oracle}
 An algorithm $\mA$ is allowed to query an oracle $r$ up to $T$ rounds,   each round gets  an estimate of the expected value of a univariate query function. Let  $M$ be a fixed number. We define $\cQ_{\mA} \subseteq \{q:\cX\rightarrow [ -M, M]\}$ as~the query space of $\mA$, that is, the set of all query functions that algorithm $\mA$ can use to interact with any oracle. Here we consider query functions that take bounded values. At each round, $\mA$ queries the oracle $r$ with a   function $q \in \cQ_{\mA}$, and obtain a realization of $Z_q \in \RR$, where $Z_q$ satisfies
	\#\label{eq::query_00}
	\PP \biggl({\bigcap_{q \in \cQ_{\mA}} } \Bigl\{ \bigl| Z_q  - \EE\bigl[q(\bX)\bigr] \bigr| \leq \tau_q \Bigr\} \biggr) \geq 1- 2\xi.
	\#
	Here $\xi\in [0,1)$ is the tail probability, $\tau_q >0$ is the  tolerance parameter, which is given by
	\#\label{eq::query_2}
	\tau_q = \max  \Biggl\{ \frac{ \bigl [ \eta(\cQ_{\mA}) + \log (1/\xi) \bigr ] \cdot M}{n},  \sqrt{\frac{2 \bigl[\eta(\cQ_{\mA}) + \log (1/\xi)\bigr] \cdot \bigl \{ M^2 - \EE ^2 [  q(\bX)  ]  \bigr \}  }{n} } \Biggr\} .
	\#
Moreover,  $\eta(\cQ_{\mA}) \geq 0$ in \eqref{eq::query_2} measures~the capacity of $\cQ_{\mA}$ in logarithmic scale, e.g., for finite $\cQ_{\mA}$, $\eta(\cQ_{\mA}) = \log (|\cQ_{\mA}|)$. We define $T$ as the oracle complexity, and $\cR[\xi,n,T, M,  \eta(\cQ_{\mA}) ]$ as the set of valid oracles satisfying the above definition. See Figure \ref{fig:illu} for an illustration.
\end{definition}

\begin{figure*}[tpb]
\centering
\includegraphics[width=0.8\textwidth]{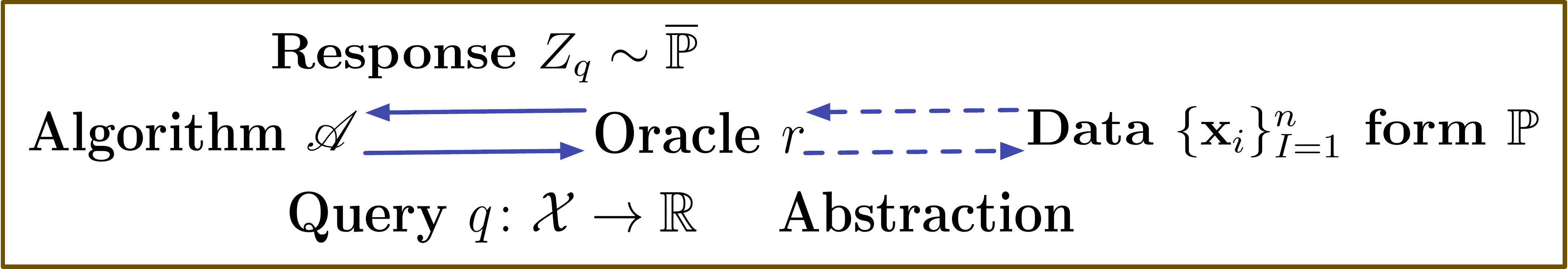}
\caption{An illustration of Definition \ref{def::oracle}.  In each iteration of an algorithm $\mA$, it sends a query $q$ to the oracle $r$ and obtain a realization of $Z_q$, which is close to $\EE [ q(\bX)] $ in the sense of \eqref{eq::query_00}.   Interaction with an   oracle  can be viewed  as an abstraction of the setting where we have direct access to   data $\{ \xb_i\}_{i =1}^n$ drawn from a distribution $\PP$. We note that the distribution of $\{ Z_q, q \in \cQ_{\mA} \}$ depends on the choice of $r$, which is denoted by $\overline{\PP}$. For a fixed query function $q$, $Z_q$ will have different distributions under different statistical oracles. }
\label{fig:illu}
\end{figure*}

 The intuition behind Definition \ref{def::oracle} can be understood from the following two aspects.
\begin{enumerate}[label=(\roman*)]

\item Suppose $n\rightarrow\infty$ such that $[\eta(\cQ_{\mA}) + \log (1/\xi)]/n\rightarrow 0$. Then, $\mA$ directly queries the population distribution of $\bX$ using $q$ and obtains a  consistent estimate of $\EE[q(\bX)]$.  For a given algorithm $\mA$, its associated query space $\cQ_{\mA}$ consists of all queries functions whose answers are uniformly consistent in the sense of \eqref{eq::query_00}.  The algorithm chooses the query functions $\{q_t\}_{t=1}^T \subseteq \cQ_{\mA}$ and gets oracle answers $\{Z_{q_t}\}_{t=1}^T$.  Statistical decisions are now based on these oracle answers.  We count the computation complexity as $T$, the number of
 estimated means 
  we are allowed to ask the oracle.


\item  In realistic cases, we have a realization of the data $\{\xb_i\}_{i=1}^n $ from the population. 
In this setting, it is common to use sample average to approximate $\EE [q(\bX)]$, which~incurs a statistical error that is governed by~Bernstein's inequality for bounded variables
\#\label{eq::bernstein-type-query}
 \PP \biggl\{ \biggl| \frac{1}{n}{\sum_{i=1}^n} q(\xb_i) - \EE \bigl[q(\bX)\bigr]  \biggr| >  t   \biggr\} \leq 2 \exp\biggl \{  \frac{ - n \cdot t^2 }{ 2 \cdot \text{Var} \bigl [ q(X) \bigr ] + 2 M/ 3  \cdot t } \biggr \}.
\#
In addition, since $q(X) \in [-M, M]$, its variance can be bounded  by
\#\label{eq:var_q_bound}
\text{Var}\bigl[ q(X) \bigr ]  = \EE \bigl  [ q^2(X) \bigr ]  - \EE ^2 \bigl [ q(X) \bigr ] \leq M^2 - \EE ^2 \bigl [ q(X) \bigr ].
\#
Thus,   in \eqref{eq::query_2} we replace the unknown  $\text{Var}[ q(X)  ] $ by its upper bound in \eqref{eq:var_q_bound}, which is tight when $q(X) $ only takes values in $\{-M, M\}$.
Moreover, uniform concentration over $\cQ_{\mA}$ can be obtained~by bounding the suprema of empirical processes. For example, when $\cQ_{\mA}$ is countable, by taking a union bound over $q \in \cQ_{\mA}$ in \eqref{eq::bernstein-type-query}, we obtain
\#\label{eq::bernstein-type-query2}
\PP\Biggl ( { \sup_{q \in \cQ_{\mA}}}  \Biggl \{ \biggl | \frac{1}{n} {\sum_{i=1}^n} q(\xb_i)  - \EE \bigl[q(\bX)\bigr] \biggr |   \leq   c \tau_q  \Biggr \}  \Biggr ) \geq 1- 2\xi
\#
for an absolute constant $c$, where $\eta(\cQ_{\mA})  $ in $\tau_q$ can be set as $  \log ( |\cQ_{\mA}|)$. Thus, the oracle $r^\star$ which answers $Z_q = n^{-1} {\sum_{i=1}^n} q(\xb_i)$  for every query $q$ satisfies Definition \ref{def::oracle}.   As for  uncountable query spaces, $\eta(\cQ_{\mA})$ can be replaced with  other capacity measures such as~the Vapnik-Chervonenkis dimension and metric entropy. \end{enumerate}

To better illustrate this computational model, we formulate the proximal gradient descent algorithm for $\ell_1$-regularized estimation into the framework of  Definition \ref{def::oracle} as an    example.
\begin{example}[\em Proximal gradient descent for $\ell_1$-regularized estimation]\label{eg::sgd} Here we aim to estimate a sparse parameter vector $\btheta^*\in \cT \subseteq \RR^d$. Let $\{ \xb_i\}_{i=1}^n  \in\cX^n$ be the $n$ realizations of a  random vector $\bX\in \cX$ and let  $\ell \colon \cT \times \cX \rightarrow \RR$ be a loss function. We define the population and empirical loss functions respectively as
\$ 
L(\btheta ) = \EE\bigl  [\ell(\btheta; \bX)\bigr ]~~\text{and}~~L_n(\btheta) = \frac{1}{n} {\sum_{i=1}^n } \ell(\btheta; \xb_i).
\$
We consider the proximal gradient algorithm for minimizing $L_n(\btheta) + \lambda \| \btheta\|_1$, in which $\lambda > 0$~is~a~regularization parameter. In detail, let $S_{\lambda}\colon \RR^d \rightarrow \RR^d$ be the soft-thresholdng operator:
\$\bigl [S_{\lambda}(\ub)  \bigr]_j = \max ( 0, | u_j| - \lambda ) \cdot \sign (u_j), ~~j \in [d] .\$
The proximal gradient algorithm iteratively performs
\#\label{eq:w588}
\btheta ^{(t+1)} \leftarrow S_{\lambda } \bigl [ \btheta^{(t)} - \eta _t  \nabla L_n(\btheta^{(t)}) \bigr],
\#
where $\eta_t > 0$ is the step-size. This algorithm can be cast into the statistical query model as follows. For simplicity, for $f \colon \cT\rightarrow \RR$,  let $\partial _j f(\btheta )$ denote the partial derivative of   $f (\btheta )$ with respect to $\theta_j$. Then the query space is given by \$
\cQ_{\mA} =  \bigl \{  {\partial _ j}\ell(\btheta; \cdot ) : j\in [d], \btheta \in \cT \bigr  \}.
\$
At the $t$-th iteration of the   algorithm, we query the oracle $r^\star$, which returns $n^{-1}  \sum_{i=1}^n  q(\xb_i)$ for any query $q$, using the query function $\partial _{j} \ell(\btheta^{(t)}, \cdot )$ for each $j\in[d]$.  That is, $r^\star$   returns the $j$-th  component of   $\nabla L_n(\btheta^{(t)})$.
The algorithm then performs \eqref{eq:w588} using the responses of the oracle $r^\star$ and obtains $\btheta ^{(t+1)}$. Let $T'$ denote the total number of iterations of the proximal gradient descent algorithm. Here the corresponding oracle complexity is $T = T'd$, since we query the gradient in a coordinate-wise manner.

\end{example}

As is discussed in \S\ref{sec::intro},  our definition of computational model follows from the one in  \cite{feldman2013statistical, feldman2015statistical, feldman2015complexity, yi2016more, wang2015sharp, lu2018edge}.
	    In order to faithfully characterize the uniform deviation of the response random variable $Z_q$, we slightly modify the {\sf VSTAT} oracle model  \citep{feldman2013statistical, feldman2015statistical, feldman2015complexity} by introducing the notions of tail probability $\xi$ and query space capacity $\eta(\cQ_{\cA})$.
	    We will illustrate the necessity of  these two notions in \S\ref{sec::upper_bound}.
	

Based on Definition \ref{def::oracle},  we consider the lower bounds of oracle complexity for hypothesis testing problems.  Let the  statistical model of interest be indexed by a  parameter $\btheta  $. We consider the hypothesis testing problem $H_0\colon \btheta \in \cG_0$ versus $H_1 \colon \btheta\in \cG_1$, where $\cG_0$ and $\cG_1$ are two disjoint~parameter spaces. Let  $\cR  [\xi,n,T, M, \eta(\cQ_{\mA}) ]$ be the set of  oracles that answer the queries of $\mA$, and $\overline{\PP}_{\btheta}$ be the distribution of the random variables output by the oracle  $r\in\cR[\xi,n,T, M, \eta(\cQ_{\mA}) ]$  when the true parameter is~$\btheta$. We note that $\overline {\PP}_{\btheta}$ depends on both the parameter $\btheta$ and the oracle $r$. Even   for the same $\btheta$,   different statistical oracles yield different $\overline \PP_{\btheta}$'s. Here we omit the dependence of $\overline{\PP}_{\btheta}$ on $r$, since any oracle returns random variables that satisfy the same tail behavior, namely \eqref{eq::query_00}. Let $\cH(\mA,r)$ be the set of  test functions that deterministically depend on the  responses to $\mA$'s queries given by  $r $.   We define $\cA(T)$ as the family of $\mA$'s that interact with an oracle for no more than $T$ rounds. For an  algorithm $\mA \in \cA(T)$ and an oracle $r\in\cR[\xi,n,T, M,  \eta(\cQ_{\mA})]$, the minimax testing risk is defined as
\#\label{eq::minimax_risk_oracle}
\overline{R}_n^*(\cG_0, \cG_1;\mA, r) = \inf_{\phi\in \cH(\mA,r)}   \Bigl[ \sup _{\btheta \in \cG_0} \overline{\PP}_{\btheta}(\phi = 1) +  \sup _{\btheta \in \cG_1} \overline{\PP}_{\btheta}(\phi = 0)  \Bigr],
\#
where the infimum on the right-hand side is taken over all allowable test functions taking values in $\{0, 1\}$.
Compared with the classical notion of minimax testing risk, \eqref{eq::minimax_risk_oracle} explicitly incorporates the~computational budgets using oracle complexity $T$.  In other words, the tests are constructed  based only on the answers to $T$ queries returned by the oracle $r$.

For $\mA \in \cA(T)$, recall that we denote by $r^\star$ the specific oracle that outputs $z_{q}^* =n ^{-1} \sum_{i=1}^n q(\xb_i)$  for any  query function $q$.   Then, for $T$ rounds of queries $\{ q_t\}_{i=1}^T$, any test function based on  $z_{q_1}^*, \ldots, z_{q_T}^*$ is also a test function based on the original data $\{\xb_i\}_{i=1}^n$.  Thus, for this specific oracle, we have
\#\label{eq::minimax_2}
\overline{R}_n^* ( \cG_0, \cG_1; \cA, r^*) \geq \inf_{\phi}  \Bigl\{ \sup_{\btheta\in \cG_0} \PP_{\btheta}\bigl[\phi(\{\xb_i\}_{i=1}^n) = 1\bigr] + \sup_{\btheta\in \cG_1} \PP_{\btheta}\bigl[\phi(\{\xb_i\}_{i=1}^n) = 0\bigr]  \Bigr\},
\#
where the infimum on the right-hand side is taken over all measurable test functions on $\cX^n$, and $\PP_{\btheta}$ is the distribution of $\bX$ when the true parameter is $\btheta$.  In other words, the minimax risk in~\eqref{eq::minimax_risk_oracle}~serves as an upper bound of the classical notion of minimax risk on the right-hand side of \eqref{eq::minimax_2}.

\subsection{Sparse Gaussian Mixture Model}
To illustrate the statistical-computational tradeoffs in statistical models with heterogeneity, we first focus on the detection of sparse Gaussian mixture model as a showcase.
Given $n$ observations $\{\xb_i\}_{i=1}^n$ of a random vector $ \bX \in \RR^d$, we consider the following hypothesis testing problem
\# \label{eq::testing}
H_0 \colon \bX \sim N(\bmu_0, \bSigma) ~~\text{versus}~~
H_1 \colon \bX \sim \nu  N(\bmu_1, \bSigma) + (1- \nu) N(\bmu_2,\bSigma).
\#
Here $\nu\in(0,1)$ is a constant, $\bmu_0, \bmu_1, \bmu_2\in \RR^d$, and $\bSigma\in \RR^{d\times d}$ is a positive definite~symmetric matrix. We are interested in the high dimensional regime, where $d$ is much larger than $n$ and~$\Delta \bmu = \bmu_2 - \bmu_1 $ is $s$-sparse \citep{arias2017detection, azizyan2013minimax, azizyan2015efficient}. We assume that all the parameters, including $d$, scale with $n$ and all the limits hereafter are taken as $n\rightarrow\infty$ and that
\#\label{eq:weigen}
\lambda_* \leq \lambda_{\min} (\bSigma) \leq \lambda_{\max}(\bSigma) \leq \lambda^*.
\#
Here $\lambda_{\min} (\bSigma)$ and $\lambda_{\max}(\bSigma)$ are the largest and smallest eigenvalues of $\bSigma$, and $\lambda_*$ and $\lambda^*$ are positive absolute constants.

 Let $\btheta = (\bmu, \bmu', \bSigma)$ and $\PP_{\btheta}$ be $\nu  N(\bmu, \bSigma) + (1- \nu) N(\bmu',\bSigma)$. Therefore, $H_0$ and $H_1$ in \eqref{eq::testing} correspond to $\btheta = (\bmu_0, \bmu_0, \bSigma)$ and $\btheta = (\bmu_1, \bmu_2, \bSigma)$, respectively. We denote by $\{\xb_i\}_{i=1}^n$ the $n$ observations of $\bX$. Let $\cG_0$ and $\cG_1$ be the parameter spaces of $\btheta$ under $H_0$ and $H_1$, respectively. The classical testing risk is defined as
 \$
R_n(\cG_0, \cG_1;\phi) = \sup_{\btheta\in \cG_0} \PP_{\btheta}\bigl[\phi(\{\xb_i\}_{i=1}^n) = 1\bigr] + \sup_{\btheta\in \cG_1} \PP_{\btheta}\bigl[\phi(\{\xb_i\}_{i=1}^n) = 0\bigr],
\$
for any nonrandomized test $\phi(\{\xb_i\}_{i=1}^n)$ that takes values 0 or 1.
 Then the classical minimax risk is given by
\$ 
R_n^*(\cG_0, \cG_1) = \inf_{\phi} R_n (\cG_0, \cG_1;\phi),
\$
the infimum is over all possible tests for $H_0$ versus $H_1$,
with no limit on computational complexity.
Also, we define the signal strength  $\rho(\btheta) $ as a nonnegative function of $\btheta$.  Let $\cG_0 = \{ \btheta \colon \rho(\btheta) = 0\}$ and $\cG_1 = \{ \btheta\colon \| \Delta \bmu \|_0 \leq s,\rho(\btheta) \geq \gamma_n\}$ for some $\gamma_n > 0$. Next we define two quantities that characterize  the difficulty of the detection problem from statistical and computational perspectives, respectively.
\begin{definition}\label{def::minimax_computation_rate}
A sequence $\alpha_n^*$ is a minimax separation rate if it satisfies the following two conditions:
\begin{enumerate}[label=(\roman*)]
\item For any sequence $\gamma_n$ such that $\gamma_n = o(\alpha_n^*)$, we have $\lim_{n\rightarrow \infty} R_n ^* (\cG_0, \cG_1)  =  1$.
\item For any sequence $\gamma_n$ such that $\gamma_n = \Omega(\alpha_n^*)$, we have $\limn R_n ^* (\cG_0, \cG_1) < 1$.
\end{enumerate}
  In addition, a sequence $\beta_n^*$ is a computationally feasible minimax separation rate if it satisfies the~following conditions:
\begin{enumerate}[label=(\roman*)]
\item  For any sequence $\gamma_n$ such that $\gamma_n =  o(\beta_n^*)$,  for any constant $\eta > 0$ and any $\mA \in \cA(d^\eta)$, there~is an 
 oracle $r\in\cR[\xi, n, T,M, \eta(\cQ_{\mA})]$ such that  $\lim_{n\rightarrow \infty}  \overline{R}_n^*(\cG_0, \cG_1;\mA, r) =  1$.
\item For any sequence $\gamma_n$ that satisfies $\gamma_n =  \Omega(\beta_n^*)$, there exist some $\eta >0$  and $\mA \in \cA(d^\eta)$ such~that, for any $r\in\cR[\xi, n, T, M, \eta(\cQ_{\mA})]$, it holds that $\limn \overline{R}_n^*(\cG_0, \cG_1;\mA, r) <1 $.
\end{enumerate}
\end{definition}
By this definition, an algorithm is considered efficient is it can be implemented  using  $d^\eta$ queries for  some  $\eta > 0$. That is,  the computational budget   of the algorithm is a  polynomial in  $d$.
In \S\ref{sec::theory_GMM}, we will show that a gap between $\alpha_n^*$ and $\beta_n^*$.  Namely, computational feasibility comes at a cost of statistical accuracy.

\subsection{Sparse Mixture of Regression Model}\label{sec::bg_reg}
In addition to the  sparse Gaussian mixture  detection, our second example is the detection of sparse mixture of regressions. We focus the emblematic setting where the mixture consists of two symmetric components. In detail,
 we assume that the response  and the covariates  satisfy
\# \label{eq::regression_model}
Y = \eta \cdot \bbeta ^\top \bX + \epsilon,
\#
where  $\bbeta \in \RR^d$ is the regression parameter,  $\eta$ is  the latent variable that has Rademacher distribution over $\{-1, 1\}$, and  $\epsilon\sim N(0, \sigma^2)$ is the Gaussian random noise.~Moreover, we assume that~$\bX \sim N(0 ,\Ib)$, and  $\sigma$ is unknown. Let $ \bZ = (Y, \bX) \in \RR^{d+1}$.~Given $n$ observations $\{ \zb_i = (y_i, \xb_i) \}_{i=1}^n$ of the model in \eqref{eq::regression_model}, we  aim to~test whether the distribution of $\bZ$  is a    mixture. In detail, we consider the   testing problem
\#\label{eq::testing2}
H_0 \colon \bbeta = \zero ~~\text{versus}~~
H_1 \colon \bbeta\neq \zero.
\#
We are interested in the high dimensional setting where $n \ll d$ and $\bbeta $ is $s$-sparse, where  $s$ is known.
For notational simplicity, we  denote $ ( \bbeta, \sigma^2) $ by $\btheta$ and define $\PP_{\btheta}$ as the distribution of $\bZ$ satisfying~\eqref{eq::regression_model} with regression parameter $\bbeta$ and noise level $\sigma^2$.

 We define the parameter spaces of the null and alternative hypotheses as $\cG_0 =  \{  \btheta   \colon\rho(\btheta) = 0 \}$~and $\cG_1 =  \{ \btheta  \colon    \| \bbeta\| _0 = s, \rho(\btheta) \geq \gamma_n\} $ respectively, where  $\rho(\btheta) = \| \bbeta \|_2^2 /\sigma^2$ denotes the signal strength.
For hypothesis  testing $H_0\colon  \btheta \in \cG_0$ versus $H_1 \colon \btheta  \in \cG_1  $, the minimax risk is given by
 \$
R_n^*(\cG_0, \cG_1) = \inf_{\phi} \Bigl\{  \sup_{\btheta\in \cG_0} \PP_{\btheta}\bigl [\phi(\{\zb_i\}_{i=1}^n) = 1 \bigr] +  \sup_{\btheta\in \cG_1 } \PP_{\btheta} \bigl [\phi(\{\zb_i\}_{i=1}^n) = 0 \bigr]  \Bigr\}.
\$
For detecting  mixture of regressions, we can similarly define $\alpha_n^*$ and  $\beta_n^*$ as in Definition \ref{def::minimax_computation_rate}. In \S\ref{sec::theory_reg}, we will show that a gap between $\alpha_n^*$ and $\beta_n^*$  also arises in this problem, which~implies the universality of the tradeoffs between statistical optimality and computational efficiency in statistical models with heterogeneity.

\section{Main Results for Gaussian Mixture Model} \label{sec::theory_GMM}
In the following, we present the  statistical-computational tradeoffs  in detecting Gaussian mixture model.~In specific, we establish a computational lower bound as well as matching  upper bounds  under the statistical query model specified in Definition \ref{def::oracle}.~Then we show that our results of detection~also imply tradeoffs in the problems of estimation, clustering, and feature selection in \S \ref{sec::implications_GMM}.

For the detection problem, we will first assume that $\bSigma$ is known; the unknown case will be treated in \S\ref{sec::unknown}.~When $\bSigma$ is known, we define the signal strength   as $\rho (\btheta)  =\Delta \bmu ^\top \bSigma^{-1} \Delta \bmu$, which is also known as the Mahalanobis distance. For the detection problem in \eqref{eq::testing}, we define~the null parameter space as
 \#\label{eq:wg0}
 \cG_0 (\bSigma)= \bigl\{ \btheta = (\bmu, \bmu, \bSigma)\colon \bmu \in \RR^d \bigr\}.
 \#
Let $\|\cdot\|_0$ be the number of nonzero entries of a vector. For $\gamma_n > 0 $, we~define
 \#\label{eq:wg1}
 \cG_1 (\bSigma,s, \gamma_n) =  \bigl\{ \btheta = (\bmu_1, \bmu_2, \bSigma)\colon \bmu_1, \bmu_2 \in \RR^d, \| \Delta \bmu\|_0 = s, \rho(\btheta) \geq \gamma_n \bigr\},
 \#
 where $\Delta \bmu = \bmu_2 - \bmu_1$.
In the following, we derive lower bounds and upper bounds for the detection problem
\#\label{eq::testing_w}
H_0\colon \btheta \in \cG_0(\bSigma) ~~\text{versus}~~ H_1\colon \btheta \in \cG_1(\bSigma, s, \gamma_n).
\#
For simplicity, we assume the sparsity level $s$ is known, $n \gg s$, and the mixing probability $\nu\in(0,1)$ in \eqref{eq::testing}  is a known absolute constant.
\subsection{Lower Bounds}\label{sec::lower_bound}
Recall that we define the minimax separation rate $\alpha_n^*$ and the computationally feasible minimax separation~rate $\beta_n^*$ in Definition \ref{def::minimax_computation_rate}. Before we present the result for  $\beta_n^*$, we first present a lower bound  for  $\alpha_n^*$~obtained in  \cite{arias2017detection}  for completeness.

\begin{proposition}\label{prop::info_lower_bound}
We consider the detection problem defined in \eqref{eq::testing_w} where   $\bSigma$ and $s$ are known. We assume that $\max(s , n)/d =o(1)$.  Then if $\gamma_n = o(\sqrt { s\log d /n  } )$, any hypothesis test is asymptotically powerless, i.e.,
\$\limn R_n^*  \bigl[ \cG_0(\bSigma),\cG_1( \bSigma, s, \gamma_n)\bigr] = 1.\$

\end{proposition}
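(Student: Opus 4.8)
The plan is to prove asymptotic powerlessness by Le Cam's two‑point method with a fuzzy alternative, reducing the claim to the statement that a certain $\chi^2$‑divergence vanishes. Since $N(\zero,\bSigma)\in\cG_0(\bSigma)$, for any prior $\pi$ supported on $\cG_1(\bSigma,s,\gamma_n)$ and any test $\phi$,
\[
R_n\bigl[\cG_0(\bSigma),\cG_1(\bSigma,s,\gamma_n);\phi\bigr]\ \ge\ \PP_0^{\otimes n}(\phi=1)+\EE_{\btheta\sim\pi}\PP_\btheta^{\otimes n}(\phi=0)\ \ge\ 1-\TV\bigl(\PP_0^{\otimes n},\overline\PP_1\bigr),
\]
with $\PP_0=N(\zero,\bSigma)$ and $\overline\PP_1=\EE_{\btheta\sim\pi}\PP_\btheta^{\otimes n}$. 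Since also $R_n^*\le1$, it suffices to exhibit $\pi$ with $\TV(\PP_0^{\otimes n},\overline\PP_1)\to0$, and since $4\TV^2\le\chi^2(\overline\PP_1\,\|\,\PP_0^{\otimes n})$ it is enough to make this $\chi^2$‑divergence tend to $0$.

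The construction of $\pi$ is the heart of the argument, and it must be \emph{centered}. Let $S$ be a uniformly random size‑$s$ subset of $[d]$, fix $\tau^2\asymp\gamma_n/s$, and set $\btheta_S=(\bmu_1^S,\bmu_2^S,\bSigma)$ with $\bmu_1^S=-(1-\nu)\tau\,\Indicator_S$ and $\bmu_2^S=\nu\tau\,\Indicator_S$, so that $\Delta\bmu=\tau\,\Indicator_S$ has $\|\Delta\bmu\|_0=s$ and, by \eqref{eq:weigen}, $\rho(\btheta_S)=\tau^2\,\Indicator_S^{\!\top}\bSigma^{-1}\Indicator_S\ge\gamma_n$; thus $\pi:=\mathrm{law}(\btheta_S)$ is supported on $\cG_1(\bSigma,s,\gamma_n)$. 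The point of the centering is that $\nu\bmu_1^S+(1-\nu)\bmu_2^S=\zero$, so $\PP_{\btheta_S}$ agrees with $\PP_0$ in mean and differs only through the rank‑one covariance ``spike'' $\nu(1-\nu)\tau^2\,\Indicator_S\Indicator_S^{\!\top}$, of operator norm $\asymp\gamma_n$. An uncentered prior would make the alternative differ from the null already in the mean — sparse normal‑mean detection, with the much larger rate $\asymp s\log d/n$; forcing the signal into the second moment is precisely what produces the rate $\sqrt{s\log d/n}$.

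I next evaluate the second moment via Ingster's identity $1+\chi^2(\overline\PP_1\,\|\,\PP_0^{\otimes n})=\EE_{S,S'}[I(S,S')^n]$, where $S,S'$ are i.i.d.\ copies and $I(S,S')=\int d\PP_{\btheta_S}\,d\PP_{\btheta_{S'}}/d\PP_0$. Using the elementary Gaussian identity $\int dN(\bmu,\bSigma)\,dN(\bmu',\bSigma)/dN(\zero,\bSigma)=\exp(\bmu^{\!\top}\bSigma^{-1}\bmu')$ and the two‑component structure,
\[
I(S,S')=\sum_{a,b\in\{1,2\}}w_a w_b\exp(\alpha_a\alpha_b\,h),\qquad h:=\tau^2\,\Indicator_S^{\!\top}\bSigma^{-1}\Indicator_{S'},
\]
with $(w_1,w_2)=(\nu,1-\nu)$ and $(\alpha_1,\alpha_2)=(-(1-\nu),\nu)$ (so that $\bmu_a^S=\alpha_a\tau\,\Indicator_S$). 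The centering resurfaces as $\sum_a w_a\alpha_a=0$, which forces $\sum_{a,b}w_a w_b\alpha_a\alpha_b=0$ and hence kills the term linear in $h$ in $I(S,S')$; since moreover $|\alpha_a\alpha_b|\le1$, Hoeffding's lemma gives $I(S,S')\le\exp(h^2/2)$, and thus $I(S,S')^n\le\exp\bigl(\tfrac12 n\tau^4(\Indicator_S^{\!\top}\bSigma^{-1}\Indicator_{S'})^2\bigr)$. By \eqref{eq:weigen} the weighted form $\Indicator_S^{\!\top}\bSigma^{-1}\Indicator_{S'}$ is within a factor $\lambda^*/\lambda_*$ of the overlap $|S\cap S'|$, so everything reduces to the combinatorial estimate
\[
\EE_{S,S'}\Bigl[\exp\bigl(c\,n\tau^4\,|S\cap S'|^2\bigr)\Bigr]\ \longrightarrow\ 1,\qquad\text{where } c\,n\tau^4\asymp n\gamma_n^2/s^2=o\bigl((\log d)/s\bigr),
\]
the last step using $\gamma_n^2=o(s\log d/n)$.

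This exponential‑moment bound is the main obstacle. The overlap $|S\cap S'|$ of two independent uniform $s$‑subsets is a hypergeometric‑type variable with $\PP(|S\cap S'|\ge k)\le(es^2/(kd))^k$; writing the expectation as $\sum_{k\ge0}\PP(|S\cap S'|=k)\exp(cn\tau^4k^2)$, one checks, using $\max(s,n)/d=o(1)$ and the scaling above, that the exponential weight never overcomes the combinatorial decay of $\PP(|S\cap S'|=k)$, so that the sum is $1+o(1)$. The delicate point is that the quantity being exponentiated is \emph{quadratic} in the overlap — a reflection of the second‑order nature of the mixture signal — so when the overlap does not concentrate at $0$ this step requires first truncating the second moment at $\{|S\cap S'|\le k_0\}$ for a suitable $k_0$ and bounding the (negligible) truncation error separately, a standard device in sparse second‑moment lower bounds. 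Feeding the resulting $\chi^2(\overline\PP_1\,\|\,\PP_0^{\otimes n})\to0$ back through the displayed inequalities gives $\TV\to0$ and hence $\lim_{n\to\infty}R_n^*[\cG_0(\bSigma),\cG_1(\bSigma,s,\gamma_n)]=1$, as claimed. For general known $\bSigma$ nothing changes except that $\bSigma^{-1}$‑weighted overlaps appear, which the eigenvalue bounds \eqref{eq:weigen} control by the unweighted ones throughout.
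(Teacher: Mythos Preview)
The paper does not prove this proposition itself --- it simply cites \cite{arias2017detection} --- so there is no in-house proof to compare against. Your approach (Le Cam's method with a centered mixture prior, Ingster's second-moment identity, Hoeffding/$\cosh$ bound, and the hypergeometric overlap combinatorics) is exactly the standard route taken in that reference for the isotropic case $\bSigma=\Ib$, and it is carried out correctly there; the paper's own hard subclass \eqref{eq:wbarg} and the computation in Lemma~\ref{lemma::compute_chi_square} are the $\pm1$-signed variant of the same construction.

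There is, however, a genuine gap in your handling of general $\bSigma$. You claim that ``by \eqref{eq:weigen} the weighted form $\Indicator_S^{\!\top}\bSigma^{-1}\Indicator_{S'}$ is within a factor $\lambda^*/\lambda_*$ of the overlap $|S\cap S'|$.'' This is false for non-diagonal $\bSigma$: if $S\cap S'=\varnothing$ then $|S\cap S'|=0$, yet $\Indicator_S^{\!\top}\bSigma^{-1}\Indicator_{S'}=\sum_{i\in S,\,j\in S'}(\bSigma^{-1})_{ij}$ collects every off-diagonal entry of $\bSigma^{-1}$ on $S\times S'$ and need not vanish (or even be small). Eigenvalue bounds on $\bSigma$ give you only $|\Indicator_S^{\!\top}\bSigma^{-1}\Indicator_{S'}|\le s/\lambda_*$, which is far too crude to drive the second moment to $1$. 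Your argument as written therefore only establishes the proposition for diagonal $\bSigma$. One standard way to repair this is to randomize the signs of the nonzero entries of the prior (exactly as in the paper's subclass $\cG(s)\subset\{-1,0,1\}^d$ in \eqref{eq:wbarg}): then, conditionally on the supports, $\vb^{\!\top}\bSigma^{-1}\vb'$ becomes a Rademacher chaos whose off-diagonal contributions have mean zero and can be controlled by a Hanson--Wright-type bound, so that the effective coupling reduces to the diagonal overlap. Without that sign-randomization step (or an equivalent device), the reduction to $|S\cap S'|$ is not justified for general known $\bSigma$.
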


\begin{proof}
See \cite{arias2017detection} for a detailed proof.
\end{proof}

In the special case where $\bSigma = \Ib$, the signal strength  is $\rho(\btheta) = \| \Delta \bmu \|_2^2$. Proposition \ref{prop::info_lower_bound} shows~that detection is impossible  if
 $
 \| \Delta\bmu \|_{2}^2 = o   ( \sqrt{ s\log d /n }  ).
$
For the sparse regime where  $ (\log n)^2 \cdot s\log d /n     = o(1)$, in \S \ref{sec::upper_bound} we will present an algorithm that constructs a hypothesis test whose   risk converges to zero asymptotically under the statistical query model, as long as  $\rho(\btheta) = \Omega ( \log n \cdot \sqrt{ s\log d /n } )$.
Here the $\log n$ term arises due to an artificial truncation that ensures the query functions to be bounded, as specified in Definition \ref{def::oracle}.
In other words, neglecting this $\log n$ term, 
the information-theoretic lower bound in Proposition \ref{prop::info_lower_bound} is tight and the minimax separation rate is $\alpha _n^* =  \sqrt{ s\log d /n } $ for Gaussian mixture detection~with known covariance matrix.

The next theorem establishes a lower bound for $\beta^*_n$, which shows that $\alpha_n^*$ is not achievable by any  computationally efficient algorithm under the statistical query model. Meanwhile, the existence~of~a computationally tractable test requires a much larger signal strength   than that in Proposition \ref{prop::info_lower_bound}.

\begin{theorem} \label{thm::lower_known_cov}
For the detection problem defined in \eqref{eq::testing_w} with both $\bSigma$ and $s$ known,  we assume that $ \max(s^2 , n)/d = o(1)$ and there exists a sufficiently small constant $\delta >0$ such that $s^2 / d^{1-\delta} = O(1)$.
Then  if $\gamma_n = o ( \sqrt{s^2/ n}  )$, for any constant $\eta>0$,   and any  $\mA \in \cA(T)$ with $T = O(d^\eta)$, there exists   an oracle $r\in\cR[\xi,n,T, M,  \eta(\cQ_{\mA}) ]$ such that
\$ \lim_{n\rightarrow \infty}   \overline{R}_n^* \bigl[ \cG_0(\bSigma), \cG_1(\bSigma, s, \gamma_n); \mA , r\bigr] =  1.
\$
In other words,  any test procedures  under the  statistical query model  defined in \eqref{def::oracle} with oracle complexity $T = O(d^\eta)$  is asymptotically powerless if
$\gamma_n = o ( \sqrt{s^2/ n}  )$.
\end{theorem}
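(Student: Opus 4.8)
The plan is to establish the lower bound for $\beta_n^*$ via a standard two-point (or, more precisely, prior-averaged) argument adapted to the statistical query model. The key idea is that when the signal strength $\gamma_n = o(\sqrt{s^2/n})$, no single bounded query function $q \in \cQ_{\mA}$ can reliably distinguish $H_0$ from a mixture drawn according to a suitably chosen prior $\pi$ over $\cG_1(\bSigma, s, \gamma_n)$, even given the full tolerance budget $\tau_q$ afforded by Definition \ref{def::oracle}. First I would reduce to the canonical case $\bSigma = \Ib$ by a change of variables $\bX \mapsto \bSigma^{-1/2}\bX$, which preserves the signal strength $\rho(\btheta) = \Delta\bmu^\top\bSigma^{-1}\Delta\bmu = \|\bSigma^{-1/2}\Delta\bmu\|_2^2$, the sparsity structure up to the eigenvalue bounds in \eqref{eq:weigen}, and the class of bounded query functions. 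Then I would fix a localized prior $\pi$ on $\cG_1$: draw the support $S \subseteq [d]$ uniformly among size-$s$ subsets, set $\Delta\bmu = \gamma_n^{1/2} s^{-1/2} \sum_{j \in S} \epsilon_j \eb_j$ with Rademacher signs $\epsilon_j$, and center the two components symmetrically about a fixed $\bmu_0$ so that the marginal first moments match those under $H_0$ (this is what makes Gaussian mixture detection subtle — the mean is uninformative, so the signal lives in the second moment).

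The heart of the argument is to show that for every $q \colon \cX \to [-M,M]$, the gap $|\EE_{\btheta}[q(\bX)] - \EE_{H_0}[q(\bX)]|$, averaged appropriately over $\btheta \sim \pi$, is small compared to $\tau_q$ for all but a vanishing fraction of queries — more precisely, that an oracle answering $Z_q = \EE_{H_0}[q(\bX)]$ on *every* query is a valid oracle in $\cR[\xi, n, T, M, \eta(\cQ_{\mA})]$ for the mixture distribution, up to the stated tolerance, with probability $\to 1$. The mechanism here is the $\chi^2$-divergence: by the standard computation (Ingster–Suslina type, specialized to sparse mixtures), the averaged squared correlation $\EE_\pi \bigl[(\EE_{\btheta}[q] - \EE_{H_0}[q])^2\bigr] \le (\chi^2(\overline{\pi} \| H_0)) \cdot \Var_{H_0}[q]$, and for the mixture prior one shows $\chi^2 = \exp\bigl(O(s \gamma_n^2/s^2 \cdot \text{stuff})\bigr) - 1 = o(1/\mathrm{poly}(d))$ precisely when $\gamma_n = o(\sqrt{s^2/n})$ and $s^2/d^{1-\delta} = O(1)$. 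Since $\Var_{H_0}[q] \le M^2 - \EE_{H_0}^2[q]$ is exactly the quantity appearing inside $\tau_q$ in \eqref{eq::query_2}, the per-query gap is within the tolerance; a union bound over a net of size $\exp(\eta(\cQ_{\mA}))$ (or over the $T = O(d^\eta)$ queries actually issued, folded into $\eta(\cQ_{\mA})$ and the $\log(1/\xi)$ term) then shows the constant oracle $r$ is simultaneously valid for $H_0$ and for $\pi$-almost-every alternative. Consequently the responses $\{Z_{q_t}\}_{t=1}^T$ are distributed identically (up to $o(1)$ total variation) under $H_0$ and under the mixture $\int \overline\PP_\btheta \, d\pi(\btheta)$, so no test $\phi \in \cH(\mA, r)$ can have nontrivial power and $\overline R_n^* \to 1$.

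The main obstacle is the $\chi^2$-divergence bound for the sparse Gaussian mixture under the localized prior — this is the technical crux flagged in the introduction as being \emph{harder} than the corresponding sparse-PCA computation, because the mixture structure couples the randomness in $S$ and in the signs $\epsilon_j$ in a way that does not factorize cleanly. The delicate point is controlling the cross terms $\EE_{S, S', \epsilon, \epsilon'}\bigl[\exp(\langle \text{signal}_S, \text{signal}_{S'}\rangle^2 \cdot c)\bigr]$: one must carefully account for the overlap $|S \cap S'|$ and show the dominant contribution comes from $|S \cap S'|$ small, using $s^2/d^{1-\delta} = O(1)$ to kill the combinatorial entropy of near-full overlaps. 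I expect the bulk of the work to be in \S\ref{pf::lower} showing that the relevant moment generating function is $1 + o(d^{-2\eta})$; once that is in hand, the reduction to the statistical query model and the union bound are routine, and the conclusion $\overline R_n^* \to 1$ follows from the standard fact that $\chi^2 \to 0$ implies asymptotic indistinguishability. A secondary (minor) obstacle is verifying that the constant oracle $r \equiv \EE_{H_0}[q]$ genuinely lies in $\cR[\xi,n,T,M,\eta(\cQ_{\mA})]$ for the alternative — i.e., that the definition's tolerance $\tau_q$, which is calibrated to sample fluctuations, is large enough to absorb the systematic (non-random) bias $|\EE_\btheta[q] - \EE_{H_0}[q]|$; this is exactly where the $\sqrt{(\eta + \log(1/\xi))(M^2 - \EE^2[q])/n}$ form of $\tau_q$, rather than a variance-free bound, is essential.
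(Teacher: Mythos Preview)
Your proposal has a genuine gap at the step linking the full-prior $\chi^2$-divergence to per-query distinguishability. The inequality you invoke,
\[
\EE_\pi\bigl[(\EE_{\btheta}[q(\bX)] - \EE_{\PP_0}[q(\bX)])^2\bigr] \;\le\; D_{\chi^2}(\bar\pi, \PP_0) \cdot \text{Var}_{\PP_0}[q(\bX)],
\]
is false: the right-hand side controls the squared bias of the \emph{mixture} $\bar\pi = \EE_\pi[\PP_\btheta]$, not the prior-averaged squared individual bias. (Take a prior on two alternatives with likelihood ratios $1 \pm g$: then $\bar\pi = \PP_0$ so $D_{\chi^2}=0$, yet the individual gaps are $\pm\EE_{\PP_0}[qg] \neq 0$.) The correct Cauchy--Schwarz bound replaces $D_{\chi^2}(\bar\pi, \PP_0)$ by $\EE_\pi[D_{\chi^2}(\PP_\btheta, \PP_0)]$, which for your prior equals $D_{\chi^2}(\PP_\vb, \PP_0) \asymp \gamma_n^2$ for every $\vb \in \cG(s)$ (Lemma~\ref{lemma::compute_chi_square} with $\la\vb,\vb\ra = s$). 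Markov's inequality then gives only $|\cC(q)|/|\cG(s)| \lesssim n\gamma_n^2/\log(T/\xi) = o(s^2/\log d)$, nowhere near the $o(d^{-\eta})$ required to union-bound over $T = O(d^\eta)$ adaptively chosen queries.

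The paper's argument is not a second-moment bound over the full prior; it works instead with the \emph{restricted} mixture $\PP_{\cC_\ell(q)}$, the uniform mixture over exactly those $\vb$ that $q$ distinguishes. Because every $\vb \in \cC_1(q)$ satisfies $\EE_{\PP_\vb}[q] - \EE_{\PP_0}[q] > \tau_{q,\vb}$, averaging over $\cC_1(q)$ and applying Cauchy--Schwarz yields a \emph{lower} bound $D_{\chi^2}(\PP_{\cC_1(q)}, \PP_0) \gtrsim \log(T/\xi)/n$ (Lemma~\ref{lemma::bound_chi_square_div}). Separately one \emph{upper}-bounds the same quantity by passing to the worst subset of $\cG(s)$ of equal cardinality---necessarily a Hamming ball around a fixed $\vb$, by monotonicity of $\cosh(\beta^2 U\la\cdot,\cdot\ra)$---and evaluates it via the geometric growth $|\cC_{j+1}(\vb)|/|\cC_j(\vb)| \ge d/(2s^2)$ (Lemma~\ref{lemma::growth_Cj}). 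Matching the two bounds forces the ball's co-radius to satisfy $s - k_\ell \gtrsim \sqrt{\log(T/\xi)/(n\beta^4)} \to \infty$, whence $|\cC(q)|/|\cG(s)| \le [2s^2/d]^{\,s-k_\ell} = o(d^{-\eta})$ under $s^2/d^{1-\delta} = O(1)$. Your overlap-moment intuition is the right ingredient for the upper-bound half, but it must be applied to the restricted set $\cC(q)$ rather than the full prior, and paired with the lower-bound half that exploits the very definition of $\cC(q)$.
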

\begin{proof}
See \S\ref{proof::thm::lower_known_cov} for a detailed proof.
\end{proof}

In the special case where $\bSigma = \Ib$, Theorem \ref{thm::lower_known_cov}  shows that
  any  computationally tractable test is asymptotically powerless if $\| \Delta \bmu \|_2^2 =  o ( \sqrt{ s ^2 /n}  )$. As we will show in \S\ref{sec::upper_bound}, there is a computationally tractable test under the statistical query model that is asymptotically powerful if
  $
  \rho(\btheta) =   \Omega  (   \sqrt{ s^2 \log d/n} ),
  $
  where we ignore a $\log n$   term incurred by truncation.
Hence, the lower bound in Theorem \ref{thm::lower_known_cov} is tight up to a logarithmic factor.~Ignoring this $\log d$ term,  the computationally feasible minimax separation rate is roughly $\beta _n^* = \sqrt{s^2/n}$. The gap between $\alpha _n^*$~and $\beta _n^*$ suggests that we have~to~pay at least a factor of $\sqrt{s/\log d}$ in terms of the signal strength to~attain computational tractability, which~exhibits  fundamental tradeoffs between computation and statistics in   sparse Gaussian mixture~model.

Note that the lower bounds in Proposition \ref{prop::info_lower_bound} and Theorem \ref{thm::lower_known_cov} are attained in subsets~of~$\cG_0 (\bSigma)$ and $\cG_1 (\bSigma,s, \gamma_n)$ defined in \eqref{eq:wg0} and \eqref{eq:wg1}, namely
\#\label{eq:wbarg}
& \overbar{\cG}_0 (\bSigma)=\bigl\{ \btheta = (\bmu, \bmu, \bSigma)\colon \bmu = \zero \bigr\} \subseteq \cG_0 (\bSigma),\notag\\
& \overbar{\cG}_1 (\bSigma,s, \gamma_n) = \Bigl\{ \btheta = \bigl[- \beta (1- \nu)   \vb, \beta\nu  \vb, \bSigma\bigr]\colon \vb \in \cG( s), \rho(\btheta) \geq \gamma_n\Bigr\} \subseteq \cG_1 (\bSigma,s, \gamma_n),\notag\\
& \text{where} ~~\cG( s) = \bigl\{ \vb \in \{ -1, 0,1\}^d \colon \| \vb \|_0  = s \bigr\}.
\#
In other words, this model subclass represents one of the most challenging settings in terms of both~computational and information-theoretic difficulties. To better illustrate the sharpness of Proposition~\ref{prop::info_lower_bound}~and Theorem \ref{thm::lower_known_cov}, in \S\ref{sec::upper_bound} we mainly focus on the upper bounds for $\overbar{\cG}_0 (\bSigma)$ and $\overbar{\cG}_1 (\bSigma,s, \gamma_n)$ for simplicity. More general upper bounds for    $\cG_0 (\bSigma)$ and $\cG_1 (\bSigma,s, \gamma_n)$ are deferred to Appendix \ref{ap:GMM}.

\subsection{Upper Bounds}\label{sec::upper_bound}
As discussed above, in this section we first consider the following restricted testing problem
\#\label{eq::reduced_testing}
H_0\colon \btheta \in \overbar{\cG}_0(\bSigma) ~~\text{versus}~~ H_1\colon \btheta \in \overbar{\cG}_1(\bSigma, s, \gamma_n),
\#
where $\overbar{\cG}_0(\bSigma)$ and $\overbar{\cG}_1(\bSigma, s, \gamma_n)$ are defined in \eqref{eq:wbarg}.   Note both classes have mean zero, but class 0 has a variance $\bSigma$ whereas class 1 has variance $\bSigma + \nu (1-\nu) \beta^2 \vb\vb^T$.  To match the information-theoretic lower bound~in Proposition \ref{prop::info_lower_bound}, we consider
  the following~sequence of query functions
\#\label{eq::query_fun1}
q_{\vb}(\xb) =  \frac{(\vb^\top \bSigma^{-1 } \xb  )^2}{\vb^\top \bSigma^{-1} \vb} \cdot \ind \Bigl \{  \bigl |   \vb^\top \bSigma^{-1} \xb  \bigr |   \leq R \sqrt{ \log n}  \cdot \sqrt{\vb^\top \bSigma^{-1 } \vb }   \Bigr  \} , ~\text{where}~ \vb \in \cG( s).
\#
Here $R$ is an absolute constant and  $\cG( s)$ is defined in \eqref{eq:wbarg}. We apply truncation in \eqref{eq::query_fun1} to obtain bounded queries. In this case, we have computational budget $T = |\cG( s) |  = 2^{ s } \cdot {d \choose { s }}$ and $\eta(\cQ_{\mA}) = \log \bigl[2^{ s } \cdot {d \choose { s }} \bigr]$.~For each query function $q_{\vb}$, let the random variable returned by the oracle be $Z_{q_{ \vb} }$.

Note that if  ignoring the truncation  in \eqref{eq::query_fun1}, we have  $\EE [ q_{\vb} (\bX) ]  = 1$ under $H_0$ and $\EE [ q_{\vb} (\bX) ] = 1 + \nu(1-\nu) \beta^2 \cdot  (\vb ^\top \bSigma^{-1} \vb)$ under $H_1$.  Therefore, we would reject $H_0$ whenever there is a direction $\vb$ such that $\EE[ q_{\vb} (\bX)  ] > 1$.  This leads us to define the following test function,
\#\label{eq::test_fun1}
\ind \Bigl\{ \sup_{\vb\in \cG( s) }Z_{q_{ \vb} }\geq  1 + 2 R ^2\cdot  \log n  \cdot  \sqrt{    [ s \log  (2d) + \log ( 1/\xi) ]   / n} \Bigr\}.
\#

The next theorem shows that the lower bound in Proposition~\ref{prop::info_lower_bound} is tight within $\overbar{\cG}_0(\bSigma)$ and $\overbar{\cG}_1(\bSigma, s, \gamma_n)$ up to a logarithmic factor in $n$.

\begin{theorem}\label{thm::test1}
We consider the sparse mixture detection problem in \eqref{eq::reduced_testing}. Let $R$ in \eqref{eq::query_fun1} be a sufficiently large constant. If
\#\label{eq:wrhobeta}
\rho(\btheta)  =\nu(1- \nu)\bmu ^\top \bSigma^{-1} \bmu \geq \gamma_n = \Omega \bigl \{  \sqrt{   [s \log  (2d) + \log ( 1/\xi)]\cdot \log n /n } \bigr \} ,
\#
then for the test function $\phi$ defined  in \eqref{eq::test_fun1}, we have
\$
{ \sup_{\btheta\in \overbar{\cG}_0(\bSigma)}} \overline{\PP}_{\btheta}(\phi = 1) + { \sup_{\btheta\in \overbar{\cG}_1(\bSigma, s, \gamma_n)}} \overline{\PP}_{\btheta}(\phi = 0)  \leq 2\xi.
\$
\end{theorem}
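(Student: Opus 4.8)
The plan is to control, for every query $q_{\vb}$ with $\vb\in\cG(s)$, both the population mean $\EE[q_{\vb}(\bX)]$ under the two hypotheses and the oracle tolerance $\tau_{q_{\vb}}$, and then to combine these with the oracle guarantee \eqref{eq::query_00}. Write $\kappa_{\vb}=\sqrt{\vb^\top\bSigma^{-1}\vb}$ and $W_{\vb}=(\vb^\top\bSigma^{-1}\bX)/\kappa_{\vb}$, so that $q_{\vb}(\bX)=W_{\vb}^2\,\ind\{|W_{\vb}|\le R\sqrt{\log n}\}$ takes values in $[0,R^2\log n]$; set $M=R^2\log n$. Under $H_0$ (where $\overbar{\cG}_0(\bSigma)$ reduces to the single point $\btheta=(\zero,\zero,\bSigma)$, i.e.\ $\bX\sim N(\zero,\bSigma)$) one has $W_{\vb}\sim N(0,1)$ for every $\vb$, hence $\EE[q_{\vb}(\bX)]=1-\EE[W_{\vb}^2\ind\{|W_{\vb}|>R\sqrt{\log n}\}]\le 1$, the correction being $O(\sqrt{\log n}\cdot n^{-R^2/2})$ and thus negligible for $R$ large. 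Under $H_1$ with distinguished direction $\vb$, $W_{\vb}$ is a two-component Gaussian mixture with component means $-\beta(1-\nu)\kappa_{\vb}$ and $\beta\nu\kappa_{\vb}$, unit within-component variance, overall mean $0$, and overall variance $1+\nu(1-\nu)\beta^2\kappa_{\vb}^2=1+\rho(\btheta)$; hence $\EE[q_{\vb}(\bX)]=1+\rho(\btheta)-\delta_n$ with $\delta_n:=\EE[W_{\vb}^2\ind\{|W_{\vb}|>R\sqrt{\log n}\}]\ge0$, again negligible once $R$ is large relative to the signal strength. So the truncated second-moment query separates the hypotheses: $\EE[q_{\vb}]\le1$ under the null, and $\EE[q_{\vb}]\ge1+\gamma_n-\delta_n$ under the alternative along its true direction.

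Next I would match $\tau_{q_{\vb}}$ to the test threshold. Here $\cQ_{\mA}=\{q_{\vb}:\vb\in\cG(s)\}$ is finite with $|\cG(s)|=2^s\binom{d}{s}$, so $\eta(\cQ_{\mA})=\log\bigl(2^s\binom{d}{s}\bigr)\le 2s\log(2d)$ for $d$ large; substituting $M=R^2\log n$ and the crude bound $M^2-\EE^2[q_{\vb}(\bX)]\le M^2$ into \eqref{eq::query_2}, and noting that the square-root branch dominates whenever $[s\log(2d)+\log(1/\xi)]/n=o(1)$, I get
\$
\tau_{q_{\vb}}\;\le\;R^2\log n\cdot\sqrt{\tfrac{2[\eta(\cQ_{\mA})+\log(1/\xi)]}{n}}\;\le\;2R^2\log n\cdot\sqrt{\tfrac{s\log(2d)+\log(1/\xi)}{n}}\;=:\;t_n,
\$
uniformly in $\vb$ --- exactly the offset that appears in the rejection threshold $1+t_n$ of \eqref{eq::test_fun1}. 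Note the $\log n$ factor in $t_n$ is an artifact of bounding $\mathrm{Var}[q_{\vb}]$ by $M^2$ (the true variance of $q_{\vb}$ is $O(1)$); it is the price of the truncation that makes the queries bounded, and it is what propagates into the $\log n$ in the signal requirement \eqref{eq:wrhobeta}.

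With these two ingredients in hand, I would finish by two one-sided applications of \eqref{eq::query_00}. Outside an event of $\overline{\PP}_{\btheta}$-probability at most $\xi$, $Z_{q_{\vb}}\le\EE[q_{\vb}(\bX)]+\tau_{q_{\vb}}$ for all $\vb$; outside a further event of probability at most $\xi$, $Z_{q_{\vb}}\ge\EE[q_{\vb}(\bX)]-\tau_{q_{\vb}}$ for all $\vb$. On the first event, for any $\btheta\in\overbar{\cG}_0(\bSigma)$ we get $\sup_{\vb}Z_{q_{\vb}}<1+t_n$ (the strict inequality coming from the strictly positive null truncation correction together with $\tau_{q_{\vb}}\le t_n$), so $\phi=0$; hence $\sup_{\overbar{\cG}_0(\bSigma)}\overline{\PP}_{\btheta}(\phi=1)\le\xi$. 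On the second event, for any $\btheta\in\overbar{\cG}_1(\bSigma,s,\gamma_n)$ with distinguished direction $\vb$ we get $Z_{q_{\vb}}\ge1+\rho(\btheta)-\delta_n-\tau_{q_{\vb}}\ge1+\gamma_n-\delta_n-t_n\ge1+t_n$, the last step using that $\gamma_n$ as in \eqref{eq:wrhobeta} exceeds $2t_n+\delta_n$ once its implied constant (and $R$) are large enough; so $\sup_{\vb}Z_{q_{\vb}}\ge1+t_n$, $\phi=1$, and $\sup_{\overbar{\cG}_1(\bSigma,s,\gamma_n)}\overline{\PP}_{\btheta}(\phi=0)\le\xi$. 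Adding the two contributions gives the claimed bound $2\xi$.

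The step I expect to be the main obstacle is the uniform truncation control: one must show that, simultaneously over all $2^s\binom{d}{s}$ sparse directions $\vb$, replacing the unbounded statistic $(\vb^\top\bSigma^{-1}\bX)^2/(\vb^\top\bSigma^{-1}\vb)$ by its truncation at level $R\sqrt{\log n}$ perturbs the mean by an amount negligible against both $\tau_{q_{\vb}}$ and $\gamma_n$ --- this determines how large $R$ (and the hidden constant in \eqref{eq:wrhobeta}) must be taken, and it is the source of the $\log n$ separating this upper bound from the information-theoretic rate of Proposition \ref{prop::info_lower_bound}. The rest is bookkeeping with \eqref{eq::query_2} and the two one-sided versions of \eqref{eq::query_00}.
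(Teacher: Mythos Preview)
Your proposal is correct and follows essentially the same route as the paper's proof: compute the mean of the untruncated second-moment statistic under each hypothesis (1 under the null, $1+\rho(\btheta)$ at the true direction under the alternative), bound the bias introduced by the truncation, bound $\tau_{q_{\vb}}$ via the crude inequality $M^2-\EE^2[q_{\vb}(\bX)]\le M^2$ with $M=R^2\log n$, and finish by invoking the uniform oracle guarantee \eqref{eq::query_00}. The only substantive difference is that the paper carries out the truncation-bias step more explicitly, using Cauchy--Schwarz together with a $\psi_1$-norm (sub-exponential) tail bound on $q_{\vb}^*(\bX)$ to show the bias is at most $1/n$ once $R$ is large enough, rather than stating the order of the correction directly as you do.
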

\begin{proof}
See \S\ref{proof::thm::test1} for a detailed proof.
\end{proof}

Note that here $\bmu$ corresponds to $\Delta \bmu = \bmu_2 - \bmu_1$ in the previous discussion, since in \eqref{eq:wbarg} we~have $\bmu_1 = -(1- \nu)\bmu$ and $\bmu_2 = \nu\bmu$. To illustrate, we consider the special case with $\bSigma = \Ib$, where we have 
\$
\rho(\btheta) = \nu(1- \nu) \| \bmu \|_2^2   =  \Omega  \bigl \{\log n \cdot \sqrt{    [s \log  (2d) + \log ( 1/\xi)]  / n}  \bigr\} .
\$ Note that the hypothesis test defined in  \eqref{eq::test_fun1} is asymptotically powerful if $\xi = o(1)$. Setting $\xi = 1/d$,  \eqref{eq:wrhobeta} is equivalent to $\gamma_n = \Omega  ( \log n \cdot  \sqrt{  s \log d /n}  )$. We note that the $\log n$ term arises due to the truncation in   \eqref{eq::query_fun1}, which ensures the query functions to be bounded. Such a truncation is  unnecessary if we construct the hypothesis test in \eqref{eq::test_fun1} using $\{ \xb_i \}_{i =1}^n$. Thus,
by Proposition \ref{prop::info_lower_bound}, we conclude that $\alpha_n^* = \sqrt{ s\log d/n}$~is the minimax separation rate for $\bSigma = \Ib$. Similar argument also holds for general $\bSigma$.

Recall that the test defined by \eqref{eq::test_fun1}   requires superpolynomial oracle complexity.~To
construct~a computationally tractable test,
we consider the following sequence of query functions,
\#\label{eq::query_fun2}
q_j(\bX) =   X_j^2 / \sigma_{j} \cdot \ind \{ | X_j / \sqrt{ \sigma_j } | \leq R \cdot \sqrt{\log n} \} ,~\text{where~} j \in [d].
\#
Here  $\sigma_{j}$ is the $j$-th diagonal element of $\bSigma$ and $R >0$ is an absolute constant. Similar to  \eqref{eq::query_fun1}, we apply truncation in \eqref{eq::query_fun2} to ensure boundedness. Then we have $T = d$, $\eta(\cQ_{\mA}) = \log d$, and $M = R^2 \cdot \log n$.~We define the test function as
\#\label{eq::test_fun2}
\ind \Bigl[\max _{j\in [d]} Z_{q_j} \geq  1 + 2R ^2 \cdot  \log n \cdot\sqrt{\log ( d / \xi)  / n } \Bigr].
\#
The test seeks to detect if any random variable $X_j^2/\sigma_j$ has variance bigger than 1 and reject the null hypothesis when there is such one. The following theorem~shows that the test defined above is asymptotically powerful if $\gamma_n = \Omega (\log n \cdot \sqrt{s^2 \log d / n}  )$.

\begin{theorem}\label{thm::test2}
We consider the sparse mixture detection problem in \eqref{eq::reduced_testing}. Let $R$ in \eqref{eq::query_fun2} be a sufficiently large constant. If
\#\label{eq:wnvsd}
  \nu(1-\nu) \beta^2 / \min_{j\in [d]} \sigma_{j}  = \Omega \bigl[\log n \cdot  \sqrt{\log ( d / \xi)/ n} \bigr],
\#
 then for $\phi$ being the test function in \eqref{eq::test_fun2}, we have
\$
 \sup_{\btheta\in \overbar{\cG}_0(\bSigma)} \overline{\PP}_{\btheta}(\phi = 1) + \sup_{\btheta\in \overbar{\cG}_1(\bSigma, s, \gamma_n)} \overline{\PP}_{\btheta}(\phi = 0)  \leq 2\xi.
\$
\end{theorem}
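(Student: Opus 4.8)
The plan is to establish two facts: under $H_0$, every $Z_{q_j}$ concentrates below the threshold in \eqref{eq::test_fun2}, so $\phi=0$ with probability at least $1-2\xi$; and under $H_1$, the coordinate $j^\star$ on which the perturbation $\nu(1-\nu)\beta^2\vb\vb^\top$ acts has $\EE[q_{j^\star}(\bX)]$ comfortably above the threshold, so $Z_{q_{j^\star}}$ — and hence the max — exceeds it with probability at least $1-2\xi$. The first step is to compute $\EE[q_j(\bX)]$ without truncation: under $\overbar{\cG}_0(\bSigma)$ the marginal of $X_j$ is $N(0,\sigma_j)$, so $\EE[X_j^2/\sigma_j]=1$, while under $\overbar{\cG}_1(\bSigma,s,\gamma_n)$ the marginal variance of $X_j$ is $\sigma_j+\nu(1-\nu)\beta^2 v_j^2$, giving $\EE[X_j^2/\sigma_j]=1+\nu(1-\nu)\beta^2 v_j^2/\sigma_j$; for $j=j^\star$ with $v_{j^\star}^2=1$ this is $1+\nu(1-\nu)\beta^2/\sigma_{j^\star}\ge 1+\nu(1-\nu)\beta^2/\min_j\sigma_j$.

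Next I would control the effect of the truncation at $R\sqrt{\log n}$. Since $X_j/\sqrt{\sigma_j}$ (or, under $H_1$, $X_j/\sqrt{\sigma_j+\nu(1-\nu)\beta^2v_j^2}$, whose scale is a bounded multiple of $\sqrt{\sigma_j}$ by \eqref{eq:weigen}) is a standard Gaussian, a Gaussian tail bound shows that the contribution of the excluded region $\{|X_j/\sqrt{\sigma_j}|>R\sqrt{\log n}\}$ to the second moment is $O(\log n\cdot n^{-R^2/2})$, which for $R$ a sufficiently large constant is $o(n^{-1/2})$ and hence negligible compared with both the threshold margin and the signal term in \eqref{eq:wnvsd}. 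So the truncated means differ from the untruncated ones by a lower-order term, and in particular $\EE[q_{j^\star}(\bX)]\ge 1+\tfrac12\nu(1-\nu)\beta^2/\min_j\sigma_j$ under $H_1$ for $n$ large.

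The concentration step invokes Definition \ref{def::oracle} directly: with $\cQ_{\mA}=\{q_j:j\in[d]\}$, $M=R^2\log n$, and $\eta(\cQ_{\mA})=\log d$, the oracle guarantee \eqref{eq::query_00}–\eqref{eq::query_2} gives, with probability at least $1-2\xi$, $|Z_{q_j}-\EE[q_j(\bX)]|\le\tau_{q_j}$ simultaneously over all $j\in[d]$, where $\tau_{q_j}\le\max\{M(\log d+\log(1/\xi))/n,\sqrt{2M^2(\log d+\log(1/\xi))/n}\}=O(R^2\log n\cdot\sqrt{\log(d/\xi)/n})$ in the relevant regime $n\gg\log(d/\xi)$. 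Under $H_0$ this puts $\max_jZ_{q_j}\le 1+O(\log n\sqrt{\log(d/\xi)/n})$, and choosing the constant in the threshold \eqref{eq::test_fun2} large enough relative to the one in $\tau_{q_j}$ makes $\phi=0$. Under $H_1$, the margin $\EE[q_{j^\star}(\bX)]-1\ge\tfrac12\nu(1-\nu)\beta^2/\min_j\sigma_j$ exceeds $2\tau_{q_{j^\star}}$ plus the threshold slack by \eqref{eq:wnvsd} (with a suitably large hidden constant), so $Z_{q_{j^\star}}$ clears the bar and $\phi=1$. Summing the two $2\xi$-probability events and taking suprema over the parameter classes yields the claimed bound.

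The main obstacle is bookkeeping the truncation bias so that it is provably dominated by the tolerance $\tau_q$ and by the signal margin in \eqref{eq:wnvsd} — this is where the choice "$R$ a sufficiently large constant" and the extra $\log n$ factor in \eqref{eq:wnvsd} come from — together with making sure, via \eqref{eq:weigen}, that the truncation level $R\sqrt{\log n}$ is simultaneously loose enough (on the $H_0$ side, where $\mathrm{Var}(X_j)=\sigma_j\le\lambda^*$) and not so loose as to reintroduce heavy tails on the $H_1$ side (where $\mathrm{Var}(X_j)\le\lambda^*+\nu(1-\nu)\beta^2$, still a bounded multiple of the standard scale since $\beta$ enters only through $\rho$). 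Everything else is a routine Gaussian second-moment computation plus an application of Definition \ref{def::oracle}.
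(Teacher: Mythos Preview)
Your outline mirrors the paper's proof --- compute the untruncated second moments, bound the truncation bias, then invoke the oracle guarantee in Definition~\ref{def::oracle} --- but two steps are misstated.

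First, under $H_1$ the marginal of $X_j$ for $j\in\supp(\vb)$ is \emph{not} Gaussian: from $\overbar{\cG}_1(\bSigma,s,\gamma_n)$ in \eqref{eq:wbarg} one gets the two-component mixture $\nu\,N\bigl(-(1-\nu)\beta v_j,\sigma_j\bigr)+(1-\nu)\,N(\nu\beta v_j,\sigma_j)$, so the claim that ``$X_j/\sqrt{\sigma_j+\nu(1-\nu)\beta^2v_j^2}$ \ldots\ is a standard Gaussian'' is false. The repair is immediate --- the mixture is sub-Gaussian, and the paper handles this by bounding $\|X_j^2/\sigma_j\|_{\psi_1}$ under $\PP_\vb$ and applying a sub-exponential tail in the Cauchy--Schwarz estimate, yielding truncation bias $\le 1/n$ for $R$ large --- but as written your tail argument rests on a wrong premise.

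Second, the inequality $1+\nu(1-\nu)\beta^2/\sigma_{j^\star}\ge 1+\nu(1-\nu)\beta^2/\min_j\sigma_j$ points the wrong way: for a generic $j^\star\in\supp(\vb)$ you have $\sigma_{j^\star}\ge\min_j\sigma_j$, hence $1/\sigma_{j^\star}\le 1/\min_j\sigma_j$. The hypothesis \eqref{eq:wnvsd} lower-bounds $\max_{j\in[d]}\beta^2/\sigma_j$; it says nothing directly about $\beta^2/\sigma_j$ at a support coordinate of an arbitrary $\vb$. The paper sets $j^*=\argmin_{j\in[d]}\sigma_j$ and argues for $\vb$ with $j^*\in\supp(\vb)$; to push the type-II bound to $\sup_{\vb\in\cG(s)}$ you need the eigenvalue sandwich \eqref{eq:weigen}, which gives $\beta^2/\sigma_j\ge(\lambda_*/\lambda^*)\cdot\beta^2/\min_k\sigma_k$ for \emph{every} $j$, so any $j^\star\in\supp(\vb)$ carries signal within a constant factor of the assumed lower bound. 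You invoke \eqref{eq:weigen} only for the truncation scale; it is also the missing ingredient here.
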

\begin{proof}
See \S\ref{proof::thm::test2} for a detailed proof.
\end{proof}

Theorem \ref{thm::test2} can be understood as follows. By \eqref{eq:weigen} we have that \eqref{eq:wnvsd} is equivalent to
\$
\rho(\btheta) = \nu(1- \nu) \bmu ^\top \bSigma^{-1} \bmu \geq \gamma_n = \Omega  \bigl[ \log n \cdot\sqrt{  s^2  \log (d/\xi) / n }  \bigr].
\$
Also, the hypothesis test defined in  \eqref{eq::test_fun2} is asymptotically powerful when $\xi = o(1)$. Setting $\xi = 1/d$, we have $\gamma_n = \Omega  (\log n \cdot \sqrt{ s^2 \log d /n} )$.   Together with the lower bound derived in Theorem \ref{thm::lower_known_cov}, we conclude that, when ignoring the $\log n$ term incurred by truncation,  the computationally feasible minimax separation rate $\beta_n^*$ is between $\sqrt{s^2 /n}$ and $\sqrt{s^2 \log d/n}$. In fact,~the test defined in \eqref{eq::query_fun2} and \eqref{eq::test_fun2} can be viewed as the diagonal thresholding procedure applied on the covariance matrix of $\bX$ \citep{johnstone2012consistency} under the statistical query model. Applying the covariance thresholding algorithm of \cite{deshpande2014sparse}, we can further close the gap between $\sqrt{s^2 /n}$ and $\sqrt{s^2 \log d/n}$, which implies the computationally feasible minimax separation rate $\beta_n^*$ is $\sqrt{s^2 /n}$.
This approach can similarly be formulated into the statistical query model, for which we omit the details for the sake of succinctness. Besides, we remark that the upper bounds can be extended from $\overbar{\cG}_0 (\bSigma)$ and $\overbar{\cG}_1 (\bSigma,s, \gamma_n)$ in \eqref{eq:wbarg} to ${\cG}_0 (\bSigma)$ and ${\cG}_1 (\bSigma,s, \gamma_n)$ in \eqref{eq:wg0}~and \eqref{eq:wg1}, with changes of the procedure and proof,  which are deferred to Appendix \ref{ap:GMM} since our main focus is on the computational lower bounds.

It is worth mentioning that the tests defined in \eqref{eq::query_fun1}-\eqref{eq::test_fun1} and \eqref{eq::query_fun2}-\eqref{eq::test_fun2} can be implemented using $\{ \xb_{i} \}_{i =1}^n$ by replacing $Z_q$ with $n^{-1}    \sum_{i=1}^n q(\xb_i)$, which yields the same guarantees by Bernstein's inequality and the union bound. Recall that for the algorithm defined by \eqref{eq::query_fun1}, $\eta(\cQ_{\mA}) = \log [|\cG( s) |]  = \log \bigl[2^{ s } \cdot {d \choose { s }} \bigr]$, and for the one defined by \eqref{eq::query_fun2}, $\eta(\cQ_{\mA}) =  \log d$. Suppose we set $\xi = 0$ and~$\tau_q =R^2 \log n \cdot  \sqrt{2/n}$ in Definition \ref{def::oracle}. Then following the proof of Theorem \ref{thm::test1},  the test defined in \eqref{eq::test_fun1} is asymptotically powerful if $\gamma_n  = \Omega  (\log n \cdot \sqrt{1/n}  )$, which contradicts the information-theoretic lower bound in Proposition \ref{prop::info_lower_bound}. This is because the statistical query model with  $\xi = 0$ and $\tau_q  = R^2 \log n \cdot  \sqrt{2/n} $ can not be implemented using $\{ \xb_{i} \}_{i =1}^n$.
 This hypothetical example indicates the necessity of incorporating the notions of tail probability~and the capacity of  query spaces  to better capture real-world algorithms.

\subsection{Extensions to Unknown Covariance}\label{sec::unknown}

In the sequel, we extend our analysis to the case in which $\bSigma $ is unknown. In this case, we define the signal strength as  $\rho^\star(\btheta) = \| \Delta \bmu \|_2^4 / \Delta \bmu^\top \bSigma \Delta \bmu$.  Throughout this section, we assume that $\nu = 1/2$~and the parameter spaces of the null and alternative hypotheses are given by
\$
&\cG_0 = \bigl\{ \btheta= (\bmu, \bmu, \bSigma ) \colon \bmu \in \RR^d, \bSigma \succ {\bf{0}} \bigr\}, \\
&\cG_1(s,\gamma_n) = \bigl\{\btheta= (\bmu_1, \bmu_2, \bSigma ) \colon \bmu_1,\bmu_2 \in \RR^d, \bSigma \succ {\bf{0}}, \| \Delta \bmu \|_0 = s, \rho^\star(\btheta) \geq \gamma_n \bigr\}.\notag
\$
Here we assume the sparsity level $s$ of $\Delta \bmu$ is known. The following proposition of \cite{arias2017detection} gives the minimax lower bound for the sparse mixture detection problem
\#\label{eq:wos}
H_0\colon \btheta \in \cG_0,~~\text{versus}~~H_1\colon \btheta\in \cG_1 (s, \gamma_n).
\#

\begin{proposition}\label{prop::info_lower_bound2}
For the testing problem in \eqref{eq:wos}, we assume that $\limn \max(s , n)/d = 0$.~Then any hypothesis test is asymptotically powerless, i.e.,
$
{ \lim_{n\rightarrow  \infty}} R_n^*  [ \cG_0, \cG_1(  s, \gamma_n)]  =  1,
$ if
\$
\gamma_n = o  \bigl[ ( s\log d /n )^{1/4}\bigr].
\$
\end{proposition}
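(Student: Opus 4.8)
Since no computational constraint is imposed, this is a classical Le~Cam minimax detection lower bound, which I would prove by testing $H_0$ against a prior on a sub-family of $H_1$. Restrict the null to $\PP_0 = N(\zero,\Ib)^{\otimes n}$, which is admissible because $(\zero,\zero,\Ib)\in\cG_0$. For the alternative, index a family by $s$-sparse sign patterns $\vb\in\cG(s) = \{\vb\in\{-1,0,1\}^d : \|\vb\|_0 = s\}$ (uniformly random support, i.i.d.\ $\pm1$ signs): set $\bmu_1(\vb) = -\tfrac{\beta}{2}\vb$, $\bmu_2(\vb) = \tfrac{\beta}{2}\vb$, and --- the point special to the \emph{unknown}-covariance problem --- component covariance $\bSigma(\vb) = \Ib - \tfrac{\beta^2}{4}\vb\vb^\top$, positive definite once $\beta^2 s < 4$. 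The crucial consequence: the per-observation alternative $\tfrac12 N(\bmu_1(\vb),\bSigma(\vb)) + \tfrac12 N(\bmu_2(\vb),\bSigma(\vb))$ has mean $\zero$ and covariance \emph{exactly} $\Ib$ and is symmetric about $\zero$, so it agrees with $N(\zero,\Ib)$ on all moments through order three and differs only at order four, where the negative excess kurtosis along $\vb$ has size $\Theta(\beta^4 s^2)$. A test thus cannot use the order-$\beta^2 s$ second-moment signal available when $\bSigma$ is known (cf.\ Proposition~\ref{prop::info_lower_bound}); it must detect a fourth-moment signal, which is exactly what replaces the square-root rate by a fourth-root one. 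I would scale $\beta$ so that $\rho^\star(\btheta(\vb)) = \|\Delta\bmu\|_2^4/(\Delta\bmu^\top\bSigma(\vb)\Delta\bmu) \ge \gamma_n$; as $\Delta\bmu = \beta\vb$, this means $\beta^2 s\asymp\gamma_n$, which is $<4$ for large $n$ under $\gamma_n = o[(s\log d/n)^{1/4}]$.

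Put the uniform prior $\pi$ on $\cG(s)$ and let $\PP_\pi = \EE_{\vb\sim\pi}[\PP_{\btheta(\vb)}^{\otimes n}]$. The Bayes bound gives $R_n^*[\cG_0,\cG_1(s,\gamma_n)] \ge 1 - \TV(\PP_0,\PP_\pi) \ge 1 - \tfrac12\sqrt{\chi^2(\PP_\pi\,\|\,\PP_0)}$, so it suffices to show $\chi^2(\PP_\pi\,\|\,\PP_0)\to 0$. Because $\bSigma(\vb)$ perturbs $\Ib$ only along $\vb$ and the mean shift is also along $\vb$, the per-observation likelihood ratio factorizes: the $\btheta(\vb)$-density equals $\varphi(\xb)\,[\,1 + h(\langle\vb,\xb\rangle/\sqrt s)\,]$, where $\varphi$ is the $N(\zero,\Ib)$ density and $h$ is the one-dimensional ratio-minus-one of $\tfrac12 N(-a,1-a^2) + \tfrac12 N(a,1-a^2)$ to $N(0,1)$ with $a = \tfrac{\beta}{2}\sqrt s$. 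In the orthonormal Hermite basis, the moment matching forces $h$'s degree-$1,2,3$ coefficients to vanish, so $h = c_4 H_4 + (\text{higher order in }a)$ with $c_4 = \Theta(a^4)$. By tensorization of $\chi^2$ and Mehler's identity $\EE[H_j(G_1)H_k(G_2)] = \delta_{jk}\rho_0^{\,k}$ for standard Gaussians $G_1,G_2$ with correlation $\rho_0 = \langle\vb,\vb'\rangle/s$,
\$
\chi^2(\PP_\pi\,\|\,\PP_0) + 1 = \EE_{\vb,\vb'}\Bigl(1 + \textstyle\sum_{k\ge 4} c_k^2\,(\langle\vb,\vb'\rangle/s)^k\Bigr)^n \;\le\; \EE_{\vb,\vb'}\exp\bigl\{\,c\,n\,\beta^8\,\langle\vb,\vb'\rangle^4\,\bigr\}
\$
for an absolute constant $c$, with $\vb,\vb'$ independent draws from $\pi$; the leading $k=4$ term contributes $\Theta(a^8(\langle\vb,\vb'\rangle/s)^4) = \Theta(\beta^8\langle\vb,\vb'\rangle^4)$ per observation.

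It remains to show the last expectation is $1 + o(1)$. Condition on the support overlap $K = |\supp(\vb)\cap\supp(\vb')|$, hypergeometric with mean $s^2/d$ and tail $\PP(K=k)\le (s^2/d)^k/k!$; given $K=k$, $\langle\vb,\vb'\rangle$ is a sum of $k$ i.i.d.\ Rademacher variables, hence sub-Gaussian with variance proxy $k$. A case split over $k$ --- sub-Gaussian concentration of $\langle\vb,\vb'\rangle$ for small/typical overlaps, combinatorial decay of $\PP(K=k)$ for large overlaps --- shows the expectation is $1 + o(1)$, the binding contribution being the diagonal $\vb = \vb'$: it carries weight $|\cG(s)|^{-1}\le (s/2d)^s = \exp\{-s\log(2d/s)\}$ and size $\exp\{c\,n\,\beta^8 s^4\} = \exp\{c\,n\,\gamma_n^4\}$ (using $\beta^2 s\asymp\gamma_n$), and the product tends to $0$ exactly when $n\gamma_n^4 = o(s\log d)$, i.e.\ $\gamma_n = o[(s\log d/n)^{1/4}]$. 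The hypothesis $\max(s,n)/d = o(1)$ enters to ensure $\log(2d/s)\asymp\log d$ and that $s^2/d$ is small enough for the hypergeometric tail to absorb the at-most-exponential-in-$k$ growth of the integrand on moderate overlaps. Hence $\chi^2(\PP_\pi\,\|\,\PP_0)\to 0$ and $\limn R_n^*[\cG_0,\cG_1(s,\gamma_n)] = 1$.

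The main obstacle is this last estimate: the integrand $\exp\{c\,n\,\beta^8\langle\vb,\vb'\rangle^4\}$ grows super-exponentially in the overlap, so one must show the combinatorial decay of $\PP(K=k)$ for large $k$ together with the Gaussian-type concentration of $\langle\vb,\vb'\rangle$ for moderate $k$ dominates it uniformly --- the diagonal term sits at the exact boundary, and balancing it is precisely where the fourth-root rate crystallizes. A secondary subtlety is the bookkeeping in the Hermite expansion: one must verify that $\bSigma(\vb) = \Ib - \tfrac{\beta^2}{4}\vb\vb^\top$ genuinely annihilates the degree-$1,2,3$ coefficients of $h$, so that the first surviving contribution is the degree-$4$ one of size $\Theta(\beta^4 s^2)$; once this is in hand the remaining estimates are routine. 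An argument of this type is carried out in \cite{arias2017detection}.
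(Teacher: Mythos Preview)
Your proposal is correct and follows precisely the argument of \cite{arias2017detection}, which is what the paper invokes for this proposition without reproducing the details; the same moment-matched alternative (with $\bSigma(\vb)=\Ib-\tfrac{\beta^2}{4}\vb\vb^\top$, up to a reparametrization of $\beta$) also appears in the paper's proof of Theorem~\ref{thm::lower_unknown_cov} and Lemma~\ref{lemma::compute_h_2}. Your identification of the diagonal term as binding, and of the fourth-root rate as arising from the vanishing of the first three Hermite coefficients of the one-dimensional likelihood ratio, captures exactly the mechanism.
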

\begin{proof}
See \cite{arias2017detection} for a detailed proof.
\end{proof}
By  Proposition \ref{prop::info_lower_bound2},  the minimax separation rate   $\alpha_n^* $ is at least $ ( s\log d /n )^{1/4} $ for unknown $\bSigma$. Thus, seen from  Theorem~\ref{thm::test1},
this setting  is  harder than the case where $\bSigma$ is known.~The next  theorem establishes the  computational lower bound under the statistical query model.

\begin{theorem}\label{thm::lower_unknown_cov}
For the testing problem in \eqref{eq:wos}, we assume that $\limn \max(s , n)/d = 0$ and    there exists a sufficiently small constant $\delta >0$ such that $s^2 / d^{1-\delta} = O(1)$.
If  $\gamma_n = o  [ (s^3/ n)^{1/4}  ]$, then for any constant $\eta>0$,    and any  $\mA \in \cA(T)$ with $T = O(d^\eta)$, there exist an oracle $r\in\cR[\xi,n,T, \eta(\cQ_{\mA}) ]$ such that
\$\lim _{n\rightarrow \infty} \overline{R}_n^*\bigl[ \cG_0, \cG_1(  s, \gamma_n); \mA , r \bigr]   =  1.
\$
\end{theorem}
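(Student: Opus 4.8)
The plan is to reduce the unknown-covariance problem to the known-covariance lower bound of Theorem~\ref{thm::lower_known_cov} by choosing a worst-case covariance that inflates the effective separation rate. The key observation is that when $\bSigma$ is unknown the relevant signal strength is $\rho^\star(\btheta) = \|\Delta\bmu\|_2^4 / (\Delta\bmu^\top\bSigma\Delta\bmu)$, whereas the Mahalanobis distance that governs the known-covariance analysis is $\rho(\btheta) = \Delta\bmu^\top\bSigma^{-1}\Delta\bmu$. On the hard subclass $\overbar\cG_1$ from \eqref{eq:wbarg}, where $\Delta\bmu$ is proportional to a vector $\vb\in\{-1,0,1\}^d$ with $\|\vb\|_0 = s$, we have $\|\Delta\bmu\|_2^2 \asymp s\beta^2$ after rescaling. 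If we take $\bSigma = \Ib$ on the support of $\vb$ (so $\bSigma$ is effectively identity in the directions that matter), then $\rho^\star(\btheta) \asymp s\beta^2$ while $\rho(\btheta)\asymp s\beta^2$ as well; the two signal strengths coincide up to constants on this subfamily. So the regime $\gamma_n = o((s^3/n)^{1/4})$ in terms of $\rho^\star$ translates, via $\rho^\star \asymp s\beta^2$, to $\beta^2 = o(\sqrt{s/n})$, which is exactly $\|\Delta\bmu\|_2^2 \asymp s\beta^2 = o(\sqrt{s^2/n})$, i.e.\ the weak regime of Theorem~\ref{thm::lower_known_cov} in the Mahalanobis metric $\rho$. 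Hence the null/alternative pair used in the proof of Theorem~\ref{thm::lower_known_cov} (with $\bSigma = \Ib$) is \emph{already} a valid instance of the unknown-covariance testing problem \eqref{eq:wos}, because $\cG_0 \supseteq \overbar\cG_0(\Ib)$ and $\cG_1(s,\gamma_n) \supseteq \overbar\cG_1(\Ib, s, \gamma_n')$ for the appropriate $\gamma_n'$ in the $\rho^\star$ metric.

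Concretely, I would proceed as follows. First, record that the packing set $\overbar\cG_1(\Ib, s, \cdot)$ lies inside $\cG_1(s,\gamma_n)$ once we verify $\rho^\star(\btheta)\ge\gamma_n$ for each member, which reduces to the calculation $\rho^\star = \|\Delta\bmu\|_2^2 \geq \gamma_n$ since $\bSigma = \Ib$ makes $\Delta\bmu^\top\bSigma\Delta\bmu = \|\Delta\bmu\|_2^2$. Second, the hypothesis $\gamma_n = o((s^3/n)^{1/4})$ gives $\|\Delta\bmu\|_2^2 = o((s^3/n)^{1/4})$, and on this subclass $\|\Delta\bmu\|_2^2 \asymp s\beta^2$, so $\beta^2 = o((s^3/n)^{1/4}/s) = o(s^{-1/4}n^{-1/4})$, hence $s^2\beta^4 = o(s^2\cdot s^{-1}n^{-1}) = o(s/n)$ — wait, this gives $\|\Delta\bmu\|_2^4 \asymp s^2\beta^4 = o(s/n)$, i.e.\ $\|\Delta\bmu\|_2^2 = o(\sqrt{s/n}) = o(\sqrt{s^2/n}/\sqrt{s}\cdot\sqrt{s}) $; more carefully, $o((s^3/n)^{1/4})$ for $\|\Delta\bmu\|_2^2$ means $\|\Delta\bmu\|_2^2\cdot\sqrt{s} = o((s^3/n)^{1/4}\sqrt{s}) = o((s^5/n)^{1/4})$, which is $o(\sqrt{s^2/n})$ iff $(s^5/n)^{1/4} \lesssim (s^2/n)^{1/2}$, i.e.\ $s^5/n \lesssim s^4/n^2$, i.e.\ $sn\lesssim 1$ — so this is \emph{not} immediate and requires rescaling $\bSigma$ rather than taking $\Ib$. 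Thus the correct choice is $\bSigma = c\,\Ib$ (or an anisotropic diagonal) tuned so that $\rho^\star$ and $\rho$ sit at the right scales simultaneously: pick the variance scale so that $\Delta\bmu^\top\bSigma\Delta\bmu$ is large, making $\rho^\star$ small for a given $\|\Delta\bmu\|_2$, while $\Delta\bmu^\top\bSigma^{-1}\Delta\bmu = \rho$ stays at the threshold $\sqrt{s^2/n}$ of Theorem~\ref{thm::lower_known_cov}. Setting $\rho \asymp \sqrt{s^2/n}$ and $\bSigma$ isotropic of level $\lambda$, one has $\|\Delta\bmu\|_2^2 \asymp \lambda\rho$ and $\rho^\star \asymp \lambda\rho$, and one then chooses $\lambda$ (bounded between $\lambda_*$ and $\lambda^*$) to arrange $\rho^\star = \Theta((s^3/n)^{1/4})$ at exactly the boundary; a short computation gives $\lambda \asymp (s^3/n)^{1/4}/\sqrt{s^2/n} = (s^3/n)^{1/4}(n/s^2)^{1/2} = s^{3/4-1}n^{-1/4+1/2} = s^{-1/4}n^{1/4}$, which does \emph{not} stay bounded, signalling that a diagonal (anisotropic) $\bSigma$ is genuinely needed — put large variance on the $s$ support coordinates and unit variance elsewhere.

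Once the covariance is chosen so that a weak-signal family in the $\rho^\star$ metric maps onto the weak-signal family in the $\rho$ metric analyzed in Theorem~\ref{thm::lower_known_cov}, the oracle-complexity lower bound transfers verbatim: for any $\eta>0$ and any $\mA\in\cA(d^\eta)$ the same oracle $r\in\cR[\xi,n,T,M,\eta(\cQ_\mA)]$ constructed in the proof of Theorem~\ref{thm::lower_known_cov} (which answers every query with its null-hypothesis mean, exploiting the sharp $\chi^2$-divergence bound under the localized prior over $\cG(s)$ referenced in \S\ref{pf::lower}) makes $\overbar R_n^*$ tend to $1$. The conditions $\max(s,n)/d = o(1)$ and $s^2/d^{1-\delta} = O(1)$ are precisely what Theorem~\ref{thm::lower_known_cov} requires for the $\chi^2$ argument (control of the number of overlapping supports and of the moment-generating-function terms), so they carry over with no change. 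The main obstacle, and the only place where real work beyond bookkeeping is needed, is \textbf{calibrating the worst-case $\bSigma$}: one must exhibit a diagonal positive-definite $\bSigma$ with eigenvalues in $[\lambda_*,\lambda^*]$ such that simultaneously (a) $\Delta\bmu^\top\bSigma^{-1}\Delta\bmu$ over the subfamily is $o(\sqrt{s^2/n})$ so that Theorem~\ref{thm::lower_known_cov} applies, and (b) $\rho^\star(\btheta) = \|\Delta\bmu\|_2^4/(\Delta\bmu^\top\bSigma\Delta\bmu)$ is $\Theta(\gamma_n)$ with $\gamma_n$ ranging up to $(s^3/n)^{1/4}$; the quartic-versus-quadratic mismatch between $\rho^\star$ and $\rho$ is exactly what produces the rate $(s^3/n)^{1/4}$ instead of $(s^2/n)^{1/2}$, and getting the algebra of this conversion right — while keeping $\bSigma$'s spectrum bounded — is the crux.
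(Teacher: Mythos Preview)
Your reduction strategy cannot reach the rate $(s^3/n)^{1/4}$. On any subfamily where the alternative covariance $\bSigma$ is fixed (diagonal, isotropic, or otherwise) and $\Delta\bmu$ has equal-magnitude entries on its support, one checks directly that $\rho^\star(\btheta)=\|\Delta\bmu\|_2^4/(\Delta\bmu^\top\bSigma\Delta\bmu)$ and $\rho(\btheta)=\Delta\bmu^\top\bSigma^{-1}\Delta\bmu$ satisfy $\rho^\star/\rho\in[\lambda_*/\lambda^*,\lambda^*/\lambda_*]$; in particular your own computation ``$\rho^\star\asymp\lambda\rho$'' for $\bSigma=\lambda\Ib$ is incorrect---both equal $\|\Delta\bmu\|_2^2/\lambda$, so $\rho^\star=\rho$ exactly. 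Hence embedding the hard instances of Theorem~\ref{thm::lower_known_cov} into $\cG_1(s,\gamma_n)$ can only yield a lower bound for $\gamma_n=o(\sqrt{s^2/n})$, which is strictly weaker than $\gamma_n=o((s^3/n)^{1/4})$ in the regime $n\gg s$.

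The missing idea is that the covariance in the alternative must \emph{vary with} $\vb$. The paper takes $\bSigma_{\vb}=\Ib-\beta^2\vb\vb^\top$ under $H_1$ (with $\nu=1/2$), so that $\Cov(\bX)=\bSigma_{\vb}+\tfrac14\|\Delta\bmu\|_2^{-2}\Delta\bmu\Delta\bmu^\top\cdot\|\Delta\bmu\|_2^2=\Ib$ for every $\vb$, matching the null exactly at the second-moment level. This forces any distinguishing query to exploit higher-order structure, and the $\chi^2$-divergence between $\PP_0$ and $\PP_{\vb}$ must be recomputed from scratch (Lemma~\ref{lemma::compute_h_2}); the resulting bound behaves like $\exp(C\beta^8 s|\langle\vb,\vb'\rangle|)$ rather than $\cosh(\beta^2|\langle\vb,\vb'\rangle|)$, and it is this $\beta^8$ (versus $\beta^4$) that produces the rate $(s^3/n)^{1/4}$ when the argument of Theorem~\ref{thm::lower_known_cov} is rerun with the new divergence estimate.
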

\begin{proof}
See \S\ref{proof::thm::lower_unknown_cov} for a detailed proof.
\end{proof}

Combining Proposition \ref{prop::info_lower_bound2} and Theorem \ref{thm::lower_unknown_cov},  we observe a  similar statistical-computational tradeoff when the covariance matrix $\bSigma $ is unknown. More specifically, while the existence of an asymptotically powerful test requires  $\rho^\star(\btheta) = \Omega [   (  s\log d/n) ^{1/4}  ]$, the existence of a powerful and computationally tractable test requires $\rho^\star(\btheta) = \Omega [ ( s^3 /n )^{ 1/4}  ]$. Indeed, \cite{arias2017detection} propose an asymptotically powerful but computationally intractable test for  $\rho^\star(\btheta) = \Omega [   (  s\log d/n) ^{1/4}  ]$, and a computationally efficient and asymptotically powerful test for $\rho^\star(\btheta) = \Omega [ ( s ^4 \log d /n) ^{1/4} ]$. These two tests can both be formulated using the statistical query model. Thus we conclude that,  for the setting with $\bSigma$ unknown, the minimax separation rate is $\alpha^*_n =(  s\log d/n) ^{1/4}$, and the computationally feasible minimax separation rate $\beta^*_n$ is between $(s^3/ n)^{1/4}$ and $( s ^4 \log d /n) ^{1/4}$. 
That is to say, there~exists at least an $(s^2 /\log d)^{1/4}$ price to pay in the minimum signal strength to obtain computational tractability.

\subsection{Implications for Estimation, Support Recovery, and Clustering}\label{sec::implications_GMM}
Note that detection is an easier task than estimation, support recovery, and clustering. For example, if it is possible to consistently estimate $\bmu_1$ and $\bmu_2$ in $\btheta = (\bmu_1, \bmu_2, \bSigma)$ using $\hat{\bmu}_1$ and $\hat{\bmu }_2$, then we can construct an asymptotically powerful test based upon $ \hat{\bmu}_1 - \hat{\bmu}_2 $ to detect the mixtures. Therefore, the lower~bounds for detection also hold for estimation, e.g., if there exists no asymptotically~powerful~test for $\rho(\btheta) = o(\zeta_n)$, then under the same condition, we can not consistently estimate $\bmu_1$ and $\bmu_2$. Similar arguments also hold for support recovery and clustering.  Therefore, Theorem \ref{thm::lower_known_cov} has the~following implications.
\begin{enumerate}[label=(\roman*)]
\item \cite{arias2017detection} consider the recovery of the support of $\Delta\bmu$. The information-theoretic lower bound for consistent recovery is $\rho(\btheta) = o   (  \sqrt{ s \log d/n }) $. In comparison, efficient algorithms can succeed as long as $\rho(\btheta) = \Omega  (\log n \cdot  \sqrt{s^2 \log d/n }  ) $. Theorem \ref{thm::lower_known_cov} indicates that, efficient algorithms can not do better than $\rho(\btheta) = \Omega  ( \sqrt{s^2 /n}  )$. In other words, ignoring the logarithmic factors, the gap observed by \cite{arias2017detection} can not be eliminated.
\item \cite{azizyan2013minimax, azizyan2015efficient} consider the clustering problem and observe the same phenomenon. Theorem \ref{thm::lower_known_cov} implies consistent clustering with efficient algorithms requires $\rho(\btheta) = \Omega  ( \sqrt{s^2 /n}  )$. In other words, the conjecture of \cite{azizyan2013minimax, azizyan2015efficient} is correct, i.e., to achieve consistent clustering with computational efficiency, a statistical price of $\sqrt{s/\log d}$ must be paid.
\end{enumerate}
More formally, we summarize the aforementioned implications with the next theorem. For simplicity, we focus on the setting with known $\bSigma$. Similar results can be obtained for unknown $\bSigma$ in the same fashion.
\begin{theorem}\label{col::implications}
For the high dimensional Gaussian mixture model with sparse mean separation,~we assume $\| \Delta \bmu \|_0 = s$. If $\Delta\bmu ^\top \bSigma^{-1} \Delta \bmu =  \gamma_n  = o(\sqrt{s^2 /n})$ and the assumptions in Theorem~\ref{thm::lower_known_cov} hold, for any constant  $\eta >0$,    and any $\mA \in \cA(T)$ with $T = O(d^{\eta})$, there~exists an oracle $r \in\cR[\xi,n,T, M, \eta(\cQ_{\mA}) ]$ such that~the following claims hold under the statistical query model.
\begin{enumerate}[label=(\roman*)]
\item There exists an absolute constant $C>0$ such that for any estimator $\hat \bmu_1$ of $\bmu_1$ and $\hat\bmu_2$ of $\bmu_2$,~we have
\#\label{eq::argument1}
\overline \PP_{\btheta} \Bigl[  \max_{\ell\in\{1,2\}}( \hat\bmu_{\ell} - \bmu_{\ell} ) ^\top \bSigma ^{-1} ( \hat \bmu_{\ell} - \bmu_{\ell})  >  \gamma_n  / 64  \Bigr] \geq C.
\#
\item  There exists an absolute constant $C> 0$ such that for any estimator $\Delta \hat{\bmu}$ of $\Delta \bmu$, we have
\#\label{eq::argument2}
\overline{\PP }_{\btheta} \bigl[\supp( \Delta \hat \bmu) \neq \supp ( \Delta \bmu) \bigr] \geq  C.
\#
\item We define the density function of $N(\bmu, \bSigma)$ as $f(\xb;\bmu, \bSigma)$. Let
\#\label{eq:optimal_cluster}
F_{\btheta} (\xb)  = \begin{cases}
	  1 \qquad & \text{if}~\nu \cdot f(\xb, \bmu_1, \bSigma ) \geq (1 - \nu ) \cdot f(\xb, \bmu_2, \bSigma )  \\
	2 \qquad & \text{otherwise},
\end{cases}
\#
be the assignment function of correct clustering. There exists an absolute constant $C >0$~such that for any assignment function $F \colon \RR^d \rightarrow \{ 1,2\}$, we have
\#\label{eq::argument3}
\min_{\Pi}  \overline \PP _{\theta}\Bigl\{ \Pi\bigl[F(\bX)\bigr]  \neq   F_{\btheta} (\bX) \Bigr\}   \geq C.
\#
Here the minimum is taken over all permutations $\Pi\colon \{1,2\} \rightarrow \{1,2\}$.
\end{enumerate}

\end{theorem}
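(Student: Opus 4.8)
The plan is to derive all three claims as corollaries of the computational lower bound for detection, Theorem~\ref{thm::lower_known_cov}, via the standard reduction ``estimation/recovery/clustering implies testing.'' Throughout, fix $\gamma_n = o(\sqrt{s^2/n})$ as in the hypothesis, and let $r$ be the oracle whose existence is guaranteed by Theorem~\ref{thm::lower_known_cov}, so that $\limn \overline R_n^*[\cG_0(\bSigma),\cG_1(\bSigma,s,\gamma_n);\mA,r]=1$ for every $\mA\in\cA(d^\eta)$. The key point is that any procedure that accomplishes one of the three tasks below, using only the $T=O(d^\eta)$ oracle responses, can be post-processed into a test function in $\cH(\mA,r)$; if that test had vanishing risk it would contradict the displayed limit, so its risk must stay bounded away from zero, which is exactly the content of \eqref{eq::argument1}--\eqref{eq::argument3} with $C$ chosen as, say, $1/4$.

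First I would treat the estimation claim \eqref{eq::argument1}. Given estimators $\hat\bmu_1,\hat\bmu_2$ built from the oracle answers, set $\Delta\hat\bmu=\hat\bmu_2-\hat\bmu_1$ and define $\phi=\ind\{(\Delta\hat\bmu)^\top\bSigma^{-1}(\Delta\hat\bmu)>\gamma_n/16\}$. Under $H_0$ (where $\Delta\bmu=\zero$) a rejection forces $\max_\ell(\hat\bmu_\ell-\bmu_\ell)^\top\bSigma^{-1}(\hat\bmu_\ell-\bmu_\ell)$ to be at least of order $\gamma_n$ by the triangle/parallelogram inequality in the $\bSigma^{-1}$-norm; under $H_1$ (where $\rho(\btheta)=\Delta\bmu^\top\bSigma^{-1}\Delta\bmu\geq\gamma_n$) an acceptance forces the same quantity to be large. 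Hence $\overline\PP_\btheta[\phi=1\text{ on }\cG_0]+\overline\PP_\btheta[\phi=0\text{ on }\cG_1]\leq 2\sup_\btheta\overline\PP_\btheta[\max_\ell(\hat\bmu_\ell-\bmu_\ell)^\top\bSigma^{-1}(\hat\bmu_\ell-\bmu_\ell)>\gamma_n/64]$ up to adjusting the constant inside the indicator; since the left side is $\geq\overline R_n^*\to 1$, the estimation error event must have probability bounded below, giving \eqref{eq::argument1}. For support recovery \eqref{eq::argument2} the argument is even cleaner: from $\Delta\hat\bmu$ form $\phi=\ind\{\supp(\Delta\hat\bmu)\neq\emptyset\}$; under $H_0$, $\supp(\Delta\bmu)=\emptyset$, so correct recovery means $\phi=0$, while under $H_1$, $|\supp(\Delta\bmu)|=s\geq 1$, so correct recovery means $\phi=1$. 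Thus $\overline\PP_\btheta[\supp(\Delta\hat\bmu)\neq\supp(\Delta\bmu)]$ dominates one of the two error terms in $\overline R_n^*$ on each side, and the detection lower bound forces it to stay bounded away from zero.

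For the clustering claim \eqref{eq::argument3} I would use the same philosophy but must be more careful since under $H_0$ the ``correct clustering'' function $F_\btheta$ is not well defined as a two-cluster object. The standard fix is to reduce to a sub-null where $\bmu_1=\bmu_2$ only in the limit: one compares the alternative $\btheta\in\overbar\cG_1(\bSigma,s,\gamma_n)$ against the alternative $\btheta'$ with the \emph{same} mixture but a tiny fixed separation, or, more directly, one notes that under $\btheta\in\cG_0$ the classifier $F_\btheta$ is essentially a coin flip, so any $F$ agrees with $F_\btheta$ (up to permutation) with probability $\to 1/2$ only if $F$ itself is nearly balanced; then under $H_1$, achieving $\min_\Pi\overline\PP_\btheta\{\Pi[F(\bX)]\neq F_\btheta(\bX)\}=o(1)$ would let us read off a consistent clustering, hence a consistent estimate of the separating direction, hence a powerful test. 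Converting that test back through $\cH(\mA,r)$ and invoking Theorem~\ref{thm::lower_known_cov} yields the constant lower bound. The main obstacle is precisely this last reduction: unlike estimation and support recovery, clustering accuracy does not mechanically produce a test, so one needs the observation that a clustering with small misclassification rate, averaged over $\bX$, identifies the level set $\{\nu f(\bX;\bmu_1,\bSigma)\gtrless(1-\nu)f(\bX;\bmu_2,\bSigma)\}$, which is a halfspace with normal $\bSigma^{-1}\Delta\bmu$; estimating that halfspace from $O(d^\eta)$ queries and thresholding gives the contradiction. I expect the bookkeeping of constants (propagating $C$ through the $\gamma_n/64$ versus $\gamma_n/16$ slack, and through the permutation) to be routine, while the clustering-to-testing reduction is where the real argument lies.
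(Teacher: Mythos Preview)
Your strategy matches the paper's: each claim is proved by contradiction, converting a hypothetically successful estimator/recoverer/clusterer into a test in $\cH(\mA,r)$ and invoking Theorem~\ref{thm::lower_known_cov}. For (i) the paper thresholds $\Delta\hat\bmu^\top\bSigma^{-1}\Delta\hat\bmu$ exactly as you do; for (ii) your test $\ind\{\supp(\Delta\hat\bmu)\neq\emptyset\}$ is in fact cleaner than the paper's, which uses $\ind\{\hat\cS=\supp(\Delta\bmu)\}$ together with the ad hoc assumption that the recoverer returns a uniformly random subset of $[d]$ under $H_0$. For (iii)---which you rightly flag as the delicate reduction---the paper does not estimate the full halfspace from the clustering as you propose; instead it issues one additional oracle query of the form $\vb_0^\top\bSigma^{-1}(\xb-\bmu)\cdot\ind\{F(\xb)=1\}$ (truncated), where $\vb_0=\Delta\bmu/\sqrt{\Delta\bmu^\top\bSigma^{-1}\Delta\bmu}$, whose expectation is $\pm\nu(1-\nu)\sqrt{\Delta\bmu^\top\bSigma^{-1}\Delta\bmu}$ under $H_1$ given accurate clustering and $0$ under $H_0$ provided $F$ is assumed independent of $\bX$ there. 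Both your sketch and the paper's argument lean on such an $H_0$-behavior assumption, and the paper's extra query moreover depends on the unknown direction $\vb_0$, a point it leaves unaddressed.
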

\begin{proof}
See \S\ref{proof::col::implications} for a detailed proof.
\end{proof}

\subsection{Relationship to  Sparse PCA}\label{sec::connet_sparse_PCA}
The detection of sparse Gaussian mixtures is closely related to sparse principal component detection \citep{berthet2013computational, berthet2013optimal}, where one aims to test
\$ 
H_0 \colon \bX \sim N(0, \Ib)~~\text{versus} ~~H_1 \colon \bX \sim N(0, \Ib + \lambda \vb \vb^{\top}).
\$
Here $\lambda >0$ and $\vb$ is an $s$-sparse vector satisfying $\| \vb\|_2 = 1$. Let $\bSigma = \Ib$ in \eqref{eq::testing}. Under the~alternative hypothesis, we have
\$
\Cov(\bX) = \Ib + \nu(1- \nu) \Delta \bmu \Delta \bmu^\top,
\$
while under the null we have $\Cov(\bX) = \Ib$. That is to say, sparse Gaussian mixture detection~is~easier than sparse principal component detection, in the sense that we can use algorithms for the latter to solve the former. Therefore, Theorem \ref{thm::lower_known_cov} implies an unconditional lower bound for sparse principal component detection under the statistical query model, i.e., $\lambda = \|\Delta \bmu\|_2^2 = \Omega (\sqrt{s^2/n} )$ is necessary~for attaining computational tractability. This result is also obtained in \cite{wang2015sharp} and mirrors the conditional computational lower bound of \cite{berthet2013computational, berthet2013optimal}.  Furthermore, it is also worth mentioning that \cite{berthet2013computational} show that detecting the presence of a planted clique in a graph is easier than detecting the existence of a sparse principal component. However, it remains unclear whether we can use any algorithm that successfully detects the Gaussian mixtures to solve the planted clique detection problem or vice versa.

\section{Main Results for Mixture of Regressions} \label{sec::theory_reg}
As another example of statistical models with heterogeneity, we introduce the theoretical results for detecting mixture of regressions in this section. For ease of presentation, as stated in \S \ref{sec::bg_reg}, we focus on the mixture of two symmetric sparse regression components in high dimensions.

Recall that for the mixture of regression model in \eqref{eq::regression_model}, we denote by $\btheta = ( \bbeta, \sigma^2)$ the model  parameters.
For the  detection problem defined in \eqref{eq::testing2}, we assume that $\sigma$ is unknown.  Then the  parameter space for the null hypothesis is defined as
 $
 \cG_0 =\{ \btheta = (\zero, \sigma^2 )\colon \sigma >0\}.
 $
For any $\gamma_n >0 $ and sparsity level $s$, we consider the following parameter space for the alternative hypothesis,
 \$
 \cG_1 ( s, \gamma_n) = \bigl\{ \btheta = (\bbeta, \sigma^2)\in \RR^{d+1}\colon    \|  \bbeta   \|_0 = s, \rho(\btheta) \geq \gamma_n \bigr\},
 \$
where $\rho(\btheta) = \| \bbeta\|_2^2 / \sigma ^2 $ is the signal strength.

\subsection{Lower Bounds}\label{sec::lower_bound_reg}
Now we  establish the lower bounds  for the detection of mixture of regressions.
The next proposition characterizes the minimax separation rate $\alpha_n^*$  of the detection problem.

\begin{proposition}\label{prop::info_lower_bound_reg}
We consider the detection problem defined in \eqref{eq::testing2}  where  $\sigma$  is unknown and~the parameter spaces for the null hypothesis and the alternative hypothesis are given by $\cG_0 $ and $\cG_1( s, \gamma_n)$, respectively.  We assume that $\limn \max(s, n)/d = 0$. If
\#\label{eq::info_detection_rate_reg}
 \gamma_n = o ( \sqrt{ s \log d /n}),
\#
then any hypothesis test is asymptotically powerless, that is, $ \lim_{n\rightarrow \infty} R_n^*  [ \cG_0 , \cG_1( s, \gamma_n)] = 1.$
\end{proposition}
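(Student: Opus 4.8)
The plan is to establish the information-theoretic lower bound via a standard $\chi^2$-divergence (second-moment) argument against a carefully chosen prior over the alternative, exactly mirroring the technique for the Gaussian mixture case in Proposition~\ref{prop::info_lower_bound}. Concretely, under $H_0$ the data $\{\zb_i\}_{i=1}^n$ with $\zb_i = (y_i, \xb_i)$ are i.i.d.\ from $\PP_0$, where $\bX \sim N(0, \Ib)$ and, because $\bbeta = \zero$, $Y = \epsilon \sim N(0, \sigma^2)$ independently; so $\PP_0$ is just $N(0, \sigma^2) \otimes N(0, \Ib)$. Under $H_1$ I would place a prior on $\btheta = (\bbeta, \sigma^2)$ by fixing a signal magnitude $\rho = \gamma_n$ (chosen so that $\rho = o(\sqrt{s\log d/n})$), setting $\sigma^2 = 1$, and drawing $\bbeta = \sqrt{\rho}\, \vb / \sqrt{s}$ where $\vb$ is uniform over the sign vectors $\cG(s) = \{\vb \in \{-1,0,1\}^d : \|\vb\|_0 = s\}$ (so $\|\bbeta\|_2^2 = \rho$ and $\rho(\btheta) = \rho/\sigma^2 = \gamma_n$ as required). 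Writing $\overline{\PP}_1 = \E_{\vb}[\PP_{\btheta(\vb)}]$ for the induced mixture, it suffices by Le Cam / the second-moment method to show $\chi^2(\overline{\PP}_1^{\otimes n} \,\|\, \PP_0^{\otimes n}) = \E_{\vb, \vb'}\big[(1 + \langle \text{contribution}\rangle)^n\big] - 1 \to 0$, which by the standard tensorization identity reduces to controlling $\E_{\vb, \vb'}[(1 + \psi(\vb,\vb'))^n]$ where $\psi(\vb,\vb')$ is the per-observation $\chi^2$ cross term $\int (\mathrm{d}\PP_{\btheta(\vb)}/\mathrm{d}\PP_0)(\mathrm{d}\PP_{\btheta(\vb')}/\mathrm{d}\PP_0)\,\mathrm{d}\PP_0 - 1$.

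The main computation is to evaluate $\psi(\vb, \vb')$ for the symmetric two-component regression likelihood. For a single observation $\zb = (y, \xb)$, the density ratio is
\$
\frac{\mathrm{d}\PP_{\btheta(\vb)}}{\mathrm{d}\PP_0}(\zb) = \tfrac12 \exp\!\big(\bbeta^\top \xb\, y - \tfrac12 (\bbeta^\top \xb)^2\big) + \tfrac12 \exp\!\big(-\bbeta^\top \xb\, y - \tfrac12 (\bbeta^\top \xb)^2\big) = \exp\!\big(-\tfrac12 (\bbeta^\top\xb)^2\big)\cosh(\bbeta^\top\xb\, y),
\$
using $\sigma^2 = 1$. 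Plugging two such factors (for $\bbeta = \bbeta(\vb)$ and $\bbeta' = \bbeta(\vb')$) into the $\chi^2$ cross term, I would first integrate out $y \sim N(0,1)$ using the Gaussian moment generating formula $\E[\cosh(a y)\cosh(b y)] = \tfrac12 e^{(a+b)^2/2} + \tfrac12 e^{(a-b)^2/2}$ with $a = \bbeta^\top\xb$, $b = \bbeta'^\top\xb$, and then integrate out $\xb \sim N(0,\Ib)$. The resulting expression depends on $\vb, \vb'$ only through the inner product $\langle \vb, \vb'\rangle$; a routine Gaussian-quadratic-form computation should yield something of the form $1 + \psi$ with $\psi = \psi(\langle\vb,\vb'\rangle)$ that is quadratic-exponential in $(\rho/s)\langle\vb,\vb'\rangle$ — analogous to, but squared relative to, the linear-signal case because the effective signal here enters only through $\bbeta^\top\xb$ appearing quadratically (reflecting the symmetry $\bbeta \leftrightarrow -\bbeta$). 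I would then bound $(1+\psi)^n \le \exp(n\psi)$ and take the expectation over the distribution of the overlap $K = \langle \vb, \vb'\rangle$ for two independent uniform draws from $\cG(s)$; a standard hypergeometric tail bound gives that $|K|$ is $O(s/\sqrt{d})$ typically with sub-Gaussian-type tails up to $K = s$, and the condition $\max(s,n)/d \to 0$ together with $\gamma_n = o(\sqrt{s\log d/n})$ ensures $\E_K[\exp(n\psi(K))] \to 1$.

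The step I expect to be the main obstacle is the moment-generating-function control of $\E_K[\exp(n\psi(K))]$ over the full range of overlaps $K \in \{0, 1, \dots, s\}$: the contribution from atypically large $K$ (near $s$) must be shown negligible, and this is exactly where the precise shape of $\psi$ as a function of $K$ and the hypergeometric tail of $K$ interact delicately — the same kind of "truncation / two-regime" split (typical overlap vs.\ large overlap) that appears in sparse-PCA and sparse-mixture lower bounds. In carrying this out I would either truncate the prior to vectors whose pairwise overlaps are well-behaved, or split $\E_K[\exp(n\psi(K))] = \E_K[\exp(n\psi(K))\ind\{|K| \le t\}] + \E_K[\exp(n\psi(K))\ind\{|K| > t\}]$ for an appropriate threshold $t$, controlling the first term by Taylor expansion ($\psi(K) \approx c(\rho/s)^2 K^2$ near $K=0$, so $n\psi = o(1)$ when $\rho^2 = o(s\log d /n)$ — note $\gamma_n^2 = o(s\log d/n)$) and the second by the crude bound $\psi(K) \le \psi(s)$ against the hypergeometric tail $\PP(|K| > t)$. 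I would note that since $\cite{arias2017detection}$ already prove this proposition, one may alternatively simply cite it; but the $\chi^2$ route above is the natural self-contained argument, and it parallels the divergence computation that will be reused (in a localized form) for the computational lower bound.
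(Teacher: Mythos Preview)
Your overall strategy---a second-moment/$\chi^2$ argument against a uniform prior on $\cG(s)$---is the right one, but the construction has a genuine gap: you take $\sigma^2=1$ under the alternative and (implicitly) $\sigma_0^2=1$ under the null. With that choice the marginal law of $Y$ is $N(0,1)$ under $\PP_0$ but $N(0,1+\rho)$ under every $\PP_{\btheta(\vb)}$, hence also under the mixture $\overline\PP_1$. A test based on $\sum_i Y_i^2$ already separates the two whenever $\rho=\omega(1/\sqrt n)$, so the $\chi^2$ cannot vanish over the target range $\rho\asymp\sqrt{s\log d/n}$. You can see this in your own computation: after integrating out $y$ and simplifying the exponent you get the cross term $\EE_{\bX}[\cosh((\bbeta^\top\bX)(\bbeta'^\top\bX))]$; even when $\supp(\vb)\cap\supp(\vb')=\varnothing$ (so $\bbeta^\top\bX$ and $\bbeta'^\top\bX$ are independent $N(0,\rho)$) this equals $(1-\rho^2)^{-1/2}\approx 1+\rho^2/2$, so $\psi(\vb,\vb')\ge \rho^2/2$ \emph{uniformly} and $\EE_{\vb,\vb'}[(1+\psi)^n]\ge e^{n\rho^2/2}\to\infty$.

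The repair, and what the paper does, is to exploit that $\sigma$ is unknown: choose the null variance $\sigma_0^2=\sigma^2+s\beta^2$, so that $Y\sim N(0,\sigma_0^2)$ under \emph{both} hypotheses and only the joint $(Y,\bX)$ structure carries information. The likelihood ratio then acquires an extra factor from the $Y$-variance mismatch, and the cross term works out (Lemma~\ref{lemma::compute_chi_square2}) to
\[
\EE_{\PP_0}\!\biggl[\frac{\ud\PP_{\vb_1}}{\ud\PP_0}\,\frac{\ud\PP_{\vb_2}}{\ud\PP_0}(\bZ)\biggr]
=\Bigl[1-\beta^4\langle\vb_1,\vb_2\rangle^2/(\sigma^2+s\beta^2)^2\Bigr]^{-1},
\]
which is exactly $1$ on disjoint supports and depends only on $\langle\vb_1,\vb_2\rangle^2$. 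From here the paper bounds $(1-x^2)^{-1}\le\cosh(2x)$ and reduces to the same Rademacher/hypergeometric overlap calculation you sketched; no truncation of the prior is needed. A minor additional point: this proposition is not one of the results taken from \cite{arias2017detection}; the paper proves it directly (the Arias--Castro--Verzelen citations are for Propositions~\ref{prop::info_lower_bound} and~\ref{prop::info_lower_bound2} on the Gaussian mixture model), invoking their overlap bound only at the final step.
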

\begin{proof}
See \S \ref{proof::prop::info_lower_bound_reg} for a detailed proof.
\end{proof}

In \S \ref{sec::upper_bound_reg} we will show, there exists an algorithm with superpolynomial oracle complexity that gives an asymptotically powerful test under the statistical query model as long as
  $\gamma_n = \Omega [ \log n \cdot \sqrt{ s \log d /n}]$, where $\log n$  arises due to an artificial truncation which ensures the query functions to be  bounded.~Then, together with  \eqref{eq::info_detection_rate_reg}, we conclude that  the information-theoretic lower bound in Proposition  \ref{prop::info_lower_bound_reg}~is~tight up to a  $\log n$ term,  and that~$\alpha _n^* = \sqrt{ s \log d /n}$ is the minimax separation rate.

  In the sequel, we establish the computational lower bound, which implies the above information-theoretic lower bound  is not achievable by any computationally tractable hypothesis tests under the statistical query model.


\begin{theorem} \label{thm::lower_reg}
For the detection problem defined in \eqref{eq::testing} with unknown $\sigma$ and known $s$, we assume $\limn \max(s^2, n)/d = 0$, and there exists a sufficiently small constant $\delta >0$ such that~$s^2 / d^{1-\delta} = O(1)$. If $\gamma_n = o ( \sqrt{s^2/ n}  )$, for any constant $\eta>0$,   and any  $\mA \in \cA(T)$ with $T = O(d^\eta)$, there exists~an oracle $r\in\cR[\xi,n,T, M, \eta(\cQ_{\mA}) ]$ such that
\$\limn \overline{R}_n^* \bigl[ \cG_0 , \cG_1(s, \gamma_n); \mA , r\bigr] = 1.
\$
Therefore,  for detecting mixture of regressions, any hypothesis test  with $T = O(d^\eta)$ oracle complexity under the statistical query model  is asymptotically powerless if
$\gamma_n = o ( \sqrt{s^2/ n}  )$.
\end{theorem}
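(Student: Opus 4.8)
The plan is to mirror the proof of Theorem~\ref{thm::lower_known_cov} after recasting the model~\eqref{eq::regression_model} as a mean-zero two-component Gaussian mixture in the augmented vector $\bZ=(Y,\bX)\in\RR^{d+1}$. Conditioning on the latent sign $\eta$, the pair $(Y,\bX)$ is jointly Gaussian with mean $\zero$ and covariance $\bSigma_\eta$ whose $(1,1)$ entry equals $\sigma^2+\|\bbeta\|_2^2$, whose first-row off-diagonal block equals $\eta\bbeta^\top$, and whose lower-right $d\times d$ block equals $\Ib$; hence under $H_1$ in~\eqref{eq::testing2} the law of $\bZ$ is the equal mixture $\tfrac12 N(\zero,\bSigma_{+1})+\tfrac12 N(\zero,\bSigma_{-1})$. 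The crucial point is that the noise level is a free parameter under $H_0$, so we may take the null to be $\btheta_0=(\zero,\sigma_0^2)$ with $\sigma_0^2=\sigma^2+\|\bbeta\|_2^2$; then $\Cov_{\PP_{\btheta_0}}(\bZ)=\tfrac12(\bSigma_{+1}+\bSigma_{-1})$ coincides with $\Cov(\bZ)$ under $H_1$, so the entire second moment of $\bZ$ is matched and the detectable signal lives only in the fourth and higher moments --- concretely, for $\btheta=(\bbeta,\sigma^2)$ one has $\EE_{\PP_{\btheta}}[Y^2\bX\bX^\top]-\EE_{\PP_{\btheta_0}}[Y^2\bX\bX^\top]=2\bbeta\bbeta^\top$, a sparse rank-one spike of strength $\asymp\|\bbeta\|_2^2=\gamma_n\sigma^2$. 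This is exactly the sparse-PCA-type structure behind Theorem~\ref{thm::lower_known_cov} (cf.\ \S\ref{sec::connet_sparse_PCA}): whitening by $\bSigma_{\btheta_0}^{-1/2}$ turns $H_1$ into $\tfrac12 N(\zero,\Ib+E)+\tfrac12 N(\zero,\Ib-E)$ with $E$ a $2s$-sparse symmetric matrix of operator norm $\asymp\sqrt{\gamma_n}$, i.e.\ the known-covariance mixture detection problem, so its machinery transfers.

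With this reformulation, the argument follows the template of \S\ref{proof::thm::lower_known_cov}. By Proposition~\ref{prop::info_lower_bound_reg} we may assume $s/\log d\to\infty$ (otherwise $\sqrt{s^2/n}\lesssim\sqrt{s\log d/n}$ and the claim is already information-theoretic), and by rescaling $Y$ we may assume $\sigma=1$, so $\rho(\btheta)=\|\bbeta\|_2^2$. Take the localized prior $\pi$ uniform over $\{\bbeta(\vb)=\sqrt{\gamma_n/s}\,\vb:\vb\in\cG(s)\}$, with $\cG(s)$ as in~\eqref{eq:wbarg}; then $\pi$ is supported on $\cG_1(s,\gamma_n)$. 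Given $\eta>0$ and $\mA\in\cA(T)$ with $T=O(d^\eta)$, consider the oracle $r$ that answers $Z_q=\EE_{\PP_{\btheta_0}}[q(\bZ)]$ to every query $q$; it is vacuously valid for $\btheta_0$, and since its answers are parameter-independent, the query sequence $q_1,\dots,q_T$ of $\mA$ is deterministic. It suffices to show that, for $\vb\sim\pi$, this $r$ is also valid for $\btheta=(\bbeta(\vb),1)$ with probability tending to one: there is then a single $\btheta^*\in\cG_1(s,\gamma_n)$ for which $r\in\cR[\xi,n,T,M,\eta(\cQ_{\mA})]$, and any test $\phi\in\cH(\mA,r)$ --- a function of the fixed answers $Z_{q_1},\dots,Z_{q_T}$ --- returns a single value, so $\sup_{\btheta\in\cG_0}\overline{\PP}_{\btheta}(\phi=1)+\sup_{\btheta\in\cG_1(s,\gamma_n)}\overline{\PP}_{\btheta}(\phi=0)=1$ and hence $\overline{R}_n^*[\cG_0,\cG_1(s,\gamma_n);\mA,r]=1$. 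By~\eqref{eq::query_00}--\eqref{eq::query_2}, a union bound over the $T$ issued queries (the capacity term $\eta(\cQ_{\mA})$ in the tolerance being precisely what lets one pass uniformly to all of $\cQ_{\mA}$, as in \S\ref{proof::thm::lower_known_cov}), and Chebyshev's inequality, the failure probability is at most
\[
\sum_{t=1}^{T}\frac{\EE_{\vb\sim\pi}\bigl[\bigl(\EE_{\PP_{\bbeta(\vb)}}[q_t(\bZ)]-\EE_{\PP_{\btheta_0}}[q_t(\bZ)]\bigr)^{2}\bigr]}{\tau_{q_t}^{2}}\ \le\ \frac{n\,T\,\|\mathcal K\|_{\mathrm{op}}}{\eta(\cQ_{\mA})+\log(1/\xi)}\,,
\]
where, after subtracting $\EE_{\PP_{\btheta_0}}[q]$ and applying Cauchy--Schwarz together with $\tau_q^2\gtrsim[\eta(\cQ_{\mA})+\log(1/\xi)]\,(M^2-\EE^2[q])/n$, $\mathcal K$ is the integral operator on $L^2(\PP_{\btheta_0})$ with kernel $K(\bz,\bz')=\EE_{\vb\sim\pi}[(L_\vb(\bz)-1)(L_\vb(\bz')-1)]$ and $L_\vb=\mathrm d\PP_{\bbeta(\vb)}/\mathrm d\PP_{\btheta_0}$ (as in \S\ref{pf::lower}, one first replaces the alternatives by truncated versions with bounded likelihood ratio, changing the risk by $o(1)$; this standard step is suppressed). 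Everything thus reduces to proving $n\,d^{\eta}\,\|\mathcal K\|_{\mathrm{op}}=o(1)$.

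The crux --- and where the sharp $\chi^2$-divergence calculation of \S\ref{pf::lower} is needed --- is this bound on $\|\mathcal K\|_{\mathrm{op}}$. A direct moment computation, marginalizing $Y\sim N(0,\sigma_0^2)$ and $\bX\sim N(0,\Ib)$ against the explicit mixture densities, shows that $\langle L_\vb-1,L_{\vb'}-1\rangle_{\PP_{\btheta_0}}=\EE_{\PP_{\btheta_0}}[(L_\vb-1)(L_{\vb'}-1)]$ depends on $\vb,\vb'$ only through $\langle\vb,\vb'\rangle$, vanishes when $\supp(\vb)\cap\supp(\vb')=\emptyset$, and for overlap $k:=\langle\vb,\vb'\rangle$ has the same leading behavior $\asymp(\gamma_n/s)^2k^2$ as in the Gaussian-mixture case (where it equals $\cosh\!\big(c(\gamma_n/s)k\big)-1$ for an absolute constant $c$), the $Y$-coordinate contributing only lower-order corrections. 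One then controls $\|\mathcal K\|_{\mathrm{op}}$ by stratifying on the support overlap: the diagonal part ($\vb=\vb'$) contributes at most $\max_\vb\langle L_\vb-1,L_\vb-1\rangle_{\PP_{\btheta_0}}/|\cG(s)|$, which is negligible because $|\cG(s)|=2^s\binom{d}{s}$ is superpolynomial in $d$ once $s/\log d\to\infty$; while for the off-diagonal part one balances the growth of the correlation at overlap $k$ against $\PP_{\vb,\vb'\sim\pi}(|\langle\vb,\vb'\rangle|\ge k)\asymp(s^2/d)^{\Theta(k)}$ (valid since $\max(s^2,n)/d=o(1)$), the critical overlap being $k=O(\eta/\delta)=O(1)$, where $\gamma_n=o(\sqrt{s^2/n})$ forces $(\gamma_n/s)^2k^2$ to be small. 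This yields $\|\mathcal K\|_{\mathrm{op}}=o\!\big([\eta(\cQ_{\mA})+\log(1/\xi)]/(n\,d^{\eta})\big)$ and completes the proof. The main obstacle is precisely this operator-norm (statistical-dimension) estimate: the naive average-correlation bound $\EE_{\vb,\vb'\sim\pi}\langle L_\vb-1,L_{\vb'}-1\rangle_{\PP_{\btheta_0}}\asymp\gamma_n^2/d$ is too lossy, so one must reproduce the overlap-stratified argument of \S\ref{pf::lower} with the mixture-of-regressions likelihood ratio substituted for the Gaussian-mixture one; the remaining ingredients --- the Gaussian moment identities for $\langle L_\vb-1,L_{\vb'}-1\rangle_{\PP_{\btheta_0}}$ and the combinatorics of support overlaps under $\pi$ --- are routine adaptations of the corresponding lemmas behind Theorem~\ref{thm::lower_known_cov}.
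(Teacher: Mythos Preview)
Your setup is right: the moment-matching null $\sigma_0^2=\sigma^2+s\beta^2$, the uniform prior over $\{\beta\vb:\vb\in\cG(s)\}$, and the ``constant-answer'' oracle $Z_q=\EE_{\PP_{\btheta_0}}[q(\bZ)]$ are exactly what the paper uses (this is the content of Lemma~\ref{lemma::distinguish}). The moment identity you quote for the cross likelihood ratio also matches Lemma~\ref{lemma::compute_chi_square2}. The gap is in the reduction you make \emph{after} that point.

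Your Chebyshev step gives, for each issued query $q$,
\[
\PP_{\vb\sim\pi}\bigl(|\EE_{\PP_\vb}[q]-\EE_{\PP_0}[q]|>\tau_q\bigr)\ \le\ \frac{\langle \bar q,\mathcal K\bar q\rangle}{\tau_q^2}\ \le\ \frac{\|\mathcal K\|_{\mathrm{op}}\cdot\mathrm{Var}_{\PP_0}(q)}{\tau_q^2}\ \lesssim\ \frac{n\,\|\mathcal K\|_{\mathrm{op}}}{\eta(\cQ_{\mA})+\log(1/\xi)}.
\]
But for this prior, the Gram matrix $G_{\vb,\vb'}=\langle L_\vb-1,L_{\vb'}-1\rangle_{\PP_0}$ has nonnegative entries and \emph{constant} row sums (the signed permutation group acts transitively on $\cG(s)$), so by Perron--Frobenius its top eigenvalue is the row sum. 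Hence
\[
\|\mathcal K\|_{\mathrm{op}}=\frac{1}{|\cG(s)|}\,\lambda_{\max}(G)=\frac{1}{|\cG(s)|}\sum_{\vb'}G_{\vb,\vb'}=D_{\chi^2}(\overbar\PP,\PP_0)\asymp \frac{\gamma_n^2}{d},
\]
exactly the ``naive average-correlation bound'' you yourself flag as too lossy. Plugging in $T=d^\eta$, $\gamma_n^2=o(s^2/n)$, and $s^2=O(d^{1-\delta})$ gives $nT\|\mathcal K\|_{\mathrm{op}}/\log d=O(d^{\eta-\delta}/\log d)$, which is $o(1)$ only when $\eta<\delta$. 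Your overlap stratification does not rescue this: writing $G=\sum_k G^{(k)}$ with $G^{(k)}$ supported on overlap-$k$ pairs, each $G^{(k)}$ again has constant row sums, so $\|G^{(k)}\|_{\mathrm{op}}=g(k)\,|\cC_{s-k}(\vb)|$ and $\sum_k\|G^{(k)}\|_{\mathrm{op}}$ reproduces the row sum. The operator-norm route therefore cannot reach arbitrary polynomial $T$.

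The paper does \emph{not} bound $|\cC(q)|/|\cG(s)|$ by a second-moment (Chebyshev) estimate. Instead it runs a two-sided argument on the mixture $\PP_{\cC_\ell(q)}$ over the bad set itself: Lemma~\ref{lemma::bound_chi_square_div} gives $D_{\chi^2}(\PP_{\cC_\ell(q)},\PP_0)\ge \tau^2:=2\log(T/\xi)/(3n)$, while the overlap-layer computation (the same $\cC_j(\vb)$, Lemma~\ref{lemma::growth_Cj}, and $(1-x^2)^{-1}\le\cosh(2x)$ applied to Lemma~\ref{lemma::compute_chi_square2}) gives $1+D_{\chi^2}(\PP_{\cC_\ell(q)},\PP_0)\le\cosh[(s-k_\ell+1)\mu]\cdot(1+o(1))$ with $\mu\asymp\gamma_n/s$. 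These together force $s-k_\ell\gtrsim\sqrt{\log(1+\tau^2)}/\mu\to\infty$ when $\gamma_n=o(\sqrt{s^2/n})$, and hence $|\cC_\ell(q)|/|\cG(s)|\le 2\zeta^{-(s-k_\ell)}$ is \emph{superpolynomially} small in $d$, killing any $T=d^\eta$. The key object is the localized $\chi^2$-divergence of the mixture over $\cC_\ell(q)$, not the global operator norm of $\mathcal K$; that localization is precisely what your Chebyshev bound throws away.
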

\begin{proof}
See \S\ref{proof::thm::lower_reg} for a detailed proof.
\end{proof}

Furthermore, similar to the Gaussian mixture model, if   $
  \rho(\btheta) =   \Omega  (\log n \cdot  \sqrt{s^2 \log d/n})
  $,  it can be shown that    there exists a computationally tractable test under the statistical query model that  is asymptotically powerful. Hence, our computational  lower bound in Theorem \ref{thm::lower_known_cov} is tight up to  logarithmic factors, and the computationally feasible minimax separation rate is $\beta _n^* = \sqrt{s^2/n}$ when ignoring the logarithmic terms. Such a  gap between $\alpha _n^*$ and $\beta _n^*$ indicates that a  factor of $\sqrt{s/\log d}$ in terms of statistical optimality  has to be compromised so as to achieve computational tractability.

As shown in their proofs, the lower bounds in Proposition \ref{prop::info_lower_bound_reg} and Theorem \ref{thm::lower_reg} are obtained by restricting on the following subsets of $\cG_0  $ and $\cG_1 ( s, \gamma_n)$:
\#\label{eq:wbarg2}
& \overbar{\cG}_0  = \bigl\{ \btheta = (\zero,\sigma_0^2 ) \bigr\} \subseteq \cG_0 ,~~\text{and}~~\overbar{\cG}_1 (s, \gamma_n) = \bigl\{ \btheta = ( \beta \cdot \vb, \sigma^2 )\colon \vb \in \cG(s) \bigr\} \subseteq \cG_1 (s, \gamma_n),
\#
where  $\cG( s) =  \{ \vb \in \{ -1, 0,1\}^d \colon \| \vb \|_0  = s  \}$, $\sigma > 0$ is a constant, and $s \beta^2 /\sigma^2 = \gamma_n$. Note that the variance of the noise term  $\epsilon$ in \eqref{eq::regression_model} is unknown. Here we set $\sigma_0^2 = \sigma^2 + s\beta^2 $ to ensure that the marginal distribution of $Y$ is the same under both the null and alternative hypotheses. 
This model subclass captures  the most challenging setting of detecting mixture of regressions in terms of statistical error and computational complexity. 

\subsection{Upper Bounds}\label{sec::upper_bound_reg}

In this section, we introduce hypothesis tests for detecting mixture of regressions under the~statistical query model. Specifically, for simplicity we primarily focus on the restricted testing problem~$H_0\colon \btheta\in \overbar{\cG}_0$ versus $H_1 \colon \btheta \in \overbar{\cG}_1 (s, \gamma_n)$, where  the parameter spaces are defined in \eqref{eq:wbarg2}. It is worth noting that the corresponding upper bounds can be extended from $\overbar{\cG}_0$ and $\overbar{\cG}_1 (s, \gamma_n)$ in \eqref{eq:wbarg2} to the more general parameter spaces ${\cG}_0$ and ${\cG}_1 (s, \gamma_n)$ in the same way as for Gaussian mixture model in Appendix \ref{ap:GMM}.

 For notational simplicity, we denote the distribution of $\bZ = (Y, \bX)$ by $\PP_0$ under the null~hypothesis  and by $\PP_{\vb}$ under the alternative hypothesis when $\bbeta = \beta \cdot \vb$ for some $\vb \in \cG(s)$. Note  that
 \#\label{eq::Second_Moment}
 \EE_{\PP_0} ( Y^2 \bX \bX^{\top}) = (\sigma^2 + s\beta^2) \cdot \Ib~~\text{and}~~ \EE_{\PP_{\vb}}  ( Y^2 \bX \bX^{\top})  = (\sigma^2 +   s \beta^2) \cdot \Ib + 2 \beta ^2 \vb \vb^\top.
\#
 Similar to  the hypothesis tests constructed  in \S \ref{sec::upper_bound} for Gaussian mixture model, we define test~functions  based on the second moments of $Y\bX$, as specified in \eqref{eq::Second_Moment}.
 Recall that the
statistical query model  in Definition \ref{def::oracle} only allow bounded queries.
We truncate  both  $Y$ and $\bX$  so as  to obtain valid hypothesis tests.

More specifically, to obtain a hypothesis test that attains the information-theoretic lower~bound in Proposition \ref{prop::info_lower_bound_reg},   for all $ \vb\in \cG(s)$, we consider the query function
 \#\label{eq::query_reg1}
q_{\vb}(Y, \bX) =  Y ^2\cdot \bigl[s^{-1}   (   \bX^\top \vb )^2 - 1\bigr] \cdot  \ind(|Y| \leq \sigma R)  \cdot \ind \bigl \{ | \vb^\top   \bX  | \leq  R \sqrt{s \log n} \bigr \} ,
\#
where $R>0$ is an absolute constant. Hence in this case we have $T = | \cG(s)| = 2^s {d \choose s}$.
By direct computation, we have
\$
\EE_{\PP_0} \{ Y^2 \cdot [ s^{-1}  ( \bX^\top \vb)^2  - 1]\}= 0\quad \text{and}\quad \EE_{\PP_{\vb}} \{  Y^2 \cdot [ s^{-1}  ( \bX^\top \vb)^2  - 1] \} = 2s  \beta^2
\$
for all $\vb \in \cG(s)$.
   As we will show in \S \ref{sec::truncation_level},   we can set the truncation level $R$ to be a sufficiently large absolute constant such that
\# \label{eq:goal_truncation_reg}
{   \sup_{\vb' \in \cG(s) } } \Bigl\{ \EE_{\PP_{\vb}} \bigl[q_{\vb'} (Y, \bX)\bigr]  -  \EE_{\PP_0} \bigl[q_{\vb'}( Y , \bX)\bigr] \Bigr\} \geq s\beta ^2,  
\#

 Similar to the test function  in \eqref{eq::test_fun1}, let $Z_{q_{\vb}}$ be the output of the~oracle for query function $q_{\vb}$~defined in \eqref{eq::query_reg1}, we define the test function as
\#\label{eq::test_fun_reg1}
\ind \Bigl[{ \sup_{\vb\in \cG( s)}} Z_{q_{ \vb} }\geq  C \sigma^2  \cdot \log n \cdot  \sqrt{[ s \log  (2d) + \log ( 1/\xi)] /n} \Bigr],
\#
where $C$ is an absolute constant.
Similar to the proof of Theorem \ref{thm::test1}, we can prove that this hypothesis test has risk no more than $2\xi$ given that
\#\label{eq:reg_signal1}
 s \beta ^2 / \sigma ^2 = \gamma_n  = \Omega\bigl\{ \log n \cdot \sqrt{   [s \log  (2d) + \log ( 1/\xi) ] / n }\bigr\}.
 \#
Thus, setting $\xi = 1/d$ in \eqref{eq::test_fun_reg1}, we conclude that the lower bound in Proposition \ref{prop::info_lower_bound_reg} is tight up to a term logarithmic in $n$.

Notice that the test function defined in \eqref{eq::test_fun_reg1} requires superpolynomial oracle complexity under the statistical query model. For the computationally tractable test, we consider query functions
 \#\label{eq::query_reg2}
  q_j (Y, \bX) =  Y ^2 \cdot   (X_j^2- 1 ) \cdot  \ind( | Y| \leq \sigma R) \cdot  \ind \bigl \{ | X_j |  \leq  R \sqrt{\log n} \bigr \},~\text{for~all~} j \in [d].
\#
In this case,
the oracle complexity is $T =d $.
Similar to \eqref{eq:goal_truncation_reg}, as will be shown in \S \ref{sec::truncation_level},
we can set  the truncation level~$R$~to be a sufficiently large absolute constant such that
\#\label{eq:goal_truncation_reg2}
{  \sup_{j \in [d]} } \Bigl\{ \EE_{\PP_{\vb}}\bigl[q_{j } (Y, \bX)\bigr]  -  \EE_{\PP_0} \bigl[q_{j}( Y , \bX)\bigr] \Bigr\} \geq \beta^2.
\#
Let $Z_{q_j}$ be the output of the oracle for query function  $q_j$ in \eqref{eq::query_reg2}.  Similar to the test in~\eqref{eq::test_fun2},  we define the computationally tractable test function as
\#\label{eq::test_fun_reg2}
\ind \Bigl[{ \max _{j\in [d]}} Z_{q_j} \geq C'\sigma^2   \cdot \log n \cdot  \sqrt{ \log ( d / \xi)/ n}  \Bigr],
\#
where  $C'$ is an absolute constant. Similar to the proof of Theorem \ref{thm::test2}, we can prove that~this  test has risk no more than $2\xi$ given $  \beta ^2  / \sigma^2  = \Omega[\log n \cdot \sqrt{\log ( d / \xi)/n}]$, which is equivalent to
\#\label{eq:reg_signal2}
s \beta^2 / \sigma ^2 = \gamma_n = \Omega\bigl[\log n \cdot \sqrt{ s^2 \log ( d / \xi) / n}\bigr].
\#
Thus, by setting $\xi = 1/d$ in \eqref{eq::test_fun_reg2}, we conclude that the    computational lower bound in Theorem \ref{thm::lower_reg} is tight up to logarithmic terms.

\subsection{Implication for Parameter Estimation}
For the mixture of regression model, our statistical-computational tradeoff  in the detection problem also implies  the computational barrier for tasks including parameter estimation, support recovery,~and clustering. \cite{wang2014high} tackle the estimation problem by proposing an EM algorithm,~which attains an estimator with statistical rate of the order $ \| \bbeta \|_2 ^{-1} \sqrt{s \log d /n}$. However, they assume~the signal strength   $\rho(\btheta) = \| \bbeta \|_2^2 / \sigma^2$ to be sufficiently large and the existence of a good initialization~for the algorithm. Here we prove that the $O( \| \bbeta \| _2^{-1} \sqrt{s \log d /n})$ rate of convergence is not achievable by computationally feasible algorithms when $\rho(\btheta) = o( \sqrt{s^2 /n})$.
\begin{theorem}\label{col::implications_reg}
For the sparse mixture of regression model in \eqref{eq::regression_model} with  $\| \bbeta\| _0 = s$, we assume that   $ \| \bbeta \|_2^2 /\sigma^2 =  \gamma_n  = o(\sqrt{s^2 /n})$. Then  for any constant  $\eta >0$  and any $\mA \in \cA(T)$ with $T = O(d^{\eta})$, there exists an oracle $r \in\cR[\xi,n,T, \eta(\cQ_{\mA}) ]$ and an absolute constant $C>0$  such that under the statistical query model,  for any estimator $\hat \bbeta $ of $\bbeta$ with polynomial oracle complexity, it holds that
\#\label{eq::argument_reg}
\overline \PP_{\btheta} \big(  \| \hat \bbeta - \bbeta \|_2 ^2>    s \gamma_n  /64 \bigr)\geq C.
\#
 \end{theorem}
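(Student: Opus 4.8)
The plan is to reduce parameter estimation to the detection problem already handled by Theorem~\ref{thm::lower_reg}: a computationally feasible estimator $\hat\bbeta$ that is accurate on more than a constant fraction of the sample space would, after a single query-free thresholding step, produce a computationally feasible test for $H_0\colon\btheta\in\overbar{\cG}_0$ versus $H_1\colon\btheta\in\overbar{\cG}_1(s,\gamma_n)$ whose risk vanishes, contradicting Theorem~\ref{thm::lower_reg}. Fix $\eta>0$ and an $\mA\in\cA(d^\eta)$ that outputs $\hat\bbeta$ from the oracle's responses. It suffices to work inside the hard sub-family $\overbar{\cG}_0,\overbar{\cG}_1(s,\gamma_n)$ of \eqref{eq:wbarg2} on which the lower bound of Theorem~\ref{thm::lower_reg} is attained; there $\bbeta=\zero$ under $H_0$, while under $H_1$ one has $\bbeta=\beta\vb$ with $\vb\in\cG(s)$, so $\|\bbeta\|_2^2=s\beta^2$. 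The construction can be taken with noise level $\sigma^2=s$ in \eqref{eq:wbarg2}, so that $\|\bbeta\|_2^2=s\gamma_n$; this is what makes the radius $s\gamma_n/64$ in \eqref{eq::argument_reg} the right scale.

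Next I would convert the estimator into a test. Let $\mA'$ be the algorithm that runs $\mA$, reads off $\hat\bbeta$, and returns $\phi=\ind\{\|\hat\bbeta\|_2^2>s\gamma_n/16\}$; the thresholding step issues no oracle queries, so $\mA'\in\cA(d^\eta)$ and $\phi\in\cH(\mA',r)$ for every oracle $r$. A triangle-inequality argument controls both error probabilities of $\phi$ by the $\ell_2$ estimation error. On $\overbar{\cG}_0$ we have $\bbeta=\zero$, hence $\{\phi=1\}=\{\|\hat\bbeta-\bbeta\|_2^2>s\gamma_n/16\}\subseteq\{\|\hat\bbeta-\bbeta\|_2^2>s\gamma_n/64\}$. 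On $\overbar{\cG}_1(s,\gamma_n)$, if $\|\hat\bbeta-\bbeta\|_2^2\leq s\gamma_n/64$ then $\|\hat\bbeta\|_2\geq\|\bbeta\|_2-\sqrt{s\gamma_n/64}\geq(1-1/8)\sqrt{s\gamma_n}$, so $\|\hat\bbeta\|_2^2>s\gamma_n/16$ and $\phi=1$; thus $\{\phi=0\}\subseteq\{\|\hat\bbeta-\bbeta\|_2^2>s\gamma_n/64\}$. Taking suprema over the two classes,
\$
\overline{R}_n^*\bigl[\overbar{\cG}_0,\overbar{\cG}_1(s,\gamma_n);\mA',r\bigr]\leq\sup_{\btheta\in\overbar{\cG}_0}\overline{\PP}_\btheta\bigl(\|\hat\bbeta-\bbeta\|_2^2>s\gamma_n/64\bigr)+\sup_{\btheta\in\overbar{\cG}_1(s,\gamma_n)}\overline{\PP}_\btheta\bigl(\|\hat\bbeta-\bbeta\|_2^2>s\gamma_n/64\bigr).
\$

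To finish, I would invoke Theorem~\ref{thm::lower_reg}. Since $\gamma_n=o(\sqrt{s^2/n})$, that theorem applied to $\mA'$ --- whose proof in fact establishes the conclusion over the restricted classes $\overbar{\cG}_0,\overbar{\cG}_1$ --- furnishes an oracle $r$ with $\overline{R}_n^*[\overbar{\cG}_0,\overbar{\cG}_1(s,\gamma_n);\mA',r]\to1$. Combined with the display, $\sup_\btheta\overline{\PP}_\btheta(\|\hat\bbeta-\bbeta\|_2^2>s\gamma_n/64)\geq 1/2-o(1)$, so for all large $n$ there is a parameter $\btheta$ with $\|\bbeta\|_0=s$ and $\rho(\btheta)=\gamma_n$ at which $\overline{\PP}_\btheta(\|\hat\bbeta-\bbeta\|_2^2>s\gamma_n/64)\geq C$ with $C=1/4$; since $\mA$ was an arbitrary polynomial-complexity estimator this proves \eqref{eq::argument_reg}, and the same reduction applied to $\supp(\hat\bbeta)$ or to a plug-in clustering rule yields the analogous barriers for support recovery and clustering. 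The one genuinely delicate point is the radius bookkeeping in the first paragraph: one must enter the hard construction behind Theorem~\ref{thm::lower_reg} and verify that $\|\bbeta\|_2^2$ is a sufficiently large multiple of $s\gamma_n$ so that the triangle-inequality step separates $\overbar{\cG}_0$ from $\overbar{\cG}_1(s,\gamma_n)$ at level $s\gamma_n/64$. The rest --- that appending a query-free post-processing step keeps the algorithm in $\cA(d^\eta)$, that $\phi$ is a valid element of $\cH(\mA',r)$, and the risk inequality itself --- is routine.
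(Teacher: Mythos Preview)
Your proposal is correct and follows the same reduction as the paper: turn the estimator into a test by thresholding $\|\hat\bbeta\|_2^2$, bound both error probabilities by the estimation-error probability, and invoke Theorem~\ref{thm::lower_reg} to force the latter to be bounded below. The only cosmetic difference is that you separate the two hypotheses with a direct triangle inequality on $\|\hat\bbeta\|_2$ (threshold $s\gamma_n/16$), whereas the paper controls $\bigl|\|\hat\bbeta\|_2^2-\|\bbeta\|_2^2\bigr|$ via Cauchy--Schwarz and thresholds $\|\hat\bbeta\|_2^2/\sigma^2$ at $5\gamma_n/8$; your version is a bit cleaner. Your explicit choice $\sigma^2=s$ is exactly what reconciles the radius $s\gamma_n/64$ in \eqref{eq::argument_reg} with the hard instance, and the proof of Theorem~\ref{thm::lower_reg} indeed only needs $n\beta^4/\sigma^4=o(1)$, which holds here.
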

\begin{proof}
See \S\ref{proof::col::implications_reg} for a detailed proof.
\end{proof}

Therefore in the regime where   the signal strength is $\gamma_n = o(\sqrt{s^2 /n})$, no computationally tractable algorithm under the statistical query model can yield an estimator with statistical rate of the order $O( \| \bbeta \| _2^{-1} \gamma_n)$. In addition, similar to Gaussian mixture model, implications for feature selection and clustering can also be established using the same  techniques in those in the proof of  Theorem \ref{col::implications}.

Similar phenomenon also arises in   estimating the phase retrieval model \citep{cai2015optimal}. More specifically, the mixture of regression model in \eqref{eq::regression_model} can be transformed into  the noisy phase model for phase retrieval \citep{chen2014convex}, i.e.,
  \#\label{eq::phase}
  \tilde{Y} = | \bX^\top \bbeta + \epsilon|,
  \#
  in which $\bbeta \in \RR^d$ denotes the parameter of interest, $\epsilon \sim N(0,\sigma^2)$ is the random noise, $\bX \sim N(0, \Ib)$~is the measurement vector, and $\tilde{Y}$ is the response.  To see this, letting $W = |Y|$ in \eqref{eq::regression_model}, we  have
  \$ 
  W = | \eta \cdot \bX^\top \bbeta + \epsilon |  = | \eta | \cdot | \bX^\top \bbeta + \eta \cdot \epsilon |  \stackrel{D}{=} | \bX^\top \bbeta + \epsilon | = \tilde{Y}.
  \$
  Here the last equation indicates that $W$ and $|\bX^\top \bbeta + \epsilon|$ have the same distribution, which follows from the symmetry of the Gaussian noise $\epsilon$. Hence, we obtain  the noisy phase model from    mixture of regressions. This  implies that if an algorithm   solves the noisy phase   model in \eqref{eq::phase}, the same algorithm can be used to solve the mixture of regression model. Therefore, our lower bounds in \S \ref{sec::lower_bound_reg} also hold for the noisy phase model.

   In high dimensional settings with $\bbeta$ $s$-sparse, based on a slightly different noise~model,  $\tilde{Y} = | \bX^\top \bbeta  |^2 + \epsilon$,  \cite{cai2015optimal} establish the $O( \| \bbeta \| _2^{-1} \sqrt{s \log d /n})$ rate of convergence for a computationally tractable estimator.~Their results achieve the information-theoretic lower bound for parameter estimation under the assumption that $n \geq C (1 + \sigma/ \| \bbeta \|_2^2)^2 \cdot s^2 \log d $, in which $C$ is a sufficiently large constant. In comparison, without such an assumption, the information-theoretic lower bound can only be attained by a computationally intractable estimator based on empirical risk minimization~\citep{lecue2013minimax}. Therefore, it is conjectured by \cite{cai2015optimal} that~such an assumption~on sample complexity is necessary for any computationally efficient algorithm. Our results on statistical-computational tradeoffs confirm this conjecture under the statistical query model for the noisy phase model in \eqref{eq::phase}.

\section{Proofs of the Main Results} \label{sec::proof}
In this section, we lay out the proofs of the theoretical results in \S\ref{sec::theory_GMM} and \S\ref{sec::theory_reg}.

\subsection{Proofs of Lower Bounds}\label{pf::lower}
In the sequel,  we first prove the computationally feasible minimax lower bounds  for Gaussian mixture detection
 as well as their implications. Then we present the proofs of the lower bounds for detecting mixture of regressions.

\subsubsection{Proof of Theorem \ref{thm::lower_known_cov}}\label{proof::thm::lower_known_cov}
Now we prove the computationally feasible minimax lower bound for  detecting Gaussian mixture models when $\bSigma$ is known. Our proof is based on the $\chi^2$-divergence  between the null and alternative distributions.
\begin{proof}
In this proof, we consider a specific instance of the sparse mixture detection problem in \eqref{eq::testing_w}, namely
\$
H_0 \colon   \btheta = ( {\bf 0} , {\bf 0} , \Ib)~~\text{versus}~~H_1 \colon \btheta = \bigl[ -\beta(1- \nu) \vb, \beta\nu  \vb , \Ib \bigr], \$
where  $\vb \in \cG(s)= \{ \vb \in \{ -1, 0,1\}^d \colon \| \vb \|_0  = s  \}$.
  In this case, under the alternative hypothesis, the signal strength   is given by $\rho(\btheta)  = \| \beta \vb \|_2^2 = s\beta^2$.~We focus on  the setting where $\beta = o(n^{-1/4})$,~which implies that $\rho(\btheta) = o ( \sqrt{s^2 /n} )$.

  For notational simplicity, let $\PP_0$ denote   the probability distribution under the null hypothesis  and  let $\PP_{\vb}$ be the probability distribution under the alternative hypothesis with $\btheta  =  [ -\beta(1- \nu) \vb, \beta\nu \vb , \Ib ] $.
Moreover, we define $\overline{\PP}_0$ as the distribution of the random variables returned by the  oracle  when the true model is $\PP_0$ and define $\overline{\PP}_{\vb}$  correspondingly.
The minimax testing risk $\overline {R}_n^* ( \cG_0, \cG_1; \mA, r)$ defined in \eqref{eq::minimax_risk_oracle} with $\cG_0$ and $\cG_1$ given in \eqref{eq:wg0} and \eqref{eq:wg1} satisfies
\$
\sup_{\bSigma}\overline{R}_n^* \bigl[ \cG(\bSigma), \cG_1(\bSigma, s, \gamma_n); \mA , r\bigr]  \geq  \inf_{\phi\in \cH(\mA,r)} \Bigl[ \overline{\PP}_{0}(\phi = 1) + \sup _{\vb\in \cG(s)} \overline{\PP}_{\vb}(\phi = 0) \Bigr].\$

The next lemma establishes a sufficient condition that any hypothesis test  under  the statistical query model   is asymptotically powerless.    This lemma is in the same flavor as Theorem 4.2 in \cite{wang2015sharp}.

\begin{lemma} \label{lemma::distinguish}
For any algorithm $\mA \in \cA(T)$ and any query function $q \in \cQ_{\mA}$, we define the hypotheses that can be distinguished by $q$ as
\#\label{eq::distinguished_set}
\cC(q) = \Bigl\{ \vb \in \cG (s) \colon \bigl| \EE_{\PP_{0}} \bigl[q(\bX)\bigr] -  \EE_{\PP_{\vb}} \bigl[q(\bX)\bigr] \bigr| \geq     \tau_{q, \vb}   \Bigr\},
\#
where $\tau_{q, 0}$ and $\tau_{q, \vb}$ are  the tolerance parameters defined in \eqref{eq::query_2} under distributions $\PP_0$ and  $\PP_{\vb}$, respectively.
%
 Then, if
 \#\label{eq:capapcity_bound}
 T \cdot  \sup _{q\in \cQ_{\mA}} | \cC(q) | < | \cG(s)|,
 \#  there exists an oracle $r\in \cR[\xi,n,T, M,  \eta(\cQ_{\mA}) ]$ such~that
\$
\inf_{\phi\in \cH(\mA,r)} \Bigl[ \overline{\PP}_{0}(\phi = 1) + \sup _{ \vb \in \cG (s)} \overline{\PP}_{\vb}(\phi = 0) \Bigr]  = 1.
\$
\end{lemma}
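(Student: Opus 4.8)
The plan is to construct an explicit ``uninformative'' oracle $r$ under which the responses to \emph{every} query in $\cQ_{\mA}$ carry no information distinguishing $H_0$ from $H_1$, so that no test based on $T$ such responses can succeed. The key observation is that the tolerance parameter $\tau_q$ in \eqref{eq::query_2} is exactly the Bernstein-type deviation that a valid oracle is \emph{allowed} to incur; hence an oracle that always returns a value consistent with the null distribution $\PP_0$ (up to tolerance) remains valid under every alternative $\PP_\vb$ for which $|\EE_{\PP_0}[q(\bX)] - \EE_{\PP_\vb}[q(\bX)]|$ is within the combined tolerance, i.e. for $\vb \notin \cC(q)$. Concretely, I would let the oracle answer each query $q$ by returning a realization of a random variable $Z_q$ whose law matches the ``canonical'' response law under $\PP_0$ (for instance $Z_q$ constructed so that \eqref{eq::query_00} holds with $\EE_{\PP_0}[q(\bX)]$ as the center). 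One must check this same $Z_q$ also satisfies \eqref{eq::query_00} with center $\EE_{\PP_\vb}[q(\bX)]$ whenever $\vb \notin \cC(q)$; this is where the definition \eqref{eq::distinguished_set} of $\cC(q)$ is used, together with the fact that the tolerance is defined as a max and is monotone in the relevant quantities, so a modest enlargement of the radius (absorbable into the tail probability budget $2\xi$) covers the shift.

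The second ingredient is a counting/union-bound argument showing that, under condition \eqref{eq:capapcity_bound}, the algorithm's $T$ adaptively chosen queries $q_1,\dots,q_T$ cannot ``cover'' all of $\cG(s)$: the set $\bigcup_{t=1}^T \cC(q_t)$ has cardinality at most $T \cdot \sup_{q} |\cC(q)| < |\cG(s)|$, so there exists at least one $\vb^\star \in \cG(s)$ lying outside every $\cC(q_t)$. I would formalize adaptivity by noting that the oracle I constructed is \emph{non-adaptive in distribution} — for each fixed query $q$, the law of $Z_q$ is the same under $\PP_0$ and under $\PP_{\vb}$ for all $\vb \notin \cC(q)$ — so the entire transcript $(q_1, Z_{q_1}, \dots, q_T, Z_{q_T})$ has identical law under $\overline{\PP}_0$ and under $\overline{\PP}_{\vb^\star}$, regardless of how the algorithm chooses $q_{t+1}$ based on $Z_{q_1},\dots,Z_{q_t}$. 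A clean way to see this: prove by induction on $t$ that the joint law of the first $t$ query-response pairs agrees under $\overline{\PP}_0$ and $\overline{\PP}_{\vb^\star}$, using at the inductive step that the next query $q_{t+1}$ is a deterministic function of the past (identical in both worlds) and that, conditioned on the past, $Z_{q_{t+1}}$ has the same conditional law in both worlds because $q_{t+1} \in \cQ_{\mA}$ and $\vb^\star \notin \cC(q_{t+1})$.

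Once the transcript laws coincide, any test $\phi \in \cH(\mA,r)$ — being a deterministic function of the transcript — satisfies $\overline{\PP}_0(\phi = 1) = \overline{\PP}_{\vb^\star}(\phi = 1)$, hence $\overline{\PP}_0(\phi = 1) + \overline{\PP}_{\vb^\star}(\phi = 0) = \overline{\PP}_{\vb^\star}(\phi=1) + \overline{\PP}_{\vb^\star}(\phi = 0) = 1$, and taking the supremum over $\vb \in \cG(s)$ and then the infimum over $\phi$ gives the claimed value $1$.

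I expect the main obstacle to be the first step — carefully defining a single oracle $r$ that is simultaneously valid (in the sense of Definition \ref{def::oracle}, i.e.\ satisfying the uniform bound \eqref{eq::query_00} with the prescribed $\tau_q, \xi$) under $\PP_0$ and under every $\PP_\vb$, while ensuring its response law for a fixed query is genuinely identical across the ``indistinguishable'' worlds rather than merely close. The delicate points are: (i) the tolerance $\tau_q$ in \eqref{eq::query_2} depends on $\EE[q(\bX)]$, which differs across worlds, so one needs the enlargement alluded to above and must check it stays within budget; and (ii) handling the uniform (over $q \in \cQ_{\mA}$) nature of \eqref{eq::query_00} rather than a per-query statement — this forces the constructed $Z_q$'s to be coupled appropriately across queries. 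I would resolve (ii) by building all the $Z_q$ from a common source of randomness mimicking an i.i.d.\ sample from $\PP_0$ (e.g.\ set $Z_q = n^{-1}\sum_i q(\xb_i)$ for $\xb_i \sim \PP_0$), for which \eqref{eq::query_00} holds under $\PP_0$ by the Bernstein/union-bound reasoning in \eqref{eq::bernstein-type-query}–\eqref{eq::bernstein-type-query2}, and then argue the same sample-average oracle meets the tolerance under $\PP_\vb$ for $\vb \notin \cC(q)$. This mirrors the structure of Theorem 4.2 in \cite{wang2015sharp}, which I would follow closely while adapting to the continuous-valued, $M$-bounded query setting of Definition \ref{def::oracle}.
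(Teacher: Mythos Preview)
Your high-level strategy---use \eqref{eq:capapcity_bound} to exhibit a $\vb^\star\in\cG(s)$ outside every $\cC(q_t)$ and then argue that the transcript is the same under $\overline\PP_0$ and $\overline\PP_{\vb^\star}$---is exactly the paper's. The difference, and the place where your proposal has a genuine gap, is the oracle construction. Your sample-average oracle built from $\xb_i\sim\PP_0$ is \emph{not} in $\cR[\xi,n,T,M,\eta(\cQ_\mA)]$: when the true model is $\PP_\vb$ with $\vb\in\cC(q)$ for some $q\in\cQ_\mA$, the response $Z_q=n^{-1}\sum_i q(\xb_i)$ concentrates near $\EE_{\PP_0}[q(\bX)]$, which by the very definition of $\cC(q)$ lies at distance $\ge\tau_{q,\vb}$ from $\EE_{\PP_\vb}[q(\bX)]$, so \eqref{eq::query_00} fails. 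You only argue validity for $\vb\notin\cC(q)$, and even there the triangle inequality gives $|Z_q-\EE_{\PP_\vb}[q(\bX)]|\le c\,\tau_{q,0}+\tau_{q,\vb}$, which exceeds the permitted $\tau_{q,\vb}$; the ``enlargement absorbable into $2\xi$'' you mention is not available in Definition~\ref{def::oracle}.

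The paper dissolves both of your identified obstacles (i) and (ii) by taking the oracle to be \emph{deterministic and distribution-aware}: under $\PP_0$ it returns $Z_q=\EE_{\PP_0}[q(\bX)]$; under $\PP_\vb$ it returns $\EE_{\PP_0}[q(\bX)]$ whenever that value is within tolerance (i.e.\ $\vb\notin\cC(q)$), and returns $\EE_{\PP_\vb}[q(\bX)]$ otherwise. This oracle satisfies \eqref{eq::query_00} with probability one under every $\PP_\vb$, so there is no budget to track and no coupling across queries to build. Because the responses under $\PP_0$ are deterministic, the adaptive queries $q_1,\dots,q_T$ are a fixed sequence, the counting argument produces $\vb^\star\notin\bigcup_t\cC(q_t)$, and for this $\vb^\star$ the oracle under $\PP_{\vb^\star}$ never hits the fallback branch on any $q_t$---the two transcripts are literally identical, not merely equal in law, and your induction becomes unnecessary.
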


\begin{proof}
See \S \ref{proof::lemma::distinguish} for a detailed proof.
\end{proof}

To apply Lemma \ref{lemma::distinguish}, we need to upper bound $\sup_{q \in \cQ_{\mA}} |\cC(q)|$.  We first  decompose  $\cC(q)$ into~two disjoint subsets $\cC_1(q)$ and $\cC_2(q)$, which are defined by
\# \label{eq::def_cq}
\cC_1(q) &=   \Bigl\{ \vb \in \cG (s): \EE_{\PP_\vb}\bigl[q(\bX)\bigr] - \EE_{\PP_0}\bigl[q(\bX)\bigr] >    \tau_{q, \vb}    \Bigr\} \quad \text{and}\quad \cC_2(q)  = \cC(q) \setminus \cC_1(q),
\#
Then, by definition,  it holds that
$${  \sup_{q \in \cQ_{\mA}} } |\cC(q)|\leq {  \sup_{q \in \cQ_{\mA}} } |\cC_1(q)| + {  \sup_{q \in \cQ_{\mA}} }|\cC_2(q)|.$$
For notational simplicity, we define
\#\label{eq::define_two_mixtures}
\PP_{ \cC_1(q)} =  \frac{{  { \sum_{ \vb \in \cC_1(q)}  }  }\PP_{\vb}}{|\cC_1(q)|}~~\text{and}~~\PP_{ \cC_2(q)} =  \frac{{  \sum_{\vb \in \cC_2(q)}}   \PP_{\vb}}{|\cC_2(q)|}
\#
as  the uniform mixture of $  \{ \PP_{\vb}\colon \vb \in \cC_1(q)  \}$ and  $ \{ \PP_{\vb} \colon \vb \in \cC_2(q)  \}$, respectively.
For $\ell \in \{1,2\}$, by the definition of  $\chi^2$-divergence we have
\#\label{eq::chi_square_divergence}
D_{\chi^2} (\PP_{ \cC_{\ell}(q)}, \PP_0 ) &= \EE_{\PP_0} \biggl \{\biggl [\frac{\ud \PP_{ \cC_{\ell}(q)}}{\ud \PP_0}(\bX) -1 \biggr]^2\biggr\} =\frac{1}{|\cC_{\ell}(q)|^2}  {\sum_{\vb , \vb' \in \cC_{\ell}(q)}} \EE_{\PP_0}   \biggl [\frac{\ud \PP_{\vb }}{\ud \PP_0} \frac{\ud \PP_{\vb'}}{\ud \PP_0}(\bX) \biggr] - 1\notag\\
&\leq \sup_{\vb  \in \cC_{\ell}(q)} \frac{1}{|\cC_{\ell}(q)|}  {\sum_{\vb' \in \cC_{\ell}(q)} }  \EE_{\PP_0}  \biggl[\frac{\ud \PP_{\vb }}{\ud \PP_0} \frac{\ud \PP_{\vb'}}{\ud \PP_0}(\bX) \biggr]- 1 \notag \\
&\leq \sup_{\vb\in \cC_{\ell}(q) }  \frac{1}{|\cC_{\ell}(q)|} {\sum_{\vb'\in \overline{\cC }_{\ell} (q, \vb ) } } \EE_{\PP_0} \biggl [\frac{\ud \PP_{\vb }}{\ud \PP_0} \frac{\ud \PP_{\vb'}}{\ud \PP_0}(\bX)\biggr ]- 1 ,
\#
where we define
\# \label{def::bar_C}
\overline{ \cC}_{\ell} (q, \vb)   = \argmax_{ {\cC}}  \biggl \{  \frac{1}{|  { \cC}|}  {  \sum_{\vb'\in {\cC }  } } \EE_{\PP_0}  \biggl[\frac{\ud \PP_{\vb}}{\ud \PP_0} \frac{\ud \PP_{\vb'}} {\ud \PP_0}(\bX) \biggr]-1~ \bigg \vert  ~|  {\cC} | = | \cC_\ell(q) |\biggr\}\subseteq \cG(s)  \#
 for $\ell\in\{0,1\}$. Here the maximization is taken over $\cG(s)$. The following lemma  gives an explicit characterization of   the last term in  \eqref{eq::chi_square_divergence}.
\begin{lemma}\label{lemma::compute_chi_square}
For any $\beta >0$ and  any $\vb_1, \vb_2 \subseteq \cG(s)$, we have
\$
\EE_{\PP_0} \biggl[ \frac{\ud \PP_{\vb_1}}{\ud \PP_{0}} \frac{\ud \PP_{\vb_2}}{\ud \PP_{0}} (\bX) \biggr] = \EE_{U} \bigl[ \cosh   ( \beta^2  U    \la \vb_1, \vb_2 \ra  ) \bigr].
\$
Here $U$ is a discrete random variable taking values in $\{ (1- \nu)^2, -\nu(1- \nu), \nu^2\}$, which satisfies
 \$
 \PP[U = (1-\nu)^2] = \nu^2,~~\PP[ U = -\nu(1-\nu) ] = 2\nu(1-\nu),~~\PP(U = \nu^2) = (1- \nu)^2.
 \$
\end{lemma}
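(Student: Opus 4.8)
The plan is to compute the pairwise likelihood-ratio moment directly, exploiting the two-component structure of $\PP_{\vb}$ together with the Gaussian moment generating function. The starting point is the elementary identity $\ud N(\bmu,\Ib)/\ud N(\zero,\Ib)(\bx)=\exp\bigl(\la\bx,\bmu\ra-\tfrac12\|\bmu\|_2^2\bigr)$, valid for any $\bmu\in\RR^d$. Since under $\PP_{\vb}$ the two centers are $\bmu_1=-\beta(1-\nu)\vb$ and $\bmu_2=\beta\nu\vb$ carrying weights $\nu$ and $1-\nu$, expanding the mixture density and applying this identity yields
$$\frac{\ud\PP_{\vb}}{\ud\PP_0}(\bx)=\nu\,e^{-\beta(1-\nu)\la\bx,\vb\ra-\frac12\beta^2(1-\nu)^2 s}+(1-\nu)\,e^{\beta\nu\la\bx,\vb\ra-\frac12\beta^2\nu^2 s}=\EE_c\Bigl[\exp\bigl(\beta c\,\la\bx,\vb\ra-\tfrac12\beta^2 c^2 s\bigr)\Bigr],$$
where $\|\vb\|_2^2=s$ for $\vb\in\cG(s)$ and $c$ is the two-point variable with $\PP(c=-(1-\nu))=\nu$ and $\PP(c=\nu)=1-\nu$, so that $\EE c=0$ and $\EE c^2=\nu(1-\nu)$.

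Next I would multiply the two likelihood ratios for $\vb_1$ and $\vb_2$, introduce an independent copy $c'$ of $c$, and push the $\PP_0$-expectation inside by Fubini (all terms are nonnegative). With $\bX\sim\PP_0=N(\zero,\Ib)$ this gives
$$\EE_{\PP_0}\Bigl[\tfrac{\ud\PP_{\vb_1}}{\ud\PP_0}\tfrac{\ud\PP_{\vb_2}}{\ud\PP_0}(\bX)\Bigr]=\EE_{c,c'}\Bigl[e^{-\frac12\beta^2(c^2+c'^2)s}\cdot\EE_{\bX}\,e^{\la\bX,\ \beta c\,\vb_1+\beta c'\,\vb_2\ra}\Bigr],$$
and the inner Gaussian moment generating function equals $\exp\bigl(\tfrac12\|\beta c\vb_1+\beta c'\vb_2\|_2^2\bigr)=\exp\bigl(\tfrac12\beta^2 c^2 s+\tfrac12\beta^2 c'^2 s+\beta^2 cc'\la\vb_1,\vb_2\ra\bigr)$. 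The two ``diagonal'' quadratic terms cancel the prefactor exactly, leaving $\EE_{c,c'}\bigl[\exp(\beta^2 cc'\la\vb_1,\vb_2\ra)\bigr]$.

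Finally I would identify the law of $U:=cc'$: enumerating the four outcomes of the independent pair $(c,c')$ produces the atoms $(1-\nu)^2$ with probability $\nu^2$, $\nu^2$ with probability $(1-\nu)^2$, and $-\nu(1-\nu)$ with probability $2\nu(1-\nu)$ (the factor $2$ coming from the two mixed sign choices), which is precisely the stated distribution of $U$; hence the moment equals $\EE_U\bigl[\exp(\beta^2 U\la\vb_1,\vb_2\ra)\bigr]$. The $\cosh$ form then follows from the symmetry of the construction, most transparently in the symmetric case $\nu=\tfrac12$, where $c=\tfrac12\epsilon$ for a Rademacher $\epsilon$, so that $\ud\PP_{\vb}/\ud\PP_0(\bx)=e^{-\beta^2 s/8}\cosh(\tfrac{\beta}{2}\la\bx,\vb\ra)$ and $|U|\equiv\tfrac14$. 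I do not anticipate a genuine obstacle here: the argument is short, and the only points requiring care are the exact cancellation of the $\tfrac12\beta^2 c^2 s$ and $\tfrac12\beta^2 c'^2 s$ prefactors against the diagonal part of $\|\beta c\vb_1+\beta c'\vb_2\|_2^2$, and the combinatorial bookkeeping that yields the weight $2\nu(1-\nu)$ on the negative atom of $U$.
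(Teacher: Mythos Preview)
Your approach is essentially identical to the paper's: introduce the two-point latent variable (the paper calls it $\eta$ with the opposite sign convention, $\eta=-c$), express the likelihood ratio as an expectation over it, multiply two independent copies, apply Fubini and the Gaussian moment generating function, and observe that the $\tfrac12\beta^2 c^2 s$ terms cancel exactly against the diagonal part of $\|\beta c\vb_1+\beta c'\vb_2\|_2^2$, leaving $\EE_{c,c'}[\exp(\beta^2 cc'\la\vb_1,\vb_2\ra)]$ with $U=cc'$. Your hesitation at the final $\exp\to\cosh$ step is well placed---the paper's proof makes the same jump without additional argument, and indeed the identity is cleanest at $\nu=\tfrac12$; for the downstream bounds only the upper estimate $\EE_U[\exp(\beta^2 U t)]\le\exp(C\beta^4 t^2)$ (which follows from $\EE U=0$ and boundedness of $U$) is actually needed.
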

 \begin{proof}
See \S \ref{proof::lemma::compute_chi_square} for a detailed proof.
\end{proof}

For notational simplicity, we define
\$
h(t) = \EE_U  \Bigl\{ \cosh \bigl[(s-t) \beta^2  U \bigr]\Bigr\}
\$ for any $t \in \{0,\ldots, s\}$. To establish an upper bound for   the last term in \eqref{eq::chi_square_divergence},  for any $j \in\{0,\ldots, s\}$ and any fixed  $\vb\in \cG(s)$, we define
\$
\cC_j(\vb) = \bigl\{\vb' \in \cG(s) : |\la \vb , \vb' \ra| = s-j \bigr\}.
\$

Since $h(t)$  is monotone decreasing  with $h(t) \geq h(s) = 1$, for any $\ell \in \{ 1,2 \}$, any  query function~$q \in \cQ_{\mA}$, and any~$\vb \in \cC_{\ell} (q)$, by Lemma \ref{lemma::compute_chi_square} and  the definition of~$\overline{\cC}_{\ell}(q, \vb)$ in~\eqref{def::bar_C},  there exists  an integer~$k_{\ell} (q, \vb )$  that satisfies
 \$
  \overline{\cC}_{\ell}(q, \vb) = \cC_{0}(\vb) \cup \cC_1(\vb) \cup \cdots \cup  \cC_{k_{\ell} (q, \vb) -1}(\vb) \cup \cC'_{\ell} (q,\vb ).
 \$
 Here $\cC'_{\ell} (q,\vb) = \overline \cC_{\ell}(q, \vb ) \setminus { \bigcup_{j=0}^{k_{\ell}(q, \vb) - 1}} \cC_j(\vb)$, which  has cardinality
 \$
 |\cC'_{\ell} (q,\vb)| = | \cC_{\ell}(q)| -{   \sum_{  j = 0}^{ k_{\ell}(q,\vb) -1}  } | \cC_{j} (\vb)| < | \cC_{k_{\ell} (q, \vb )} (\vb)|.
 \$
 Therefore, we can sandwich the cardinality of $\overline{\cC} _{\ell} (q , \vb)$ by
  \#\label{eq::set_bound}
{   \sum_{j=0}^{k_{\ell}(q, \vb ) } } |\cC_j(\vb) | > | \overline{ \cC} _{\ell} (q , \vb)|\geq {  \sum_{j=0}^{k_{\ell} (q, \vb )-1}} |\cC_j (\vb) |.
 \#
 Then by  \eqref{eq::chi_square_divergence} and Lemma \ref{lemma::compute_chi_square}, we further have
 \#\label{eq::upper_bound_chi_square1}
 1 + D_{\chi^2} (\PP_{  \cC_{\ell}(q)}, \PP_0   )\leq \frac{\sum_{j=0}^ {k_{\ell}(q, \vb )-1} h(j)\cdot |\cC_j(\vb)| + h\bigl[k_{\ell}(q, \vb )\bigr] \cdot | \cC_{\ell}'(q, \vb ) |}  { \sum _{j=0}^{k_{\ell}(q, \vb)-1} | \cC_j(\vb)| + | \cC_{\ell}'(q, \vb)|},~\text{for all}~\vb \in \cC_{\ell} (q).
 \#
Then by \eqref{eq::upper_bound_chi_square1} and~the   monotonicity of $h(t)$  we obtain that
 \#\label{eq::upper_bound_chi_square}
 1 + D_{\chi^2} (\PP_{ \cC_{\ell}(q)}, \PP_0 )  \leq \frac{\sum_{j=0}^ {k_{\ell}(q , \vb )-1} h(j) \cdot    | \cC_j(\vb)| }  { \sum _{j=0}^{k_{\ell}(q, \vb )-1} | \cC_j(\vb)| }.
 \#
 According to the symmetry of $\cG(s)$, the cardinality of $\cC_j(\vb)$ does not depend on the choice of $\vb$.  To further upper bound the right-hand side of \eqref{eq::upper_bound_chi_square}, we establish the following lemma to characterize the growth of $|\cC_j(\vb)|$.
\begin{lemma}\label{lemma::growth_Cj}
For any $\vb \in \cG(s)$, let $\cC_j(\vb) = \{\vb' \in \cG(s) : |\la \vb , \vb' \ra| = s-j\}$. Then we have
\#\label{eq::compare_geometric}
	|\cC_{j+1}(\vb)|/|\cC_{j}(\vb)| &\geq d/(2{ s }^2 ), ~~\text{for~all~} j\in \{0, \ldots, s-1\}.
	\#
\end{lemma}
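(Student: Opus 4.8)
The plan is to prove \eqref{eq::compare_geometric} by a direct count of $|\cC_j(\vb)|$. Fix $\vb\in\cG(s)$ and put $T=\supp(\vb)$, so $|T|=s$. For $\vb'\in\cG(s)$ with support $T'=\supp(\vb')$, let $k=|T\cap T'|$ and let $a$ be the number of coordinates of $T\cap T'$ on which $\vb$ and $\vb'$ have the same sign. Then $\la\vb,\vb'\ra=2a-k$, so $|\la\vb,\vb'\ra|=s-j$ forces $k\ge s-j$ and $k\equiv s-j\pmod 2$. Conversely, a $\vb'\in\cC_j(\vb)$ with overlap $k$ is specified by the $k$ coordinates of $T\cap T'$ inside $T$, the $s-k$ coordinates of $T'$ outside $T$, the signs of $\vb'$ on those $s-k$ coordinates, and the signs of $\vb'$ on $T\cap T'$ with exactly $(k\pm(s-j))/2$ agreements. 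This gives
\$
|\cC_j(\vb)|=\sum_{\substack{s-j\le k\le s\\ k\equiv s-j\,(2)}}T_k,\qquad T_k:=\binom{s}{k}\binom{d-s}{s-k}\,2^{s-k}\cdot 2\binom{k}{(k+s-j)/2},
\$
and the analogous formula for $|\cC_{j+1}(\vb)|$ with $s-j$ replaced by $s-j-1$.

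For the lower bound I would keep only the leading ($k=s-j-1$) term in the expansion of $|\cC_{j+1}(\vb)|$; after cancellation it equals $2^{j+1}\binom{s}{j+1}\binom{d-s}{j+1}$ (this is exact when $j=s-1$, where the overlap is empty, and is half the true leading term when $j<s-1$, which only helps). For the upper bound I would show that the sum defining $|\cC_j(\vb)|$ is dominated by its leading term $T_{s-j}=2^{j+1}\binom{s}{j}\binom{d-s}{j}$. A routine manipulation of binomial coefficients gives
\$
\frac{T_{k+2}}{T_k}=\frac{(s-k)^2(s-k-1)^2}{(d-2s+k+1)(d-2s+k+2)\bigl[(k+2)^2-(s-j)^2\bigr]}\le\frac{s^4}{(d-2s)^2},
\$
using $s-k\le j<s$ in the numerator together with $(d-2s+k+1)(d-2s+k+2)\ge(d-2s)^2$ and $(k+2)^2-(s-j)^2\ge 1$ in the denominator. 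Under the standing assumption $\max(s^2,n)/d=o(1)$ this ratio is $o(1)$, so summing the resulting geometric series yields $|\cC_j(\vb)|\le(1+o(1))\,T_{s-j}=(1+o(1))\,2^{j+1}\binom{s}{j}\binom{d-s}{j}$.

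Combining the two estimates,
\$
\frac{|\cC_{j+1}(\vb)|}{|\cC_j(\vb)|}\ \ge\ \frac{2^{j+1}\binom{s}{j+1}\binom{d-s}{j+1}}{(1+o(1))\,2^{j+1}\binom{s}{j}\binom{d-s}{j}}\ =\ \bigl(1-o(1)\bigr)\frac{(s-j)(d-s-j)}{(j+1)^2}\ \ge\ \bigl(1-o(1)\bigr)\frac{d-2s+1}{s^2},
\$
where the last step uses $s-j\ge 1$ and $(j+1)^2\le s^2$. Since $s^2/d=o(1)$ also forces $s/d=o(1)$, the right-hand side is $(1-o(1))\,d/s^2$, which exceeds $d/(2s^2)$ for all large $n$, proving \eqref{eq::compare_geometric}. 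The only step that is not bookkeeping is the ratio bound for $T_{k+2}/T_k$, and this is precisely where $s^2/d\to 0$ is used: it guarantees that the vectors of $\cC_j(\vb)$ with the smallest possible overlap $s-j$ with $T$ — hence the most freedom to place their remaining $j$ nonzero coordinates among the $d-s$ coordinates outside $T$ — dominate the count. I do not anticipate a serious obstacle; the care needed is in the geometric-tail estimate and in the $j=s-1$ edge case of the combinatorial factors.
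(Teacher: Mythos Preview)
Your argument is correct and delivers the bound for all sufficiently large $n$; since the lemma is only invoked inside asymptotic proofs under the standing hypothesis $s^2/d=o(1)$, this suffices. The paper, however, takes a different and somewhat cleaner route.

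Rather than parametrizing by the overlap size $k=|T\cap T'|$ and controlling the whole sum by its leading term via a geometric tail, the paper parametrizes by the pair $(a,b)$, where $a$ (resp.\ $b$) is the number of coordinates in $\supp(\vb)$ on which $\vb'$ agrees (resp.\ disagrees) in sign with $\vb$, so that $\la\vb,\vb'\ra=a-b$. Writing $N_{a,b}$ for the number of such $\vb'$ and $M(k)=\sum_{b} N_{k+b,b}$, a single binomial ratio gives
\[
\frac{N_{a,b}}{N_{a+1,b}}=\frac{2(a+1)(d-2s+a+b+1)}{(s-a-b)^2}\ \ge\ \frac{d}{s^2}
\]
under a mild linear condition on $d/s$. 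Summing over $b$ yields $M(k-1)\ge (d/s^2)M(k)$ \emph{term by term}. Since $|\cC_j(\vb)|=2M(s-j)$ for $j<s$ while $|\cC_s(\vb)|=M(0)$, the bound $|\cC_{j+1}(\vb)|/|\cC_j(\vb)|\ge d/(2s^2)$ follows, with the factor $1/2$ appearing only at $j=s-1$.

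The payoff of the paper's decomposition is that the comparison is term by term and therefore non-asymptotic: no tail estimate is needed and no $(1+o(1))$ slack is introduced, so the inequality holds exactly rather than eventually. Your leading-term argument works, but it genuinely needs $s^4/(d-2s)^2\to 0$ to sum the geometric series, and hence proves a formally weaker (asymptotic) statement. What your approach buys is an explicit explanation of why the smallest-overlap configurations dominate the count; the paper's term-by-term trick sidesteps that intuition entirely.
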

\begin{proof}
See \S \ref{proof::lemma::growth_Cj} for a detailed proof.
\end{proof}
	We define $\zeta = d/ (2s^2)$.
By Lemma \ref{lemma::growth_Cj} we have $| \cC_{j}(\vb)|  \leq \zeta^{j-s}| \cC_{s}(\vb)|$ for $  j \in \{0,\ldots, s\}$.
	By the definition of $k_{\ell}(q, \vb )$ in \eqref{eq::set_bound}, for any  $q \in \cQ_{\mA}$, we further obtain
		\#\label{eq::upper_bound_Cq}
	 |\cC_{\ell}(q)| &\leq { \sum_{j=0}^{k_{\ell}(q, \vb ) }} |\cC_j(\vb)|
	 \leq   |\cC_{ s }(\vb)|  { \sum_{j=0}^{k_{\ell}(q, \vb ) }} \zeta^{j - s} \notag \\
	 &\leq \frac{\zeta^{-[ s  - k_{\ell}(q, \vb ) ]} |\cG(s) |}{1 - \zeta^{-1}} \leq 2\zeta^{-[ s  - k_{\ell}(q, \vb ) ] } | \cG(s)| ,
	 \#
	 where the last inequality follows from the fact that     $2s^2/d  = \zeta^{-1} = o(1)$.

	  Moreover,  let  $k \in \{ 0, \ldots, s \}$  be an integer. For two positive sequences $\{ a_i \}_{i=0}^k $ and $\{ b_i \}_{i=0}^k$, which satisfy $a_{i}/ a_{i-1} \geq b_{i } / b_{i-1} >1$ for all $i \in [k]$, the monotonicity of $h(t)$ implies that
	 \#\label{eq::useless}
	 { \sum_{0\leq i< j \leq k} }(a_i b_j - a_j b_i ) \cdot [ h(i) - h(j) ] \leq 0.
	 \#
	 Furthermore,  by expanding and simplifying the terms in \eqref{eq::useless} we have
	 \#\label{eq::useless2}
	 \frac{{\sum_{i=0}^k} a_i \cdot h(i)}{{  \sum_{i=0}^k} a_i} \leq \frac{{ \sum_{i = 0}^k} b_i \cdot  h(i)}{{ \sum_{i=0}^k} b_i}.
	 \#

	 In the sequel, we establish an upper bound on $k_{\ell}(q,\vb)$ for $\ell \in \{1,2\}$ and $\vb \in \cC_{\ell}(q)$. For notational simplicity, we denote $k_{\ell} = k_{\ell}(q, \vb)$.
 First, by combining \eqref{eq::upper_bound_chi_square}, \eqref{eq::compare_geometric}, and \eqref{eq::useless2} with   $a_j = | \cC_j(\vb)|$~and $b_j = \zeta^j$ we  have
  \#\label{eq::compute_expectation}
	1 + D_{\chi^2} (\PP_{ \cC_{\ell}(q)}, \PP_0 )&\leq \frac{ \sum_{j=0}^{k_{\ell}-1} \zeta^j \EE_{U}\Bigl\{ \cosh\bigl[( s  - j) \beta^2 U \bigr]\Bigr\}}{\sum_{j=0}^{k_{\ell}-1} \zeta^j} \notag\\
	&\leq  \frac{ \EE_{U} \Bigl\{  \cosh\bigl[(s - k_{\ell} + 1     ) \beta^2 U \bigr]  \Bigr\}(1-\zeta^{-1})}{1 - \zeta^{-1}\cosh({\beta}^2)}.
	\#
	Here we use the fact that $\cosh(\beta^2) / \zeta = o(1)$, which holds because
	  $ \beta = o(n^{-1/4})$ and  $s^2 /d =o(1)$. In addition, we employ the  following lemma to establish a lower bound for $D_{\chi^2} (\PP_{ \cC_{\ell}(q)}, \PP_0 )$. Combining with the upper bound in \eqref{eq::compute_expectation}, we obtain an upper bound on $k_{\ell}$.
	
 \begin{lemma}\label{lem:1}\label{lemma::bound_chi_square_div}
For any query function $q$ and $\ell\in \{1,2\}$, we have
\$
D_{\chi^2} (\PP_{  \cC_\ell(q)}, \PP_0 ) \geq \frac{2 \log (T/\xi)}{3 n}.
\$
\end{lemma}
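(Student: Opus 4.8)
The plan is to obtain the bound from the variational (Cauchy--Schwarz) lower bound for the $\chi^2$-divergence, evaluated at the query function $q$ itself. Recall that for probability measures $P\ll Q$ and any $g\in L^2(Q)$, writing $\EE_P[g]-\EE_Q[g]=\EE_Q\bigl[(g-\EE_Q[g])(\tfrac{\ud P}{\ud Q}-1)\bigr]$ and applying Cauchy--Schwarz gives
\[
D_{\chi^2}(P,Q)=\EE_Q\!\left[\Bigl(\tfrac{\ud P}{\ud Q}-1\Bigr)^{\!2}\right]\ \geq\ \frac{\bigl(\EE_P[g]-\EE_Q[g]\bigr)^2}{\mathrm{Var}_Q(g)}.
\]
I would take $P=\PP_{\cC_\ell(q)}$, $Q=\PP_0$, and $g=q$. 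Testing with $q$ (rather than with the likelihood ratio, which is optimal but not an admissible query) has the advantage that the numerator is controlled by the very property that \emph{defines} $\cC_\ell(q)$, and—crucially—by a quantity independent of $|\cC_\ell(q)|$, so that combining this lower bound with the upper bound \eqref{eq::compute_expectation} (which depends on $|\cC_\ell(q)|$ through $k_\ell$) does not produce circularity.

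For the numerator, note from \eqref{eq::def_cq} that the discrepancies $\EE_{\PP_{\vb}}[q]-\EE_{\PP_0}[q]$ have a common sign over $\vb\in\cC_\ell(q)$, and from \eqref{eq::distinguished_set} that each has magnitude at least the tolerance $\tau_{q,\vb}$; hence averaging incurs no cancellation and
\[
\bigl|\EE_{\PP_{\cC_\ell(q)}}[q]-\EE_{\PP_0}[q]\bigr|=\frac{1}{|\cC_\ell(q)|}\sum_{\vb\in\cC_\ell(q)}\bigl|\EE_{\PP_{\vb}}[q]-\EE_{\PP_0}[q]\bigr|\ \geq\ \min_{\vb\in\cC_\ell(q)}\tau_{q,\vb}.
\]
Inspecting the two terms of the maximum in \eqref{eq::query_2} shows that, in either regime, $\tau_{q,\vb}^2\geq \tfrac{2[\eta(\cQ_{\mA})+\log(1/\xi)]}{n}\bigl(M^2-\EE_{\PP_{\vb}}^2[q]\bigr)$; combined with $\mathrm{Var}_{\PP_0}(q)\leq M^2-\EE_{\PP_0}^2[q]$ (from $q\in[-M,M]$), the variational bound gives
\[
D_{\chi^2}(\PP_{\cC_\ell(q)},\PP_0)\ \geq\ \frac{2[\eta(\cQ_{\mA})+\log(1/\xi)]}{n}\cdot\frac{M^2-\EE_{\PP_{\vb^\star}}^2[q]}{M^2-\EE_{\PP_0}^2[q]},
\]
where $\vb^\star\in\cC_\ell(q)$ maximizes $\EE_{\PP_{\vb}}^2[q]$.

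It then remains to (i) replace $\eta(\cQ_{\mA})+\log(1/\xi)$ by $\log(T/\xi)$ and (ii) bound the trailing ratio below by $1/3$. For (i), repeated queries to the adversarial oracle of Lemma~\ref{lemma::distinguish} convey no information, so we may assume $\mA$'s $T$ queries are distinct, whence $|\cQ_{\mA}|\geq T$, i.e. $\eta(\cQ_{\mA})+\log(1/\xi)\geq\log(T/\xi)$. For (ii), since $\vb^\star\in\cC(q)$, Lemma~\ref{lemma::compute_chi_square} with $\vb_1=\vb_2=\vb^\star$ shows that $\PP_{\vb^\star}$ and $\PP_0$ are close in $\chi^2$ in the small-signal regime $\gamma_n=o(\sqrt{s^2/n})$ of Theorem~\ref{thm::lower_known_cov}; this proximity forces $|\EE_{\PP_{\vb^\star}}[q]-\EE_{\PP_0}[q]|$ to be small relative to $M$, hence $M^2-\EE_{\PP_{\vb^\star}}^2[q]$ and $M^2-\EE_{\PP_0}^2[q]$ to be comparable, so the ratio exceeds $1/3$ for all large $n$; in the degenerate case where $M^2-\EE_{\PP_0}^2[q]$ is of smaller order than $M^2$ (the null mean of $q$ being near $\pm M$) one instead invokes the first term of the maximum in \eqref{eq::query_2}, which is insensitive to the null mean. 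Assembling the displays yields $D_{\chi^2}(\PP_{\cC_\ell(q)},\PP_0)\geq\tfrac{2\log(T/\xi)}{3n}$.

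The main obstacle I anticipate is exactly step (ii): controlling the denominator $\mathrm{Var}_{\PP_0}(q)$ (equivalently the null-side variance proxy $M^2-\EE_{\PP_0}^2[q]$) against the alternative-side proxy $M^2-\EE_{\PP_{\vb^\star}}^2[q]$ entering the tolerance, uniformly over all bounded query functions $q$. This is precisely where the $\chi^2$-closeness of the null and alternative (Lemma~\ref{lemma::compute_chi_square}, under $\gamma_n=o(\sqrt{s^2/n})$) must be used, and it is the reason the constant in the statement is $2/3$ rather than $2$—the factor $1/3$ being the slack that swallows this constant-factor comparison together with the degenerate case.
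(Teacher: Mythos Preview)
Your variational Cauchy--Schwarz framework with $g=q$ is exactly the paper's approach, and your handling of the numerator via the common sign over $\cC_\ell(q)$ is correct. The gap is entirely in step~(ii), and your proposed resolution does not work.

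First, the $\chi^2$-closeness route fails outright. Nothing in the hypotheses of Theorem~\ref{thm::lower_known_cov} forces $s^2/n$ to be small (take $s=n^{0.6}$, say, which is compatible with $n\gg s$ and $s^2/d^{1-\delta}=O(1)$). Lemma~\ref{lemma::compute_chi_square} with $\vb_1=\vb_2=\vb^\star$ gives $1+D_{\chi^2}(\PP_{\vb^\star},\PP_0)=\EE_U[\cosh(s\beta^2 U)]$, and since $s\beta^2=o(s/\sqrt n)$ this can diverge, indeed exponentially. So no useful bound on $|\EE_{\PP_{\vb^\star}}[q]-\EE_{\PP_0}[q]|$ follows, and the ratio $(M^2-a^2)/(M^2-b^2)$ (writing $a=\EE_{\PP_{\vb^\star}}[q]$, $b=\EE_{\PP_0}[q]$) cannot be controlled this way. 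More structurally, your lower bound on the numerator by $(\min_\vb\tau_{q,\vb})^2$ discards information: in the genuinely bad case---$a$ near $\pm M$ while $b$ is not---the tolerance $\tau_{q,\vb^\star}$ collapses to the linear term $[\eta(\cQ_{\mA})+\log(1/\xi)]M/n$, and squaring this and dividing by $M^2-b^2$ can be arbitrarily smaller than $\log(T/\xi)/n$. Your ``degenerate case'' split on the size of $M^2-b^2$ does not catch this, because the obstruction is the \emph{relation} between $a$ and $b$, not the location of $b$ alone.

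The paper's fix (its Lemma~\ref{lemma:lower_bound_tau}, adapted from \cite{feldman2013statistical}) is a purely elementary self-improving inequality requiring no distributional input. Rescaling $\varphi=a/(2M)+1/2$, $\overline\varphi=b/(2M)+1/2\in[0,1]$ and writing $\kappa=\log(T/\xi)$, the tolerance hypothesis reads $|\varphi-\overline\varphi|\geq\max\{\kappa/(2n),\,\sqrt{\kappa\varphi(1-\varphi)/(2n)}\}$; one shows $|\varphi-\overline\varphi|\geq\sqrt{\kappa\,\overline\varphi(1-\overline\varphi)/(6n)}$ by two cases (by symmetry assume $\varphi\leq1/2$): if $\varphi\geq 2\overline\varphi/3$, the square-root term with $1-\varphi\geq 1/2$ gives it directly; if $\varphi<2\overline\varphi/3$, then $|\varphi-\overline\varphi|\geq\overline\varphi/3$, and \emph{multiplying} this copy of $|\varphi-\overline\varphi|$ by the linear-term copy $|\varphi-\overline\varphi|\geq\kappa/(2n)$ yields $|\varphi-\overline\varphi|^2\geq\kappa\overline\varphi/(6n)\geq\kappa\,\overline\varphi(1-\overline\varphi)/(6n)$. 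This converts each per-$\vb$ bound into one involving the \emph{null} mean $b$; after averaging (common sign) and your Cauchy--Schwarz step, the factor $M^2-b^2$ cancels exactly. The constant $1/3$ thus comes from this case split, not from any comparison of null and alternative variances.
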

 \begin{proof}
See \S \ref{proof::bound_chi_square_div} for a detailed proof.
\end{proof}

We remark that Lemmas \ref{lemma::distinguish} and \ref{lemma::bound_chi_square_div} are closely related to  the Le Cam's method \citep{le2012asymptotic} in   the classical minimax framework.  The main idea of Le Cam's method is that, any hypothesis test incurs a large risk if the divergence between the null and alternative distributions is small.~In~detail, as shown in \cite{arias2017detection}, the proofs of Propositions \ref{prop::info_lower_bound} and \ref{prop::info_lower_bound2} are based upon the $\chi^2$-divergence between $\PP_{0}$ and the uniform mixture of $\{ \PP_{\vb} \colon \vb \in \cG( s)\}$. In comparison, our proof relies on the $\chi^2$-divergence between $\PP_0$ and $\PP_{\cC_{\ell}(q)}$, where $\PP_{\cC_{\ell}(q)}$ denotes the uniform mixture of the distributions in
\$
\bigl\{\PP_{\vb} : \supp(\vb) \in \cC_{\ell}(q)\bigr\} \subseteq \bigl\{ \PP_{\vb} \colon \vb \in \cG(  s)\bigr\},
\$
which leverages the local structure of the family of alternative distributions. Therefore, our analysis of the computationally feasible minimax  lower bound can be viewed as a localized refinement of the classical Le Cam's method.

For notational simplicity, we denote $\sqrt{2 \log (T/\xi)/ (3n)}$ by $\tau$  hereafter.~Combining \eqref{eq::compute_expectation}, Lemma \ref{lemma::bound_chi_square_div} and inequality $\cosh(x) \leq \exp(x^2/2)$, we obtain
	\#\label{eqn::second_bound}
	 (s  - k_{\ell} + 1 )^2 \geq \frac{2 \log (1+\tau^2 )}{{\beta}^4} - 2\log \biggl[ \frac{1-\zeta^{-1}}{{1 - \zeta^{-1}\cosh({\beta}^2)}} \biggr]\bigg/{\beta}^4.
	\#
	Moreover, by Taylor expansion and the fact that $\cosh(\beta^2) /\zeta = o(1)$, we obtain
	\#\label{eq::bound_by_taylor1}
	\log \biggl[\frac{1-\zeta^{-1}}{1 - \zeta^{-1}\cosh({\beta}^2)} \biggr] = \log \biggl \{ 1 + \frac{\zeta^{-1} \bigl[ \cosh(\beta^2) -1 \bigr]   }{1-  \zeta^{-1} \cosh(\beta^2) } \biggr\} = O( \zeta^{-1} \beta^4  ).
	\#
	Since $\beta = o(n^{-1/4})$, we have $\zeta ^{-1} \beta ^4 = o( \zeta^{-1} n^{-1})$. In addition, we   have  $\log (1+ \tau ^2) \geq \tau^2 /2  \geq 1/n $ by inequality $\log(1 + x) \geq x/2$. Thus combining \eqref{eqn::second_bound} and \eqref{eq::bound_by_taylor1},    the right hand side of \eqref{eqn::second_bound} is dominated by the first term.  Hence we obtain $
	 (s  - k_{\ell} + 1 )^2 \geq  \log (1+\tau^2 ) / {\beta}^4,
$
which implies that
	\#\label{eq::bound_integer_k_21}
	  k_{\ell} (q, \vb) \leq  s + 1 - \sqrt{    \log (1+\tau^2 ) /{\beta}^4 }, ~ \text{for all}~ \ell \in \{ 1,2\}.
	\#
	Moreover, inequality \eqref{eq::bound_integer_k_21} holds for all $q \in \cQ_{\mA}$ and all $\vb \in \cC_{\ell}(q)$.
	After obtaining upper bounds~for $k_1$ and $k_2$,
	 combining   \eqref{eq::set_bound}, \eqref{eq::upper_bound_Cq},  and \eqref{eq::bound_integer_k_21}, we further have
\#\label{eq::bound_Main_term}
 \frac{T \cdot    \sup  _{q\in \cQ_{\mA}} | \cC(q)|}{|\cG(s)|}
 \leq 4T \cdot   \exp \Bigl\{ -\log \zeta\cdot \bigl[\sqrt{   \log (1+\tau^2 ) / {\beta}^4  }- 1\bigr] \Bigl\}.
 \#
Recall that we denote $\tau = \sqrt{\log (T/\xi) / n} $  where $\xi = o(1)$.~For any constant $\eta >0$, we set $T = O(d^\eta)$.
Also, under the assumption of the theorem, there exists a  sufficiently small  constant $\delta >0$ such that $s ^2 / d^{1- \delta } = O(1)$. Hence, we have $\zeta = d/ (2s^2) =  \Omega(d^{\delta} )$. By inequality $\log(1+x) \geq x/2$, it holds that
	$\log (1+ \tau^2) \geq \tau^2/2 = \log (T/\xi) / (3n) $. Under the condition  $\beta ^4 n = o(1)$,~we have
	\$
	\frac{\log (T/\xi)}{3n\beta^4} > \frac{\eta \log d}{3n \beta^4}\rightarrow \infty.
	\$
	Hence if $n$ is sufficiently large, we have
	\$
	\frac{\log (T/\xi)}{3n\beta^4} > C^2,
	\$
	where the absolute constant $C$ satisfies $\delta (C-1) > \eta$.~Then by  \eqref{eq::bound_Main_term} we have
	\#\label{eq::bound_key_quantity_1}
	& \frac{T \cdot \sup_{q \in \cQ_{\mA}} |\cC(q)|}{|\cG(s)|} \leq 4T \cdot   \exp \Bigl\{ -\log \zeta\cdot \bigl[\sqrt{   \log (1+\tau^2 ) / {\beta}^4  }- 1\bigr] \Bigl\} \notag\\
	&\quad  =  O\bigl[4 d^\eta \zeta^{-(C-1) }  \bigr]=   O\bigl[4d^{\eta-\delta (C-1)} \bigr] = o(1).
	\#
	By combining \eqref{eq::bound_key_quantity_1} and Lemma \ref{lemma::distinguish}, we conclude that $\overline {R}_n^* ( \cG_0, \cG_1; \mA, r)$ converges to  $ 1$ as $n$ goes to infinity if $\gamma_n = o ( \sqrt{s^2/n})$.~ This concludes the proof of Theorem \ref{thm::lower_known_cov}.
\end{proof}

\subsubsection{Proof of Theorem \ref{thm::lower_unknown_cov}}\label{proof::thm::lower_unknown_cov}
Now we prove Theorem \ref{thm::lower_unknown_cov}, the computational lower bound for Gaussian mixture detection when $\bSigma$ is unknown.
\begin{proof}
The proof is similar to that of Theorem \ref{thm::lower_known_cov}. To characterize the fundamental difficulty of~the testing problem, we  consider the following specific instance
\$
H_0 \colon   \btheta = ( {\bf 0} , {\bf 0} , \Ib)~~\text{versus}~~H_1 \colon \btheta  = (  - \beta \vb ,   \beta \vb  , \bSigma_1 ),
\$
where $\vb \in \cG( s) =   \{ \vb \in \{ - 1,0, 1\}^d, \| \vb \|_0 = s \}$ and $\bSigma_1 = \Ib - \beta^2 \vb \vb^\top$.
In this case we have~$\Delta \bmu = 2\beta \vb$, and the signal strength  for parameter  of  alternative distribution  is
 \$
 \rho'(\btheta) = \frac{\| \Delta \bmu \|_2^4}{\Delta \bmu^\top \bSigma_1 \Delta \bmu} = \Delta \bmu ^\top \bSigma _1^{-1} \Delta \bmu  = \frac{4 s\beta^2}{1- s\beta^2},
 \$
where the second equality follows from the Woodbury matrix identity.
Provided the  assumption that  $\rho'(\btheta) = o[ (s^3 /n)^{1/4}]$,
 we have $ \beta^8 s  n  = o(1)$. For notational simplicity,  let $\PP_0$ be the distribution of $\bX$ under the null and $\PP_{\vb}$ be the distribution of $\bX$ under the alternative with model parameters $\btheta = (  - \beta \vb ,  \beta  \vb , \bSigma_1 )$. Due to the similar structure of the problem, we define quantities  $\cG(s)$, $\overline{ \PP} _{\vb}$  and $\PP_{\vb}$ in the same way as in the proof of Theorem \ref{thm::lower_known_cov}.
Then we have~that  the minimax testing   risk $\overline {R}_n^* [ \cG_0, \cG_1( s, \gamma_n); \mA, r]$ defined in \eqref{eq::minimax_risk_oracle} is lower bounded by
\$\overline {R}_n^* \bigl[  \cG_0, \cG_1( s, \gamma_n) ; \mA, r \bigr]  &\geq    \inf_{\phi\in \cH(\mA,r)}  \Bigl[ \overline{\PP}_{0}(\phi = 1) +  \sup _{\vb\in \cG( s) } \overline{\PP}_{\vb}(\phi = 0) \Bigr].\$
  By  Lemma \ref{lemma::distinguish}, to show that any hypothesis test with polynomial  oracle complexity is asymptotically powerless, it remains to show that $ T \cdot  \sup_{q \in \cQ_{\mA}} | \cC(q)| / | \cG(s) | = o(1)$.~For any query function~$q\in \cQ_{\mA}$,~let $\cC_1(q)$ and $\cC_2(q)$ be defined as in \eqref{eq::def_cq}, and  $\PP_{\cC_1(q)}$ and $\PP_{\cC_2(q)}$ be defined as in \eqref{eq::define_two_mixtures}. Besides, we~define $k_{\ell}(q, \vb)$ for $\ell \in \{1,2\}$ and any $\vb \in \cC_{\ell}(q)$  in the same fashion as in \eqref{eq::set_bound}.
  The following lemma, as a counterpart of Lemma \ref{lemma::compute_chi_square}, characterizes the cross moment of the likelihood ratios in \eqref{eq::chi_square_divergence}.

\begin{lemma}\label{lemma::compute_h_2}
For any integer $s>0$ and any $\vb_1, \vb_2 \in \cG(s)$,     we have
\#\label{eq::h2_formula}
\EE_{\PP_0} \biggl[ \frac{\ud  \PP_{\vb_1 }}{\ud  \PP_{0}} \frac{\ud\PP_{\vb_2}}{\ud \PP_{0}} (\bX) \biggr] = \EE_{W}  \biggl[(1-\beta^4 W^2)^{-1/2} \cdot\exp \biggl (\frac{-\beta ^4 W^2}{1-\beta ^4 W^2} \biggr)\cdot  \cosh \biggl(\frac{\beta^2 W}{1-\beta^4 W^2}\biggr )\biggr ],
\#
where $W$ is the sum of $| \la \vb_1 ,  \vb_2\ra |$ independent Rademacher random variables.

\end{lemma}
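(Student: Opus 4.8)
The plan is to evaluate the cross-moment $\EE_{\PP_0}\big[\tfrac{\ud\PP_{\vb_1}}{\ud\PP_0}\tfrac{\ud\PP_{\vb_2}}{\ud\PP_0}(\bX)\big]$ by a direct Gaussian computation, the key simplification being that each likelihood ratio is a function of a single linear statistic. Recall $\PP_0=N(\zero,\Ib)$ and, since $\nu=1/2$, $\PP_{\vb}=\tfrac12 N(-\beta\vb,\bSigma_\vb)+\tfrac12 N(\beta\vb,\bSigma_\vb)$ with $\bSigma_\vb=\Ib-\beta^2\vb\vb^\top$. Because $\|\vb\|_2^2=s$, Sherman--Morrison gives $\bSigma_\vb^{-1}=\Ib+\tfrac{\beta^2}{1-\beta^2 s}\vb\vb^\top$, hence $\bSigma_\vb^{-1}-\Ib=\tfrac{\beta^2}{1-\beta^2 s}\vb\vb^\top$, $\bSigma_\vb^{-1}\vb=\tfrac{1}{1-\beta^2 s}\vb$, and $\det\bSigma_\vb=1-\beta^2 s$. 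Substituting these into $\tfrac{\ud N(\pm\beta\vb,\bSigma_\vb)}{\ud N(\zero,\Ib)}(\xb)$ shows the density ratio depends on $\xb$ only through $\vb^\top\xb$, and averaging the two mixture components yields $\tfrac{\ud\PP_\vb}{\ud\PP_0}(\xb)=\psi(\vb^\top\xb)$, where with $c=\tfrac{\beta^2}{1-\beta^2 s}$, $\lambda=\tfrac{\beta}{1-\beta^2 s}$,
\[
\psi(t)=(1-\beta^2 s)^{-1/2}\exp\!\Big(-\tfrac{c}{2}t^2-\tfrac{\beta^2 s}{2(1-\beta^2 s)}\Big)\cosh(\lambda t).
\]
This is the unknown-covariance analogue of the reduction behind Lemma~\ref{lemma::compute_chi_square}; the difference is that the covariance perturbation makes the ratio a Gaussian-times-$\cosh$ rather than a finite sum of exponentials.

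With this reduction the problem becomes two-dimensional: under $\PP_0$ the pair $(t_1,t_2)=(\vb_1^\top\bX,\vb_2^\top\bX)$ is centred Gaussian whose covariance is the Gram matrix $G$ of $(\vb_1,\vb_2)$, with diagonal entries $s$ and off-diagonal entries $a:=\la\vb_1,\vb_2\ra$, so the cross-moment equals $\EE_{(t_1,t_2)\sim N(\zero,G)}[\psi(t_1)\psi(t_2)]$. I would expand $\cosh(\lambda t_1)\cosh(\lambda t_2)=\tfrac14\sum_{\epsilon_1,\epsilon_2\in\{-1,1\}}\exp(\lambda\epsilon_1 t_1+\lambda\epsilon_2 t_2)$ and apply the standard identity $\EE_{N(\zero,G)}[\exp(-\tfrac c2(t_1^2+t_2^2)+b^\top t)]=\det(I_2+cG)^{-1/2}\exp\!\big(\tfrac12 b^\top(I_2+cG)^{-1}G\,b\big)$ to each of the four sign-terms. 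Since $(1,1)$ and $(1,-1)$ are eigenvectors of $G$ with eigenvalues $s+a$ and $s-a$, these evaluations are explicit: $\det(I_2+cG)=(1-\beta^4 a^2)/(1-\beta^2 s)^2$, so the determinant prefactors combine with $\det\bSigma_\vb$ to leave $(1-\beta^4 a^2)^{-1/2}$, and after cancelling the offsets $\tfrac{\beta^2 s}{2(1-\beta^2 s)}$ against the quadratic-form contributions the two ``aligned'' sign-terms ($\epsilon_1\epsilon_2=1$) collapse to the exponent $\tfrac{\beta^2 a}{1+\beta^2 a}$ and the two ``opposed'' ones to $-\tfrac{\beta^2 a}{1-\beta^2 a}$.

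Collecting the contributions gives $\EE_{\PP_0}\big[\tfrac{\ud\PP_{\vb_1}}{\ud\PP_0}\tfrac{\ud\PP_{\vb_2}}{\ud\PP_0}(\bX)\big]=\tfrac12(1-\beta^4 a^2)^{-1/2}(e^{u}+e^{v})$ with $u=\tfrac{\beta^2 a}{1+\beta^2 a}$, $v=-\tfrac{\beta^2 a}{1-\beta^2 a}$; using $e^u+e^v=2e^{(u+v)/2}\cosh(\tfrac{u-v}{2})$ with $\tfrac{u+v}{2}=\tfrac{-\beta^4 a^2}{1-\beta^4 a^2}$ and $\tfrac{u-v}{2}=\tfrac{\beta^2 a}{1-\beta^4 a^2}$ produces exactly the integrand of \eqref{eq::h2_formula} evaluated at $a=\la\vb_1,\vb_2\ra$. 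The resulting expression is even in $a$ and depends on $(\vb_1,\vb_2)$ only through $a$ — the $s$-dependence cancels entirely, which is a convenient sanity check — so it equals its value at $|\la\vb_1,\vb_2\ra|$; recording $\la\vb_1,\vb_2\ra$ as a signed tally over the coordinates where $\vb_1,\vb_2$ carry matching nonzero entries, i.e.\ as a sum of $|\la\vb_1,\vb_2\ra|$ Rademacher variables, puts it in the stated form $\EE_W[\,\cdot\,]$, which is how it then feeds into \eqref{eq::chi_square_divergence}--\eqref{eq::compute_expectation} for Theorem~\ref{thm::lower_unknown_cov}. The main obstacle is entirely the bookkeeping: justifying the reduction to the statistic $\vb^\top\bX$ under a $\vb$-dependent covariance, and then pushing the rank-two determinant and Woodbury computations through the four sign-terms while verifying the cascade of cancellations that collapses them into the single $\cosh$.
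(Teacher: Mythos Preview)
Your argument is correct and follows the same overall strategy as the paper: both reduce the $d$-dimensional expectation to a two-dimensional Gaussian computation by observing that each likelihood ratio $\ud\PP_{\vb}/\ud\PP_0$ is a function of the single projection $\vb^\top\bX$, and then evaluate the resulting Gaussian integral over $(t_1,t_2)=(\vb_1^\top\bX,\vb_2^\top\bX)$. The execution of that integral differs: the paper orthogonalizes $U_2$ against $U_1$ via the auxiliary variable $V=(1-\alpha^2)^{-1/2}(U_2-\alpha U_1)$ and integrates sequentially (first $V$, then $U_1$), while you diagonalize the $2\times2$ Gram matrix $G$ using the eigenvectors $(1,\pm1)$ and collapse the four sign-terms via the eigenvalue factorization $\det(I_2+cG)=(1+\beta^2 a)(1-\beta^2 a)/(1-\beta^2 s)^2$. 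Your route makes the cancellation of the $s$-dependence more transparent and avoids the somewhat heavier intermediate expressions, but both arrive at the same closed form.

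One caveat on the very last step: your justification for passing from the deterministic value at $a=\la\vb_1,\vb_2\ra$ to ``$\EE_W[\cdot]$ with $W$ a sum of $|\la\vb_1,\vb_2\ra|$ Rademachers'' is loose, and the paper is equally loose here (it asserts that $\eta_1\eta_2\la\vb_1,\vb_2\ra$ has this distribution, which it does not in general). What the computation actually yields---in both proofs---is the integrand of \eqref{eq::h2_formula} evaluated at $W=\la\vb_1,\vb_2\ra$, which by evenness equals its value at $|\la\vb_1,\vb_2\ra|$; this is what is actually used downstream, so the substance of your argument is unaffected.
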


\begin{proof}
See \S \ref{proof::lemma::compute_h_2} for a detailed proof.
\end{proof}

For any $\vb , \vb' \in \cG(s)$,
combining inequalities  \$ -  \log ( 1-x) \leq x /(1-x),~\text{for~all~} x \in [0,1),~~ \cosh(x) \leq \exp(x^2/2),~\text{for~all~} x \geq 0, \$ we obtain the following upper bound for    the right-hand side of  \eqref{eq::h2_formula}
\#\label{eq::what}
\EE_{\PP_0} \biggl[ \frac{\ud  \PP_{\vb}}{\ud  \PP_{0}} \frac{\ud\PP_{\vb'}}{\ud \PP_{0}} (\bX) \biggr] \leq \EE_{\overbar{T}} \biggl\{    \exp \biggl[ \frac{ \beta^8 \overbar{T}^4}{ 2( 1- \beta^4 \overbar{T}^2)^2}\biggr]  \biggr\} \leq \EE_{\overbar{T}} \biggl\{  \exp \biggl[ \frac{ \beta^8 \overbar{T}^4} { 2 ( 1- s^2 \beta^4)^2}\biggr] \biggr\} ,
\#
where $\overbar{T}$ is the sum of $| \la \vb, \vb' \ra| $ independent Rademacher random variables.~Since $s\beta^2 = o(1)$,  it~holds that $2 ( 1- s^2 \beta^4)^2 > 1$ when $n$ is sufficiently large. Thus, by \eqref{eq::what} we further have
\#\label{eq::what2}
\EE_{\PP_0} \biggl[ \frac{\ud  \PP_{\vb}}{\ud  \PP_{0}} \frac{\ud\PP_{\vb'}}{\ud \PP_{0}} (\bX) \biggr]  \leq  \EE_{\overbar{T}}  \bigl[ \exp (\beta^8 \overbar{T}^4) \bigr].
\#
Note that $\beta^8 \overbar{T}^4 = o(1)$.  By second-order Taylor expansion  we further have
\#\label{eq::what22}
 \EE_{\overbar{T}}  \bigl[\exp (\beta^8 \overbar{T}^4) \bigr] \leq  \EE_{\overbar{T}} \bigl[ 1 +  ( \beta^8 \overbar{T}^4) +  (  \beta^8 \overbar{T}^4)^2 \bigr].\#
Since $\overbar{T}$ is the sum of $| \la \vb, \vb' \ra|$ Rademacher random variables, a calculation of its moments yields
\#\label{eq::what4}
\EE_{\overbar{T}} \overbar{T}^4 &= 3| \la \vb, \vb' \ra|^2 - 2| \la \vb, \vb' \ra|,\notag\\
\EE_{\overbar{T}} \overbar{T}^8 &= 105 | \la \vb, \vb' \ra|^4 - 420 | \la \vb, \vb' \ra|^3 + 588 | \la \vb, \vb' \ra|^2 - 272 | \la \vb, \vb' \ra|.
\#
Combining \eqref{eq::what2}, \eqref{eq::what22}, and \eqref{eq::what4}, we conclude that there exists a constant $C_0$ such that
\#\label{eq::compute_h_fun}
\EE_{\PP_0} \biggl[ \frac{\ud  \PP_{\vb}}{\ud  \PP_{0}} \frac{\ud\PP_{\vb'}}{\ud \PP_{0}} (\bX) \biggr]  \leq   \exp ( C_0   \beta^8s  | \la \vb, \vb' \ra|   ) .
\#
Similar to the proof of Theorem \ref{thm::lower_known_cov}, we define $\zeta = d/ (2s^2)$.
Combining \eqref{eq::upper_bound_chi_square}, \eqref{eq::compare_geometric}, \eqref{eq::useless2}, and~\eqref{eq::compute_h_fun} with   $a_j = | \cC_j(\vb)|$, $b_j = \zeta^j$, and  $h(t) =\exp[C _0 \beta^8 s (s-t)] $, we obtain
	\#\label{eqn::compute_expectation2}
	1 + D_{\chi^2}(\PP_{  \cC_{\ell}(q) }, \PP_0)&\leq \frac{ \sum_{j=0}^{k_{\ell} -1} \zeta^j \exp\bigl[C_0  \beta^8 s (s-j)\bigr]}{\sum_{j=0}^{k_{\ell}  -1} \zeta^j} \notag\\
	&\leq  \frac{ \exp \bigl[ C_0  (s - k_{\ell}   + 1) \beta^8 s \bigr] \cdot (1-\zeta^{-1})}{1 - \zeta^{-1}\exp (C  \beta^8 s)} .
	\#
Here we use $\zeta^{-1}\exp (C_0 \beta^8 s) = o(1)$ and denote $ k_{\ell}(q, \vb)$ by $k_{\ell}$. For notational simplicity, we denote $\sqrt{2 \log (T/\xi)/(3n)}$ by $\tau$  hereafter.
Combining \eqref{eqn::compute_expectation2} and  Lemma \ref{lemma::bound_chi_square_div}
 we obtain that
	\#\label{eq:w909}
	 s  - k _{\ell}+ 1 \geq \frac{\log (1+\tau^2 )}{C_0  {\beta}^8 s} - \log \biggl[ \frac{1-\zeta^{-1}}{1 - \zeta^{-1}\exp (C_0    \beta^8 s)}\biggr]\bigg/( C_0   {\beta}^8 s) .
	\#
Note that by Taylor expansion we have
	\#\label{eq:w9999}
	& \log \biggl[\frac{1-\zeta^{-1}}{1 - \zeta^{-1}\exp(C_0   {\beta}^8  s)} \biggr] = \log \biggl\{1 + \frac{\bigl[\exp(C_0  {\beta}^8 s) - 1\bigr]\zeta^{-1}}{1 - \zeta^{-1}\exp(C_0  {\beta}^8 s )} \biggr\} \notag\\
	&\quad = O\biggl\{\frac{\bigl[\exp(C _0{\beta}^8 s) - 1\bigr]\zeta^{-1}}{1 - \zeta^{-1}\exp(C _0 {\beta}^8 s)}\biggr\} = O(\zeta^{-1}{\beta}^8 s),
	\#
	where we use the fact that $\zeta^{-1}\exp(C_0  {\beta}^8 s) = o(1)$.~Thus,  from \eqref{eq:w909} and  \eqref{eq:w9999}, we have that, when $n$ is sufficiently large,
	\#\label{eq:w7777}
	k_{\ell} \leq s + 2 - \frac{\log (1+\tau^2 )}{C_0  {\beta}^8 s}, ~\text{for~all~}\ell \in \{1,2\}.
	\#
	Now combining    \eqref{eq::set_bound}, \eqref{eq::upper_bound_Cq},  and \eqref{eq:w7777}, we obtain
\#\label{eqn::bound_Main_term11}
\frac{T \cdot  \sup_{q \in \cQ_{\mA}} |\cC(q)|}{|\cG(s)|}
 \leq 4T \cdot  \exp \biggl\{ -\log \zeta\cdot  \biggl[\frac{\log (1+\tau^2 )}{C_0  {\beta}^8 s} - 2 \biggr] \biggr\}.
 \#
For any positive absolute constant $\eta$, we set $T = O(d^{\eta})$. In addition, under the assumption  that  there exists a  sufficiently small  constant $\delta >0$ such that $s ^2 / d^{1- \delta } = O(1)$, we have $\zeta = d/ (2s^2) =  \Omega(d^{\delta} )$.~By inequality $\log(1+x) \geq x/2$,  we have $
	\log (1+ \tau^2) \geq \tau^2/2 = \log (T/\xi) / (3n).$
	 Under the condition that  $\beta^8  s n = o(1)$,  it holds that
	  \$
	  \frac{\log (T/\xi)}{3C_0  n\beta^8 s}  \rightarrow \infty.
	  \$
	 Hence, for $n$ large enough, we have $\log (T/\xi) /( 3C_0   n\beta^8 s)  >C'$  for some sufficiently large  constant $C'$  satisfying $\delta (C'-2) > \eta$. Then by  \eqref{eqn::bound_Main_term11} we have
	\#\label{eq:w27365}
	& \frac{T \cdot   \sup_{q \in \cQ_{\mA}} |\cC(q)|}{|\cG(s)|} \leq 4T \cdot  \exp \biggl\{ -\log \zeta\cdot \biggl [\frac{\log (T/ \xi)}{3 C_0 {\beta}^8 sn} - 2 \biggr] \biggr\} \notag\\
	& \quad =  O\bigl[4 d^\eta \zeta^{-(C'-2) } \bigr]=   O\bigl[4d^{\eta-\delta (C'-2)} \bigr] = o(1).
	\#
	 Combining \eqref{eq:w27365}  and Lemma \ref{lemma::distinguish}, we obtain that $\overline {R}_n^* [ \cG_0, \cG_1( s, \gamma_n) ; \mA, r]\rightarrow 1 $ under~the assumption that  $\gamma_n = o [ (s^3/ n)^{1/4}  ]$. This concludes the proof of Theorem \ref{thm::lower_unknown_cov}.
\end{proof}

\subsubsection{Proof of Theorem \ref{col::implications}}\label{proof::col::implications}
In the sequel, we prove Theorem \ref{col::implications}, which shows that the lower bounds for the detection problem also hold for estimation, support recovery, and clustering.

\begin{proof}
We prove the three claims by contradiction. We show that  if any of the~arguments is false, we can construct an asymptotically powerful test for the detection problem, i.e., testing $H_0\colon \btheta\in   {\cG}_0(\bSigma)$ against $H_1 \colon \btheta \in    \cG_1(\bSigma,s, \gamma_n)$.  Recall that $\gamma_n = o(\sqrt{s^2/n})$. Then the existence of a computationally tractable test contradicts Theorem  \ref{thm::lower_known_cov}.

We first assume that \eqref{eq::argument1} does not hold. That is, suppose that there exists $\eta >0$ and $\mA \in \cA(T)$ with $T = O(d^{\eta})$ such that under the alternative hypothesis,  for any oracle $r \in \cR[\xi,n,T, M, \eta(\cQ_{\mA}) ]$, we  obtain estimators $\hat \bmu_1$ of $\bmu_1$ and $\hat\bmu_2$ of $\bmu_2$ satisfying
\$
\overline\PP_{\btheta} \Bigl[   \max_{\ell \in \{1,2\}} ( \hat\bmu_{\ell} - \bmu_{\ell} ) ^\top \bSigma ^{-1} ( \hat \bmu_{\ell} - \bmu_{\ell})>   \gamma_n / 64  \Bigr]=  o(1),
\$
or equivalently,
\#\label{eq::basedon_estimation1}
\max_{\ell \in \{1,2\}} ( \hat\bmu_{\ell} - \bmu_{\ell} ) ^\top \bSigma ^{-1} ( \hat \bmu_{\ell} - \bmu_{\ell}) \leq  \gamma_n / 64
\#
 with probability tending to one. Recall that the signal strength is
 $\Delta \bmu ^\top \bSigma^{-1} \Delta \bmu = \gamma_n$. Based on \eqref{eq::basedon_estimation1},  the test function for the sparse mixture detection problem can be defined as
 \#\label{eq::first_test}
 \phi_1(\{z_t\}_{t=1}^T) = \ind ( \Delta \hat \bmu^{\top} \bSigma^{-1} \Delta \hat \bmu  \geq \gamma_n /3).
 \#
By  inequality $(a+b)^2 \leq 2 a^2 + 2b^2 $ and  \eqref{eq::basedon_estimation1},  with high probability, we have
\#\label{eq::estimation2}
&(\Delta \hat  \bmu- \Delta \bmu)^{\top} \bSigma^{-1} (\Delta \hat \bmu - \Delta \bmu) \notag\\
& \quad \leq  2 \bigl[ ( \hat\bmu_1 - \bmu_1 ) ^\top \bSigma ^{-1} ( \hat \bmu_1 - \bmu_1)+  ( \hat\bmu_2- \bmu_2 ) ^\top \bSigma ^{-1} ( \hat \bmu_2 - \bmu_2) \bigr]
 \leq     \gamma_n  /16.
\#
 Also, by direct calculation we have
\#\label{eq::estimation3}
&(\Delta \hat  \bmu+  \Delta \bmu)^{\top} \bSigma^{-1} (\Delta \hat \bmu + \Delta \bmu) \\
 &\quad = (\Delta \hat  \bmu-   \Delta \bmu) ^\top \bSigma^{-1}(\Delta \hat \bmu - \Delta \bmu) + 4 (\Delta \hat  \bmu-   \Delta \bmu)^\top \bSigma^{-1} \Delta \bmu  + 4 \Delta \bmu^{\top } \bSigma^{-1} \Delta \bmu \notag\\
& \quad \leq  (\Delta \hat  \bmu-   \Delta \bmu)^\top \bSigma^{-1}(\Delta \hat \bmu - \Delta \bmu) + 4 \sqrt{\gamma_n} \cdot \bigl[ (\Delta \hat  \bmu-   \Delta \bmu)^\top \bSigma^{-1}(\Delta \hat \bmu - \Delta \bmu) \bigr]^{1/2} + 4 \gamma_n,\notag
\#
where the last inequality follows from Cauchy-Schwarz inequality. Thus, by \eqref{eq::estimation2} and  \eqref{eq::estimation3},  we have $(\Delta \hat  \bmu+  \Delta \bmu)^{\top} \bSigma^{-1} (\Delta \hat \bmu + \Delta \bmu)  \leq  6 \gamma_n$.
Furthermore,  combining this inequality with \eqref{eq::estimation2}, we obtain that
\#\label{eq::estimation4}
&| \Delta \hat \bmu ^{\top} \bSigma^{-1} \Delta \hat \bmu - \Delta \bmu ^{\top} \bSigma^{-1} \Delta \bmu   |^2  \notag\\
&\quad \leq  \bigl[ (\Delta \hat  \bmu- \Delta \bmu)^{\top} \bSigma^{-1} (\Delta \hat \bmu - \Delta \bmu) \bigr] \cdot \bigl[(\Delta \hat  \bmu+  \Delta \bmu)^{\top} \bSigma^{-1} (\Delta \hat \bmu + \Delta \bmu) \bigr] \leq  3/8 \cdot \gamma_n^2.
\#
Since $\Delta \bmu ^{\top} \bSigma^{-1} \Delta \bmu =   \gamma_n$, \eqref{eq::estimation4} implies that
\$
\Delta \hat \bmu ^{\top} \bSigma^{-1} \Delta \hat \bmu \geq (1- \sqrt{3/8}) \gamma_n \geq \gamma_n /3
\$ with high probability.

Furthermore, under the null hypothesis, we have $\bmu_1 = \bmu_2$ and $\Delta \bmu = {\bf 0}$. In this case, \eqref{eq::basedon_estimation1} still holds with probability tending to one, which implies that
\$
& \Delta \hat \bmu ^{\top} \bSigma^{-1} \Delta \hat \bmu = ( \hat \bmu_1 - \hat \bmu_2 ) ^\top \bSigma^{-1} ( \hat \bmu_1 - \hat \bmu_2 ) \\
 &\quad  =  \bigl [   ( \hat \bmu_1-  \bmu_1 )  -( \hat \bmu_2 - \bmu_2  )
 \bigr ]  ^\top \bSigma^{-1} \bigl [   ( \hat \bmu_1-  \bmu_1 )  -( \hat \bmu_2 - \bmu_2  )
 \bigr ] \\
 &\quad \leq   2 \bigl[ ( \hat\bmu_1 - \bmu_1 ) ^\top \bSigma ^{-1} ( \hat \bmu_1 - \bmu_1)+  ( \hat\bmu_2- \bmu_2 ) ^\top \bSigma ^{-1} ( \hat \bmu_2 - \bmu_2) \bigr]
 \leq     \gamma_n  /16.
\$
Hence,  the test function defined in \eqref{eq::first_test}  is asymptotically powerful. However, since $\gamma_n = o(\sqrt{ s^2/n})$, this contradicts the computationally feasible minimax lower bound in Theorem \ref{thm::lower_known_cov}.

Second, for any $\btheta \in \cG_1(\bSigma,s, \gamma_n)$, suppose that we have a polynomial-time algorithm that returns an index set $\hat \cS \subseteq[d]$ such that $\hat \cS = \supp(\Delta \bmu)$~with high probability   under $\overline \PP_{\btheta}$.~Furthermore, under the null hypothesis, we assume that this algorithm yields any one of the $2^d$ index subsets of $[d]$ with equal probability under $\overline \PP_{\btheta}$ for $\btheta\in \cG_0(\bSigma)$.~Then the test function for detecting Gaussian mixtures can be defined as
\#\label{eq::test2_selection}
\phi_2(\{z_t\}_{t=1}^T) = \ind \bigl[ \hat \cS  = \supp (\Delta \bmu) \bigr].
\#
Under  the alternative hypothesis, since $\hat \cS= \supp(\Delta \bmu)$ with high probability, by the definition in \eqref{eq::test2_selection} we have
\$
  \sup _{\btheta\in \cG_1(\bSigma,s,\gamma_n)} \overline{\PP}_{\btheta} (\phi_2 =0) =\overline\PP_{\btheta}  \bigl[  \hat \cS \neq \supp(\Delta \bmu)  \bigr]  = o(1).
\$
Moreover, under the null hypothesis,  we have $\Delta \bmu ={\bf 0} $ and thus $\supp(\Delta \bmu) = \emptyset$. Since $\hat \cS$ is uniformly random under $\overline {\PP}_{\btheta}$, we have
\$\sup_{\btheta \in \cG_0(\bSigma) }\overline{\PP}_{\btheta} (\phi_2 = 1) =   \overline {\PP} _{\btheta} \bigl ( \hat \cS =  \emptyset \bigr ) = 2^{-d}= o(1).\$
Therefore, the test function $\phi_2$ defined  in \eqref{eq::test2_selection} is asymptotically powerful, which
  is impossible~when $\gamma_n = o(\sqrt{ s^2/n})$ by  Theorem \ref{thm::lower_known_cov}. Hence, there exists an absolute constant $C$ such that \eqref{eq::argument2} holds.

Finally, to see \eqref{eq::argument3}, suppose that we obtain an assignment function $ F\colon \RR^d \rightarrow \{1,2\}$  such that clustering by $F$ is asymptotically accurate, that is,
\$
\min _{\Pi}  \PP _{\btheta}\Bigl\{ \Pi\bigl[F(\bX)\bigr]  \neq   F_{\btheta} (\bX) \Bigr\} = o(1),
\$
where $\Pi:\{1,2\}\rightarrow \{1,2\}$ is any permutation function. In addition, we define
\#\label{eq:two_events}
\cE_1 = \bigl\{ F(\bX)= F_{\btheta}(\bX) \bigr\}~~\text{and}~~\cE_2 = \bigl\{ F(\bX)= 3 - F_{\btheta}(\bX) \bigr\}.
\#
Let $\cE= \cE_1 \cup \cE_2$, which
is  the event that clustering by $F$ is accurate. Now we consider the problem of detecting the Gaussian mixture model, i.e.,
\#\label{eq:define_a_new_mixturemodel}
H_0 \colon \bX \sim N( \bmu, \bSigma ) \quad \text{versus} \quad H_1 \colon \bX\sim  \nu \cdot N(\bmu_1, \bSigma )+  ( 1-  \nu ) \cdot N(\bmu_1, \bSigma ),
\#
where we assume that $\bmu = \nu \cdot \bmu_1 + ( 1- \nu) \cdot \bmu_2$.     For ease of presentation, let $\Delta \bmu = \bmu_1 - \bmu_2$ and let $\eta \in \{ 1, 2\}$ be the latent variable of the Gaussian mixture model in \eqref{eq:define_a_new_mixturemodel} under $H_1$.
By the definition of $F_{\btheta} $ in \eqref{eq:optimal_cluster}, it can be verified that, under $H_1$,  we have
\$
& \PP( \eta = 1 \given \bX= \xb ) =\frac{  \PP(\eta = 1, \bX = \xb) }{ \PP(  \bX = \xb)   } \notag \\
& \quad  =\frac{  \nu \cdot f(\xb; \bmu_1 , \bSigma) } {\nu \cdot f(\xb; \bmu_1 , \bSigma) + ( 1 - \nu)  \cdot f(\xb; \bmu_1 , \bSigma) }
 = \PP\bigl [ F_{\btheta} (\bX) = 1\given \bX = \xb \bigr ],
\$
which implies that $(\eta , \bX)$ and $[ F_{\btheta} (\bX), \bX]$ has the same distribution under $H_0$. Here we use $f(\xb; \mu, \bSigma)$ to denote the density of $N(\bmu, \bSigma)$ at $\xb$.

Furthermore, let $\vb_0 = \Delta \bmu / \sqrt{ \Delta \bmu^\top \bSigma^{-1} \Delta \bmu }$.  We define
$
g  (\xb)   = \vb_0^\top \bSigma^{-1} ( \xb - \bmu)
$
and $ \overline g  (\xb) = g(\xb) \cdot \ind   [  F(\xb) = 1   ]$.
We
consider the  query function
 \#\label{eq:new_quary_cluster}
\overline q  (\xb) =   \overline g (\xb)   \cdot  \ind \bigl \{ |   g(\xb)|      \leq R \cdot  \sqrt{ \log n}   \bigr \},
\#
where $R $ is an absolute constant. Here  we adopt  truncation
to ensure that the query function is bounded.
Moreover, let $\bar Z $ be the random variable returned by the oracle $r$ defined in
  Definition \ref{def::oracle} for query function $\overline q $, and let $\bar z  $ be the realization of $\bar Z $.

  To characterize the effect of truncation in $\overline q $, by Cauchy-Schwarz inequality, under both $H_0$ and  $H_1$,  we have
  \#\label{eq:again_use_cauchy}
&  \bigl |  \EE   \bigl [  \overline q  (\bX) - \overline g  (\bX)   \bigr ]  \bigr | ^2   \leq \EE \bigl [ \overline g  ^2 (\bX)   \bigr ]  \cdot  \PP    \bigl (|   g(\xb)|  >  R \cdot  \sqrt{ \log n}    \bigr ).
  \#
  Note that $ g(\bX)  \sim N(0, 1 )$ under $H_0$ and that
  \$
   g(\bX) \sim \nu \cdot N\bigl [  (1- \nu)\cdot \vb_0^\top \bSigma^{-1} \Delta \bmu  , 1 \bigr ]  + ( 1- \nu) \cdot N\bigl(   - \nu \cdot \vb_0^\top \bSigma^{-1} \Delta \bmu  , 1 \bigr )
   \$ under $H_1$. By the definition of $\vb_0$, we have $\vb_0^\top \bSigma^{-1} \Delta \bmu = \sqrt{ \Delta \bmu ^\top \bSigma^{-1} \Delta \bmu} =o(1)   $. Thus, under both $H_0$ and $H_1$,
     $ g(\bX)$ is a sub-Gaussian random variable such that
   \# \label{eq:tail:of_sigmax}
   \PP \bigl [  \bigl |  g(\bX) \bigr |    \geq t    \bigr ]  \leq C_1 \cdot \exp( -C_2 \cdot t  )
   \#
   for any $t >0$, where
   $C_1$ and $C_2 $ are absolute constants.
   Hence, combining \eqref{eq:again_use_cauchy} and \eqref{eq:tail:of_sigmax}, we can set $R$  sufficiently large such that
 $  |  \EE     [\overline q  (\bX) - \overline g(\bX)   ]   | \leq 1/ n  $ under both $H_0$ and $H_1$.

   Furthermore, under $H_1$, by the definitions of $\cE_1$ and $\cE_2$ in \eqref{eq:two_events}, we have
   \#\label{eq:two_expectations}
   \EE  \bigl[ \overline{g}(\bX) \vert \cE_1 \bigr]  =  \nu  (1-\nu) \cdot  \vb_0 ^\top \bSigma^{-1} \Delta\bmu   , \quad
   \EE  \bigl[ \overline{g}(\bX) \vert \cE_2  \bigr]  =    - \nu  (1-\nu) \cdot  \vb_0 ^\top \bSigma^{-1} \Delta\bmu  .
   \#
 Whereas
 under the null hypothesis,  since there is only one Gaussian component,  we assume that  $F$ assigns  clusters  randomly, i.e., $F(\bX)$ is independent of $\bX$. In this case, we have $\EE [ \overline g(\bX) ] = 0$.

  Furthermore,  since clustering by $F$ is asymptotically accurate, it holds that $\overline{\PP}_{\btheta}(\cE^c) = o(1)$ under~the alternative hypothesis,    where $\cE^c$ denotes the complement of $\cE$. Recall that we assume that $T = O(d^{\eta})$ and $\bar z$ is the response  returned by the oracle $r$ for $\overline q$ in \eqref{eq:new_quary_cluster}. Based on $\{ z_t\}_{t=1}^T$ and $\bar z$, we define a test for  Gaussian mixture detection as
\# \label{eq:testfun33}
\phi_3(\{z_t\}_{t=1}^T, \bar z   )  = \ind \bigl\{  | \bar z| > C'  \sqrt {     \log n \cdot \log (d/\xi)     / n}   \bigr\},
\#
where $C'$ is an absolute constant.
Note that the tolerance parameter of the statistical query model in this case is
\#\label{eq:tauq33}
\tau_q = R \sqrt{ \log n}  \cdot \sqrt{  2  [ \log (T+1) + \log (1/\xi)  ]/ n}  = O\bigl\{ \sqrt {     \log n \cdot \log (d/\xi)     / n}  \bigr\}.
\#
Under $H_0$, by  Definition of \ref{def::oracle}, with probability at least $1- \xi$, we have
$$
 | \bar z| \leq   \bigl |  \EE [ \overline q(\bX)] \bigr  | + \tau_q \leq \bigl |  \EE [ \overline g (\bX)] \bigr  | + \tau_q + 1/ n \leq 2 \tau_q,
$$
which implies that
  type-I error of $\phi_3$ is no more than $\xi$.  Furthermore, under the alternative hypothesis, the type-II error is
\#\label{eq:conditioning_arg}
\overline{\PP} (\phi _3 = 0)  \leq  \overline \PP  (\phi_3 = 0\vert \cE _1 )+  \overline \PP  (\phi_3 = 0\vert \cE_2 )  +  \PP(\cE^c) .
\#
Conditioning on  $\cE_1$ or $\cE_2$  defined in \eqref{eq:two_events}, by \eqref{eq:two_expectations} we have
\$
| \bar z| &  \geq  \bigl |  \EE [ \overline q(\bX) \given \cE_i ] \bigr  |  - \tau_q \geq  \bigl |  \EE [ \overline g (\bX) \given \cE_i ] \bigr  | - \tau_q - 1/n \\
&\quad \geq \nu (1 - \nu)\cdot \sqrt{ \Delta \bmu ^\top \bSigma^{-1} \Delta \bmu} - 2 \tau_q \geq \gamma_n -2 \tau_q.
\$
Note that  $\gamma_n$ exceeds the information-theoretical limit, i.e., $\gamma_n = \Omega [ \sqrt{s \log d / n}]$.
Hence, combining  \eqref{eq:testfun33}, \eqref{eq:tauq33}, and \eqref{eq:conditioning_arg}, we conclude that
the type-II error is no more than $2\xi + o(1)$.
Therefore,
  $\phi_3$ is asymptotically powerful, contradicting the~lower~bound in Theorem \ref{thm::lower_known_cov}.~By combining the three claims, we conclude the proof.
\end{proof}

 \subsubsection{ Proof of Proposition \ref{prop::info_lower_bound_reg}}\label{proof::prop::info_lower_bound_reg}
Now we prove Proposition \ref{prop::info_lower_bound_reg}, which establishes the information-theoretic lower bound for detecting mixture of regressions.

\begin{proof}

We restrict the general detection problem in \S \ref{sec::bg_reg} to testing $H_0 \colon \btheta \in \overbar\cG_0  $ versus~$H_1 \colon \overbar \cG_1 ( s, \gamma_n)$, where the parameter spaces are defined in \eqref{eq:wbarg2}. Here $\sigma$ is an unknown constant. Then  under $H_0$ we have $Y\sim N(0, \sigma^2 + s\beta ^2)$ with $\beta> 0$  and  $\bX$ and $Y$ are independent.  In addition, under $H_1$ we have  $\bbeta \in  \{ \bbeta  = \beta \cdot \vb \colon \vb \in \cG(s) \}$, where $\cG(s) =  \{ \vb \in \{-1,0,1\}^d\colon \| \vb\|_0 = s  \}$. Recall that we define $\bZ = (Y,\bX)$. Hereafter, let  $\{  \bZ_i \}_{i=1}^n   $ be  $n$ independent copies of $\bZ$.

 We denote by   $\PP_{0} $ the probability  distribution of $(Y, \bX)$ under the null hypothesis and denote by $\PP_{\vb}$ the probability distribution under the  alternative hypothesis when $\bbeta = \beta \cdot \vb$.
 Besides, we denote $\overline{\PP} = 2^{-s} {d\choose s}^{-1} \sum_{\vb\in \cG( s)}\PP_\vb^{  n}$, where we use the superscript $n$ to denote the $n$-fold product probability measure.  By the  Neyman-Pearson Lemma, we immediately have
\#\label{eq::111}
 R_n^*(\cG_0, \cG_1)  &\geq \inf_{\phi}  \bigl[ \PP_{0}^{  n}(\phi = 1) + \overline{\PP}   (\phi = 0) \bigr] = 1 -  \frac{1}{2} \cdot\EE_{\PP_0^n}  \biggl[ \biggl| \frac{ \ud \overline\PP}{ \ud \PP_0 ^{n} }(\bZ_1, \ldots, \bZ_n) - 1\biggr|\biggr] \notag\\
&  \geq 1- \frac{1}{2}  \cdot \biggl(\EE_{\PP_0^{  n}} \bigg\{\biggl[\frac{ \ud \overline\PP}{ \ud \PP_0 ^{  n} }(\bZ_1, \ldots, \bZ_n )  \biggr]^2\biggr\} -1  \biggr)^{1/2},
\#
where the second inequality follows from Cauchy-Schwarz inequality. In what follows, we show that $\EE_{\PP_0^{  n}}   [\ud \overline\PP/ \ud \PP_0 ^{  n} (\bZ_1, \ldots, \bZ_n )  ]^2 = 1+ o(1)$, which  implies $ R_n^*(\cG_0, \cG_1) \geq 1 - o(1)$ by  \eqref{eq::111}.

By calculation, we have
\#\label{eq::122}
\EE_{\PP_0^{  n}}  \biggl\{\biggl [\frac{ \ud \overline\PP}{ \ud \PP_0 ^{  n} }(\bZ_1, \ldots, \bZ_n )  \biggr]^2\biggr\} =  2^{-2s}    {d \choose s}^{-2}  \sum_{\vb, \vb' \in \cG(s)}   \EE_{\PP_0^{  n}  }\biggl[\frac{\ud \PP_{\vb}^{  n} } {\ud \PP_0 ^{  n} } \frac{\ud \PP_{\vb'}^{  n}  }{\ud \PP_0^{  n} } (\bZ_1, \ldots, \bZ_n) \biggr].
\#
The following lemma calculates the right-hand side of \eqref{eq::122} in closed form.

\begin{lemma}\label{lemma::compute_chi_square2}
For any $\vb_1, \vb_2 \in \cG(s)$, we have
\$
& \EE_{\PP_0  }  \biggl[ \frac{\ud \PP_{\vb_1} }{\ud \PP_{0}}  \frac{\ud \PP_{\vb_2}  }{\ud \PP_{0}} (\bZ ) \biggr] = \biggl[ 1 -  \frac{\beta^4  \la\vb_1,\vb_2 \ra^2}{(\sigma^2 + s\beta^2)^{2}} \biggr]^{-1},\\
&  \EE_{\PP_0^n }  \biggl[ \frac{\ud \PP_{\vb_1}^n}{\ud \PP_{0}^n}   \frac{\ud \PP_{\vb_2}^n }{\ud \PP_{0}^n} (\bZ_1, \ldots, \bZ_n) \biggr]   =  \biggl[ 1 - \frac{\beta^4   \la \vb_1, \vb_2\ra^2}{(\sigma^2 + s\beta^2)^2} \biggr]^{-n},
  \$
where we use  $ \{ \bZ_i  \}_{i =1}^n $ to denote the $n$ independent copies of $\bZ$.
\end{lemma}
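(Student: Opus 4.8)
The plan is a direct Gaussian-integral calculation: condition on $\bX$, evaluate the resulting one-dimensional integral in $Y$, then integrate over $\bX$, and finally tensorize. Write $\sigma_0^2 = \sigma^2 + s\beta^2$. Under $\PP_0$ the density of $\bZ = (Y,\bX)$ factors as the standard $d$-dimensional Gaussian density of $\bX$ times the $N(0,\sigma_0^2)$ density of $Y$; under $\PP_{\vb}$, using $Y \mid (\bX,\eta) \sim N(\eta\beta\vb^\top\bX,\sigma^2)$ with $\eta$ Rademacher, it factors as the standard Gaussian density of $\bX$ times $\tfrac12[\varphi(Y - \beta\vb^\top\bX) + \varphi(Y + \beta\vb^\top\bX)]$, where $\varphi$ denotes the $N(0,\sigma^2)$ density. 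Hence each likelihood ratio $\ud\PP_{\vb}/\ud\PP_0$ depends on $\bX$ only through $\vb^\top\bX$, and after cancelling the common $\bX$-factors one gets $\EE_{\PP_0}[(\ud\PP_{\vb_1}/\ud\PP_0)(\ud\PP_{\vb_2}/\ud\PP_0)(\bZ)] = \EE_{\bX}[I(\beta\vb_1^\top\bX,\beta\vb_2^\top\bX)]$, where $I(a,b)$ is the integral over $y$ of $\tfrac14[\varphi(y-a)+\varphi(y+a)][\varphi(y-b)+\varphi(y+b)]$ divided by the $N(0,\sigma_0^2)$ density.

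First I would evaluate $I(a,b)$. Expanding the two mixtures produces four terms of the form $J(m_1,m_2)=\int \varphi(y-m_1)\varphi(y-m_2)\,\ud y$ divided by the $N(0,\sigma_0^2)$ density; completing the square twice — once to combine the two numerator Gaussians into a single Gaussian in $y$ times $\exp(-(m_1-m_2)^2/(4\sigma^2))$, and once against the reciprocal Gaussian factor — gives the closed form $J(m_1,m_2) = c\,\exp(-(m_1-m_2)^2/(4\sigma^2) + (m_1+m_2)^2/(4(2\sigma_0^2-\sigma^2)))$ with $c = \sigma_0^2/(\sigma\sqrt{2\sigma_0^2-\sigma^2})$; note $2\sigma_0^2-\sigma^2 = \sigma^2 + 2s\beta^2 > 0$, so every Gaussian integral encountered converges. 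The symmetry $(m_1,m_2)\mapsto(-m_1,-m_2)$ collapses the four terms to $I(a,b) = \tfrac12[J(a,b)+J(a,-b)]$.

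Next I would integrate over $\bX \sim N(0,\Ib)$. Under $\PP_0$, the pair $u = \vb_1^\top\bX$, $w = \vb_2^\top\bX$ is jointly centered Gaussian with $\mathrm{Var}(u) = \mathrm{Var}(w) = \|\vb_1\|_2^2 = \|\vb_2\|_2^2 = s$ and $\mathrm{Cov}(u,w) = \la\vb_1,\vb_2\ra$; because the two variances are equal, $u-w$ and $u+w$ are \emph{independent}, with variances $2s - 2\la\vb_1,\vb_2\ra$ and $2s + 2\la\vb_1,\vb_2\ra$. Substituting these into $\EE_{\bX}[I(\beta u,\beta w)]$ and invoking the elementary identity $\EE[\exp(tX^2)] = (1-2t\,\mathrm{Var}(X))^{-1/2}$ for a centered Gaussian $X$ (valid here since $\beta^2 s < \sigma_0^2$) reduces each of the two summands to a product of two such factors; collecting them — the prefactor $c$ cancels a $\sigma\sqrt{2\sigma_0^2-\sigma^2}$, and the two summands combine over the common denominator $\sigma_0^4 - \beta^4\la\vb_1,\vb_2\ra^2$ — yields exactly $[1 - \beta^4\la\vb_1,\vb_2\ra^2/(\sigma^2+s\beta^2)^2]^{-1}$, which is the first display. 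The $n$-sample display then follows immediately, since $\bZ_1,\dots,\bZ_n$ are i.i.d.\ under each of $\PP_0$, $\PP_{\vb_1}$, and $\PP_{\vb_2}$, so the cross-moment of the product likelihood ratios factorizes as the $n$-th power of the single-sample cross-moment.

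The only real obstacle is the bookkeeping: carrying the normalizing constants correctly through the two completions of the square in $J(m_1,m_2)$ and then through the two Gaussian square-exponential expectations after diagonalizing the covariance of $(u,w)$, and checking integrability at each step — which in every case reduces to the trivially true inequality $\beta^2 s < \sigma^2 + s\beta^2$ (equivalently $\beta^4\la\vb_1,\vb_2\ra^2 \le \beta^4 s^2 < \sigma_0^4$, so the final expression is a well-defined finite quantity).
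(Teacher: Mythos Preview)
Your proof is correct and takes a genuinely different route from the paper's. The paper treats $\bZ=(Y,\bX)$ as a $(d+1)$-dimensional vector, writes both $\PP_0$ and $\PP_{\vb}$ as (mixtures of) multivariate Gaussians with explicit covariance matrices $\Ab_0=\mathrm{diag}(\sigma_0^2,\Ib_d)$ and $\Ab(\pm\vb)$, and evaluates the cross moment via the matrix identity $\EE_{\PP_0}[\exp(-\tfrac12\bZ^\top\Bb\bZ)]=\det(\Ib+\Ab_0^{1/2}\Bb\Ab_0^{1/2})^{-1/2}$, reducing the answer to a block-determinant computation that collapses to $[1-(\sigma_0^2)^{-1}\beta^2\eta_1\eta_2\la\vb_1,\vb_2\ra]^{-1}$ before averaging over the Rademacher signs. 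Your approach instead exploits the regression structure directly: conditioning on $\bX$ turns the problem into a one-dimensional Gaussian integral in $Y$, and then the key observation that $u\pm w$ are independent (since $\|\vb_1\|_2^2=\|\vb_2\|_2^2=s$) diagonalizes the remaining two-dimensional integral. Your route is more elementary---it never leaves scalar Gaussian computations and makes the cancellation of the constant $c$ transparent---while the paper's determinant approach is more uniform and would generalize more readily to other covariance structures on $\bX$. Both are straightforward once set up; the bookkeeping you flag is indeed the only work.
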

\begin{proof}
See \S \ref{proof::lemma::compute_chi_square2} for a detailed proof.
\end{proof}

From \eqref{eq::122} and the basic inequality
\$
(1- x^2)^{-1}\leq \cosh(2x)= \frac{\exp(2x) + \exp(-2x)}{2}
\$ for any  $x \in [-1/2, 1/2]$, we have
\#\label{eq::123}
\EE_{\PP_0^{  n}}  \biggl[ \frac{\ud \overline\PP}{  \ud \PP_0 ^{  n} } (\bZ_1, \ldots, \bZ_n ) \biggr ]^2 & =   2^{-2s}    {d \choose s}^{-2}    \sum_{\vb, \vb'\in \cG(s)} \biggl[1 - \frac{\beta^4 \la \vb, \vb' \ra^2}{(\sigma^2+s\beta^2  )^{2}} \biggr] ^{-n} \notag\\
&\leq  2^{-2s}    {d \choose s}^{-2}    \sum_{\vb, \vb'\in \cG(s) }  \cosh        \biggl ( \frac{ 2 \beta^2 \la\vb, \vb'\ra    } { \sigma^2 + s\beta^2} \biggr)^n .
\#
Let $ \xi_1, \ldots,\xi_n  $ be $n$  i.i.d. Rademacher random variables, then \eqref{eq::123} can be written as
\#\label{eq::124}
\EE_{\PP_0^{  n}}  \biggl[ \frac{\ud \overline\PP}{  \ud \PP_0 ^{  n} } (\bZ_1, \ldots, \bZ_n ) \biggr ]^2 & =   2^{-2s}    {d \choose s}^{-2}   \sum_{\vb, \vb'\in \cG(s) } \EE_{\bxi} \biggl [\exp \biggl ( \frac{ 2 n \beta^2     } { \sigma^2 + s\beta^2} \sum_{i=1}^n \sum_{j=1}^d \xi_i v_j v_j'\biggr)\biggr] .
\#
We define $\cC(s) = \{ \cS\subseteq [d] \colon  | \cS| =s\}$ as all  subsets of $[d]$ with cardinality $s$. Then for any $\vb\in \cG(s)$, the support of $\vb$ is in $\cC(s)$. We denote $\vb \sim \cS$ if $\textrm{supp} (\vb ) = \cS$ for notational simplicity.  Then we can write \eqref{eq::124} as
\#\label{eq::125}
&\EE_{\PP_0^{  n}}  \biggl[ \frac{\ud \overline\PP}{  \ud \PP_0 ^{  n} } (\bZ_1, \ldots, \bZ_n ) \biggr ]^2  \notag\\
&\quad =  2^{-2s}    {d \choose s}^{-2} \sum_{\cS, \cS' \in \cC(s)}  \sum_{\vb\sim \cS, \vb'\sim \cS' }   \EE_{\bxi} \biggl [ \exp \biggl ( \frac{ 2 n \beta^2     } { \sigma^2 + s\beta^2}  \sum_{i=1}^n \sum_{j=1}^d \xi_i v_j v_j'\biggr) \biggr] \notag \\
 &  \quad = {d \choose s}^{-2}  \sum_{\cS, \cS' \in \cC(s)} 2^{-2s} \sum_{\vb\sim \cS, \vb'\sim \cS' }  \EE_{\bxi} \biggl [ \exp \biggl ( \frac{ 2 n \beta^2     } { \sigma^2 + s\beta^2}  \sum_{i=1}^n \sum_{j\in\cS \cap \cS'} \xi_i v_j v_j'\biggr)\biggr].
\#
Here we denote by $\EE_{\bxi}$ the expectation with respect to the randomness of $\xi_1, \ldots, \xi_n$.
Given $\cS, \cS' \in \cC(s)$, let $\overbar{T}$ be the sum of $| \cS\cap \cS'|$ independent Rademacher random variables. Then we have
\#\label{eq::126}
 2^{-2s} \sum_{\vb\sim \cS, \vb'\sim \cS' }  \EE_{\bxi}\biggl[\exp \biggl ( \frac{ 2 n \beta^2     } { \sigma^2 + s\beta^2}  \sum_{i=1}^n \sum_{j\in\cS \cap \cS'} \eta_i v_j v_j'\biggr) \biggr]  =   \EE_{\overbar{T}, \bxi} \biggl[\exp \biggl (\frac{ 2 n \beta^2     } { \sigma^2 + s\beta^2}  \sum_{i=1}^n  \xi_i \overbar{T} \biggr)\biggr] .
\#
Let $\cS, \cS'$ be two i.i.d. random sets that are uniformly distributed over $\cC(s)$. By \eqref{eq::125} and \eqref{eq::126}~we have
\#\label{eq::1467}
\EE_{\PP_0^{  n}}  \biggl[ \frac{\ud \overline\PP}{  \ud \PP_0 ^{  n} } (\bZ_1, \ldots, \bZ_n ) \biggr ]^2 &=  \EE_{\cS, \cS'}  \EE_{\overbar{T}, \bxi}\biggl[\exp \biggl (\frac{ 2 n \beta^2     } { \sigma^2 + s\beta^2}  \sum_{i=1}^n  \xi_i \overbar{T} \biggr) \biggr] \notag \\
& = \EE_{\cS, \cS'} \biggl\{ \biggl[\cosh\biggl(\frac{2n \beta^2}{\sigma^2 + s \beta^2}\biggr)\biggr]^{|\cS\cap \cS'|} \biggr\}.
\#
By the proof in \cite{arias2017detection}, the last term in \eqref{eq::1467} equals $ 1 + o(1)$ if
\$
\frac{s\beta ^2}{\sigma^2 + s \beta^2} =  o\biggl\{ \max \biggl[ \sqrt { \frac{s\log (d/s)}{n}}, \frac{s\log (d/s)}{n} \biggr]  \biggr\}.\$ Note that $s\log (d/s)/n = o(1)$  and $\sigma$ is a constant. Therefore, we obtain that, if
\$
\frac{ s\beta ^2 }{ \sigma ^2} = \rho(\btheta) = o\biggl\{ \max \biggl[ \sqrt { \frac{s\log (d/s)}{n}}, \frac{s\log (d/s)}{n} \biggr]  \biggr\},
\$
then we have that $\lim_{n\rightarrow \infty} R_n^* ( \cG_0, \cG_1) \geq 1$, which concludes the proof since $s < d$ and $n$ is sufficiently large such that $s\log (d/s)/n < 1$.
\end{proof}

\subsubsection{Proof of Theorem \ref{thm::lower_reg}}\label{proof::thm::lower_reg}
Next we prove Theorem \ref{thm::lower_reg}, which quantifies the hardness of detecting mixture of regressions~under finite computational budgets.
\begin{proof}
Similar to the proof of the information-theoretic lower bound, we study the    restricted detection problem $H_0 \colon \btheta \in \overbar{\cG}_0  $ versus  $H_1 \colon \overbar{\cG}_1 ( s, \gamma_n)$,
 in which $\overbar{\cG}_0 $ and $\overbar{\cG}_1 ( s, \gamma_n)$ are defined in \eqref{eq:wbarg2}.~Following the same notations in the proof of Proposition \ref{prop::info_lower_bound_reg},    we  denote by $\PP_{0}$ the null distribution  and by  $\PP_{\vb}$  the alternative distribution when $ \bbeta = \beta \cdot \vb$ for some
 $\vb \in\cG(s)$.~Here $\beta >0$ is a fixed number. Under the assumption that $\rho(\btheta) = s \beta ^2 /\sigma^2 = o ( \sqrt{s^2/n})$, we have  $ n \beta ^4/ \sigma^4 = o(1)$.

  Moreover, we define  the distribution of the random variables returned by the oracle~under~the~null distribution  as $\overline{\PP}_0$ and define $\overline{\PP}_{\vb}$ similarly.
Hence the risk $\overline {\cR}_n^* ( \cG_0, \cG_1; \mA, r)$ defined in \eqref{eq::minimax_risk_oracle} is  lower bounded by
\#\label{eq::first_step_reduce_to_finite}
\overline {R}_n^* ( \cG_0, \cG_1; \mA, r) \geq {     \inf_{\phi\in \cH(\mA,r)} }  \Bigl[ \overline{\PP}_{0}(\phi = 1) + {  \sup _{\vb\in \cG(s) }} \overline{\PP}_{\vb}(\phi = 0) \Bigr].\#
Following the proof of Theorem   \ref{thm::lower_known_cov}, for any query function $q\in \cQ_{\mA}$, we define $\cC(q)$ as  in \eqref{eq::distinguished_set}, and $\cC_1(q)$ and $\cC_2(q)$ as in \eqref{eq::def_cq}. By Lemma \ref{lemma::distinguish}, to show the right hand side of \eqref{eq::first_step_reduce_to_finite} is not asymptotically negligible, it remains to show that \$T \cdot   \sup_{q \in \cQ_{\mA}}  |\cC_1(q) | + T  \cdot  \sup_{q \in \cQ_{\mA}} | \cC_2(q)| < | \cG(s)|.\$
Also, for any $\ell \in \{ 1,2\}$ we define $ \PP_{\cC_{\ell}(q)}$ as the uniform mixture of $\{ \PP_{\vb} \colon \vb\in \cC_{\ell}(q)\}$ and define $\overline {\cC}_{\ell}(q, \vb)$ as in \eqref{def::bar_C} for $\vb\in \cG(s)$. Combining \eqref{eq::chi_square_divergence} and \eqref{def::bar_C}, for $\ell \in \{ 1, 2\} $, by the definition of $\chi^2$-divergence,~we have
\#\label{eq::chi2_bound}
D_{\chi^2} (\PP_{ \cC_{\ell}(q)}, \PP_0 ) &\leq  \sup _{\vb\in \cC _{\ell}(q) }   \frac{1}{| \overline{ \cC}_{\ell}(q, \vb)|} \sum_{\vb' \in \overline{\cC}_{\ell}(q, \vb)} \EE_{\PP_0}   \biggl [\frac{\ud \PP_{\vb }}{\ud \PP_0} \frac{\ud \PP_{\vb'}}{\ud \PP_0}(\bZ) \biggr] - 1.
\#
By Lemma \ref{lemma::compute_chi_square}  and the  inequality  $(1-x^2 )^{-1} \leq \cosh(2 x )$ for $x\in [-1/2, 1/2]$ we have
\#\label{eq::127}
\EE_{\PP_0}   \biggl [\frac{\ud \PP_{\vb }}{\ud \PP_0} \frac{\ud \PP_{\vb'}}{\ud \PP_0}(\bZ)\biggr ] \leq \cosh\biggl(\frac{2\beta ^2 \la \vb, \vb' \ra}{\sigma^2 + s\beta ^2}\biggr).
\#
For notational simplicity, we define $\mu = 2 \beta ^2 / (\sigma ^2 + s \beta^2 )$  and
$
h(t) =   \cosh [(s-t)   \mu  ]
$ for $t \in \{0,\ldots, s\}$. By combining \eqref{eq::chi2_bound} and \eqref{eq::127}, we obtain that
\#\label{eq::128}
D_{\chi^2} (\PP_{ \cC_{\ell}(q)}, \PP_0 )  \leq  {  \sup _{\vb\in \cC _{\ell}(q) }} \frac{\sum_{\vb' \in \overline{\cC}_{\ell}(q, \vb)} \cosh( \mu \la \vb , \vb'\ra) - 1}{| \overline{ \cC}_{\ell}(q, \vb)|}.
\#
To establish an upper bound for   the right-hand side of \eqref{eq::128},  we define
\$
\cC_j(\vb) = \bigl\{\vb' \in \cG(s) : |\la \vb , \vb' \ra| = s-j\bigr\}
\$
for any $j \in\{0,\ldots, s\}$ and any fixed  $\vb\in \cG(s)$. Then for~$\ell \in \{ 1,2 \}$, any  query function $q \in \cQ_{\mA}$, and any~$\vb \in \cC_{\ell} (q)$, by the monotonicity of function $h(t)$ and  the definition of set~$\overline{\cC}_{\ell}(q, \vb)$ in~\eqref{def::bar_C},  there exists  an integer  $k_{\ell} (q, \vb )$  satisfying
 \$
  \overline{\cC}_{\ell}(q, \vb) = \cC_{0}(\vb) \cup \cC_1(\vb) \cup \cdots \cup  \cC_{k_{\ell} (q, \vb) -1}(\vb) \cup \cC'_{\ell} (q,\vb ),
 \$
 where $\cC'_{\ell} (q,\vb) = \overline \cC_{\ell}(q, \vb ) \setminus { \bigcup_{j=0}^{k_{\ell}(q, \vb) - 1}} \cC_j(\vb)$ has cardinality
 \$
| \cC'_{\ell} (q,\vb) | =  | \cC_{\ell}(q)| -{   \sum_{  j = 0}^{ k_{\ell}(q,\vb) -1}  } | \cC_{j} (\vb)| < | \cC_{k_{\ell} (q, \vb )} (\vb)|.
 \$
 Thus the cardinality of $\overline{\cC} _{\ell} (q , \vb)$  can be bounded  by
  \#\label{eq::set_bound2}
{   \sum_{j=0}^{k_{\ell}(q, \vb ) } } |\cC_j(\vb) | > | \overline{ \cC} _{\ell} (q , \vb)|\geq {  \sum_{j=0}^{k_{\ell} (q, \vb )-1}} |\cC_j (\vb) |.
 \#
	 Then by  \eqref{eq::128} and \eqref{eq::set_bound2} we further obtain that
 \#\label{eq::129}
 1 + D_{\chi^2} (\PP_{  \cC_{\ell}(q)}, \PP_0   )\leq & { \sup _{\vb\in \cC _{\ell}(q) }}  \frac{\sum_{j=0}^ {k_{\ell}(q, \vb )-1} h(j)\cdot |\cC_j(\vb)| + h[k_{\ell}(q, \vb )] \cdot | \cC_{\ell}'(q, \vb ) |}  { \sum _{j=0}^{k_{\ell}(q, \vb)-1} | \cC_j(\vb)| + | \cC_{\ell}'(q, \vb)|}\notag \\
 \leq &{ \sup _{\vb\in \cC _{\ell}(q) }} \frac{\sum_{j=0}^ {k_{\ell}(q , \vb )-1} h(j) \cdot    | \cC_j(\vb)| }  { \sum _{j=0}^{k_{\ell}(q, \vb )-1} | \cC_j(\vb)| }.
 \#
 Here the second inequality in \eqref{eq::129} follows from the   monotonicity of $h(t)$.

To obtain an upper bound for right-hand side of \eqref{eq::129}, note that by Lemma \ref{lemma::growth_Cj} we have
  \$| \cC_{j}(\vb)| \leq \zeta^{j-s} | \cC_{s}(\vb)|,~ \text{for all}~   j \in \{0,\ldots, s\}, \$ where we denote $\zeta = d/ (2s^2)$ for notational simplicity. Then under the assumptions of the theorem, we have $\zeta^{-1} = o(1)$ and $\zeta = \Omega(d ^{\delta})$ for some  constant $\delta$ that is  sufficiently small.
Then by  \eqref{eq::set_bound2}~and the definition of $\overline {\cC}_{\ell}(q, \vb)$ in \eqref{def::bar_C}, for any  $q \in \cQ_{\mA}$, we   obtain that
		\#\label{eq::upper_bound_Cq2}
	 |\cC_{\ell}(q)| = |\overline {\cC}_{\ell}(q, \vb)|&\leq { \sum_{j=0}^{k_{\ell}(q, \vb ) }} |\cC_j(\vb)|
	 \leq   |\cC_{ s }(\vb)| { \sum_{j=0}^{k_{\ell}(q, \vb ) }} \zeta^{j - s} \notag \\
	 &\leq \frac{\zeta^{-[ s  - k_{\ell}(q, \vb ) ]} |\cG(s) |}{1 - \zeta^{-1}} \leq 2\zeta^{-[ s  - k_{\ell}(q, \vb ) ] } | \cG(s)|,
	 \#
	where the last inequality follows from    $\zeta^{-1} = 2s^2 /d = o(1)$. In the following, we denote $k_{\ell} = k_{\ell}(q, \vb)$ to simplify the  notations.
	
 By  combining  \eqref{eq::129}, \eqref{eq::upper_bound_Cq2}, and  \eqref{eq::useless2} with   $a_j = | \cC_j(\vb)|$~and $b_j = \zeta^j$, we  obtain
  \#\label{eq::compute_expectation2}
	1 + D_{\chi^2} (\PP_{ \cC_{\ell}(q)}, \PP_0 )&\leq \frac{ \sum_{j=0}^{k_{\ell}-1} \zeta^j  \cosh\bigl[( s  - j) \mu \bigr] }{\sum_{j=0}^{k_{\ell}-1} \zeta^j} \notag\\
	&\leq  \frac{ \cosh\bigl[(s - k_{\ell} + 1) \mu\bigr]  \cdot (1-\zeta^{-1})}{1 - \zeta^{-1}\cosh({\mu})},
	\#
	where we use the fact that $\cosh(\mu) / \zeta = o(1)$, which holds because
	  $ n  \beta ^ 4 /\sigma ^4= o(1)$ and  $s^2 /d =o(1)$. Moreover, by the  inequality $\cosh(x) \leq \exp(x^2/2)$, we have
\#\label{eq::130}
1 + D_{\chi^2} (\PP_{ \cC_{\ell}(q)}, \PP_0 )\leq \exp\bigl[ (s - k _{\ell} + 1) ^2 \mu^2 /2 \bigr] \cdot \frac{1- \zeta^{-1}}{1 - \zeta^{-1}\cosh({\mu})}.
\#	
In the following, for notational simplicity, we denote $\sqrt{2 \log (T/\xi)/(3n)}$ by $\tau$. By combining \eqref{eq::130}~and  Lemma \ref{lemma::bound_chi_square_div}, we obtain that
	\#\label{eqn::second_bound2}
	 (s  - k_{\ell} + 1 )^2 \geq \frac{2 \log (1+\tau^2 )}{\mu^2} - 2\log \biggl[ \frac{1-\zeta^{-1}}{1 - \zeta^{-1}\cosh(\mu )} \biggr]\bigg/\mu^2.
	\#
	Moreover, by Taylor expansion and the fact that $\cosh(\mu) /\zeta = o(1)$, we have
	\#\label{eq::bound_by_taylor}
	\log \biggl[\frac{1-\zeta^{-1}}{1 - \zeta^{-1}\cosh({\mu} )} \biggr] = \log \biggl \{ 1 + \frac{\zeta^{-1} \bigl[ \cosh(\mu) -1\bigr]   }{1-  \zeta^{-1} \cosh(\mu ) } \biggr\} = O( \zeta^{-1} \mu^2).
	\#
	Similar to  \eqref{eq::bound_by_taylor1}, we conclude that the first term on the right-hand side of \eqref{eqn::second_bound2} is   dominant, which implies that
	$
	 (s  - k_{\ell} + 1 )^2 \geq  \log (1+\tau^2 ) / \mu^2
$ when $n$ is sufficiently large.

	Combining  \eqref{eqn::second_bound2} and \eqref{eq::bound_by_taylor}, we finally have
	\#\label{eq::bound_integer_k_22}
	  k_{\ell} (q, \vb) \leq  s +1  - \sqrt{  \log (1+\tau^2 )/\mu^2  },  ~\text{for~all}~\ell \in \{ 1,2\}.
	\#
	Furthermore, it can be seen that  \eqref{eq::bound_integer_k_22} holds for all $q \in \cQ_{\mA}$ and all $\vb \in \cC_{\ell}(q)$.
	After obtaining upper bounds for $k_1$ and $k_2$,
	 combining   \eqref{eq::set_bound}, \eqref{eq::upper_bound_Cq2},  and \eqref{eq::bound_integer_k_22}, we further obtain
\#\label{eq::bound_Main_term2}
 \frac{T \cdot    \sup  _{q\in \cQ_{\mA}} | \cC(q)|}{|\cG(s)|}
 \leq 4T \cdot   \exp \Bigl\{ -\log \zeta\cdot \bigl[  \sqrt{\log (1+\tau^2 )/\mu^2  }-1\bigr] \Bigr\}.
 \#

 For any constant $\eta >0$, we set $T = O(d^\eta)$.
Remind that we denote $\tau = \sqrt{2 \log (T/\xi) / (3n)} $  where~$\xi = o(1)$. By inequality $\log(1+x) \geq x/2$, it holds that
	$\log (1+ \tau^2) \geq \tau^2/2 = \log (T/\xi) / (3n) $.~Furthermore, under the condition that  $ n \beta ^4/ \sigma^4 = o(1)$,~we have
	\$
	\frac{\log (T/\xi)}{3n\mu^2} \geq \frac{\sigma ^4  \log (T/\xi)}{3n \beta^4}\rightarrow \infty.
	\$
	 Let $n$ be sufficiently large such that
	\#\label{eq::12345}
	\frac{\log (1+ \tau^2)}{\mu^2} \geq \frac{\sigma^4 \log (T/\xi)}{3n\beta^4} > C^2,
	\#
	where $C$ is an absolute constant satisfying  $\delta (C-1) > \eta$. Then  combining \eqref{eq::bound_Main_term2} and \eqref{eq::12345} we have
	\#\label{eq::bound_key_quantity_11}
	\frac{T \cdot   \sup_{q \in \cQ_{\mA}} |\cC(q)|}{|\cG(s) |} &\leq 4T \cdot   \exp \Bigl\{ -\log \zeta\cdot \bigl[  \sqrt{\log (1+\tau^2 )/\mu^2  }-1\bigr] \Bigr\} \notag\\
	& =  O\bigl[4 d^\eta \zeta^{-(C-1) }\bigr]=   O\bigl[4d^{\eta-\delta (C-1)}\bigr] = o(1).
	\#
	Finally, by   \eqref{eq::bound_key_quantity_11}  and Lemma \ref{lemma::distinguish}, we conclude that $\overline {R}_n^* ( \cG_0, \cG_1; \mA, r) \rightarrow 1$ if $\gamma_n = o ( \sqrt{s^2/n})$.
\end{proof}

\subsubsection{ Proof of Theorem \ref{col::implications_reg}}\label{proof::col::implications_reg}

\begin{proof}
~In the following, we prove by contradiction. Suppose there exists an absolute constant $\eta >0$
  and $\mA \in \cA(T)$ in which $T = O(d^{\eta})$, such that under the mixture of regression model with parameter $\btheta = ( \bbeta, \sigma^2)$,  for any oracle $r \in \cR[\xi,n,T, M, \eta(\cQ_{\mA}) ]$, we  obtain an estimator $\hat \bbeta$ of $\bbeta$ such that
 $
 \| \hat \bbeta - \bbeta \|_2^2  /  \sigma^2  \leq   \gamma_n /64
 $
 holds with probability tending to one.
 Recall that we have $ \| \bbeta \|_2^2 / \sigma^2  = \gamma _n.$

 By Cauchy-Schwarz inequality we have
\#\label{eq::implication1}
\bigl|  \| \hat \bbeta \|_2^2 - \| \bbeta \|_2^2 \bigr|  ^2  = \bigl| ( \hat \bbeta - \bbeta ) ^\top ( \hat \bbeta + \bbeta ) \bigr|^2 \leq  \| \hat \bbeta - \bbeta \|_2^2 \cdot \| \hat \bbeta + \bbeta \|_2^2.
\#
In addition, by triangle inequality and the fact that $ \| \bbeta \|_2^2 / \sigma^2  = \gamma _n $ we have
\#\label{eq::implication2}
  \| \hat \bbeta + \bbeta \|_2^2 & \leq \bigl( \| \hat \bbeta - \bbeta \|_2 + 2\| \bbeta \|_2 \bigr)^ 2 \leq 2 \| \hat \bbeta - \bbeta \|_2^2 +8 \| \bbeta \|_2^2 \notag \\
&  \leq  8 \| \bbeta \|_2^2  +   \sigma^2 \cdot \gamma_n  /32 \leq 9 \| \bbeta \|_2 ^2 .
\#
Combining \eqref{eq::implication1} and \eqref{eq::implication2}  we obtain that
\$
\bigl|  \| \hat \bbeta \|_2^2 - \| \bbeta \|_2^2 \bigr|  ^2  \leq  \sigma^2/ 64  \cdot  \gamma_n      \cdot  9\| \bbeta \|_2^2  = 9/64\cdot  \sigma^4\gamma_n^2,
\$
which then implies that $|  \| \hat \bbeta \|_2^2 - \| \bbeta \|_2^2 |  /  \sigma^2  \leq 3/8\cdot \gamma_n$. Thus, under the alternative hypothesis, with   probability tending to one, we have
$
\| \hat \bbeta \|_2^2 / \sigma^2 \geq 5/8\cdot \gamma_n.
$

 Furthermore, under the null hypothesis, since $\bbeta = {\bf 0}$, the algorithm produces an estimator $\hat \bbeta $ such that
 $\| \hat \bbeta - \bbeta \|_2 ^2 /\sigma^2 = \| \hat \bbeta \|_2 ^2 / \sigma^2 \leq \gamma_n  / 64$ with high probability.

Therefore, the test function $
 \phi(\{w_t\}_{t=1}^T ) = \ind ( \| \hat \bbeta \|_2^2 / \sigma^2 \geq 5/8 \cdot \gamma_n)$
 is asymptotically powerful, where $w_t$ is the realization of the random variable $W_t$ returned by the oracle  for query function $q_t$. Since $\gamma _n = o( \sqrt{s^2 / n})$, the existence of an asymptotically powerful test with polynomial oracle complexity contradicts the computational lower bound  in Theorem \ref{thm::lower_reg}.
\end{proof}

\subsection{Proofs of Upper Bounds}\label{pf::upper}
In this section we lay out the proofs of the upper bounds for Gaussian mixture detection. In specific, we prove that the  hypothesis tests  in \eqref{eq::test_fun1} and \eqref{eq::test_fun2} are asymptotically powerful, which implies~the tightness of   the lower bounds established in \S\ref{sec::lower_bound}.
\subsubsection{Proof of Theorem \ref{thm::test1}} \label{proof::thm::test1}
\begin{proof}
To simplify the notation,  for any $\vb\in \cG(s)$, we define $q_{\vb}^*(\xb) = (\vb^\top \bSigma^{-1}\xb)^2 / (\vb ^\top \bSigma^{-1} \vb)$.
 Note that under $\PP_0$, for any $\vb \in \RR^d$,
 $
 \vb^\top \bSigma^{-1}\bX / \sqrt{   \vb^\top \bSigma^{-1} \vb} $ is a standard normal random variable.
Thus  $q_{\vb}^*(\bX)\sim \chi_1^2$ under  $\PP_0$, which implies that $\EE_{\PP_0} [q_{\vb}^*(\bX)] = 1$. As in the proofs of the  lower bounds in \S\ref{pf::lower}, we denote by $\PP_{\vb}$ the probability distribution under the alternative hypothesis with model parameter $\btheta = [ - \beta (1- \nu) \vb, \beta \nu \vb, \Ib]$. Let $\overline{\PP}_{0}$ and $\overline{\PP}_{\vb}$ be the distributions of the random variables returned by the oracle under $\PP_0$ and $\PP_{\vb}$, respectively.~Then under  $\PP_{\vb}$, for~any $\vb' \in \cG(s)$,  we have
\#\label{eq::distri_test_stat}
\frac{ {\vb'}^\top\bSigma ^{-1}   \bX}{ \sqrt{ {\vb'}^\top \bSigma^{-1} \vb'} }  &\sim \nu \cdot  N  \biggl( -\frac{ \beta (1- \nu) {\vb'}^\top\bSigma ^{-1}\vb}{ \sqrt{ {\vb'}^\top \bSigma^{-1} \vb'}  } ,  1  \biggr)   +     (1- \nu)\cdot N  \biggl ( \frac{ \beta  \nu{\vb'}^\top \bSigma ^{-1}\vb}{   \sqrt{ {\vb'}^\top \bSigma^{-1} \vb'}         }  ,  1 \biggr  ) .
\#
 Therefore, the expectation of $q_{\vb'}^*(\bX)$ under $\PP_{\vb}$ is given by
\$
\EE_{\PP_{\vb}}[q_{\vb' }^*(\bX)] &=  1 + \frac{\beta^2 \nu(1- \nu)  | {\vb'}^\top \bSigma ^{-1} \vb |^2}{\vb'^ \top \bSigma^{-1} \vb'}\leq   1+ \beta^2 \nu(1- \nu) \vb ^\top \bSigma^{-1} \vb,
\$
where the inequality follows from Cauchy-Schwarz inequality and  equality is attained by $\vb' = \vb $.~Thus we have
\#\label{eq::diff_mean}
 \sup_{\vb' \in \cG(s) } \Bigl\{ \EE_{\PP_{\vb}}\bigl[q_{\vb' }^* (\bX)\bigr]  -\EE_{\PP_0} \bigl[q_{\vb'}^*(\bX)\bigr] \Bigr\}=  \beta^2 \nu(1- \nu) \vb^\top \bSigma^{-1} \vb.
\#

In the following, we characterize the effect of truncation in \eqref{eq::query_fun1} by bounding the difference between $q_{\vb'}(\bX)$ and $q_{\vb'}^* (\bX)$ under $\PP_0$ and $\PP_{\vb}$ for any   any $\vb ' \in \cG(s)$. Under the null hypothesis, since $q_{\vb'}^* (\bX) \sim \chi_1^2$, by Cauchy-Schwarz inequality, we have
\# \label{eq:use_cauchy1}
\bigl | \EE_{\PP_0} [ q_{\vb'}(\bX) - q_{\vb'}^* (\bX)  ]\bigr | ^2     & \leq \EE _{\PP_0} \bigl \{  [ q_{\vb' }^* (\bX)]^2 \bigr \} \cdot  \PP_0  \Bigl (  |  {\vb'}^\top\bSigma ^{-1}   \bX | > R \sqrt{\log n} \cdot  \sqrt{ {\vb'}^\top \bSigma^{-1} \vb'}  \Bigr )
\notag \\
& \leq 6 \cdot \exp( -R^2 \log n /2 ).
\#
Here in the last inequality  we use the fact that  $\PP( \varepsilon > t) \leq \exp(-t^2 / 2 )$ for all $t >  0$, where $\varepsilon \sim N(0,1)$.
Similarly, under $\PP_{\vb}$, Cauchy-Schwarz inequality implies that
\#\label{eq:use_cauchy2}
\bigl | \EE_{\PP_{\vb}} [ q_{\vb'}(\bX) - q_{\vb'}^* (\bX)  ]\bigr | ^2  \leq  \EE _{\PP_\vb} \bigl \{  [ q_{\vb' }^* (\bX)]^2 \bigr \} \cdot  \PP_{\vb}  \bigl [   q_{\vb'}^*(\bX)> R ^2 \cdot \log n  \bigr] .
\#
Note that \eqref{eq::distri_test_stat} implies that  $ \vb^\top \bSigma^{-1}\bX / \sqrt{   \vb^\top \bSigma^{-1} \vb} $ can be written as the sum of   a Bernoulli  and  a  standard normal random variable. In addition, recall that the $\psi_1$-norm of a random variable $W \in \RR$ is defined as
$\| W \|_{\psi_1} = \sup_{p\geq 1} p^{-1}\cdot  ( \EE|W| ^p )^{1/p}$.  We  denote the $\psi_1$-norm under $\PP_{\vb}$ by   $\| \cdot \|_{\psi_1, \vb}$  hereafter. Using  the  inequality $(a+b)^2 \leq 2 a^2 + 2b^2$, we have
\$
\bigl\| q_{\vb'}^* (\bX)   \bigr\|_{\psi_1, \vb} \leq 2    \beta^2  \vb ^\top \bSigma^{-1} \vb+ 2 \| \varepsilon^2 \|_{\psi_1} ,
\$
where $\varepsilon \sim N(0,1)$.
Thus, under the assumption that
\#\label{eq:signal_assume}
 \beta^2  \nu(1- \nu) \vb^\top \bSigma^{-1} \vb  = \Omega \{  \log n \cdot   \sqrt{ [ s \log (2d ) + \log(1/\xi) ]  /n} \},
 \#  when $n$ is sufficiently large, we have $\bigl\| q_{\vb'}^* (\bX)   \bigr\|_{\psi_1, \vb} \leq 3  \| \varepsilon^2 \|_{\psi_1}$.
 Thus, by applying the sub-exponential tail to \eqref{eq:use_cauchy2}, we obtain that
 \#\label{eq:use_cauchy22}
 \bigl | \EE_{\PP_0} [ q_{\vb'}(\bX) - q_{\vb'}^* (\bX)  ]\bigr | ^2  \leq C_1 \cdot \exp( - C_2 \cdot R^2 \log n),
 \#
 where
 $C_1$ and $C_2$
  are absolute constants. Thus,   when $R$ is   sufficiently large, by combining \eqref{eq:use_cauchy1} and  \eqref{eq:use_cauchy22},
  we have
  \#\label{eq:truncation_effect}
  \max_{\vb'\in \cG(s) } \Bigl \{  \bigl | \EE_{\PP_0} [ q_{\vb'}(\bX) - q_{\vb'}^* (\bX)  ]\bigr |  + \bigl | \EE_{\PP_\vb} [ q_{\vb'}(\bX) - q_{\vb'}^* (\bX)  ]\bigr | \Bigr \}   \leq 1 / n.
  \#

Moreover, by the definition of  statistical query model in Definition \ref{def::oracle}, since the query functions are bounded by $R^2  \log n$ in absolute value, under both the null and alternative hypotheses, we have
\#\label{eq::tol_param}
\tau_{q_{\vb'}} & =R^2\cdot  \log n \cdot \sqrt{ 2   \bigl[ \log | \cG(s) | +\log(1/\xi)\bigr]/ n }\notag \\
& \leq 2 R^2\cdot \log n \cdot \sqrt{   \bigl[ s \log (2d ) + \log(1/\xi) \bigr] / n }
\#
for all $\vb ' \in \cG(s)$.
For notational simplicity, let  $\Lambda =   2 R^2 \log n \cdot   \sqrt{  [ s \log (2d ) + \log(1/\xi) ]/n}$. By
   \eqref{eq:signal_assume},      it holds that  $ \beta^2 \nu(1- \nu)    \cdot \vb ^\top \bSigma^{-1} \vb \geq 3 \Lambda$. Combing this with \eqref{eq::diff_mean} and  \eqref{eq:truncation_effect}, we~have
   \# \label{eq:final_mean_diff}
   \sup_{\vb' \in \cG(s) } \Bigl\{ \EE_{\PP_{\vb}}\bigl[q_{\vb' }  (\bX)\bigr]  -\EE_{\PP_0} \bigl[q_{\vb'} (\bX)\bigr] \Bigr\} \geq  \beta^2 \nu(1- \nu) \vb^\top \bSigma^{-1} \vb - 2/ n   \geq 2 \Lambda.
  \#
   Finally, combining
  \eqref{eq::tol_param} and \eqref{eq:final_mean_diff},
 we have
\$
\overline{R}(\phi) &= \overline{\PP}_0 \Bigl(   \sup_{\vb'\in \cG( s)} Z_{q_{\vb'}}\geq 1 +  \Lambda \Bigr) + \sup_{\vb \in \cG(s)} \overline{\mathbb{P}}_{\vb} \Bigl(  { \sup _{\vb'\in \cG( s)}}Z_{q_{\vb'} }<1 +   \Lambda \Bigr) \notag\\
& \leq \overline{\mathbb{P}}_0 \biggl(   { \bigcup_{\vb'\in \cG(  s)}} \Bigl\{ \bigl| Z_{q_{\vb'}}-\EE_{\PP_0}\bigl[q_{\vb'} (\bX)\bigr]\bigr| \geq  \tau_{q_{\vb'}} \Bigr\} \biggr)\notag\\
&\quad +  \sup_{\vb \in \cG( s)}  \overline{ \mathbb{P}}_{\vb} \biggl(   { \bigcup_{\vb'\in \cG(  s)}} \Bigl\{ \bigl| Z_{q_{\vb'}}-\EE_{\PP_{\vb}}\bigl[q_{\vb'} (\bX)\bigr] \bigr| \geq  \tau_{q_{\vb'} } \Bigr\} \biggr) \\
& \leq  2\xi,
\$
which concludes the proof of Theorem \ref{thm::test1}.
\end{proof}

\subsubsection{Proof of Theorem \ref{thm::test2} }\label{proof::thm::test2}
\begin{proof}
To simplify the notation, for any $j \in [d]$, we define $q_j^*(\bX) = X_j^2 / \sigma_j$. Under the null hypothesis, since
 $X_j  \sim N(0, \sigma_{j})$,  $q_{j}^*(\bX) $ is a $\chi_1^2 $ random variable,  which further implies that  $\EE_{\PP_0} [q_j^*(\bX)] = 1$. Moreover, for any $\vb \in \cG( s)$,  under $\PP_{\vb}$ we have
 \#\label{eq:entry_null_gaussianmix}
 X_j \sim  \nu N \bigl[  -(1-\nu)  \beta v_j , \sigma_j  \bigr]  + (1- \nu) N ( \nu\beta  v_j ,\sigma_j),~\text{for~all~} j \in \supp(\vb),
 \#
 and   $X_j \sim N(0, \sigma_j)$ otherwise. Here $v_j \in \{-1, 0, 1\}$ is the $j$-th entry of $\vb$.
Thus,  it  holds for any $j\in \supp(\vb)$ that
\#\label{eq::diff_mean2}
&\EE_{\PP_{\vb}}\bigl[q_j^* (\bX)\bigr] - \EE_{\PP_0} \bigl[q_j^* (\bX)\bigr] = \nu(1-\nu)\beta ^2/ \sigma_{j} .
\#
Similar to the proof of Theorem \ref{thm::test1}, we need to bound the difference between the expectations of $q_j(\bX)$ and $q_j^* (\bX)$. To this end, under the null hypothesis, by Cauchy-Schwarz inequality, we have
\#\label{eq:cauchy-111}
\bigl | \EE_{\PP_0} [ q_{j}(\bX) - q_{j}^* (\bX)  ]\bigr | ^2     & \leq \EE _{\PP_0} \bigl \{  [ q_{j }^* (\bX)]^2 \bigr \} \cdot  \PP_0  \bigl (  |   X_j | / \sigma_j  > R \sqrt{\log n} \bigr )
\notag \\
& \leq 6 \cdot \exp( -R^2 \log n /2 ).
\#
Similarly, under $\PP_{\vb}$,  Cauchy-Schwarz inequality   implies that
\#\label{eq:cauchy-112}
\bigl | \EE_{\PP_\vb} [ q_{j}(\bX) - q_{j}^* (\bX)  ]\bigr | ^2     & \leq \EE _{\PP_\vb} \bigl \{  [ q_{j }^* (\bX)]^2 \bigr \} \cdot  \PP_\vb  \bigl (  |    X_j | / \sigma_j  > R \sqrt{\log n} \bigr ).
\#
By \eqref{eq:entry_null_gaussianmix}, for any $j \in \supp(\vb)$, $X_j /\sqrt{ \sigma_j}$ can be written as   $\varepsilon + \varphi$, where $\varepsilon \sim  N(0, 1)$ and $\varphi$ is a Bernoulli random variable satisfying   $$
\PP\bigl [\varphi =  - ( 1- \nu)\cdot \beta v_j / \sqrt{ \sigma _j}  \bigr ] = \nu, \quad \text{and}\quad \PP( \varphi =  \nu \beta v_j / \sqrt{ \sigma_j} ) =  1- \nu. $$
Thus, using the  inequality $(a+b)^2 \leq 2 a^2 + 2b^2$, for any $j\in \supp(\vb)$, we have
\#\label{eq:q_j_star_psi}
\| q_j^*(\bX) \|_{\psi_1, \vb} \leq 2 \| \varepsilon^2 \|_{\psi_1} + 2 \| \varphi^2 \|_{\psi_1} \leq 2 \| \varepsilon^2 \|_{\psi_1} +2  \beta^2 \cdot v_j^2 /\sigma_j,
\#
where $\| \cdot \|_{\psi_1, \vb}$ denotes the the $\psi_1$-norm  under $\PP_{\vb}$.
Under the condition that
\#\label{eq:test2_condition}
  \max_{j\in [d]}  \nu(1-\nu) \beta^2 / \sigma_{j}  = \Omega[ \sqrt{\log ( d / \xi)\cdot \log n /n} ],
\# when $n$ is sufficiently large, by \eqref{eq:q_j_star_psi}  we have $\| q_j^* (\bX) \|_{\psi_1, \vb} \leq 3 \| \varepsilon ^2 \|_{\psi_1}$.
 Moreover, for any $j \notin \supp(\vb)$, since $q_j^* (\bX)\sim \chi_1^2 $, we  have $\| q_j^* (\bX) \|_{\psi_1, \vb} \leq   \| \varepsilon ^2 \|_{\psi_1}$. Thus, by \eqref{eq:cauchy-112}, there exist constants $C_1$ and $C_2$ such that
 \#\label{eq:cauchy-113}
 \bigl | \EE_{\PP_\vb} [ q_{j}(\bX) - q_{j}^* (\bX)  ]\bigr | ^2     & \leq C_1\cdot \exp(- C_2 \cdot R^2 \log n )
 \#
 for any $j\in [d]$,
 Combining \eqref{eq:cauchy-111} and \eqref{eq:cauchy-113},
we obtain that
\#\label{eq:truncation_bias}
\max_{j\in[d]} \Bigl \{ \bigl | \EE_{\PP_0} [ q_{j}(\bX) - q_{j}^* (\bX)  ]\bigr | + \bigl | \EE_{\PP_\vb} [ q_{j}(\bX) - q_{j}^* (\bX)  ] \bigr |  \Bigr \} \leq 1/ n.
\#
when $R$ is sufficiently large.

 Furthermore, since the query functions $\{ q_j \}_{j \in [d]}$ are bounded by $R^2  \cdot \log n$ in absolute value.
By Definition \ref{def::oracle},  the tolerance parameters  in the statistical query model are
   \#\label{eq::tol_para2}
   \tau_{q_{j}} = R ^2 \cdot \log  n \cdot  \sqrt{ 2\log(d/\xi) / n} ,~\text{for~all~} j \in [d].
   \#

   In the sequel, we conclude the proof by bounding the risk of the hypothesis test in \eqref{eq::test_fun2}.
  To simplify the  notation,
    we define  $\Lambda = R^2 \cdot \log n  \cdot  \sqrt{ 2\log(d/\xi) / n}$   and  $j^* = \argmin _{j\in[d] }  \sigma_{j}$. By \eqref{eq:test2_condition},
   it holds that $\max_{j\in [d]} \nu(1-\nu) \beta^2/ \sigma_{j}    = \nu(1-\nu) \beta^2/ \sigma_{j^*}  \geq 3 \Lambda.$
   Hence, for any   $\vb\in \cG( s)$ such that $j ^* \in \supp(\vb)$,
   combining \eqref{eq::diff_mean2} and \eqref{eq:truncation_bias} we obtain that
   	\#\label{eq::expect3}
   	\max_{j \in [d] } \Bigl \{   \EE_{\PP_\vb} [ q_{j}(\bX)   ] - \EE_{\PP_0} [ q_{j}(\bX) ]      \Bigr \} = \sup_{j\in \supp(\vb)}  \nu(1-\nu)\beta ^2/ \sigma_{j} -  2 / n \geq 2 \Lambda.
   	\#
   	Furthermore, for this $\vb$,
   	 by \eqref{eq::tol_para2} and \eqref{eq::expect3}  we have
\#\label{eq::bound_prob_oracle2}
& \overline{\PP}_{\vb } \Bigl( \max_{j\in[d]} Z_{q_{j}}  < 1+    \Lambda \Bigr)  \leq  \overline{\PP}_{\vb} \Bigl\{   \max_{j\in[d]}Z_{q_{j}}  < \EE_{\PP_{\vb}}  \bigl[q_{j^*} (\bX)\bigr]  -   \Lambda \Bigr\} \notag\\
&\quad \leq \overline{\PP}_{\vb} \Bigl\{ \mathbb{E}_{\PP_{\vb}} \bigl[q_{j^*}(\bX)\bigr]  - Z_{q_{j^*}}  >   \Lambda \Bigr\} =  \overline{\PP}_{\vb} \Bigl\{ \mathbb{E}_{\PP_{\vb}} \bigl[q_{j^*}(\bX)\bigr]  - Z_{q_{j^*}}  >  \tau_{q^*} \Bigr\}  .
\#
 By taking a union over $j\in [d]$, the last term in \eqref{eq::bound_prob_oracle2} can be  further upper bounded by
\#\label{eq::bound_prob_oracle22}
& \overline{\PP}_{\vb} \Bigl(  \max_{j\in[d]} Z_{q_{j}}  < 1+   \Lambda \Bigr) \leq  \overline{\PP}_{\vb} \Bigl\{ \mathbb{E}_{\PP_{\vb}}\bigl[q_{j^*} (\bX)\bigr] -Z_{q_{j^*}}  \geq   \tau_{q_{j^*}} \Bigr\} \notag\\
  & \quad \leq \overline{\PP}_{\vb} \biggl( {\bigcup_{j\in[d]}} \Bigl\{ \bigl|Z_{q_{j}}-\mathbb{E}_{\PP_{\vb}}\bigl[q_j(\bX)\bigr] \bigr| \geq \tau_{q_j} \Bigr\} \biggr).
\#
By the definition of the statistical query model  in \eqref{eq::query_2}  and \eqref{eq::bound_prob_oracle22}, we finally obtain
\$
  \bar{R}(\phi) &  =\overline{\mathbb{P}}_0 \Bigl(  \sup_{j\in[d]} Z_{q_{j}}  >  1+   \Lambda \Bigr) + \sup_{\vb \in \cG(s) } \PP_{\vb}\Bigl( \sup_{j\in[d]} Z_{q_{j}}<   1+   \Lambda \Bigr)\notag\\
&  \leq\overline{\mathbb{P}}_0\biggl( {\bigcup_{j\in[d]}} \Bigl\{ \bigl| Z_{q_{j}} - \mathbb{E}_{\overline{\PP}_0}\bigl[q_j(\bX)\bigr] \bigr | \geq  \tau_{q_{j} } \Bigr\} \biggr)   + \sup_{\vb \in \cG(s) } \overline{\mathbb{P}}_{\vb} \biggl( {\bigcup_{j\in[d]}} \Bigl\{ \bigl| Z_{q_{j}}-\mathbb{E}_{\mathbb{P}_{\vb}}\bigl[q_j(\bX)\bigr] \bigr|\geq  \tau_{q_{j} } \Bigr\} \biggr) \notag \\
&
\leq 2\xi,
 \$
which concludes the proof of Theorem \ref{thm::test2}.
\end{proof}

\section{Conclusion} 
In this paper, we characterize the computational barriers in high dimensional heterogeneous models, with  sparse Gaussian mixture model, mixture of sparse linear regressions, and sparse phase retrieval model as concrete instances. Under  an oracle-based computational model that is free of  computational hardness conjectures,  we  establish  computationally feasible minimax lower bounds for these models,  which quantify the minimum signal strength required for the existence of any algorithm that is both computationally tractable and statistically accurate.   Furthermore, we show  that there exist significant gaps between computationally feasible minimax risks and classical ones, which characterizes the fundamental tradeoffs between statistical accuracy and computational tractability in  the presence of data heterogeneity.   Interestingly, our results reveal a new but counter-intuitive phenomenon in heterogeneous data analysis that more data might lead to less computation complexity.

\begin{appendix}
\end{appendix}
\section{More General Upper Bounds for Gaussian Mixture Model}\label{ap:GMM}

In this appendix, we extend the hypothesis tests in  \S \ref{sec::upper_bound} to more general settings of Gaussian mixture detection, i.e., 
\#\label{eq:testing_prob}
H_0\colon \btheta \in {\cG}_0(\bSigma) ~~\text{versus}~~ H_1\colon \btheta \in  {\cG}_1(\bSigma, s, \gamma_n),
\#
as defined in \eqref{eq:wg0} and \eqref{eq:wg1}. Here  $\bSigma$ is assumed to be known. Equivalently, this  testing problem~can be written as 
\$
H_0 \colon \bX \sim N(\bmu , \bSigma) ~~\text{versus}~~
H_1 \colon \bX \sim \nu  N(\bmu_1, \bSigma) + (1- \nu) N(\bmu_2,\bSigma),
\$
in which $\Delta \bmu = \bmu_2 - \bmu_1$ is $s$-sparse.  Similar to the tests in \eqref{eq::test_fun1} and \eqref{eq::test_fun2}, we construct hypothesis tests based on the covariance matrix of $\bX$. 

Before  presenting the query functions, we first introduce a few quantities that will be used later. For each  index set $\cS \subseteq [d]$ with $| \cS| = s$, we define  the rescaled sparse unit  sphere as 
\$
\cU( \bSigma, \cS) = \bigl\{ \vb \in \RR^d \colon \vb^{\top} \bSigma^{-1} \vb =  1, \supp(\vb) = \cS\bigr\}.
\$
 For  any $\delta \in (0,1)$, we denote by $\cM( \delta;\bSigma, \cS)$   the minimal  $\delta$-covering subset of $\cU(\bSigma, \cS)$. That is to say, $\cM(\delta; \bSigma, \cS)$ satisfies the property that, for any $\vb \in \cU(\bSigma, \cS)$, there exists ${\vb'}\in \cM( \delta; \bSigma, \cS)\subseteq \cU (\bSigma, \cS)$ with $\supp({\vb'}) = \cS$ such that 
\$
( \vb - {\vb'}) ^\top  \bSigma^{-1} ( \vb - {\vb'}) \leq \delta^2.
\$
Moreover, the cardinality of $\cM(\delta; \bSigma, \cS)$ is the smallest among all subsets of $\cU (\bSigma, \cS)$ possessing  such  property.
It can be shown (see, e.g., \cite{vershynin2010introduction} for details) that  
\$
| \cM(\delta; \bSigma, \cS) | \leq ( 1+ 2/\delta )^{s}.
\$ 
 With slight abuse of notations, we denote by $\cM(\delta; \bSigma)$ the union of $\cM(\delta; \bSigma, \cS)$ over all index sets $\cS$ with $|\cS | = s$.  
To attain the information-theoretic lower bound~in Proposition \ref{prop::info_lower_bound}, we first consider~the following~sequence of query functions
\#\label{eq::query_fun110}
q_{\vb}(\xb ) =   \vb^\top \bSigma^{-1}  \xb \cdot \ind \bigl \{ | \vb^\top \bSigma^{-1} \xb | \leq R  \cdot \sqrt{ \log n}  \bigr \} ,    
\#
where $\vb \in \cM( 1/2; \bSigma)$ and  $R > 0$ is an absolute constant. Here  we apply truncation to have bounded queries. 
For query function $q_{\vb}$, let the random variable returned by the oracle be $Z_{q_{ \vb} }$. Given a realization $z_{q_{\vb}}$ of $ Z_{q_{\vb}}$ for each $\vb\in \cM(1/2;\bSigma)$,  we  query the oracle with another sequence of query functions
\#\label{eq::query_fun11}
\overbar q_{\vb} (\xb) = (\vb^\top \bSigma^{-1}  \xb - z_{q_{\vb}} )^2\cdot    \ind \bigl \{ | \vb^\top \bSigma^{-1} \xb   | \leq   R  \cdot \sqrt{ \log n}   \bigr \}
\#
where  $ \vb \in \cM( 1/2; \bSigma)$.
Let  $Z_{\overbar q_{\vb}} $ be the random variable returned by the oracle for $\overbar q_{\vb}$ and $z_{\overbar q_{\vb}}$ be its realization.
In this~case, the query complexity is 
\$
T =2   |\cM( 1/2; \bSigma ) |  \leq  2 \cdot 5^{ s } \cdot {d \choose { s }},
\$
 and $\eta(\cQ_{\mA}) = \log T\leq   s \log (5d)$. Finally,   we define the   test function as
\#\label{eq::test_fun11}
\ind \Bigl\{\sup_{\vb\in \cM(1/2; \bSigma) } z_{\overbar q_{ \vb} }\geq  1 + 16 R^2 \cdot  \log n \cdot  \sqrt{  2\bigl[s \log  (5d) + \log ( 1/\xi) \bigr]/ n } \Bigr\}. 
\#
 The subsequent theorem proves that the information-theoretic  lower bound in Proposition~\ref{prop::info_lower_bound} is tight within $ {\cG}_0(\bSigma)$ and $ {\cG}_1(\bSigma, s, \gamma_n)$.

\begin{theorem}\label{thm::test_full1}
We consider the   mixture detection problem in \eqref{eq:testing_prob}. Under the assumption that 
\# \label{eq:signal_general1}
\rho(\btheta)  =\nu(1- \nu) \Delta\bmu ^\top \bSigma^{-1} \Delta \bmu \geq \gamma_n = \Omega \Bigl\{ \log n \cdot  \sqrt{  \bigl[s \log  (5d) + \log ( 1/\xi)\bigr]/ n } \Bigr\},
\#
 the test function in \eqref{eq::test_fun11}, which is denoted by $\phi$, satisfies 
\$
\sup_{\bSigma}  \Bigl[\sup_{\btheta\in  {\cG}_0(\bSigma)} \overline{\PP}_{\btheta}(\phi = 1) + \sup_{\btheta\in {\cG}_1(\bSigma, s, \gamma_n)} \overline{\PP}_{\btheta}(\phi = 0) \Bigr] \leq 2\xi.
\$
\end{theorem}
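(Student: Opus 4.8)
The plan is to mirror the proof of Theorem~\ref{thm::test1}, with two extra ingredients forced by the general parameter spaces: a covering-net reduction replacing the finite direction set $\cG(s)$, and the two-stage ``estimate the projected mean, then take the projected centred second moment'' device needed because the common mean $\bmu$ is now unknown and unconstrained. For $\vb\in\RR^d$ write $\|\vb\|_{\bSigma^{-1}}^2=\vb^\top\bSigma^{-1}\vb$; every $\vb\in\cM(1/2;\bSigma)$ has $\|\vb\|_{\bSigma^{-1}}=1$ and $\|\vb\|_0\le s$. Let $W_\vb=\vb^\top\bSigma^{-1}\bX$ and let $q^*_\vb$, $\overbar q^{\,*}_\vb$ be the untruncated versions of \eqref{eq::query_fun110}--\eqref{eq::query_fun11}. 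Under $N(\bmu,\bSigma)$ one has $W_\vb\sim N(\vb^\top\bSigma^{-1}\bmu,1)$, so $\mathrm{Var}(W_\vb)=1$; under the mixture $\mathrm{Cov}(\bX)=\bSigma+\nu(1-\nu)\Delta\bmu\Delta\bmu^\top$, so $W_\vb$ has mean $\vb^\top\bSigma^{-1}\overbar\bmu$ with $\overbar\bmu=\nu\bmu_1+(1-\nu)\bmu_2$ and variance $1+\nu(1-\nu)(\vb^\top\bSigma^{-1}\Delta\bmu)^2$. Writing $\mu_\vb$ for the mean of $W_\vb$ under the relevant hypothesis, the identity $\EE[\overbar q^{\,*}_\vb(\bX)]=\mathrm{Var}(W_\vb)+(\mu_\vb-z_{q_\vb})^2$ is the backbone: the first-stage query supplies $z_{q_\vb}\approx\mu_\vb$, and the second-stage query then reports the projected variance, which is $1$ under $H_0$ and $1+\nu(1-\nu)(\vb^\top\bSigma^{-1}\Delta\bmu)^2$ under $H_1$.

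For the power, under $H_1$ set $\cS=\supp(\Delta\bmu)$ (so $|\cS|=s$) and $\vb^\star=\Delta\bmu/\|\Delta\bmu\|_{\bSigma^{-1}}\in\cU(\bSigma,\cS)$. By construction of the net there is $\vb'\in\cM(1/2;\bSigma,\cS)\subseteq\cM(1/2;\bSigma)$ with $\|\vb^\star-\vb'\|_{\bSigma^{-1}}\le1/2$, so Cauchy--Schwarz in the $\bSigma^{-1}$-inner product gives $\vb'^\top\bSigma^{-1}\Delta\bmu\ge\|\Delta\bmu\|_{\bSigma^{-1}}-\tfrac12\|\Delta\bmu\|_{\bSigma^{-1}}=\tfrac12\|\Delta\bmu\|_{\bSigma^{-1}}$, whence $\nu(1-\nu)(\vb'^\top\bSigma^{-1}\Delta\bmu)^2\ge\tfrac14\rho(\btheta)\ge\tfrac14\gamma_n$ and therefore $\EE_{\PP_\btheta}[\overbar q^{\,*}_{\vb'}(\bX)]\ge1+\tfrac14\gamma_n$. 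It then remains to pass from $\overbar q^{\,*}$ to the truncated $\overbar q$ and from expectations to oracle outputs.

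The truncation control is carried out as in \eqref{eq:use_cauchy1}--\eqref{eq:truncation_effect}: by Cauchy--Schwarz, $\big|\EE[q_\vb(\bX)]-\EE[q^*_\vb(\bX)]\big|^2$ and $\big|\EE[\overbar q_\vb(\bX)]-\EE[\overbar q^{\,*}_\vb(\bX)]\big|^2$ are each at most a fourth moment of $W_\vb$ (respectively of $W_\vb-z_{q_\vb}$) times $\PP(|W_\vb|>R\sqrt{\log n})$, and since $W_\vb$ has unit conditional variances this can be forced below $1/n$ by taking the absolute constant $R$ large. I expect the genuine obstacle to sit here: this estimate is clean only when the relevant projected mean ($\vb^\top\bSigma^{-1}\bmu$ under $H_0$, $\vb^\top\bSigma^{-1}\overbar\bmu$ under $H_1$) stays away from the truncation level $R\sqrt{\log n}$ --- if it is much smaller the truncation loses negligible mass, if much larger the query is essentially zero --- and since $\cG_0(\bSigma)$ and $\cG_1(\bSigma,s,\gamma_n)$ put no bound on those means, one must supply a case split on the magnitude of the projected mean (and, if needed, also screen the first-stage responses $z_{q_\vb}$) to show that $\EE[\overbar q_\vb(\bX)]$ nevertheless stays below $1+o(\Lambda)$ under $H_0$, uniformly over $\vb\in\cM(1/2;\bSigma)$, and that the signal $\tfrac14\gamma_n$ survives the truncation under $H_1$. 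This is exactly the step where the argument goes beyond the mean-zero setting of Theorem~\ref{thm::test1}, and the whole proof hinges on it.

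For the assembly I would follow the end of the proof of Theorem~\ref{thm::test1}. The first-stage queries are bounded by $R\sqrt{\log n}$ and the second-stage ones by $(2R\sqrt{\log n}+|z_{q_\vb}|)^2=O(R^2\log n)$, so Definition~\ref{def::oracle} with $\eta(\cQ_\mA)=\log T\le s\log(5d)$ gives tolerances $\tau_{q_\vb},\tau_{\overbar q_\vb}$ of order $R^2\log n\sqrt{[s\log(5d)+\log(1/\xi)]/n}$, which the constant $16$ in the threshold makes strictly smaller than $\Lambda:=16R^2\log n\sqrt{2[s\log(5d)+\log(1/\xi)]/n}$. On the event \eqref{eq::query_00}, applied simultaneously to all $T=2|\cM(1/2;\bSigma)|\le2\cdot5^s\binom{d}{s}$ queries, every $z_{\overbar q_\vb}$ lies within $\tau_{\overbar q_\vb}$ of $\EE[\overbar q_\vb(\bX)]$; under $H_1$, \eqref{eq:signal_general1} with a sufficiently large hidden constant makes $\tfrac14\gamma_n$ exceed $2\Lambda$, so $z_{\overbar q_{\vb'}}\ge1+\tfrac14\gamma_n-\tau_{\overbar q_{\vb'}}-1/n>1+\Lambda$ and the test rejects, while under $H_0$ every $z_{\overbar q_\vb}$ stays below $1+\Lambda$ by the preceding step. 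Taking a union bound over the $T$ queries, absorbed into $\eta(\cQ_\mA)$, yields total risk at most $2\xi$, uniformly in $\bSigma$, which is the claim of Theorem~\ref{thm::test_full1}.
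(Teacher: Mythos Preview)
Your plan is the paper's plan: the same two-stage query scheme \eqref{eq::query_fun110}--\eqref{eq::query_fun11}, the same covering-net reduction to $\cM(1/2;\bSigma)$, the same Cauchy--Schwarz control of the truncation bias, and the same high-probability event $\cE$ from Definition~\ref{def::oracle} to pass from expectations to oracle outputs. The one substantive divergence is precisely where you flag an obstacle. The paper does \emph{not} do a case split on the size of the projected means; instead it introduces $\Upsilon_0=\max\{\|\bSigma^{-1/2}\bmu\|_2,\|\bSigma^{-1/2}\bmu_1\|_2,\|\bSigma^{-1/2}\bmu_2\|_2\}$, bounds $|\vb^\top\bSigma^{-1}\bmu|\le\|\bSigma^{-1/2}\bmu\|_2\le\Upsilon_0$ (and likewise under $H_1$) by Cauchy--Schwarz, and then lets all the sub-Gaussian tail and moment constants in the truncation control depend on $\Upsilon_0$, which it treats as fixed. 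So the uniformity over the full $\cG_0(\bSigma)\cup\cG_1(\bSigma,s,\gamma_n)$ that worried you is resolved in the paper only modulo this implicit boundedness assumption; your proposed case split would be a genuine strengthening if carried out. One minor quantitative point: from $\|\vb_0-\vb_1\|_{\bSigma^{-1}}^2\le1/4$ with both vectors of unit $\bSigma^{-1}$-norm, expanding the square gives $\vb_0^\top\bSigma^{-1}\vb_1\ge7/8$, so the paper extracts signal $\ge(7/8)\rho(\btheta)$ at the good net point rather than your $\tfrac14\rho(\btheta)$; either constant works for the final inequality.
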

\begin{proof}
We first  note that, due to the truncation in \eqref{eq::query_fun110}, for any $\vb \in \cM(1/2, \bSigma)$, we have $| z_{q_{\vb}} |  \leq R\cdot \sqrt{\log n}$, which implies that $\overbar q_{\vb}$ is bounded by $4 R^2 \cdot \log n$. 
In the sequel, 
for notational simplicity, we define $\cQ = \{ q_{\vb}, \overbar q_{\vb} \colon \vb \in \cM(1/2; \bSigma) \}$ and   
\$
\cE =   \bigcap _{q \in \cQ } \Bigl\{ \bigl| Z_{q  }  - \EE _{\PP_{\btheta}} \bigl[ q (\bX) \bigr] \bigr| \leq  \tau_{q } \Bigr\},
\$
where    $\tau_q$ is the tolerance parameter of the statistical query~model, which satisfies 
\#\label{eq::tol_param3}
\tau_{q} \leq 4 R^2\cdot  \log n \cdot  \sqrt{ 2\bigl[ \log T  +\log(1/\xi) \bigr]/ n } \leq 4 R^2\cdot  \log n\cdot  \sqrt{ 2 \bigl[ s \log (5d ) + \log(1/\xi) \bigr]/ n}  
\#
for all $ q \in \cQ.$
  By Definition \ref{def::oracle}, for any $\btheta \in \cG_0 (\bSigma) \cup \cG_1(\bSigma, s, \gamma_n)$, we have  $\PP_{\btheta} ( \cE  ) \geq 1 - \xi$.

 In the sequel, we prove that both the type-I and type-II errors of the test function in \eqref{eq::test_fun11} are bounded by $\xi$. More specifically, we prove this by showing that  the test function takes value zero on $\cE$ under $H_0$ and  one under $H_1$.  
 
 Similar to the proof of Theorem \ref{thm::test1}, to characterize the effect of truncation, we define 
 \#\label{eq:queryfun_star}
 q_{\vb}^* (\xb ) =   \vb^\top \bSigma^{-1}  \xb  , \quad   \overbar q_{\vb} ^*(\xb) = (\vb^\top \bSigma^{-1}  \xb - z_{q_{\vb}} )^2 \quad \text{for all} \quad \vb \in \cM( 1/2 , \bSigma),
 \#
 where $z_{q_{\vb}}$ is the realization of $Z_{q_{\vb}}$. We first show that, under the assumption   in \eqref{eq:signal_general1},  $q_{\vb}^*(\bX) $ and $\overbar q_{\vb}^*(\bX)$ are close to   $q_{\vb} (\bX) $ and $\overbar q_{\vb} (\bX)$ respectively in expectation.

 Hereafter, we denote $\max \{  \| \bSigma^{-1/2 } \bmu\|_2,\| \bSigma^{-1/2 } \bmu_1\|_2, \| \bSigma^{-1/2 } \bmu_2 \|_2 \}$ by $\Upsilon_0$. 
 Under the null hypothesis, for any $\btheta \in \cG_0( \bSigma)$ and   any $\vb\in \cM(1/2; \bSigma)$,  since $\vb^\top \bSigma ^{-1} \vb = 1$, it holds that  $\vb^\top \bSigma^{-1} \bX \sim N( \vb ^\top \bSigma ^{-1} \bmu,  1 )$. This  implies 
 $\EE _{\PP_{\btheta}} [q_{\vb}^* (\bX )] =  
 \vb^\top \bSigma^{-1} \bmu  $ and 
 \#\label{eq:expected_null_bar}
 \EE _{\PP_{\btheta}} \bigl[ \overbar q_{\vb} ^* (\bX) \bigr]  = 1 + (\vb ^\top \bSigma^{-1} \bmu - z_{q_{\vb}})^2.
\#
Since $\vb^\top \bSigma ^{-1} \vb = 1$, we have $| \vb ^\top \bSigma^{-1} \bmu | \leq \| \bSigma^{-1/2} \bmu\|_2 \leq \Upsilon_0$. 
Note that  when $n$ is sufficiently large, $\tau_q$ in \eqref{eq::tol_param3} is bounded by one. Thus, 
by the definition of the statistical query model, on event $\cE$ we have $| Z_q - \EE_{\PP_{\theta} } [ q(\bX)] | \leq 1$ for all $q \in \cQ$.   
Moreover, by Cauchy-Schwarz inequality, we have 
\#\label{eq:cauchy_qv}
\bigl | \EE_{\PP_{\btheta} } [ q_{\vb}(\bX ) - q_{\vb}^* (\bX)] \bigr | ^2 \leq \EE_{\PP_{\btheta} } \bigl \{  [ q_{\vb}^*(\bX) ]^2 \bigr \}  \cdot \PP_{\btheta} \bigl \{ |q_{\vb}^*(\bX )  |  > R \cdot \sqrt{ \log n}  \bigr \}.
\#
Since   $q_{\vb}^* (\bX)$ is a Gaussian random variable，    there exist  constants $C_1$ and $C_2$ such that 
\$
\PP_{\btheta} \bigl \{ |q_{\vb}^*(\bX )  |  > R \cdot \sqrt{ \log n}  \bigr \} \leq C_1 \cdot \exp( -C_2 \cdot R^2  \log n).
\$
Notice that $\EE_{\PP_{\btheta} } \bigl \{  [ q_{\vb}^*(\bX) ]^2 \bigr \} \leq 1 + \Upsilon_0^2 $. Thus, by \eqref{eq:cauchy_qv}, we have 
\#\label{eq:bound_qv_bias}
\bigl | \EE_{\PP_{\btheta} } [ q_{\vb}(\bX ) - q_{\vb}^* (\bX)] \bigr | ^2 \leq C_1 \cdot   ( 1+ \Upsilon_0^2  ) \cdot  \exp( -C_2 \cdot R^2  \log n).
\#
In addition,  for $\overbar q_{\vb}^* (\bX)$, Cauchy-Schwarz inequality implies that 
\#\label{eq:cauchy_qv2}
\bigl | \EE_{\PP_{\btheta} } [ \overbar q_{\vb}(\bX ) - \overbar q_{\vb}^* (\bX)] \bigr | ^2 &  \leq \EE_{\PP_{\btheta} } \bigl \{  [ \overbar  q_{\vb}^*(\bX) ]^2 \bigr \}  \cdot \PP_{\btheta} \bigl \{ |q_{\vb}^*(\bX )  |  > R \cdot \sqrt{ \log n}  \bigr \} \notag \\
& \leq C_1 \cdot \EE_{\PP_{\btheta} } \bigl \{  [ \overbar  q_{\vb}^*(\bX) ]^2 \bigr \} \cdot  \exp( -C_2 \cdot R^2  \log n) .
\#
 Since $z_{q_{\vb}} $ is bounded, by \eqref{eq:queryfun_star}, 
 $  \EE_{\PP_{\btheta} } \bigl \{  [ \overbar  q_{\vb}^*(\bX) ]^2 \bigr \}$ is also bounded. Thus, \eqref{eq:cauchy_qv2} implies that 
 \#\label{eq:bound_qv_bias2}
 \bigl | \EE_{\PP_{\btheta} } [ q_{\vb}(\bX ) - q_{\vb}^* (\bX)] \bigr | ^2 \leq C_3 \cdot  \exp( -C_2 \cdot R^2  \log n),
 \#
 where $C_3$ is an absolute constant depending on $\Upsilon_0$.

 Furthermore, under the alternative hypothesis with parameter  $\btheta = ( \bmu_1, \bmu_2, \bSigma)\in \cG_1( \bSigma,  s, \gamma_n)$,  for any $\vb \in \cM(1/2; \bSigma)$, it holds that 
 \#\label{eq::dist_query_function00}
 q_{\vb}^*  (\bX)  = \vb^{\top} \bSigma^{-1} \bX \sim  \nu N( \vb^{\top} \bSigma ^{-1}\bmu_1, 1) + (1- \nu) N( \vb^\top \bSigma^{-1} \bmu_2, 1).
 \#
By Cauchy-Schwarz inequality, we have  $| \vb^\top \bSigma^{-1} \bmu_1 |  \leq \| \bSigma^{-1/2} \bmu_1 \|_2 \leq \Upsilon_0$ and $| \vb^\top \bSigma^{-1} \bmu_2 |  \leq \| \bSigma^{-1/2} \bmu_2\|_2 \leq \Upsilon_0$,
which implies that $q_{\vb}^* (\bX)$ is a sub-Gaussian random variable, i.e.,
\$
\PP_{\btheta} \bigl \{ |q_{\vb}^*(\bX )  |  > t  \bigr \} \leq C_4 \cdot \exp( -C_5 \cdot t^2 ) 
\$
for any $t >  0$, where $C_4$ and $C_5$ are absolute constants depending on $\Upsilon_0$. 
Moreover, since $| Z_{q_{\vb} } - \EE_{\PP_{\btheta}} [ q_{\vb} (\bX)] |  \leq 1 $ on $\cE$,    we have 
\$
|Z_{q_{\vb}} |  \leq 1 +\bigl |  \EE_{\PP_{\btheta}} [ q_{\vb} (\bX)] \bigr | \leq 1 +   \EE_{\PP_{\btheta}} [ |  q_{\vb} (\bX) | ]  \leq 1 + \sqrt{ \pi/2 } +  \Upsilon_0 .
\$
Thus, there exists an absolute constant $C_6$ depending on $\Upsilon_0$ such that $$\max \Bigl  \{ \EE_{\PP_{\btheta} } \{  [q_{\vb}^*(\bX)]^2 \},  \EE_{\PP_{\btheta} } \{    [\overbar q_{\vb}^*(\bX)]^2 \} \Bigr \}  \leq C_6.$$ 
Thus, similar to \eqref{eq:cauchy_qv} and \eqref{eq:cauchy_qv2}, by Cauchy-Schwarz inequality,  we have 
\#\label{eq:cauchy_qv3}
&\max \Bigl\{ \bigl | \EE_{\PP_{\btheta} } [ q_{\vb}(\bX ) - q_{\vb}^* (\bX)] \bigr | ^2, ~\bigl | \EE_{\PP_{\btheta} } [ \overbar q_{\vb}(\bX ) - \overbar q_{\vb}^* (\bX)] \big |^2 \Bigr \} \notag \\
&\quad  \leq \max \Bigl (  \EE_{\PP_{\btheta} } \bigl \{  [ q_{\vb}^*(\bX) ]^2 \bigr \},~\EE_{\PP_{\btheta} } \bigl \{  [ \overbar q_{\vb}^*(\bX) ]^2 \bigr \} \Bigr )   \cdot \PP_{\btheta} \bigl \{ |q_{\vb}^*(\bX )  |  > R \cdot \sqrt{ \log n}  \bigr \} \notag \\
& \quad \leq C_4 \cdot  C_6 \cdot \exp( - C_5 \cdot R^2 \log n).
\#
Combining \eqref{eq:bound_qv_bias}, \eqref{eq:bound_qv_bias2}, and \eqref{eq:cauchy_qv3}, 
we conclude that, when $R$ is sufficiently large, we have 
\#\label{eq:truncation_bias_final}
 \max \Bigl \{  \bigl | \EE_{\PP_{\btheta} } [ q_{\vb}(\bX ) - q_{\vb}^* (\bX)] \bigr |, ~ \bigl | \EE_{\PP_{\btheta} } [ \overbar q_{\vb}(\bX ) -\overbar  q_{\vb}^* (\bX)] \bigr | \Bigr \} \leq 1/n 
\#
for any $\btheta \in \cG_0(\bSigma ) \cup \cG_1(\bSigma, s, \gamma_n) $ and any $\vb \in \cM(1/2, \bSigma)$. 

Now we consider the value of the test function in \eqref{eq::test2_selection} under $H_0$.
Conditioning on event $\cE$, for 
 $\overbar q_{\vb}$ in \eqref{eq::query_fun11},  by \eqref{eq:expected_null_bar} and  \eqref{eq:truncation_bias_final}, we have 
 \#\label{eq:null_test_fun_val1}
 Z_{\overbar q_{\vb}} & \leq \EE_{\PP_{\btheta }} [ \overbar q_{\vb} (\bX) ] + \tau _{\overbar q_{\vb}} \leq  \EE_{\PP_{\btheta }} [ \overbar q_{\vb} ^* (\bX) ] + \tau _{\overbar q_{\vb}} + 1/n  \notag \\
 & \leq 1 + (\vb^\top \bSigma^{-1} \bmu - z_{q_{\vb}})^2 + \tau _{\overbar q_{\vb}} + 1/n \notag   \\
 & = 1 + \bigl \{ \EE_{\PP_{\btheta}} [ q_{\vb}^* (\bX)] - z_{q_{\vb}} \bigr \} ^2  + \tau _{\overbar q_{\vb}} + 1/n  .
 \#
 Note that $\tau_{q} \leq 1$ for all $q \in \cQ$  when $n$ is sufficiently large. Applying  inequality $(a+b)^2 \leq 2a^2 + 2b^2 $ to \eqref{eq:null_test_fun_val1} we have 
\#\label{eq:null_test_fun_val2} 
	Z_{\overbar q_{\vb}} & \leq 1 + 2  \bigl \{ \EE_{\PP_{\btheta}} [ q_{\vb}  (\bX)] - Z_{q_{\vb}} \bigr \} ^2 + 2 \bigl \{ \EE_{\PP_{\btheta}} [ q_{\vb}  (\bX)] -\EE_{\PP_{\btheta}} [ q_{\vb}^*  (\bX)]  \bigr \} ^2 +\tau _{\overbar q_{\vb}} + 1/n  \notag \\
	& \leq  1+ 2 \tau_{q_{\vb}}^2 + 2 / (n^2) + \tau _{\overbar q_{\vb}} + 1/n \leq   1+ 2  \tau_{q_{\vb}} +\tau _{\overbar q_{\vb}} + 3/n .
\#
Thus, by \eqref{eq::tol_param3} and \eqref{eq:null_test_fun_val2}, we obtain   
\$ 
 	Z_{\overbar q_{\vb}} < 1 + 16 R^2\cdot  \log n\cdot  \sqrt{ 2 \bigl[ s \log (5d ) + \log(1/\xi) \bigr]/ n}  
\$ 
for any $\vb \in \cM(1/2 , \bSigma)$, 
which implies that the test function in \eqref{eq::test_fun11} takes value zero on $\cE$ and that the  type-I error   is bounded by $\xi$.

 It remains to bound the type-II error. 
 Under $H_1$ with parameter $\btheta = ( \bmu_1, \bmu_2, \bSigma)$,   by \eqref{eq:queryfun_star} and \eqref{eq::dist_query_function00},  we have 
 \#
 \EE_{\PP_{\btheta}} [ q_{\vb}^*(\bX)] & =  \nu \cdot   \vb^{\top} \bSigma^{-1} \bmu_1 + (1 - \nu) \cdot  \vb^{\top} \bSigma^{-1} \bmu_2 , \notag \\
   \EE_{\PP_{\btheta}} [ \overbar q_{\vb}^*(\bX)] & =  \textrm{Var} _{\PP_{\btheta}}[ q_{\vb}(\bX) ]+ \bigl \{ z_{q_{\vb}} -\EE_{\PP_{\btheta}} [ q_{\vb}^*(\bX)]   \bigr \} ^2    \geq  1 + \nu(1- \nu) \cdot | \vb^\top \bSigma ^{-1} \Delta \bmu | ^2, \label{eq:alter_hypo_means2}
\#
for any $\vb \in \cM(1/2; \bSigma)$. Here $ \textrm{Var} _{\PP_{\btheta}} $ denotes the variance under $\PP_{\btheta}$ and $\Delta \bmu = \bmu_2 - \bmu_1$. 
 Furthermore, by the construction of $\cM(1/2; \bSigma)$,   there exists $\vb_1 \in \cM(1/2; \bSigma)$ such that 
\#\label{eq::covering_rate}
( \vb_0 - \vb_1) ^{\top} \bSigma ^{-1} (\vb_0 - \vb_1) \leq 1/4, ~~\text{where}~\vb_0 = \Delta \bmu / \sqrt{\Delta \bmu ^{\top}\bSigma^{-1} \Delta \bmu}.
\#
Note that $\vb_1^{\top} \bSigma ^{-1} \vb_1 = \vb_0^\top \bSigma^{-1} \vb_0 = 1$. 
Then by \eqref{eq::covering_rate}, we have  $\vb_0 ^\top \bSigma^{-1} \vb_1 \geq 7/8$. Setting $\vb  = \vb_1$ in \eqref{eq:alter_hypo_means2}, we obtain 
\#\label{eq::compute_q_v_exp_alt}
\EE_{\PP_{\btheta}} \bigl[\overbar q_{\vb_1 }^*(\bX)\bigr] & \geq 1 +  \nu(1- \nu) \cdot  | \vb_1   ^\top \bSigma ^{-1} \Delta \bmu |^2 \notag\\
&  = 1 +  \nu(1- \nu) \cdot | \vb_1 \bSigma^{-1} \vb_0 |^2 \cdot ( \Delta \bmu^\top \bSigma^{-1} \Delta \bmu)
 \geq  1 + 7/8 \cdot \rho (\btheta), 
\#
where we denote $ \nu(1- \nu) \cdot  \Delta \bmu^\top \bSigma^{-1} \Delta \bmu$ by $\rho(\btheta)$. 

Now we consider the value of the test function in \eqref{eq::test_fun11} 
under the condition  that  $ \gamma_n  = \Omega \{ \log n \cdot \sqrt{[ s \log (5d ) + \log(1/\xi) ]/n} \} $. On event $\cE$, combining  \eqref{eq::tol_param3},  \eqref{eq:truncation_bias_final} and  \eqref{eq::compute_q_v_exp_alt}  we have 
 \#\label{eq:zq_alt_val}
 Z_{\overbar q_{\vb_1 }} &  \geq \EE_{\PP_{\theta}} [ \overbar q_{\vb_1 }(\bX)] - \tau_{\overbar q_{\vb_1 }}  \geq  \EE_{\PP_{\theta}} [ \overbar q_{\vb_1 }^* (\bX)] - 1/ n -  \tau_{\overbar q_{\vb}} \notag \\
 & \geq 1 + 7/ 8  \cdot \rho (\btheta) - 1/ n -   \tau_{\overbar q_{\vb}} \notag \\
 & \geq 1 + 7/ 8  \cdot \rho (\btheta)  - 5 R^2\cdot  \log n\cdot  \sqrt{ 2 \bigl[ s \log (5d ) + \log(1/\xi) \bigr]/ n} .
 \#
   Therefore, when $\rho(\btheta ) \geq 24  R^2\cdot  \log n\cdot  \sqrt{ 2 \bigl[ s \log (5d ) + \log(1/\xi) \bigr]/ n} $, \eqref{eq:zq_alt_val} implies that, for  any $\btheta \in \cG_1(\bSigma, s, \gamma_n)$,  we have 
   \$
    \sup_{\vb\in \cM(1/2; \bSigma)} Z_{\overbar q_{\vb_1 }}  \geq 1 + 16 R^2\cdot  \log n\cdot  \sqrt{ 2 \bigl[ s \log (5d ) + \log(1/\xi) \bigr]/ n} 
   \$ 
   with probability one 
on event $\cE$. 
   That is,  the test function in \eqref{eq::test_fun11} takes value one on $\cE$. Hence the type-II error is upper bounded by $\xi$, which concludes the proof of this theorem.
\end{proof}

In the sequel, we present a computationally tractable hypothesis test for the detection problem in \eqref{eq:testing_prob}.   
Similar to the test function in \eqref{eq::test_fun11}, we also establish a test function based on the covariance of $\bX$.  For $j \in [d]$, we consider   query function   
\#  \label{eq::query_general_fun20}
q_j (\xb) =  x_j  / \sqrt{  \sigma_{j}  } \cdot \ind \{ | x_j /\sqrt{\sigma_j }  | \leq R \cdot \sqrt{\log n }  \},
\#
 where $\sigma_{j}$ is the $j$-th diagonal element of $\bSigma$ and $R>0$ is an absolute constant. Let $z_{q_j}$ be the realization of $Z_{q_j}$ returned by the oracle. We construct  another  query function   
\#\label{eq::query_general_fun2}
\overbar q_j (\xb ) = (x_j /\sqrt{\sigma_j}- z_{q_j} )^2 \cdot \ind \{ | x_j /\sqrt{\sigma_j }  | \leq R \cdot \sqrt{\log n }  \}.
\#
 Now the query complexity is $T = 2 d$~and~we have $\eta(\cQ_{\mA}) = \log (2d)$.~We define the test function as
\#\label{eq::test_fun21}
\ind \Bigl[\max _{j\in [d]} z_{\overbar q_j}   \geq  1 + 16 R^2\cdot \log n \cdot  \sqrt{ \log (2 d / \xi)/ n} \Bigr],
\#
 where  $\varepsilon \sim N(0,1)$ and   $z_{\overbar q_j}$  is   the realizations of   $Z_{\overbar q_j}$ obtained from the oracle.  The following theorem proves that the hypothesis  test  defined above is asymptotically powerful if $\gamma_n = \Omega ( \log n \cdot \sqrt{s^2\log d/ n}  )$ and the energy of $\Delta \bmu$ is evenly spread over its support.

\begin{theorem}\label{thm::test_full2}
We consider the sparse mixture detection problem in \eqref{eq::reduced_testing}. We denote by $\Delta \mu_j $ the $j$-th element of $\Delta \bmu$ for $j\in [d]$.  If
\#\label{eq:wnvsd}
 \max_{j\in [d]}   \nu(1-\nu) \cdot (\Delta \mu_j )^2/ \sigma_{j}    = \Omega \bigl[\log n \cdot  \sqrt{  \log (2 d / \xi) / n}   \bigr],
\#
then for $\phi$ being the test function in \eqref{eq::test_fun21}, we have
\$
\sup_{\bSigma} \Bigl[\sup_{\btheta\in  {\cG}_0(\bSigma)} \overline{\PP}_{\btheta}(\phi = 1) + \sup_{\btheta\in  {\cG}_1(\bSigma, s, \gamma_n)} \overline{\PP}_{\btheta}(\phi = 0) \Bigr] \leq 2\xi.
\$
\end{theorem}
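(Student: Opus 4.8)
The plan is to mirror the argument used to prove Theorem \ref{thm::test_full1}, replacing the sparse-direction queries \eqref{eq::query_fun110}--\eqref{eq::query_fun11} by the coordinatewise queries $q_j$ and $\overbar q_j$ of \eqref{eq::query_general_fun20}--\eqref{eq::query_general_fun2}, and replacing the covering-net direction by the single coordinate $j^\star$ carrying the largest signal. First I would record that, by the indicator truncations, $\overbar q_j$ is bounded in absolute value by $4R^2\log n$, so $q_j,\overbar q_j\in\cQ_\mA$ with $\eta(\cQ_\mA)=\log(2d)$ and tolerance parameters $\tau_q\le 4R^2\log n\cdot\sqrt{2\log(2d/\xi)/n}$; in particular $\tau_q\le 1$ once $n$ is large. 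Then I would introduce the good event $\cE=\bigcap_{q\in\cQ}\{|Z_q-\EE_{\PP_\btheta}[q(\bX)]|\le\tau_q\}$, which by Definition \ref{def::oracle} has probability at least $1-\xi$ under any $\btheta\in\cG_0(\bSigma)\cup\cG_1(\bSigma,s,\gamma_n)$, and argue that it suffices to show the test \eqref{eq::test_fun21} equals $0$ on $\cE$ under $H_0$ and equals $1$ on $\cE$ under $H_1$, which then bounds the type-I and type-II errors each by $\xi$.

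Second, I would define the untruncated counterparts $q_j^\ast(\xb)=x_j/\sqrt{\sigma_j}$ and $\overbar q_j^\ast(\xb)=(x_j/\sqrt{\sigma_j}-z_{q_j})^2$ and bound the truncation bias by $1/n$: since $X_j/\sqrt{\sigma_j}$ is Gaussian under $H_0$ and a two-component Gaussian mixture under $H_1$, with component means $\mu_{\ell,j}/\sqrt{\sigma_j}$ controlled via \eqref{eq:weigen} (so $q_j^\ast(\bX)$ is uniformly sub-Gaussian and, using that $z_{q_j}$ is bounded by the truncation, $\EE[(q_j^\ast)^2]$ and $\EE[(\overbar q_j^\ast)^2]$ are uniformly bounded), Cauchy--Schwarz together with the Gaussian tail bound gives $|\EE_{\PP_\btheta}[q_j(\bX)-q_j^\ast(\bX)]|^2$ and $|\EE_{\PP_\btheta}[\overbar q_j(\bX)-\overbar q_j^\ast(\bX)]|^2$ at most $C\exp(-cR^2\log n)$, hence at most $1/n^2$ once $R$ is a large enough absolute constant. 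This step is the coordinatewise analogue of \eqref{eq:cauchy-111}--\eqref{eq:truncation_bias} in the proof of Theorem \ref{thm::test2} and \eqref{eq:cauchy_qv}--\eqref{eq:truncation_bias_final} in the proof of Theorem \ref{thm::test_full1}.

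Third, for the type-I error, under $\btheta\in\cG_0(\bSigma)$ one has $\EE_{\PP_\btheta}[\overbar q_j^\ast(\bX)]=1+(\mu_j/\sqrt{\sigma_j}-z_{q_j})^2$; on $\cE$ we replace $\mu_j/\sqrt{\sigma_j}=\EE_{\PP_\btheta}[q_j^\ast(\bX)]$ by $Z_{q_j}$ at cost $\tau_{q_j}+1/n$ and use $(a+b)^2\le 2a^2+2b^2$ to get $Z_{\overbar q_j}\le 1+2\tau_{q_j}+\tau_{\overbar q_j}+O(1/n)$, strictly below the threshold $1+16R^2\log n\sqrt{\log(2d/\xi)/n}$, so the test is $0$ on $\cE$. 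For the type-II error, under $\btheta=(\bmu_1,\bmu_2,\bSigma)\in\cG_1(\bSigma,s,\gamma_n)$, writing $\Delta\mu_j$ for the $j$-th entry of $\Delta\bmu$, the mixture representation of $X_j/\sqrt{\sigma_j}$ yields $\mathrm{Var}_{\PP_\btheta}[q_j^\ast(\bX)]=1+\nu(1-\nu)(\Delta\mu_j)^2/\sigma_j$, hence $\EE_{\PP_\btheta}[\overbar q_j^\ast(\bX)]\ge 1+\nu(1-\nu)(\Delta\mu_j)^2/\sigma_j$. Taking $j^\star=\argmax_j\nu(1-\nu)(\Delta\mu_j)^2/\sigma_j$ and invoking \eqref{eq:wnvsd} (so that this quantity is $\ge 24R^2\log n\sqrt{\log(2d/\xi)/n}$ for $R,n$ large), on $\cE$ we get $Z_{\overbar q_{j^\star}}\ge \EE_{\PP_\btheta}[\overbar q_{j^\star}^\ast(\bX)]-1/n-\tau_{\overbar q_{j^\star}}\ge 1+16R^2\log n\sqrt{\log(2d/\xi)/n}$, so $\max_j z_{\overbar q_j}$ exceeds the threshold and the test is $1$ on $\cE$. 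Summing gives risk at most $2\xi$, uniformly over $\bSigma$ since every constant depends on $\bSigma$ only through $\lambda_\ast,\lambda^\ast$.

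The main obstacle I anticipate is the truncation-bias control of the second step: one must argue that the moments and sub-Gaussian tails of $q_j^\ast(\bX),\overbar q_j^\ast(\bX)$ are bounded \emph{uniformly} over the alternative parameter space, which requires either converting the eigenvalue condition \eqref{eq:weigen} into a uniform bound on $|\mu_{\ell,j}|/\sqrt{\sigma_j}$ (an implicit regularity condition, since the stated $\cG_1(\bSigma,s,\gamma_n)$ constrains only $\Delta\bmu$ and not the individual means) or choosing the truncation level $R$ adaptively; and one must check that a single absolute $R$ makes all the $\exp(-cR^2\log n)$ terms smaller than $1/n^2$ simultaneously across $j\in[d]$ and across both hypotheses. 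Everything else is the coordinatewise specialization of the already-established Theorems \ref{thm::test2} and \ref{thm::test_full1}.
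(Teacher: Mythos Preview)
Your proposal is correct and follows essentially the same route as the paper's proof: define the good event $\cE$ from Definition~\ref{def::oracle}, introduce the untruncated $q_j^\ast,\overbar q_j^\ast$, control the truncation bias by Cauchy--Schwarz plus sub-Gaussian tails to get the $1/n$ bound, then handle type-I via $\EE[\overbar q_j^\ast]=1+(\mu_j/\sqrt{\sigma_j}-z_{q_j})^2$ and type-II via $\EE[\overbar q_j^\ast]\ge 1+\nu(1-\nu)(\Delta\mu_j)^2/\sigma_j$ evaluated at $j^\star$. The obstacle you flag is exactly the one the paper handles by introducing the quantity $\Upsilon_0=\max_{j}\max\{|\mu_j|/\sqrt{\sigma_j},|\mu_{1,j}|/\sqrt{\sigma_j},|\mu_{2,j}|/\sqrt{\sigma_j}\}$ and letting the absolute constants depend on it---so your diagnosis that this is an implicit regularity assumption on the individual means (not just on $\Delta\bmu$ and $\bSigma$) is accurate.
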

Recall that $\lambda_*$ and $\lambda^*$ are defined in \eqref{eq:weigen}. If the energy of $\Delta \bmu$ is evenly spread over its~support, that is, $\| \Delta \bmu \|_{\infty}$ is of the same order as $\| \Delta \bmu \|_2 / \sqrt{s}$, then  since  
\$
{\lambda_*} \leq  \min_{j\in [d]}  \sigma_j \leq  \max _{j\in [d]} \sigma_j \leq {\lambda^*},
\$
 the condition in \eqref{eq:wnvsd} is equivalent to $\gamma_n = \Omega[ \log n \cdot \sqrt{ s ^2\log ( d / \xi)/n}  ]$. Setting $\xi = 1/d$, we conclude that the test function in \eqref{eq::test_fun21} is asymptotically powerful if $$\gamma_n = \Omega \bigl ( \log n \cdot  \sqrt{s^2\log d/ n}  \bigr ).$$ 

\begin{proof} 
Similar to the proof of Theorem \ref{thm::test_full1}, for notational simplicity we define 
\$
\cQ = \bigl\{ q_{j }, \overbar q_{j } \colon j \in [d] \bigr\}\quad \text{and}\quad \cE =  \bigcap _{q\in \cQ} \Bigl\{ \bigl| Z_{q}  - \EE _{\PP_{\btheta}} \bigl[ q (\bX) \bigr] \bigr| \leq \tau_{q } \Bigr\},
\$
where  the tolerance parameter  $\tau_q$ satisfies 
\#\label{eq::tol_param2}
\tau_{q} \leq  4R^2 \cdot \log n \cdot  \sqrt{ 2\bigl[\log (2d) +\log(1/\xi)\bigr]/ n }  , ~\text{for~all~} q \in \cQ.
\#
Here \eqref{eq::tol_param2} follows from the fact that both $q_j$ and  $ \overbar q_j $ are bounded by $4 R^2 \cdot \log n $ in absolute value for any $j\in [d]$. 
  By Definition \ref{def::oracle}, for any  $\btheta \in \cG_0 (\bSigma) \cup \cG_1(\bSigma, s, \gamma_n)$, we have $\PP_{\btheta} ( \cE  ) \geq 1 - \xi$.
  
  Following the same proof strategy,  in the sequel, we show that  the hypothesis test   in \eqref{eq::test_fun21} is correct on event $\cE$. That is, we prove that the test function takes value zero on $\cE$ under $H_0$ and  one under $H_1$, which implies that the risk is upper bounded by $2 \xi$.     
 
 To this end, we first quantify the bias of truncation in \eqref{eq::query_general_fun20} and \eqref{eq::query_general_fun20}. Specifically, for any $j \in [d]$, we define 
 \#\label{eq::query_star_2}
q_j^* (\xb) = x_j / \sigma_j, \quad \overbar q_j^* (\xb) = (x_j / \sqrt{ \sigma_j}   - z_{q_j})^2 .
\#
Moreover, we assume that $n$ is sufficiently large such that the right-hand side  in \eqref{eq::tol_param2} is less than one. In this case, $\tau_q \leq 1$ for any $q \in \cQ$.

 For any $\btheta =  (\bmu, \bmu,\bSigma) \in \cG_0(\bSigma)$ and $j\in [d]$,  since the marginal distribution of $ X_j$ is $N(\mu_j, \sigma_j)$ under $\PP_{\btheta}$,   we have $q_{j}^* (\bX ) \sim N( \mu_j /\sqrt{\sigma_j}, 1)$. Here $\mu_j$ is the $j$-th coordinate of $\bmu$. For ease of presentation, let 
 \$
 \Upsilon_0 = \max _{j\in [d] } \bigl ( \max \{  | \mu_j / \sqrt{ \sigma_j } |,~ |   \mu_{1, j} / \sqrt{ \sigma_j } |, ~ |   \mu_{2, j} / \sqrt{ \sigma_j } |\}   \big ) ,
 \$
 where $\mu_{1, j}$ and $\mu_{2, j}$ are the $j$-th entries of $\bmu_1$ and $\bmu_2$, respectively.
 In addition, by triangle inequality, we have 
 \#\label{eq:bound_zq}
 | z_{q_j} | \leq 1 + |\EE_{\PP_{\btheta} } [ q_j (\bX) ] | \leq 1 + \EE_{\PP_{\btheta}} [ | q_j (\bX)  | ] \leq 1 + \sqrt{\pi/2} + \Upsilon_0.
 \#
 
 Moreover, for any $\btheta = (\bmu_1, \bmu_2, \bSigma) \in \cG_1(\bSigma , s, \gamma_n)$, we have   
 \$
 q_j^* (\bX) = X_j /\sqrt{\sigma_j} \sim  \nu N (  \mu_{1,j}  /\sqrt{\sigma_j}  , 1 )   + (1- \nu) N ( \mu_{2,j}  /\sqrt{\sigma_j}  ,1),~~\forall  j \in [d].\$
Thus, \eqref{eq:bound_zq} also holds under the alternative hypothesis. 
Hence, there exists an absolute constant $C_1$ such that, for any $\btheta \in \cG_0(\bSigma) \cup \cG_1(\bSigma , s, \gamma_n)$, we have 
 \#\label{eq:bound_quad_moment}
\max \Bigl (  \EE_{\PP_{\btheta} } \bigl \{ [ q_j^* (\bX)]^2  \bigr \} , ~ \EE_{\PP_{\btheta} } \bigl \{ [ \overbar q_j^* (\bX)]^2  \bigr \}  \Bigr )  \leq C_1,~~\forall  j \in [d].
 \# 
 It is also easy to see that, for any $\btheta \in \cG_0(\bSigma) \cup \cG_1(\bSigma , s, \gamma_n)$,  $q_j^*(\bX)$ is a sub-Gaussian random variable under $\PP_{\btheta}$, i.e., there exists constants $C_2$ and $C_3$ such that 
 \#\label{eq:subgaussian_qj}
 \PP_{\btheta} \bigl  [ |  q_j^*(\bX) | \geq t \bigr ] \leq C_2 \cdot \exp(-C_3 \cdot t^2 )
 \#
for any $t >0$.  Setting $t = R\cdot \sqrt{\log n}$ in \eqref{eq:subgaussian_qj} and using Cauchy-Schwarz inequality, we have 
\#\label{eq:bound_truncation_qj}
&\max \Bigl\{ \bigl | \EE_{\PP_{\btheta} } [ q_{j}(\bX ) - q_{j}^* (\bX)] \bigr | ^2, ~\bigl | \EE_{\PP_{\btheta} } [ \overbar q_{j}(\bX ) - \overbar q_{j}^* (\bX)] \big |^2 \Bigr \} \notag \\
&\quad  \leq \max \Bigl (  \EE_{\PP_{\btheta} } \bigl \{  [ q_{j}^*(\bX) ]^2 \bigr \},~\EE_{\PP_{\btheta} } \bigl \{  [ \overbar q_{j}^*(\bX) ]^2 \bigr \} \Bigr )   \cdot \PP_{\btheta} \bigl [  |q_{j}^*(\bX )  |  > R \cdot \sqrt{ \log n}  \bigr ] \notag \\
& \quad \leq C_1 \cdot  C_2 \cdot \exp( - C_3 \cdot R^2 \log n),
\# 
 where the last inequality follows from \eqref{eq:bound_quad_moment} and \eqref{eq:subgaussian_qj}. We remark that  absolute constants $C_1$, $C_2$, and $C_3$ depend on $\Upsilon_0$. Hence, setting $R$ to be a sufficiently large constant  in \eqref{eq:bound_truncation_qj}, we obtain  that, for all $\btheta \in \cG_0(\bSigma) \cup \cG_1(\bSigma , s, \gamma_n)$, it holds that 
 \#\label{eq:bound_truncation_qj2}
 \max \Bigl\{ \bigl | \EE_{\PP_{\btheta} } [ q_{j}(\bX ) - q_{j}^* (\bX)] \bigr | ,  ~\bigl | \EE_{\PP_{\btheta} } [ \overbar q_{j}(\bX ) - \overbar q_{j}^* (\bX)] \big |  \Bigr \} \leq 1/n.
 \#
 
 In the sequel, we consider the value of the test function in \eqref{eq::test_fun21} under the null and alternative hypotheses separately.  For any $\btheta \in \cG_0(\bSigma)$ and for any $j \in [d]$, under $\PP_{\btheta}$ we have 
 \#\label{eq:z_qj_upper1}
 Z_{\overbar q_j } & \leq \EE_{\PP_{\btheta} } [\overbar q_j (\bX)  ] + \tau_{\overbar q_j}\leq \EE_{\PP_{\btheta} } [\overbar q_j ^*  (\bX)  ]  + 1/ n + \tau_{\overbar q_j} \notag \\
 & = \bigl| z_{q_j}  - \EE _{\PP_{\btheta}}  [ q_j(\bX) ] \bigr|^2 + 1 +  \tau_{\overbar q_j} + 1/n,
 \#
 where the first inequality follows from Definition \ref{def::oracle} and the second inequality follows from \eqref{eq:bound_truncation_qj2}. Using   $(a+b)^2 \leq 2 a^2 + 2b^2 $ and combining \eqref{eq:bound_truncation_qj2} and \eqref{eq:z_qj_upper1}, we have 
\#\label{eq:z_qj_upper2}
  Z_{\overbar q_j }&  \leq   2 \bigl|  \EE _{\PP_{\btheta}}  [ q_j(\bX)- q_j^*(\bX) ] \bigr|^2 + 3/n + 1+  \tau_{\overbar q_j} \notag \\
&\quad  \leq 1 +2  \tau_{q_j}^2 + 3/ n + \tau_{\overbar q_j}  \leq 1 + 2 \tau_{q_j}+ \tau_{\overbar q_j}+3/n  .
\#
 Thus, combining  \eqref{eq::tol_param2} and \eqref{eq:z_qj_upper2}, on event $\cE$ we obtain  
 \$
\max_{j\in [d]} Z_{\overbar q_j } < 1+ 16R^2 \cdot \log n \cdot  \sqrt{ 2 [\log (2d) +\log(1/\xi) ]/ n } .
 \$
 Thus, for any $\btheta \in \cG_0(\bSigma)$, the type-I error of the test function in \eqref{eq::test_fun21} is no more than $\xi$. 
 
 It remains to bound the type-II error. For any $\btheta \in 
 \cG_1( \bSigma , s, \gamma_n)$,   for $\overbar q_j^*(\xb)  $ defined in  \eqref{eq::query_star_2}, by direct copmutation, we have 
  \#\label{eq::exp_second_query_alt}
  \EE_{\PP_{\btheta}} \bigl[ \overbar q_j^*(\bX) \bigr]  & = \EE_{\PP_{\btheta}} \Bigl(\bigl\{  q_j^* (\bX) - \EE_{\PP_{\btheta}}   [q_j^* (\bX) ] \bigr\}^2 \Bigr)  + \bigl| z_{q_j} -  \EE_{\PP_{\btheta}} \bigl[ q_j^* (\bX)\bigr]\bigr|^2 \notag \\
  &\geq 1 +  \nu(1- \nu) ( \Delta \mu_j)^2/ \sigma_j  ,
  \#
where $\Delta\mu _j$ is the $j$-th element of $\Delta \bmu = \bmu_2 - \bmu_1$  
 Let  $j^* = \argmax _{j\in[d] } \{   (\Delta \mu_j)^2 /  \sigma_{j} \} $.  Then by \eqref{eq:bound_truncation_qj2},  \eqref{eq::exp_second_query_alt},   and the definition of $\cE$, we obtain
\#\label{eq:test_zq_upper_final}
Z_{\overbar q_{j^*}} &\geq  \EE_{\PP_{\btheta}}  \bigl[ \overbar q_{j^*}^* (\bX) \bigr] - \tau_{\overbar q_{j^*}}  \geq   \EE_{\PP_{\btheta}}  \bigl[ \overbar q_{j^*} (\bX) \bigr] -  \tau_{\overbar q_{j^*}} - \bigl | \EE_{\PP_{\btheta} } [ \overbar q_{j^*}(\bX ) - \overbar q_{j^*}^* (\bX)] \big |  \notag   \\
&\geq  1 + \max _{j \in [d] } \bigl \{  \nu (1- \nu) (\Delta \mu_j)^2/ \sigma_{j} \bigr \} -\tau_{\overbar q_{j^*}}  - 1/n \notag  \\
& \geq  1 + \max _{j \in [d] } \bigl \{  \nu (1- \nu) (\Delta \mu_j)^2/ \sigma_{j} \bigr \}  - 5 R^2 \cdot \log n \cdot  \sqrt{ 2 [\log (2d) +\log(1/\xi) r]/ n }.
\#
Thus, when \eqref{eq:wnvsd} holds, by \eqref{eq:test_zq_upper_final} we have 
\$
\sup_{j\in[d]} Z_{\overbar q_{j}}  \geq Z_{\overbar q_{j^*}} \geq   1 + 16  R^2 \cdot \log n \cdot  \sqrt{ 2 [\log (2d) +\log(1/\xi) r]/ n } .
\$
Therefore, the test function in \eqref{eq::test_fun21} takes value one on $\cE$, which implies that the type-II error is upper bounded by $\xi$.   This concludes the proof of Theorem \ref{thm::test_full2}.
\end{proof}


\section{Proofs of Auxiliary Results}\label{sec::append}
In this section, we first show how to select the truncation levels for the query functions in  \eqref{eq::query_reg1} and \eqref{eq::query_reg2}, and then present the proofs of the auxiliary results in \S \ref{sec::proof}.

\subsection{Truncation Levels for Query Functions in \S \ref{sec::upper_bound_reg} } \label{sec::truncation_level}
Remind that the query functions in \eqref{eq::query_reg1} and \eqref{eq::query_reg2}  involve truncation on the response variable $Y$. In the following, we prove that the truncation levels are absolute constants by explicitly characterizing the effect of truncation. 

For the query function $q_{\vb}$ defined in \eqref{eq::query_reg1},  to show \eqref{eq:goal_truncation_reg}, it suffices to   find an absolute constant $R$ such that 
\# \label{eq:truncation_goal00}
   \EE_{\PP_{\vb}} \bigl [q_{\vb } (Y, \bX)\bigr]  -  \EE_{\PP_0} \bigl[q_{\vb }( Y , \bX) \bigr ]   \geq s\beta ^2.
\#
Note that  by \eqref{eq::regression_model} we have 
\#\label{eq:expectations_no_truncation}
\EE_{\PP_{0}}   \{  Y^2 [ s^{-1}  ( \bX^\top \vb  )^2- 1] \}  = 0  \quad \text{and}\quad  \EE_{\PP_{\vb}} \{  Y^2 [ s^{-1}  ( \bX^\top \vb)^2- 1] \} = 2s  \beta^2.
 \# 
  For ease of presentation, we  define $ \tilde q_{\vb}(y, \xb) = y^2 \cdot \ind \{ | y | \leq \sigma \cdot R \} \cdot [ s^{-1} ( \xb ^\top \vb )^2 - 1 ]$. Since $Y$ and $\bX$ are independent under $\PP_0$, we have 
$\EE_{\PP_0} [ \tilde q_{\vb} ( Y, \bX)] = 0$.

 Our derivation of \eqref{eq:truncation_goal00} consists of two steps.
 We first show that 
\# \label{eq:truncation_reg_step1}
\max \Bigl \{ \bigl | \EE_{\PP_{\vb} } [ q_{\vb} ( Y, \bX) - \tilde q_{\vb} (Y, \bX)] \bigr | , ~ \bigl | \EE_{\PP_{0} } [ q_{\vb} ( Y, \bX) - \tilde q_{\vb} (Y, \bX)] \bigr |  \Bigr \}  \leq s\beta^2 / 4. 
\#
 Then we further show that 
 \#\label{eq:truncation_reg_step2}
  \EE_{\PP_{\vb}} \bigl [\tilde q_{\vb } (Y, \bX)\bigr]  -  \EE_{\PP_0} \bigl[\tilde q_{\vb }( Y , \bX) \bigr ]  \geq 3 s^2 \beta^2 / 2.
 \#
 Combining \eqref{eq:truncation_reg_step1} and \eqref{eq:truncation_reg_step2}, we obtain  \eqref{eq:truncation_goal00}. 
 
 In the following, we establish \eqref{eq:truncation_reg_step1}. By definition, we have 
 \$
 \tilde q_{\vb} (Y, \bX)] - q_{\vb} ( Y, \bX) = Y ^2 \cdot \ind \bigl  \{ | Y  | \leq \sigma \cdot R \bigr \}   \cdot [ s^{-1} ( \xb ^\top \vb )^2 - 1 ] \cdot \ind \bigl  \{ | \bX^\top \vb|  >  R \sqrt{ s\log n} \bigr \}.
 \$
 By Cauchy-Schwarz inequality, we have 
 \#\label{eq:truncation_step1_1}
 & \bigl | \EE_{\PP_0} \bigl [  \tilde q_{\vb} (Y, \bX)] - q_{\vb} ( Y, \bX)  \bigr ] \bigr | ^2 \notag \\
 &\quad   \leq  \EE_{\PP_0}  \Bigl \{   Y ^4     \cdot [ s^{-1} ( \xb ^\top \vb )^2 - 1 ]^2  \Bigr \}   \cdot \PP_0\bigl ( | \bX^\top \vb| /\sqrt{s}  >  R \sqrt{  \log n} \bigr   ) \notag \\
 &\quad  =     \EE_{ \PP_0} (Y^4 ) \cdot \EE_{ \PP_0} \bigl \{   [ s^{-1} ( \xb ^\top \vb )^2 - 1 ]^2 \bigr \} \cdot \PP_0\bigl ( | \bX^\top \vb| /\sqrt{s}  >  R \sqrt{  \log n} \bigr   ),
 \#
where the last equality follows from the fact that $Y$ and $\bX$ are independent under $\PP_0$. Since $Y \sim N(0, \sigma^2)$ and $\bX^\top \vb / \sqrt{s} \sim N(0,1)$, we have 
$  \EE_{ \PP_0} (Y^4 )  = 3\sigma^4$ and $ \EE_{ \PP_0}  \{   [ s^{-1} ( \xb ^\top \vb )^2 - 1 ]^2 \} = 2.$ Moreover, by the tail inequality of Gaussian random variables, we have 
 \$
  \PP_0\bigl ( | \bX^\top \vb| /\sqrt{s}  >  R \sqrt{  \log n} \bigr   ) \leq 2 \exp( -R^2 / 2 \cdot  \log n ).
 \$
 Thus, by \eqref{eq:truncation_step1_1}, we have 
 \#\label{eq:truncation_step1_2}
 \bigl | \EE_{\PP_0} \bigl [  \tilde q_{\vb} (Y, \bX)] - q_{\vb} ( Y, \bX)  \bigr ] \bigr | ^2 \leq 12 \sigma^2 \cdot  \exp( -R^2 / 2 \cdot  \log n ).
 \# 
 
 Similarly, under $\PP_{\vb}$, Cauchy-Schwarz inequality implies that 
 \#\label{eq:truncation_step1_3}
 & \bigl | \EE_{\PP_\vb } \bigl [  \tilde q_{\vb} (Y, \bX)] - q_{\vb} ( Y, \bX)  \bigr ] \bigr | ^2 \notag \\
 &\quad   \leq  \EE_{\PP_\vb }  \Bigl \{   Y ^4     \cdot [ s^{-1} ( \bX ^\top \vb )^2 - 1 ]^2  \Bigr \}   \cdot \PP_{\vb}\bigl ( | \bX^\top \vb| /\sqrt{s}  >  R \sqrt{  \log n} \bigr   ) \notag \\
 & \quad \leq \sqrt{  \EE_{\PP_{\vb} } (Y^8 ) \cdot \EE_{\PP_{\vb} } \{  [ s^{-1} ( \bX ^\top \vb )^2 - 1 ]^4  \} } \cdot \PP_{\vb}\bigl ( | \bX^\top \vb| /\sqrt{s}  >  R \sqrt{  \log n} \bigr   ).
 \#
 Note that $Y \sim N(0, \sigma^2 + s\beta ^2 )$ and $\bX ^\top \vb / \sqrt{s} \sim N(0,1)$ under $\PP_{\vb}$. By \eqref{eq:truncation_step1_3} we obtain that there exists an absolute constant $C$ such that 
 \#\label{eq:truncation_step1_4}
   \bigl | \EE_{\PP_\vb } \bigl [  \tilde q_{\vb} (Y, \bX)] - q_{\vb} ( Y, \bX)  \bigr ] \bigr | ^2 \leq C \cdot \exp(- R^2 / 2\cdot \log n).
 \# 
 Hence, combining \eqref{eq:truncation_step1_2} and  \eqref{eq:truncation_step1_4}, when $R$ is sufficiently large, we have 
 \#\label{eq:set_R_expoential}
 \max \Bigl \{ \bigl | \EE_{\PP_{\vb} } [ q_{\vb} ( Y, \bX) - \tilde q_{\vb} (Y, \bX)] \bigr | , ~ \bigl | \EE_{\PP_{0} } [ q_{\vb} ( Y, \bX) - \tilde q_{\vb} (Y, \bX)] \bigr |  \Bigr \}  \leq 1/ n,  
 \#
 which implies \eqref{eq:truncation_reg_step1} when \eqref{eq:reg_signal1} holds. 
 
It remains to establish \eqref{eq:truncation_reg_step2}. By \eqref{eq:expectations_no_truncation} and the fact that $ \EE_{\PP_0} [ \tilde q_{\vb} ( Y, \bX)] = 0$, it  suffices to find an $R$ such that 
\#\label{eq::truncation_creteria1}
 & \EE_{\PP_{\vb}} \Bigl \{  Y^2 [ s^{-1}  ( \bX^\top \vb)^2- 1] - \tilde q_{\vb} (Y, \bX ) \Bigr  \} \notag \\
 &\quad = \EE_{\PP_{\vb}} \Bigl\{ Y ^2\cdot  \ind(|Y| > \sigma R) \cdot   \bigl  [s^{-1} (   \bX^\top \vb )^2 - 1 \bigr] \Bigr \} \leq s\beta^2 / 2.
\#

Notice that, under $\PP_{\vb}$,  we have   $|Y| \stackrel{D}{=}  |\beta \bX^\top \vb + \epsilon|$. We denote $W = (\beta \bX^\top \vb + \epsilon)/\sqrt{ \varsigma + \sigma^2}$ and $Z = \bX^\top \vb /\sqrt{s}$, where $\varsigma= \sqrt{s \beta^2}$. Then  $W$ and $Z$ are both standard Gaussian random variables marginally and~their  correlation  is $\varsigma  / \sqrt{ \varsigma ^2 + \sigma^2}$.
Then we can write the left-hand side of \eqref{eq::truncation_creteria1} as 
\#\label{eq::reduce_to_normal}
&\EE_{\PP_{\vb}} \Bigl\{ Y ^2\cdot  \ind(|Y| > \sigma R) \cdot   \bigl [s^{-1} (   \bX^\top \vb )^2 - 1 \bigr ]  \Bigr \} \notag \\
 &\quad = \EE \bigl [ ( \varsigma ^2 + \sigma^2)\cdot | W  |^2 \cdot \ind\bigl (|W | >  \sigma R/ \sqrt{ \varsigma ^2 + \sigma^2}  \bigr) \cdot      ( Z^2 -1)\bigr].
\#
Note that the right-hand side of \eqref{eq::reduce_to_normal} is of the  form $\EE [f (W)\cdot g(Z)]$ for some functions $f$ and $g$. By expanding the joint density of $W$ and $Z$ using Hermite polynomials, the following lemma enables us to calculate $\EE [f (W)\cdot g(Z)]$ in general settings.

\begin{lemma}\label{lemma::hermite_poly}
Let $\{ H_k \}_{k \geq 0}$ be the   normalized Hermite polynomials such that 
\$
\int_{-\infty} ^{+\infty} H _k (x) H_{\ell} (x) \phi(x) \ud x = \delta_{k \ell},
\$
where $\phi(x)$ is the density of $N(0,1)$ and $\delta _{k \ell}$ is the Kronecker delta function. Let centered random variables $W$ and $Z$ follow  bivariate  Gaussian distribution with variance one and correlation $\zeta$.  For any functions $f   = \sum_{k=0}^\infty a_k H_k   $ and $g = \sum_{k=0}^\infty b_k  H_k   $ such that $ \sum_{k=0}^\infty a_k^2 < \infty $ and $\sum_{k=0}^\infty b_k^2 < \infty$,  we have
\$
\EE \bigl[ f(W) \cdot g(Z) \bigr] = \sum_{k = 0}^\infty a_k b_k  \zeta ^k.
\$
\end{lemma}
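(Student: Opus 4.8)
The statement is the classical fact that the normalized Hermite polynomials diagonalize the bivariate Gaussian kernel; it is equivalent to Mehler's formula for the Ornstein--Uhlenbeck semigroup. The plan is to prove it by first establishing the conditional-expectation identity $\EE[H_k(Z)\mid W=w] = \zeta^k H_k(w)$, and then expanding $g$ in the Hermite basis and combining the tower property with orthonormality to collapse the double sum to $\sum_k a_k b_k \zeta^k$.

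First I would record the generating-function identity for the normalized Hermite polynomials, $\sum_{k\geq 0} (t^k/\sqrt{k!})\,H_k(x) = \exp(tx - t^2/2)$ for all $t,x\in\RR$, which follows from the corresponding identity $\sum_{k\geq 0}\mathrm{He}_k(x)t^k/k! = e^{tx-t^2/2}$ for the (unnormalized) probabilist's Hermite polynomials together with $\int \mathrm{He}_m\mathrm{He}_n\,\phi = n!\,\delta_{mn}$. Since $(W,Z)$ is bivariate Gaussian with unit variances and correlation $\zeta$, conditionally on $W=w$ one has $Z\sim N(\zeta w,1-\zeta^2)$, so computing the Gaussian moment generating function gives
\[
\EE\bigl[e^{tZ - t^2/2}\mid W=w\bigr] = \exp\bigl(t\zeta w - t^2\zeta^2/2\bigr) = \exp\bigl((t\zeta)w - (t\zeta)^2/2\bigr).
\]
Expanding both sides in powers of $t$ and matching coefficients yields $\EE[H_k(Z)\mid W=w] = \zeta^k H_k(w)$ for every $k\geq 0$. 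Then, writing $g=\sum_{k}b_k H_k$, the tower property gives $\EE[f(W)g(Z)] = \EE\bigl[f(W)\,\EE[g(Z)\mid W]\bigr] = \EE\bigl[f(W)\sum_k b_k\zeta^k H_k(W)\bigr] = \sum_k a_k b_k \zeta^k$, where the last step uses $f=\sum_k a_k H_k$ and the orthonormality $\EE[H_k(W)H_\ell(W)] = \delta_{k\ell}$ (the marginal of $W$ is $N(0,1)$).

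The only point requiring care is the interchange of the infinite sum with the expectation, which I would justify by an $L^2$ argument. Since $\sum_k a_k^2<\infty$ and $\sum_k b_k^2<\infty$, Parseval's identity shows $f,g\in L^2(\phi)$ and that the partial sums $g_N:=\sum_{k\leq N}b_k H_k$ converge to $g$ in $L^2(\phi)$; because $Z$ has the $N(0,1)$ marginal this means $\EE[(g_N(Z)-g(Z))^2]\to 0$, so by Cauchy--Schwarz $\EE[f(W)g_N(Z)]\to\EE[f(W)g(Z)]$. On the other hand $\EE[f(W)g_N(Z)]=\sum_{k\leq N}a_k b_k\zeta^k$ (each term finite by Cauchy--Schwarz applied to $f(W),H_k(Z)\in L^2$ and the identity above), and $\sum_k a_k b_k\zeta^k$ converges absolutely since $|\zeta|\leq 1$ and $\sum_k|a_k b_k|\leq(\sum_k a_k^2)^{1/2}(\sum_k b_k^2)^{1/2}<\infty$. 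Passing to the limit completes the proof. I do not anticipate a genuine obstacle: the argument is entirely classical, and the only mild subtlety is keeping the $L^2$-convergence bookkeeping clean; one could alternatively cite Mehler's formula directly and reduce the computation to termwise integration against the explicit kernel $\phi(w)\phi(z)\sum_k\zeta^k H_k(w)H_k(z)$.
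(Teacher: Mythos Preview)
Your proof is correct and takes a genuinely different route from the paper. The paper proceeds by directly citing Mehler's bilinear expansion of the bivariate Gaussian density, $\psi(w,z;\zeta)=\phi(w)\phi(z)\sum_{k\geq 0}\zeta^k H_k(w)H_k(z)$, and then integrates termwise against $H_\ell(w)H_m(z)$ to read off $\EE[H_\ell(W)H_m(Z)]=\zeta^\ell\delta_{\ell m}$; the claim follows by bilinearity. You instead derive the semigroup identity $\EE[H_k(Z)\mid W]=\zeta^k H_k(W)$ from scratch, by computing the conditional moment-generating function $\EE[e^{tZ-t^2/2}\mid W=w]=e^{(t\zeta)w-(t\zeta)^2/2}$ and matching coefficients in the Hermite generating function, then finish with the tower property and orthonormality. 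Your argument is more self-contained---it does not require importing Mehler's formula from an outside reference---and you are more careful than the paper about justifying the interchange of the infinite Hermite sum with the expectation via $L^2$ convergence and Cauchy--Schwarz. The paper's approach is shorter once Mehler is taken as known; yours effectively reproves the relevant consequence of Mehler along the way. Amusingly, the alternative you mention in your last sentence is exactly what the paper does.
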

\begin{proof}
We denote the joint density of $(W,Z)$ by $\psi( w, z; \zeta)$. It is known that $\psi( w, z; \zeta)$ can be written as a power series of  the correlation $\zeta$ by
\#\label{eq::expansion_density}
\psi( w, z; \zeta)  =\phi (w) \cdot \phi(z) \sum_{k=0}^ \infty  \zeta^k \cdot  H_k(w) \cdot H_k(z).
\#
See Chapter 11 of \cite{balakrishnan2009continuous} for more details.
Hence, for any  integers $\ell, m \geq 0$,  we have 
\$
\EE \bigl [ H_{\ell}(W)\cdot H_m (Z)\bigr] = \sum_{k=0}^ \infty    \zeta^k \cdot \EE \bigl [ H_k (W)\cdot  H_{\ell} (W) \bigr  ] \cdot  \EE\bigl  [ H_{k} (Z) \cdot H_m (Z)\bigr  ]  =   \sum_{k=0}^ \infty   \zeta^k \delta_{k\ell} \delta _{k m}.
\$
Therefore, for $\EE [ f(W) \cdot g(Z) ] $,   we have
\$
\EE [ f(W) \cdot g(Z) ] =  \sum_{\ell, m =0}^{\infty}  a_{\ell} b_{m} \cdot  \EE \bigl  [ H_{\ell}(W)\cdot H_m (Z)\bigr ]  =   \sum_{k = 0}^\infty  a_k b_k \zeta^k,
\$
which  concludes the proof of this Lemma.
\end{proof}

 For notational simplicity, let $f_{t} (w) =   w^2\cdot  \ind ( |w| > t) $ and $g(z) = z^2 - 1$. Note that $g(z) = \sqrt{2} \cdot H_2(z)$. By Lemma  \ref{lemma::hermite_poly}, we have
 \$
 \EE \bigl [ ( \varsigma ^2 + \sigma^2)  \cdot f_t(W) \cdot g(Z)\bigr  ] =  ( \varsigma ^2 + \sigma^2) \cdot \bigl(   \varsigma /   \sqrt {\varsigma ^2 + \sigma^2}  \bigr )^2 \cdot a_2(t) = \varsigma ^2  a_2(t).
 \$
 Here $a_2(t) $ is  the  coefficient of $H_2$ in  expansion $\sqrt{2} \cdot f_t = \sum_{k=0} ^\infty a_k (t) \cdot H_k,$ which is given by
 \#\label{eq::final_threshold}
 a_2 (t) = \int_{-\infty} ^{\infty}  w^2 (w^2 - 1)\cdot  \ind ( |w| > t)\cdot \phi(w) \ud w, ~~t\geq 0.
 \# 
 Note that $a_2 \colon \RR\rightarrow \RR$ in \eqref{eq::final_threshold} is   monotonically nonincreasing. Moreover, 
 $a_2(0) = 2$ and $a_2(t) $ tends to zero as $t$ goes to infinity. Thus,  we can set  the truncation level $R$ in \eqref{eq::truncation_creteria1} to be a sufficiently large constant such that     
 \#\label{eq::set_trunc_level_final}
 R \geq  2 \cdot \inf \bigl \{ t\colon  a_2(t) \leq 1/2  \bigr\},
 \#
 and that  \eqref{eq:set_R_expoential} also holds.
which is an absolute constant. Since $\varsigma^2 = s\beta^2 $ is negligible compared with  $\sigma^2 $, by \eqref{eq::reduce_to_normal}  we have
\#\label{eq:use_a_fancylemma}
&\EE \bigl [ ( \varsigma ^2 + \sigma^2)\cdot | W  |^2 \cdot \ind\bigl(|W | >  \sigma R /   \sqrt{ \varsigma ^2 + \sigma^2}  \bigr) \cdot      ( Z^2 -1)\bigr] \notag \\
&\quad \leq \EE \bigl [ ( \varsigma ^2 + \sigma^2)\cdot | W  |^2 \cdot \ind(|W | > R /2 ) \cdot      ( Z^2 -1)\bigr ]  \leq \varsigma^2 \cdot a_2 (R/2) \leq  s\beta ^2 / 2,
\#
which implies \eqref{eq::truncation_creteria1} and \eqref{eq:truncation_reg_step2}. 
Therefore, our choice of $R$ in \eqref{eq::set_trunc_level_final} satisfies the desired condition in \eqref{eq:goal_truncation_reg}. 

Similarly, for query function $q_j$ in \eqref{eq::query_reg2},  we define $\tilde q_j(y, \xb) = y^2 \cdot \ind \{ | y | \leq \sigma R \} \cdot (X_j^2 - 1) $.
By Cauchy-Schwarz inequality, we have 
\#\label{eq:truncation_q_j_cauchy}
  \bigl | \EE  \bigl [  \tilde q_{j} (Y, \bX)] - q_{j} ( Y, \bX)  \bigr ] \bigr | ^2  &  \leq  \EE\bigl [  Y ^4    (    X_j^2 - 1) ^2    \bigr ]   \cdot \PP \bigl ( |X_j|   >  R \sqrt{  \log n} \bigr   ) \notag \\
&  \leq \sqrt{  \EE  (Y^8 ) \cdot \EE  \bigl  [   ( X_j ^2 - 1 )^4  \bigr ] }  \cdot \PP ( |  X_j|   >  R \sqrt{  \log n} \bigr   ), 
\#
where the expectation is taken under either $\PP_0$ or $\PP_{\vb}$. By direct computation, it can be shown that there exists an absolute constant $\tilde C$ such that 
\$
\max \Bigl \{ \EE_{\PP_0} (Y^8), ~\EE_{\PP_{\vb}} (Y^8), ~ \EE_{\PP_{0}} [ ( X_j^2 -1 )^4 ], ~\EE_{\PP_{\vb}}   [ ( X_j^2 -1 )^4 ] \Bigr \} \leq \tilde C,
\$
where $\tilde C$ depends on $\sigma$. Since $X_j \sim N(0,1)$, by \eqref{eq:truncation_q_j_cauchy} we obtain that 
\#\label{eq:truncation_q_j_final}
 \bigl | \EE  \bigl [  \tilde q_{j} (Y, \bX)  - q_{j} ( Y, \bX)  \bigr ] \bigr | ^2 \leq 2 \tilde C \cdot \exp( - R^2 / 2\cdot \log n).
\#
Thus, by setting $R$ in \eqref{eq:truncation_q_j_final} to be a  sufficiently large constant, we have 
\# \label{eq:truncation_q_j_final2}
 \max \Bigl  \{  \bigl | \EE_{\PP_0}   \bigl [  \tilde q_{j} (Y, \bX)  - q_{j} ( Y, \bX)  \bigr ] \bigr |,~ \bigl | \EE_{\PP_\vb }   \bigl [  \tilde q_{j} (Y, \bX)  - q_{j} ( Y, \bX)  \bigr ] \bigr | \Bigr \} \leq 1/ n \leq \beta^2 /4 
\#
for any $j \in [d]$, 
where the last inequality follows from \eqref{eq:reg_signal2}.

In the sequel, we show that, for any $j \in \supp(\vb)$, it holds that 
\#\label{eq:truncation_step22}
  \EE_{\PP_{\vb} } [ \tilde q_j(Y, \bX)] - \EE_{\PP_0} [ \tilde q_j (Y, \bX) ]   \geq 3 \beta^2/2.
\#
Combining \eqref{eq:truncation_q_j_final2} and  \eqref{eq:truncation_step22} yields the desired inequality in  \eqref{eq:goal_truncation_reg2}.
 
Notice that  $\EE_{\PP_0} [ \tilde q_j (Y, \bX) ] =0 $  since $Y$ and $\bX$ are independent under $\PP_0$. By the definition of $\tilde q_j$, to show \eqref{eq:truncation_step22}, it suffices to 
 find a truncation level  $R $ such that 
 \#\label{eq::second_truncation_goal} 
\EE_{\PP_{\vb}}  \bigl [  Y ^2\cdot  \ind( | Y| > \sigma R) \cdot   (X_j^2- 1 ) \bigr ] \leq \beta^2 /2.
\#
In this case, we also set   $R$ such that   \eqref{eq::set_trunc_level_final} and \eqref{eq:truncation_q_j_final2} hold simultaneously. In the following, we show that the condition  in \eqref{eq::second_truncation_goal}  is satisfied.

Let $ W = (\beta \bX^\top \vb + \epsilon)/\sqrt{ \varsigma^2  + \sigma^2}$ and $Z_1= X_j$, where $j \in \supp(\vb)$. Under $\PP_{\vb}$, we have 
\$
|Y| \stackrel{D}{=}  \sqrt{ \varsigma^2 + \sigma^2}  \cdot |W|.
\$  Besides,   $W$ and $Z_1$ are centered bivariate Gaussian random variables with variance one and correlation $\beta / \sqrt{ \varsigma ^2  + \sigma^2}$. Since  $\varsigma ^2 = s\beta^2 $ is negligible compared with $\sigma^2$, similar to \eqref{eq:use_a_fancylemma}, we use Lemma \ref{lemma::hermite_poly}  to obtain that 
\$
&  \EE_{\PP_{\vb}} \bigl  [  Y ^2\cdot  \ind( | Y| > \sigma R) \cdot   (X_j^2- 1 ) \bigr ]  =  \EE \bigl[ ( \varsigma ^2 + \sigma^2)\cdot | W  |^2 \cdot \ind\bigl (|W | >   \sigma R/ \sqrt{ \varsigma ^2 + \sigma^2}    \bigr  ) \cdot      ( Z_1^2 -1)\bigr ]\\
& \quad  \leq \EE \bigl [ ( \varsigma ^2 + \sigma^2)\cdot | W  |^2 \cdot \ind(|W | >   R/2  ) \cdot      ( Z_1^2 -1) \bigr] \leq \beta^2\cdot a_2 (R/2) \leq \beta^2 /2 ,
\$
where $a_2 $ is defined in \eqref{eq::final_threshold}. 
Therefore, we   conclude the derivation of  \eqref{eq:goal_truncation_reg2}. 

\subsection{Proofs of Auxiliary Results in \S \ref{sec::proof}}
In the following, we prove the supporting lemmas used in the proofs of the main results in \S \ref{sec::proof}.
\subsubsection{Proof of Lemma \ref{lemma::distinguish}} \label{proof::lemma::distinguish}
\begin{proof}
Given an algorithm $\mA \in \cA(T)$, suppose that $\mA$   makes  queries~$\{q_t\}_{t=1}^T \subseteq \cQ_{\mA}$. By \eqref{eq:capapcity_bound}, it follows that $\cG(s) \setminus \bigcup_{t\in [T]} \cC(q_t)$ is not empty. Thus, there exists $   \vb_0  \in \cG(s)$ such that 
	\#\label{eq:oracle_confuse}
\bigl 	|    \EE_{\PP_{0}} \bigl[q(\bX)\bigr] -  \EE_{\PP_{\vb_0}} \bigl[q(\bX)\bigr] \bigr| \leq      \tau_{q, \vb_0}  
	\#
	for any $q\in \cQ_{\mA}$, where $\tau_{q, \vb_0}$ is the tolerance parameter in \eqref{eq::query_2} under $\PP_{\vb_0}$.  To prove this lemma, it suffices to show that there exists an oracle $r$ such that 
	\#\label{eq::reduce_risk}
\inf_{\phi\in \cH(\mA,r)} \Bigl[ \overline{\PP}_{0}(\phi = 1) +  \overline{\PP}_{\vb_0}(\phi = 0) \Bigr]  = 1.
	\#
	
To this end, in the sequel, we construct an  the oracle $r_0$  as follows.   When $\bX$ follows $\PP_0$, for any query function $q \in \cQ_{\mA}$, $r_0$ returns $\EE_{\PP_0} [ q(\bX) ]$.  Moreover,  when $\bX $ follows $ \PP_{\vb}$ for any $\vb \in \cG(s)$, for any query function $q$, if $ \EE_{\PP_0} [ q(\bX) ] $ is a valid response in the sense that $ Z_q = \EE_{\PP_0} [ q(\bX) ] $ satisfies \eqref{eq::query_00}, $r_0$ returns $\EE_{\PP_0} [ q(\bX) ]$ as the response. Otherwise it returns $ \EE_{\PP_\vb} [ q(\bX) ] $. It  is not hard to see that, $r_0$ satisfies Definition \ref{def::oracle} and  is thus a valid oracle.

By the construction of $r_0$ and \eqref{eq:oracle_confuse},   when $\bX \sim \PP_{\vb_0}$,  for any $t \in [T]$,  $Z_{q_t} = \EE_{\PP_0} [ q_t(\bX)]$ satisfies \eqref{eq::query_00} for query function $q_t$. Thus, when we query  oracle $r_0$ by $q_t$ in the $t$-th round, $r_0$ returns $Z_{q_t} = \EE_{\PP_0} [ q_t(\bX)]$. As a result,  $r_0$ returns the same responses for query functions $ \{q_t \}_{t =1}^T $ when $\bX$ follows either   $\PP_0$ or  $\PP_{\vb_0}$.

Furthermore, since any test function $\phi$ computed by $\mA\in \cA(T)$ is a function of $\{ Z_{q_t} \}_{t=1}^T$, which  have the same distribution under $\overline{\PP}_0 $ and  $\overline{\PP}_{\vb_0}$, we conclude that
\$\overline {\PP}_0 (\phi = 1) + \overline{\PP}_{\vb_0}(\phi = 0) \geq 1.\$
Meanwhile, note that a hypothesis test that randomly rejects the null hypothesis  with probability~$1/2$ incurs risk one. 
Therefore, we establish \eqref{eq::reduce_risk}, which concludes  the proof of Lemma \ref{lemma::distinguish}.
\end{proof}

\subsubsection{Proof of Lemma \ref{lemma::compute_chi_square}}\label{proof::lemma::compute_chi_square}
\begin{proof}
We define a random variable  $\eta\in \{ 1- \nu, \nu\} $ such that
\$
\PP(\eta = 1- \nu) = \nu~~\text{and}~~\PP(\eta = -\nu) = 1- \nu.
\$ 
For any $\vb \in \cG(s)$, by the definition of $\PP_{\vb}$, we have
\$
\frac{\ud \PP_{\vb}}{\ud \PP_0} (\xb) = \EE_{\eta} \bigl [ \exp ( - \eta  \beta \vb^\top \xb) \cdot \exp ( -\eta^2 /2\cdot   s  \beta^2 )\bigr],
\$
where $\EE_{\eta}$ is the expectation with respect to the randomness of $\eta$.
  Then for any $\vb_1$ and $\vb_2 $ in $\cG(s)$,   by Fubini's theorem we obtain
\#\label{eq::use_expectation_like_ratio30}
\EE_{\PP_{0}} \biggl[ \frac{\ud  \PP_{\vb_1}}{\ud \PP_0} \frac{\ud   \PP_{\vb_2}}{\ud \PP_0} (\bX) \biggr ]
 & =  \EE_{\PP_0} \EE_{\eta_1,\eta_2} \Bigl\{\exp\bigl [ - \beta (\eta_1  \vb_{1} + \eta_2  \vb_2)^\top \bX -  s  \beta^2 /2 \cdot(\eta_1^2 + \eta_2^2 ) \bigr  ]  \Bigr \}\\
 & = \EE_{\eta_1,\eta_2} \Bigl(  \EE_{\PP_{0}} \Bigl\{ \exp\bigl[ - \beta (\eta_1   \vb_{1} + \eta_2  \vb_2)^\top \bX\bigr ]\Bigr \} \cdot \exp\bigl[-  s \beta^2 /2 \cdot(\eta_1^2 + \eta_2^2 ) \bigl  ]\Bigr ) .\notag
\#
Since $\EE_{\PP_0} [\exp(\ab^\top \bX)] = \exp( \| \ab\|_2^2 /2)$, we have
\#\label{eq::use_expectation_like_ratio4}
\EE_{\PP_0} \Bigl\{ \exp\bigl[ - \beta (\eta_1   \vb_{1} + \eta_2  \vb_2)^\top \bX\bigr ]\Bigr \}  &= \exp\bigl(  \beta^2 /2 \cdot \| \eta_1  \vb_1  + \eta_2  \vb_2\|_2^2 \bigr )\notag\\
&=  \exp\bigl[  s     \beta^2/2 \cdot (\eta_1 ^2 + \eta_2^2) + \beta^2  \eta_1 \eta_2 \la \vb_1 ,\vb_2\ra \bigr ].
\#
Combining \eqref{eq::use_expectation_like_ratio30} and \eqref{eq::use_expectation_like_ratio4}, we have
\$
\EE_{\PP_{0}} \biggl[
\frac{\ud  \PP_{\vb_1}}{\ud \PP_0}  \frac{\ud   \PP_{\vb_2}}{\ud \PP_0} (\bX) \biggr ]   &= \EE_{\eta_1,\eta_2} \bigl [\exp  ( \beta^2 \eta_1 \eta_2 \la \vb_1 ,\vb_2\ra   ) \bigr]= \EE_{U} \bigl [ \cosh  ( \beta^2 U \la \vb_1 , \vb_2 \ra  )\bigr],
\$
where the random variable $U = \eta_1 \eta_2$  satisfies \$\PP\bigl [U = (1-\nu)^2\bigr ] = \nu^2,~~\PP\bigl [ U =- \nu(1-\nu) \bigr] = 2\nu(1-\nu), ~~\PP(U = \nu^2) = (1- \nu)^2.\$
Thus, we conclude the proof.
\end{proof}

\subsubsection{Proof of Lemma \ref{lemma::growth_Cj}}\label{proof::lemma::growth_Cj}
\begin{proof}
Since $| \cC_{j}(\vb)|$ does not depend on $\vb$, without any loss of generality, we assume that the first~$s$ entries of $\vb$ are equal to one and the rest are all zero.  For two integers $a, b\geq 0$, we define
\#\label{eq::S_ab}
\cS_{a,b} =\biggl   \{ \ub\in \cG(s)\colon \sum_{i=1}^s \ind{( u_j= 1) } = a ~\text{and}~  \sum_{i=1}^s \ind {( u_j = -1)  = b}  \biggr\}.
\#
Then by definition, for any $\vb' \in \cS_{a,b}$, $\la \vb, \vb' \ra = a - b$. In addition, we define
\$M(k) =\bigl | \{ \vb' \in \cG(s)\colon \la \vb, \vb'\ra = k \}\bigr  | ~~\text{and}~~ N_{a,b}  = | \cS_{a,b}| .\$
By the symmetry of $\cG(s)$, it holds that $M(k) = M(-k)$. Also, by the definition of $\cS_{a,b}$ in \eqref{eq::S_ab},~for $k \in\{ 0, \ldots, s\}$, we have
\$
M(k) =     N_{k,0} + N_{k+1, 1} + \cdots + N_{\floor{\frac{s+k}{2}}, \floor{\frac{s-k}{2}}}.
\$
For any $a,b \geq 0$ satisfying $a+b \leq s$, by calculation, we have
\#\label{eq::N_ab}
N_{a,b} = { s\choose a}{s-a \choose b} {d-s \choose s-a-b}\cdot  2^{s-a-b}.
\#
Hence, if $a+b + 1\leq s$, by  \eqref{eq::N_ab} we obtain
\#\label{eq::compare}
\frac{N_{a,b}}{N_{a+1,b}} & = \frac{{ s\choose a}}{{ s\choose a+1}} \cdot \frac{{s-a \choose b}}{ { s- a-1 \choose b } }\cdot \frac{{ d-s \choose s-a-b}}{{d-s \choose s-a-b-1}} \cdot 2  \notag\\
&=\frac{ a+1}{s-a} \cdot \frac{s-a}{ s-a-b} \cdot \frac{ d-s+a+b+1}{s-a-b}\cdot 2 \notag\\
&= \frac{2 (a+1)(d-s+a+b-1)}{(s-a-b)^2} \geq \frac {d} {s^2}.
\#
Here the last inequality holds because $2s \leq d$. Now we consider $M(k-1)/ M(k)$ for $k \in[s]$.  If $s+k$ is odd, we have $\floor {(s+k)/2} = (s+k-1)/2.$ Hence, we have
\#\label{eq::compare2}
M(k-1) = N_{k-1, 0} + \cdots+  N_{\frac{s+k-1}{2}, \frac{s-k+1}{2}}~~\text{and}~~ M(k) = N_{k,0} + \cdots+ N_{\frac{s+k-1}{2} , \frac{s-k-1}{2}}.
\#
Combining \eqref{eq::compare} and \eqref{eq::compare2}, we have $M(k-1) \geq d/s^2 \cdot M(k)$. Moreover, if $s+k$ is even, we have $\floor {(s+k)/2} = (s+k)/2.$ In this case we have
\#\label{eq::compare3}
M(k-1) = N_{k-1, 0} + \cdots+ N_{\frac{s+k-2}{2}, \frac{s-k}{2}}~~\text{and}~~ M(k) = N_{k,0} + \cdots + N_{\frac{s+k}{2} , \frac{s-k}{2}}.
\#
Combining  \eqref{eq::compare3} and  \eqref{eq::compare}, we also have  $M(k-1) \geq d/s^2\cdot  M(k)$. By the definition of $\cC_j(\vb)$,~it holds that~$|\cC_s(\vb)| = M(0)$~and~$|\cC_{j}(\vb) |  = M(s-j) + M(j-s)$~for~$j\in\{0, \ldots, s-1\}$. Thus, we have
$
|\cC_{j+1}(\vb)| / | \cC_j(\vb)| \geq d/(2s^2)$ for all $j$ in $\{0, \ldots, s-1\}.
$
\end{proof}

\subsubsection{Proof of Lemma \ref{lemma::bound_chi_square_div}}\label{proof::bound_chi_square_div}

\begin{proof}[Proof  of Lemma \ref{lemma::bound_chi_square_div}]
We only prove this lemma for $\ell = 1$. The proof is identical for $\ell = 2$.
For any query function $q\in \cQ_{\mA}$, by the definition of $\cC_1(q)$ in \eqref{eq::def_cq}, for any $\vb \in \cC_1(q)$,  we have
\#\label{eq:apply_defin_Cq1}
\EE_{\PP_{\vb}}\bigl[q(\bX)\bigr] - \EE_{\PP_0}\bigl[q(\bX)\bigr] \geq  \tau_{q, \vb}    ,
\#
where the tolerance parameters   $\tau_{q, \vb}$ is defined in \eqref{eq::query_2} under distribution $\PP_{\vb}$. In the following, we adapt   Lemma 3.5 in \cite{feldman2013statistical} to lower bound the left-hand side of \eqref{eq:apply_defin_Cq1} using $\EE_{\PP_0} [q(\bX)]$.
\begin{lemma}\label{lemma:lower_bound_tau}
	Suppose  the  query function $q \in \cQ_{\mA}$ and   $ \vb \in  \cG(s)$ satisfy 
	\#\label{eq::aux_lemma_difference}
	\bigl| \EE_{\PP_{0}} \bigl[q(\bX)\bigr] -  \EE_{\PP_{\vb}} \bigl[q(\bX)\bigr] \bigr| \geq     \tau_{q, \vb}    ,
	\#
	where $\tau_{q, \vb}$ is  the tolerance parameter  defined in \eqref{eq::query_2} under   $\PP_{\vb}$.
	Then 
	we also have 
	\$
	\bigl| \EE_{\PP_{0}} \bigl[q(\bX)\bigr] -  \EE_{\PP_{\vb}} \bigl[q(\bX)\bigr] \bigr| \geq \sqrt{      2   \log (T/\xi) \cdot \Bigl (M^2 - \bigl \{ \EE _{\PP_0}[ q(\bX) ] \bigr\}   ^2  \Bigr ) \big / (3 n)  }  .
	\$
\end{lemma}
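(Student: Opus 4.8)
The statement is the analogue, for the oracle model of Definition~\ref{def::oracle}, of Lemma~3.5 in \cite{feldman2013statistical}, and the plan is to transplant that argument. Write $a=\EE_{\PP_0}[q(\bX)]$, $b=\EE_{\PP_\vb}[q(\bX)]$, $\Delta=|a-b|$, and $\kappa=\eta(\cQ_\mA)+\log(1/\xi)$; then the tolerance entering the hypothesis is $\tau_{q,\vb}=\max\{\kappa M/n,\ \sqrt{2\kappa(M^2-b^2)/n}\}$, and since this is a maximum of two nonnegative numbers, the assumption $\Delta\ge\tau_{q,\vb}$ simultaneously yields $\Delta\ge \kappa M/n$ and $\Delta^2\ge 2\kappa(M^2-b^2)/n$. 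The whole point is then to trade the ``$\PP_\vb$ variance proxy'' $M^2-b^2$ for the ``$\PP_0$ variance proxy'' $M^2-a^2=M^2-\EE_{\PP_0}^2[q(\bX)]$, which is nonnegative because $q$ is $[-M,M]$-valued, so the square root in the conclusion is well defined.

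First I would compare the two proxies. From $|a|\le M$ and $|b|\le M$ we get $|a^2-b^2|=|a-b|\,|a+b|\le 2M\Delta$, hence $M^2-a^2\le (M^2-b^2)+2M\Delta$. Now substitute the two consequences of $\Delta\ge\tau_{q,\vb}$: the bound $\Delta^2\ge 2\kappa(M^2-b^2)/n$ gives $M^2-b^2\le n\Delta^2/(2\kappa)$, and the bound $\Delta\ge\kappa M/n$ gives $M\le n\Delta/\kappa$, hence $2M\Delta\le 2n\Delta^2/\kappa$. Combining, $M^2-a^2\le \tfrac52\,n\Delta^2/\kappa$, i.e.\ $\Delta^2\ge \tfrac{2\kappa}{5n}(M^2-a^2)$. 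Since $\mA$ issues at most $T$ query functions, all lying in $\cQ_\mA$, one has $\eta(\cQ_\mA)\ge\log T$ and therefore $\kappa\ge\log(T/\xi)$; this already yields $\Delta\ge\sqrt{2\log(T/\xi)(M^2-a^2)/(cn)}$ for a universal constant $c$, and a short refinement of the last estimate pins $c$ down to the stated value $3$.

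The argument is entirely elementary, so there is no genuine obstacle — the only delicate point is the bookkeeping of constants in the last step, which I expect to handle by a case split on the signs of $a$ and $b$. When $a$ and $b$ have opposite signs, $|a+b|\le M$, so $|a^2-b^2|\le M\Delta$ and one gets $M^2-a^2\le (M^2-b^2)+M\Delta\le\tfrac32 n\Delta^2/\kappa$, which is exactly the constant $3$; when they have the same sign one reduces to $M^2-a^2\le M^2-b^2$ directly if $|a|\ge|b|$, and otherwise estimates $M^2-a^2=(M^2-b^2)+\Delta(a+b)$ exploiting the smallness of $M^2-b^2$ forced by $\Delta\ge\tau_{q,\vb}$ (and, in any case, a universal constant in place of $3$ is all that the downstream argument actually needs). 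Once Lemma~\ref{lemma:lower_bound_tau} is in hand, it finishes the proof of Lemma~\ref{lemma::bound_chi_square_div}: averaging $\EE_{\PP_\vb}[q(\bX)]-\EE_{\PP_0}[q(\bX)]\ge\tau_{q,\vb}\ge\sqrt{2\log(T/\xi)(M^2-\EE_{\PP_0}^2[q(\bX)])/(3n)}$ over $\vb\in\cC_1(q)$ and pairing with $\ud\PP_{\cC_1(q)}/\ud\PP_0-1$ via Cauchy--Schwarz, together with $\mathrm{Var}_{\PP_0}(q)\le M^2-\EE_{\PP_0}^2[q(\bX)]$, lower-bounds $D_{\chi^2}(\PP_{\cC_1(q)},\PP_0)=\EE_{\PP_0}[(\ud\PP_{\cC_1(q)}/\ud\PP_0-1)^2]$ by $2\log(T/\xi)/(3n)$.
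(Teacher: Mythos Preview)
Your argument is correct and gives a universal constant, but it is organized differently from the paper's proof and does not quite deliver the exact constant $3$. The paper first affinely rescales to $[0,1]$ by setting $\varphi=\EE_{\PP_\vb}[q(\bX)]/(2M)+1/2$ and $\overline\varphi=\EE_{\PP_0}[q(\bX)]/(2M)+1/2$, so that the two branches of $\tau_{q,\vb}$ become $|\varphi-\overline\varphi|\ge\kappa/(2n)$ and $|\varphi-\overline\varphi|\ge\sqrt{\kappa\,\varphi(1-\varphi)/(2n)}$; by the symmetry $\varphi\mapsto 1-\varphi$ one may assume $\varphi\le 1/2$, and then a single case split on whether $\varphi\ge 2\overline\varphi/3$ finishes the proof with the stated constant. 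Your route instead bounds $M^2-a^2\le(M^2-b^2)+|a+b|\,\Delta$ and controls each piece using the two branches of $\tau_{q,\vb}$; using the crude $|a+b|\le 2M$ gives constant $5$, and your sign-based refinement recovers $3$ when $a,b$ have opposite signs or when $|a|\ge|b|$, but in the remaining same-sign case $|a|<|b|$ the sketch (``exploiting the smallness of $M^2-b^2$'') does not obviously close the gap---for instance, take $a=0$ and $b$ moderate, where $|a+b|=b$ need not be $\le M/ $ constant in the right way. As you correctly note, only a universal constant is used downstream (in Lemma~\ref{lemma::bound_chi_square_div} and the theorem proofs the constant enters through $\tau^2\asymp\log(T/\xi)/n$ and only its order matters), so your weaker constant is entirely adequate; but if you want the stated $3$, the paper's rescaling plus the $\varphi\gtrless 2\overline\varphi/3$ split is the clean way to get it.
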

\begin{proof}
	See \S\ref{sec:proof:lemma:lower_bound_tau} for a detailed proof. 
\end{proof}
By this lemma, we obtain 
\#\label{eq::compute_chi_eq1}
\sqrt{      2   \log (T/\xi) \cdot \Bigl (M^2 - \bigl \{ \EE _{\PP_0}[ q(\bX) ] \bigr\}   ^2  \Bigr ) \big / (3 n)  }  &\leq  \frac{1}{|\cC_1(q)|} \sum_{\vb\in\cC_1(q)}\Bigl \{\EE_{\PP_{\vb}}\bigl [q(\bX)\bigr] - \EE_{\PP_0}\bigl[q(\bX)\bigr]\Bigr \} \notag \\
& = \frac{1}{|\cC_1(q)|} \sum_{\vb\in\cC_1(q)}  \EE_{\PP_{\vb}}\bigl [\overline q(\bX)\bigr]  ],
  \#
where in the  the last equality we define $\overline q(\xb) = q(\xb) - \EE_{\PP_0} [ q(\bX) ] $. Writing 
$\ud \PP_{\vb} = \ud \PP_{\vb}/ \ud \PP_{0} \cdot \ud \PP_0$    in \eqref{eq::compute_chi_eq1}, we have
\#\label{eq::compute_chi_square_ineq}
 \frac{1}{|\cC_1(q)|} \sum_{\vb\in\cC_1(q)}  \EE_{\PP_{\vb}}\bigl [\overline q(\bX)\bigr]  ] =  \EE_{\PP_0} \biggl(\overline{q} (\bX) \cdot \biggl\{\frac{1}{|\cC_1(q)|} { \sum_{\vb\in\cC_1(q)} }\biggl[\frac{\ud \PP_{\vb}}{\ud \PP_0}(\bX)  - 1\biggr] \biggr\}\biggr).
\#
 Meanwhile, Cauchy-Schwarz inequality implies that
\#\label{eq::compute_chi_square2}
&\EE_{\PP_0} \biggl( \overline q  (\bX) \cdot \biggl\{\frac{1}{|\cC_1(q)|}\sum_{\vb\in\cC_1(q)} \biggl[\frac{\ud \PP_{\vb}}{\ud \PP_0}(\bX)  - 1\biggr] \biggr\}\biggr) \notag\\
&\quad \leq  \Bigl ( \EE_{\PP_0} \Bigl \{ \bigl[\overline{q}(\bX)\bigr]^2\Bigr \}\Bigr)^{1/2}  \cdot  \biggl[\EE_{\PP_0} \biggl(\biggl\{\frac{1}{|\cC_1(q)|}\sum_{\vb\in\cC_1(q)} \biggl[\frac{\ud \PP_{\vb}}{\ud \PP_0}(\bX)  - 1\biggr] \biggr\}^2\biggr) \biggr ]^{1/2}  .
\#
For the first term on the right-hand side of \eqref{eq::compute_chi_square2}, by the definition of $\overline{q}$ and the fact that $q (\bX) \in [-M, M]$,  we have
\#\label{eq::compute_chi_square3}
      \EE_{\PP_0} \Bigl \{  \bigl[\overline{q}(\bX)\bigr]^2  \Bigr \}  =   \EE_{\PP_0} \Bigl \{\bigl [ q(\bX) \bigr ] ^2 \Bigl \} -\Bigl \{ \EE_{\PP_0}\bigl[q(\bX)\bigr ]\Bigr \}^2 
 \leq  M^2 - \Bigl \{ \EE_{\PP_0}\bigl[q(\bX)\bigr ]\Bigr \}^2 .
\#
Thus, combining \eqref{eq::compute_chi_eq1}, \eqref{eq::compute_chi_square_ineq},  \eqref{eq::compute_chi_square2}, and  \eqref{eq::compute_chi_square3},  we have 
\#\label{eq::compute_chi_square_additional}
\sqrt{ \frac{ 2 \log (T/ \xi)}   {3n}  } \leq   \biggl[\EE_{\PP_0} \biggl(\biggl\{\frac{1}{|\cC_1(q)|}\sum_{\vb\in\cC_1(q)} \biggl[\frac{\ud \PP_{\vb}}{\ud \PP_0}(\bX)  - 1\biggr] \biggr\}^2\biggr) \biggr ]^{1/2}.
\# 
It remains to bound the right-hand side of \eqref{eq::compute_chi_square_additional}. By direct computation, we have 
\#\label{eq::compute_chi_square4}
&\biggl[\EE_{\PP_0}\biggl(\biggl\{\frac{1}{|\cC_1(q)|}\sum_{\vb\in\cC_1(q)} \biggl[\frac{\ud \PP_{\vb}}{\ud \PP_0}(\bX)  - 1\biggr] \biggr\}^2\biggr)\biggr]^{1/2} \notag\\
&\quad = \biggl(\frac{1}{|\cC_1(q)|^2}\sum_{\vb_2,\vb_2\in\cC_1(q)}   \EE_{\PP_0} \biggl\{ \biggl[ \frac{\ud \PP_{\vb_1}}{\ud \PP_0}(\bX)  - 1\biggr] \cdot \biggl[\frac{\ud \PP_{\vb_2}}{\ud \PP_0}(\bX)  - 1\biggr] \biggr\}\biggr)^{1/2}   \notag\\
&\quad= \biggl\{ \frac{1}{|\cC_1(q)|^2} \sum_{\vb_2,\vb_2\in\cC_1(q)}   \EE_{\PP_0} \biggl[ \frac{\ud \PP_{\vb_1}}{\ud \PP_0}\frac{\ud \PP_{\vb_2}}{\ud \PP_0}(\bX) - 1\biggr]\bigg\} ^{1/2}=\biggl(\EE_{\PP_0} \biggl\{\biggl[\frac{\ud \PP_{  \cC_1(q)}}{\ud \PP_0}(\bX) -1 \biggr]^2\biggr\}\biggr)^{1/2}\notag\\
&\quad = \bigl[D_{\chi^2} (\PP_{ \cC_1(q)}, \PP_0 )\bigr]^{1/2},
\#
 Therefore, combining \eqref{eq::compute_chi_square_additional} and  \eqref{eq::compute_chi_square4},      we    conclude the proof of Lemma \ref{lemma::bound_chi_square_div}.
\end{proof}

\subsubsection{Proof of Lemma \ref{lemma::compute_h_2}} \label{proof::lemma::compute_h_2}
\begin{proof}
 Let $\eta_1, \eta_2$  be two independent Rademacher random variables. For notational simplicity,
we denote $\kappa = s\beta^2$.  For any $\vb_1, \vb_2\in \cG(s)$, by the definition of $\PP_{\vb}$, we have
\#\label{eq::first_hfun_symmetric}
\EE_{\PP_0} \biggl[ \frac{\ud  \PP_{\vb_1}}{\ud  \PP_{0}} \frac{\ud  \PP_{\vb_2}}{\ud \PP_{0}} (\bX) \biggr]  = \exp\biggl[- \frac{\kappa^2}{ (1-\kappa^2)} \biggr ]\cdot (1-\kappa^2)^{-1} \cdot \EE_{\eta_1,\eta_2}  (
   \Xi ),
\#
were $\Xi$ is defined as
\$
\Xi &=  \EE_{\PP_0} \biggl\{ \exp \biggl  [-\frac{\kappa ^2  (\bX^\top \vb_1)^2}{2(1-\kappa^2)} + \frac{\kappa\eta_1  (\bX^\top \vb_1) }{1-\kappa^2}    -\frac{\kappa ^2  (\bX^\top \vb_2)^2}{2(1-\kappa^2)} + \frac{\kappa \eta_2 (\bX^\top \vb_2)}{1-\kappa^2}   \biggr ] \biggr\}\\
&= \EE_{\PP_0} \biggl \{ \exp \biggl[-\frac{\kappa^2 U_1^2 }{2(1-\kappa^2)}  + \frac{\kappa  U_1}{1-\kappa^2} -\frac{\kappa^2 U_2^2}{2(1-\kappa^2)}    + \frac{\kappa  U_2 }{1-\kappa^2}    \biggr]\biggr\}.
\$
Here we define $U_1 =    s^{-1/2}\eta_1    \bX^\top\vb_1 $ and $U_2 =  s^{-1/2}  \eta_2   \bX^\top \vb_2  $. Let $\alpha = s^{-1}\eta_1 \eta_2  \la \vb_1, \vb_2 \ra$. We define
\$
V = ( 1- \alpha^2 )^{-1/2} ( U_2 - \alpha U_1).
\$ By definition, $U_1$  and $V$ are independent  standard normal random variables.  By definition, we have
\$
& \Xi 
=\EE_{\PP_0} \biggl ( \exp\biggl\{ -\frac{\kappa^2  \bigl [U_1^2 + (\alpha U_1+\sqrt{1- \alpha ^2}V)^2\bigr] }{2(1-\kappa^2)}   + \frac{\kappa (U_1 +  \alpha  U_1+\sqrt{1- \alpha ^2} V)}{1-\kappa^2}  \biggr\} \biggr) \notag\\
& \quad =  \EE_{\PP_0}\biggl\{ \exp\biggl[ -\frac{\kappa^2(1- \alpha ^2) V^2}{2(1-\kappa^2)} + \frac{\kappa \sqrt{1- \alpha^2} \cdot(1-\kappa  \alpha  U_1) V}{1-\kappa^2} -\frac{\kappa^2(1+ \alpha  ^2)U_1^2}{2(1-\kappa^2)} + \frac{\kappa(1+ \alpha) U_1}{1-\kappa^2} \biggr]\biggr\}.
\$
Note that for  any $a < 1/2$ and $b \in \RR$, we have
\$
\EE\bigl[\exp(a Z^2 + b Z)\bigr] = (1-2a)^{-1/2} \exp\bigl[b^2/(2-4a)\bigr],
\$
where $Z \sim \cN(0,1)$.
By first taking expectation with respect to $V$, we obtain that
\$
\Xi &=  \sqrt{\frac{1-\kappa^2}{1-\kappa^2 \alpha ^2}} \cdot  \EE_0\biggl\{  \exp\biggl[\frac{\kappa^2(1- \alpha  ^2) (1-\kappa \alpha  U_1  )^2}{2(1-\kappa^2)(1-\kappa^2 \alpha^2)} -\frac{\kappa^2(1+ \alpha  ^2) U_1^2 }{2(1-\kappa^2)}+ \frac{\kappa(1+ \alpha )U_1}{1-\kappa^2} \biggr]\biggr\} \notag\\
&= \sqrt{\frac{1-\kappa^2}{1-\kappa^2 \alpha  ^2}}\cdot
\exp \biggl [\frac{\kappa^2(1- \alpha  ^2)}{2(1-\kappa^2)(1-\kappa^2 \alpha  ^2)} \biggr] \cdot \\
& \quad \quad\quad
\EE_0 \biggl\{ \exp\biggl [-\frac{\kappa^2 (1 +  \alpha  ^2 -2 \kappa^2 \alpha  ^2)U_1^2}{2(1-\kappa^2)(1-\kappa^2 \alpha  ^2)}  + \frac{\kappa (1 +  \alpha   ) (1 -\kappa^2  \alpha  )U_1}{(1-\kappa^2)(1-\kappa^2 \alpha  ^2)} \biggr]\biggr\}.
\$
By further taking expectation with respect  to $U_1$, we have
\#\label{eq::last_hfun_symmetric}
\Xi &= \sqrt{\frac{1-\kappa^2}{1-\kappa^2 \alpha  ^2}}  \cdot
\exp \biggl[{\frac{\kappa^2(1- \alpha  ^2)}{2(1-\kappa^2)(1-\kappa^2 \alpha  ^2)}} \biggr]\cdot \notag \\
&\quad \quad \quad
\sqrt{\frac{(1-\kappa^2)(1-\kappa^2 \alpha  ^2)}{1-\kappa^4 \alpha  ^2}}\cdot
\exp \biggr[\frac{\kappa^2 (1+ \alpha  )^2(1-\kappa^2 \alpha  )^2}{2(1-\kappa^2)(1-\kappa^2 \alpha  ^2)(1-\kappa^4 \alpha  ^2)}\biggr] \notag\\
&= \frac{1-\kappa^2}{\sqrt{1-\kappa^4 \alpha  ^2}} \cdot\exp\biggl[\frac{\kappa^2(1-\kappa^2 \alpha  ^2)}{(1-\kappa^2)(1-\kappa^4 \alpha  ^2)}+\frac{\kappa^2 \alpha  }{1-\kappa^4 \alpha  ^2}\biggr].
\#
Combining \eqref{eq::first_hfun_symmetric} and  \eqref{eq::last_hfun_symmetric}, we finally obtain that
\#\label{eq::last_two_step}
\EE_{\PP_0} \biggl[ \frac{\ud  \PP_{\vb_1}}{\ud \PP_{0}} \frac{\ud  \PP_{\vb_2}}{\ud \PP_{0}} (\bX) \biggr]  = \EE_{\eta_1, \eta_2}
  \biggl[ (1-\kappa^4 \alpha  ^2)^{-1/2}  \exp\biggl( - \frac{\kappa^4  \alpha  ^2}{1-\kappa^4 \alpha  ^2}+\frac{\kappa^2 \alpha  }{1-\kappa^4 \alpha  ^2}\biggr)\biggr] .
\#
Recall that $s \alpha =   \eta_1 \eta_2  \la \vb_1, \vb_2 \ra   $ is a sum of $|\la \vb_1, \vb_2 \ra| $ independent Rademacher random variables.~Let $W = s \alpha $. Thus by replacing $\kappa = s\beta^2$ in \eqref{eq::last_two_step}, we obtain
\$
& \EE_{\PP_0} \biggl[ \frac{\ud  \PP_{\vb_1}}{\ud  \PP_{0}} \frac{\ud  \PP_{\vb_2}}{\ud \PP_{0}} (\bX) \biggr]  =
\EE_{W} \biggl[(1-\beta^4 W^2)^{-1/2} \exp\biggl (\frac{-\beta ^4 W^2}{1-\beta ^4 W^2}\biggr) \cdot\cosh\biggl(\frac{\beta^2 W}{1-\beta^4 W^2}\biggr)\biggr].
\$
Therefore, we conclude the proof of Lemma \ref{lemma::compute_h_2}.
\end{proof}

\subsubsection{Proof of Lemma \ref{lemma::compute_chi_square2}}\label{proof::lemma::compute_chi_square2}
\begin{proof}

Under $\PP_0$, we have $\bZ \sim N({\bf 0}, \Ab_0)$, where $\Ab_0= \text{diag}( \sigma^2 + s\beta^2 , \Ib_{d})$. In addition, under $\PP_{\vb}$,  we have
  \$
  \bZ \sim 1/2 \cdot N \bigl [{ \bf 0}, \Ab(\vb)\bigr  ]+ 1/2 \cdot N \bigl [{ \bf 0}, \Ab(-\vb) \bigr],
  \$
  where we define \$\Ab(\vb) = \begin{bmatrix} \sigma^2 + \beta^2 \| \vb\|^2_2 & \beta \vb^\top \\
  \beta \vb & \Ib_{d}
  \end{bmatrix}
 \$ for any $\vb\in \cG(s)$.  By definition, for any $\vb \in \cG(s)$  we have
  \$
  \frac{\ud  \PP_{\vb}}{\ud \PP_0}(\zb) =  \EE_{\eta}  \biggl (  \text{det}^{1/2} (\Ab_0)\cdot \text{det}^{-1/2} \bigl[ \Ab(\eta \vb )\bigr]\cdot \exp \Bigl\{ - 1/2 \cdot \zb^{\top}  \bigl[ \Ab^{-1}(\eta \vb) - \Ab_0^{-1} \bigr ] \zb \Bigr \} \biggr    ),
  \$
  where we denote $\zb =(y, \xb^\top)^\top$.
  Thus for any $\vb_1$ and $\vb_2$ in $\cG(s)$, we further have
  \#\label{eq::use_expectation_like_ratio3}
  \EE_{\PP_{0}} \biggl[
  \frac{\ud  \PP_{\vb_1}}{\ud \PP_0}  \frac{\ud   \PP_{\vb_2}}{\ud \PP_0} (\bZ) \biggr ]  & = \EE_{\PP_0} \EE_{\eta_1,\eta_2} \biggl (\text{det} (\Ab_0)\cdot \text{det}^{-1/2} \bigl[ \Ab(\eta _1 \vb_1)\bigr]\cdot \text{det}^{-1/2}\bigl [ \Ab(\eta _2 \vb_2 )\bigr]  \cdot \notag\\
  &\qquad \exp \Bigl\{ - 1/2 \cdot \bZ^{\top} \bigl  [ \Ab^{-1}(\eta_1 \vb_1) + \Ab^{-1}(\eta_2 \vb_2)  - 2 \Ab_0^{-1}  \bigr] \bZ \Bigr \} \biggr  ).
  \#
  Here we denote $\bZ = ( Y, \bX^\top )^\top$.
  By calculation, we have
  \$ 
  &\EE_{\PP_0}\biggl  (  \exp\Bigl  \{ - 1/2 \cdot \bZ^{\top}  \bigl[ \Ab^{-1}(\eta_1 \vb_1) + \Ab^{-1} (\eta_2 \vb_2)  - 2 \Ab_0^{-1} \bigr] \bZ  \Bigr\}\biggr )\notag\\
  &\quad = (2\pi)^{-(d+1)/2} \text{det}^{-1/2} (\Ab_0)\cdot  \int_{\zb\in \RR^{d+1}} \exp  \Bigl \{  -1/2\cdot  \zb^{\top } \bigl[ \Ab^{-1}(\eta_1 \vb_1) + \Ab^{-1}(\eta_2 \vb_2)  -   \Ab_0^{-1}\bigr  ] \zb \Bigr  \} \ud \zb \notag \\
  & \quad =  \text{det}^{- 1/2} \bigl [\Ab^{-1}(\eta_1 \vb_1) + \Ab^{-1}(\eta_2 \vb_2)  -   \Ab_0^{-1}  \bigr]\cdot \text{det}^{-1/2} (\Ab_0).
  \$
  Thus, by Fubini's  theorem, the right-hand side of \eqref{eq::use_expectation_like_ratio3} is reduced to
  \#\label{eq::fubini}
  & \EE_{\PP_{0}} \biggl[
  \frac{\ud  \PP_{\vb_1}}{\ud \PP_0} \frac{\ud   \PP_{\vb_2}}{\ud \PP_0} (\bZ) \biggr ]  \notag \\
  &\quad  =\EE_{\eta_1, \eta_2}  \Bigl\{   \text{det}  \bigl[\Ab^{-1}(\eta_1 \vb_1) + \Ab^{-1}(\eta_2 \vb_2)  -   \Ab_0^{-1}  \bigr] \cdot\notag\\
&\qquad \qquad \qquad  \text{det} \bigl[\Ab(\eta _1 \vb_1)\bigr] \cdot \text{det}€€\bigl[\Ab(\eta _2 \vb_2)\bigr]\cdot  \text{det}^{-1}(\Ab_0) \Bigr\} ^{-1/2}\notag\\
  & \quad=\text{det}^{1/2} (\Ab_0) \cdot   \EE_{\eta_1, \eta_2} \Bigl\{ \text{det}\bigl [ \Ab(\eta_1 \vb_1) + \Ab(\eta_2 \vb_2) -    \Ab(\eta_1 \vb_1)\cdot \Ab_0^{-1}\cdot \Ab(\eta_2 \vb_2)
 \bigr ] ^{-1/2} \Bigr\}  .
  \#
By calculation  we obtain
  \#\label{eq::compute_1}
  &\Ab(\eta_1\cdot \vb_1)\cdot \Ab_0^{-1}\cdot \Ab(\eta_2\cdot \vb_2) =  \begin{bmatrix} a_1 & \btheta ^\top\\
  	\btheta & \Ab_1
  \end{bmatrix}\notag\\
  &\Ab(\eta_1\cdot \vb_1) + \Ab(\eta_2\cdot \vb_2) =
   \begin{bmatrix}
   	2 \sigma^2  + 2s\beta^2& \btheta^{\top} \\
   	\btheta    & 2\Ib_{d}
   \end{bmatrix},
  \#
  where we define $\btheta  = \beta (\eta_1\vb_1 + \eta_2 \vb_2 )$, $a_1 = \sigma^2 + \beta^2 ( s + \eta_1 \eta_2 \vb_1^\top \vb_2)$, and $ \Ab_1 = \Ib_d + ( \sigma^2 + s\beta ^2)^{-1} \beta^2 \eta_1 \eta_2  \vb_1 \vb_2^\top$. Finally, combining \eqref{eq::fubini} and \eqref{eq::compute_1}, we obtain that
  \#\label{eq::result}
  \EE_{\PP_0} \biggl[\frac{\ud  \PP_{\vb_1}}{\ud \PP_0}  \frac{\ud   \PP_{\vb_2}}{\ud \PP_0} (\bZ) \biggr ]  =  \EE_{\eta_1, \eta_2} \Bigl \{  \bigl [ 1 -  (\sigma^2 + s\beta ^2)  ^{-1}\beta^2  \eta_1 \eta_2 \vb_1^\top \vb_2\bigr]^{-1}\Bigr\}.
  \#
  Since $\eta_1\eta_2$ is a Rademacher random variable, \eqref{eq::result} is reduced to
  \$
  \EE_{\PP_0} \biggl[\frac{\ud  \PP_{\vb_1}}{\ud \PP_0} \frac{\ud   \PP_{\vb_2}}{\ud \PP_0} (\bZ) \biggr ]  = \bigl [ 1 - (\sigma^2 + s\beta^2)^{-2}  \beta^4  | \vb_1^\top \vb_2| ^2\bigr ]^{-1}.
  \$
  To prove the second argument, note that for any $\vb \in \cG(s)$, we have 
  \$
  \frac{\ud \PP_{\vb }^n}{\ud \PP_{0}^n}  (\zb_1, \ldots, \zb_n) &  = \text{det}^{n/2} (\Ab_0)\cdot \\
  & \quad\prod_{i=1}^n \biggl[ \EE_{\eta^i}  \biggl( \text{det}^{-1/2}\bigl [ \Ab(\eta^i \cdot \vb )\bigr] \cdot \exp \Bigl  \{ - 1/2 \cdot \zb_i^{\top}\bigl [ \Ab^{-1}(\eta ^i\cdot \vb) - \Ab_0^{-1} \bigr] \zb _i \Bigr \}\biggr )\biggr ],
  \$
  where $\{\eta^{i}\}_{i=1}^n$ are independent Rademacher random variables. Let $\{ \bZ_i\}_{i=1}^n$ be $n$ independent copies of $\bZ$ and let  $\{\eta_1^{i}, \eta_{2}^i \}_{i=1}^n$ be $2n$ independent Rademacher random variables. Therefore, by Fubini's  theorem  and \eqref{eq::result}, we have 
\$
&\EE_{\PP_0^n }  \biggl[ \frac{\ud \PP_{\vb_1}^n}{\ud \PP_{0}^n}     \frac{\ud \PP_{\vb_2}^n }{\ud \PP_{0}^n} (\bZ_1, \ldots, \bZ_n) \biggr] =  \EE_{\eta_1^1, \ldots, \eta_1^n, \eta_2^1, \ldots, \eta_2^n} \EE_{\PP_0}\biggl[ \frac{\ud \PP_{\vb_1}^n}{\ud \PP_{0}^n}   \frac{\ud \PP_{\vb_2}^n }{\ud \PP_{0}^n} (\bZ_1, \ldots, \bZ_n) \biggr]\notag\\
&\quad =  \prod_{i=1}^n \biggl(\EE_{\eta_1^i , \eta_2^i} \Bigl\{  \bigl [ 1 - (\sigma^2 + s\beta ^2)^{-1}  \beta^2 \eta_1 ^i\eta_2^i \vb_1^\top \vb_2 \bigr  ]^{-1}\Bigr\}\biggr )\notag\\
&\quad   = \bigl [ 1 - (\sigma^2 + s\beta^2)^{-2} \beta^4  | \vb_1^\top \vb_2| ^2 \bigr ]^{-n} .\$
  This concludes the proof of Lemma \ref{lemma::compute_chi_square2}.                             
\end{proof}

\subsection{Proof of Lemma \ref{lemma:lower_bound_tau}} \label{sec:proof:lemma:lower_bound_tau}
\begin{proof}
	For notational simplicity, within this proof, we define  $ \varphi =  \EE_{\PP_{\vb}} [ q(\bX) / (2M) ]  + 1/2 $  and $\overline \varphi  =    \EE_{\PP_{0}} [ q(\bX) / (2M) ]  + 1/ 2 $. Since $q $ takes values in $[-M, M]$, we have $\varphi, \overline \varphi \in [0, 1]$. Moreover,  we denote $\log (T / \xi)$ by $\kappa$ for simplicity. By \eqref{eq::aux_lemma_difference} and \eqref{eq::query_2}, we have 
	\#\label{eq::aux_diff_mean}
	| \varphi - \overline \varphi | \geq  \max \biggl \{ \frac{\kappa}{2n}, \sqrt{ \frac{ \kappa \cdot \varphi \cdot (1 - \varphi) } {2n} }  \biggr \}.
	\#
	Moreover, our goal is to establish
	\#\label{eq::aux_goal}
	| \varphi - \overline \varphi | \geq   \sqrt{ \frac{ \kappa \cdot  \overline  \varphi \cdot (    1 - \overline \varphi ) } {6n}  } . 
	\#
	Note that both \eqref{eq::aux_diff_mean} and \eqref{eq::aux_goal} remain  the same if we replace $\varphi$ and $\overline \varphi$ by $1-\varphi$ and $1 - \overline  \varphi$, respectively. Thus, it suffices to show \eqref{eq::aux_goal} given that $\varphi \leq 1/2$. 
	Suppose it holds that $\varphi \geq 2 \overline \varphi / 3$, then by \eqref{eq::aux_diff_mean} we have 
	\$
	| \varphi - \overline \varphi |  \geq   \sqrt{ \frac{ \kappa \cdot \varphi \cdot (1 - \varphi) } {2n} } \geq  \sqrt{ \frac{ \kappa \cdot ( 2 \overline \varphi /3 ) \cdot 1/2  } {2n} } = \sqrt{ \frac{\kappa \cdot \overline \varphi}{6n} }.
	\$ 
	Moreover, when $\varphi < 2 \overline \varphi/ 3$, we have 
	$ 
	\overline \varphi   - \varphi \geq \overline \varphi /3 .
	$ 
	Combining this with \eqref{eq::aux_diff_mean}, we obtain that $| \overline \varphi   - \varphi| \geq \sqrt{ \kappa \cdot \overline \varphi / (6n) }$.  Since $\overline \varphi \in [0,1]$,  we conclude the proof of Lemma \ref{lemma:lower_bound_tau}.
\end{proof}

\bibliographystyle{ims}
\bibliography{mixture}
\end{document}